\newtheorem{thm}{Theorem}[section]
\Crefname{thm}{Theorem}{Theorems}
\newtheorem*{thm*}{Theorem}
\Crefname{thm}{Theorem}{Theorems}
\newtheorem{lem}[thm]{Lemma}
\Crefname{lem}{Lemma}{Lemmas}
\Crefname{prop}{Proposition}{Propositions}
\newtheorem{cor}[thm]{Corollary}
\Crefname{cor}{Corollary}{Corollaries} 
\Crefname{que}{Question}{Questions}
\Crefname{con}{Conjecture}{Conjectures}
\newtheorem{clm}[thm]{Claim}
\Crefname{clm}{Claim}{Claims}
\Crefname{goal}{Goal}{Goals}
\theoremstyle{definition}
\newtheorem{defn}[thm]{Definition}
\Crefname{defn}{Definition}{Definitions}
\newtheorem{ex}[thm]{Example}
\Crefname{ex}{Example}{Examples}
\newtheorem{rem}[thm]{Remark}
\Crefname{rem}{Remark}{Remarks}
\newtheorem{obs}[thm]{Observation}
\Crefname{obs}{Observation}{Observations}
\newcommand{\cyccomp}{\mathcal{Z}}
\newcommand{\cycset}{\mathcal{S}}
\newcommand{\restcyc}{\mathbin{\upharpoonright}}
\newcommand{\flowerradius}{2cm}
\newcommand{\addthickness}{1pt}
\newcommand{\orientationradius}{0.2*\flowerradius}
\newcommand{\basicflower}{\draw [thick] (0,0) ellipse [radius=\flowerradius];}
\newcommand{\orientation}{\draw [thick,->,gray] (20:\orientationradius) arc [start angle=20,end angle=-240,radius=\orientationradius];}
\newcommand{\newnode}[3][]{\node [fill,circle,inner sep=1.3pt,label=#3:#2,#1] at (#3:\flowerradius) {};}
\newcommand{\labelofsep}[5][]{
	\draw [#1] ($(#2:\flowerradius) +(#2-90:\addthickness*#5)$)-- (.5*#2+.5*#3:1.4*\addthickness*#5) -- ($(#3:\flowerradius)+(#3+90:\addthickness*#5)$);
	\draw [->,#1] (.5*#2+.5*#3:1.4*\addthickness*#5) -- (.5*#2+.5*#3:0.15*\flowerradius);
	\path [#1] (.5*#2+.5*#3:.6*\flowerradius) node [align=center]{#4};}
\newcommand{\labelofarcbelow}[5][black]{
	\draw [thick,#1] (#2:\flowerradius-#4*\addthickness) arc [start angle=#2, end angle=#3,radius=\flowerradius-#4*\addthickness];
	\node [label=180+0.5*#2+0.5*#3:\textcolor{#1}{#5}] at (0.5*#2+0.5*#3:\flowerradius) {};}
\newcommand{\labelofarcabove}[5][black]{%
	\draw [thick,#1] (#2:\flowerradius+#4*\addthickness) arc [start angle=#2, end angle=#3,radius=\flowerradius+#4*\addthickness];
	\node [label=.5*#2+.5*#3:\textcolor{#1}{#5}] at (0.5*#2+0.5*#3:\flowerradius) {};}
\colorlet{picturegreen}{green!60!black}
\colorlet{pictureorange}{orange!70!brown}
\colorlet{picturepurple}{red!55!blue}
\begin{document}
\title{Flowers in graph-like spaces}
\author{Ann-Kathrin Elm}
\address{Research conducted at Universität Hamburg, Department of Mathematics, Hamburg, Germany}
\curraddr{Ann-Kathrin Elm: Universität Heidelberg, Department of Computer Science, Heidelberg, Germany}
\email{elm@informatik.uni-heidelberg.de, hendrikheine@gmail.com}
\author{Hendrik Heine}
\keywords{flower, tree decomposition, graph-like space, separation, maximal flower}
\subjclass[2020]{05C40, 05C63 (Primary), 05C05, 05C70 (Secondary)}
\begin{abstract}
    One perspective on tree decompositions is that they display (low-order) separations of the underlying graph or matroid.
    The separations displayed by a tree decomposition are necessarily nested.
    In 2013, Clark and Whittle proved the existence of tree decompositions with flowers added in which, up to a natural equivalence, all low-order separations are displayed.
    An important step in that proof is to show that flowers can be extended to maximal flowers.
    In this paper we generalise the notion of a flower to pseudoflowers in graph-like spaces and show for our generalisation, flowers can be extended to maximal flowers.
\end{abstract}
\maketitle

\section{Introduction}

Tree decompositions play an important role in the proof of the Graph Minor Theorem by Robertson and Seymour and have since become a central tool of structural graph theory.
Tangles are a way of fuzzily encoding highly connected regions of a graph $G$:
They are objects that orient every separation of order less than $k$ (this $k$ is then the \emph{order} of the tangle) towards the side where the highly connected region of the graph can be found.
In a tree decomposition of a graph $G$, every edge of the decomposition graph $T$ induces a separation of $G$.
So every $k$-tangle orients every edge of $T$ that induces a separation of $G$ of order less than $k$ and thus points towards a subtree of $T$ whose edges induce separations of $G$ of order at least $k$.
In this way, every tangle induces a subtree of $T$.

A tree of tangles of $G$ is a tree decomposition such that for any two distinct tangles of the same order, the induced subtrees of the decomposition tree are disjoint.
Additionally all separations induced by edges of the decomposition tree have to distinguish two tangles, that is, the two tangles orient the separation differently.
The subtrees of the decomposition tree $T$ induced by the tangles are naturally arranged in a tree-like fashion by $T$, and usually a tree of tangles is seen as a tool to exhibit the resulting tree-like structure of the tangles.

From another viewpoint, a tree of tangles also displays the tree-like structure of the separations of $G$ it induces.
As any two separations induced by a tree of tangles cannot cross, the amount of separations that can be induced by one tree of tangles is very limited.
One way around this restriction is to declare two separations as equivalent if they distinguish the same tangles, and to consider a separation as represented by a tree decomposition if some equivalent separation is induced by the tree decomposition.
This approach is justified by the fact, implied by tangle-tree-duality theorems, that for any two equivalent separations $r$ and $s$ of which one is induced by a tree of tangles there is a sequence $r = p_0, \ldots, p_n = s$ of separations such that the ``difference'' between to consecutive separations is less than a separation that is small in the sense that it does not distinguish tangles.

Even when allowing for this equivalence relation, it is in general not possible to represent all separations in one tree decomposition, even in the special case where the tangles to be distinguished all have the same order $k$ and all contain the same tangle of order $k-1$.
The problem here is that there might be separations $r$ and $s$ that \emph{properly cross}, that is, there are tangles $T_1$, $T_2$, $T_3$ and $T_4$ to be distinguished such that both $T_1$ and $T_2$ orient $r$ the same way, $T_1$ and $T_3$ orient $s$ the same way, $r$ distinguishes $T_1$ from both $T_3$ and $T_4$ and $s$ distinguished $T_1$ from both $T_2$ and $T_4$.
In this case, any separations $r'$ and $s'$ equivalent to $r$ and $s$ respectively also have to cross, and thus cannot be induced by a common tree decomposition.

The concepts of tangles, tree decompositions and trees of tangles in matroid theory are closely related to the corresponding concepts in graph theory.
In the context of finite matroids, the problem of representing separations that properly cross has been solved by adding flowers to the vertices of the decomposition tree.
Flowers are a way of encoding many separations simultaneously that mostly pairwise cross.
Flowers and tree decompositions with added flowers were introduced for $3$-tangles in \cite{3flowers} and then generalised to $k$-tangles in \cite{AikinOxley} and \cite{ClarkWhittle}.
\cite[Theorem 7.1]{ClarkWhittle} implies that for a set $\mathcal{T}$ of tangles of the same order $k$ that all contain the same tangle of order $k-1$, there is a tree of tangles with flowers such that the flowers conform with the tree decomposition and such that every separations distinguishing elements of $\mathcal{T}$ is equivalent to either a separation induced by the tree decomposition or to a separation displayed by one of the flowers.
A central part of this theorem is the use of flowers that distinguish a maximal set of elements of $\mathcal{T}$, and the fact that under certain circumstances for a flower $\Phi$, a separation that is only equivalent to separations that cross separations displayed by $\Phi$ can be added to $\Phi$.

In this paper, as a first step towards a version of the theorem by Clark and Whittle for graph-like spaces (a generalisation of infinite graphs), maximal flowers in graph-like spaces are investigated.
We find a definition of pseudoflower that extends the notion of flower from \cite{ClarkWhittle} while being tailored to taking care of several technical difficulties. A similar extension of flowers for infinite matroids can also be found in \cite{elm2023pseudoflowers}.
We then show that every pseudoflower that is not too small can be extended to another pseudoflower that is maximal in the sense that further subdividing one of its petals does not yield another pseudoflower (\cref{leqmaxexiststangle}).
Finally we show that under certain assumptions on $\mathcal{T}$, these maximal pseudoflowers also distinguish maximally many elements of $\mathcal{T}$ (\cref{maximalflowerwrttangles}).

\section{Tools and terminology}

\subsection{Separations, universes and profiles}

The definition of a graph-like space is rather involved and for the results of this paper other than examples we only use a few properties of the separations of a graph-like space.
For this reason, we will mostly work within the framework of a universe of vertex separations.

A \emph{universe of vertex separations} on ground set $V$ (thought of as vertex set) is a tuple $\mathcal{U}=(U,\wedge, \vee, ^*)$ where $U$ consists of pairs $(A,B)$ with $A\cup B=V$ and is closed under $\wedge$, $\vee$ and $^*$; $\wedge: U\times U \rightarrow U$ is defined via $(A,B)\wedge (C,D)=(A\cap C,B\cup D)$, $\vee: U\times U \rightarrow U$ is defined via $(A,B)\vee (C,D)=(A\cup C, B\cap D)$ and $^*:U\rightarrow U$ is defined via $(A,B)^*=(B,A)$.
The elements of $U$ are called \emph{separations}.

The name ``universe of vertex separations'' derives from a more general definition of universe, in which $U$ is just some set and there need not be a ground set $V$, as defined for example in \cite{ASS}.
Universes of vertex separations are studied in depth under the name of separation systems implemented by set separations in \cite{BowlerKneip}.
The main example for universes of vertex separations arise from graphs and a generalisation of graphs called graph-like spaces.

The separation $(B,A)$ is the \emph{inverse} of the separation $(A,B)$, and $(A,B)$ and $(B,A)$ are the \emph{orientations} of $(A,B)$.
Just as with graphs, the relation $\leq$ on $U$ where $(A,B)\leq (C,D)$ if $A\subseteq C$ and $D\subseteq B$ is a partial order in which $(A,B)\wedge (C,D)$ is the infimum of $(A,B)$ and $(C,D)$ and $(A,B)\vee (C,D)$ is the supremum of $(A,B)$ and $(C,D)$.
Two separations $(A,B)$ and $(C,D)$ that have orientations that are comparable by $\leq$ are called \emph{nested}, and if two separations are not nested then they \emph{cross}.
A \emph{corner} of two crossing separations $(A,B)$ and $(C,D)$ is the supremum or infimum of an orientation of $(A,B)$ and an orientation of $(C,D)$, so two crossing separations have (up to) $8$ corners.
The \emph{order} of a separation $(A,B)$ is the size of $A\cap B$ if that is finite and $\infty$ otherwise.
Just as with graphs, the order function is symmetric and submodular.
A $\leq k$-separation is a separation of order at most $k$, and a $<k$-separation is a separation of order less than $k$.

We also need the notion of a separation system.
The more general definition of abstract separation systems, together with related definitions and several easy facts about them can be found in \cite{ASS}.
We only need the separation systems $S_k$ that consist, for a universe $(U, \wedge ,\vee, ^*)$ of vertex separations and some $k \in \mathbb{N}$, of all separations of $U$ that have order less than $k$.
The partial order of $U$ induces a partial order of $S_k$.
In this context, for two elements $r$ and $s$ of $S_k$, the infimum $r \wedge s$ is always taken in the surrounding universe, not in the partial order of $S_k$.
Thus the infimum always exists, but is not always contained in $S_k$.
The same holds for $r \vee s$.

In order to obtain our results we want the universe of vertex separations to behave well with respect to limits of chains of separations that all have the same order.
In this case, we ask that if $k$ is a non-negative integer and $(A_i,B_i)_{i\in I}$ is a chain (with respect to $\leq$) of separations of order at most $k$, then the supremum of $(A_i,B_i)_{i\in I}$ in $U$ exists, has at most order $k$ and is of the form $(A\cup X, B)$ for $A=\bigcup_{i\in I} A_i$, $B=\bigcap_{i\in I}B_i$ and some set $X\subseteq V$.
We call a universe of vertex separations whose order function has this property \emph{limit-closed}.
Throughout this paper we fix a limit-closed universe of vertex separations $\mathcal{U}$ on vertex set $V$.
Note that a sub-universe of $\mathcal{U}(V)$ whose induced order function is still limit-closed need not be a limit-closed universe of vertex separations.

For a given graph, the universe of separations of that graph clearly is a limit-closed universe of vertex separations.
We will now show that the separations of a graph-like space form a limit-closed universe of vertex separations.
As this and the example in \cref{sec:example} are the only parts of the paper where we actually work with graph-like spaces, we do not give the (slightly involved) definition here but refer the interested reader to \cite{BCC:graphic_matroids} and \cite{GraphlikeContinuaandMenger}.
In a graph-like space, let a pair $(A,B)$ be a separation of that graph-like space if $A \cup B$ is the vertex set of the graph-like space and every arc meeting both $A$ and $B$ contains a vertex of $A \cap B$.
In light of the use of pseudo-arcs in \cite{BCC:graphic_matroids}, one might want to allow for different definitions of separations in graph-like spaces, for example that additionally every pseudo-arc meeting both $A$ and $B$ has to meet $A \cap B$.
Such other possible definitions can be captured for example by the notion of separation of a path space, to be found in \cite{heine2020path}.
The following proof that the universe of separations of a graph-like space also is a limit-closed universe of vertex separations also works for path spaces.

\begin{lem}\label{glsarelcd}
	Let $k\in\mathbb{N}$, let $G$ be a graph-like space and let $(A_i,B_i)_{i \in I}$ be a chain of separations of order $\leq k$.
	Let $A = \bigcup_{i \in I} A_i$ and $B = \bigcap_{i \in I} B_i$. Then there exists $X \subseteq V(\mathcal{P})$ such that $(A \cup X, B)$ is a separation of order at most $k$.    
\end{lem}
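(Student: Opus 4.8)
The plan is to let $X$ be the set of vertices of $B$ that lie in the closure of $A$ in $G$, that is, $X:=\overline{A}\cap B$, and to show this works. Two quick reductions come first. Since $A_i\cup B_i=V(G)$ for every $i$, any vertex outside every $A_i$ lies in every $B_i$, so $A\cup B=V(G)$; hence $(A\cup X)\cup B=V(G)$ automatically. And since the interior of each edge is open and meets $V(G)$ in nothing, $G\setminus V(G)$ is open and $V(G)$ is closed, so $\overline A\subseteq\overline{V(G)}=V(G)$; thus $X$ consists of vertices, and as $A\cap B\subseteq\overline A\cap B=X$ we get $(A\cup X)\cap B=X$. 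So it remains to verify that every arc meeting both $A\cup X$ and $B$ contains a vertex of $X$, and that $|X|\le k$.

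For the arc condition I would proceed by contradiction: suppose an arc $\alpha$ meets $A\cup X$ and $B$ but avoids $X$. As $\alpha$ avoids $X$ it must meet $A$ and cannot meet $A\cap B$, so choosing $a\in\alpha\cap A$ and $b\in\alpha\cap B$ we have $a\neq b$. Parametrise $\alpha$ as a homeomorphic image of $[0,1]$ and let $s<t$ be the parameters of $a$ and $b$ (swapping if necessary). The set of $r\in[s,t]$ with $\alpha(r)\in\overline A$ is closed, contains $s$, and avoids $t$ (since $b\notin\overline A$, else $b\in X$); let $\tau$ be its maximum and $v:=\alpha(\tau)\in\overline A\subseteq V(G)$. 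Now $v\notin B$ (else $v\in X$), so $v\in A$, say $v\in A_m$; moreover $v\notin B$ gives $v\notin B_j$ for all $j$ past some $m'$, while $b\notin A$ and $b\in B$ give $b\notin A_j$ and $b\in B_j$ for all $j$. Take $j$ above $m$ and $m'$. The subarc of $\alpha$ from $v$ to $b$ meets $A_j$ at $v$ and $B_j$ at $b$, so it contains a vertex $y$ of $A_j\cap B_j$; but $y\in A_j\subseteq\overline A$ forces $y$ to have parameter $\le\tau$, whereas $y\in B_j$ together with $v\notin B_j$ forces its parameter to exceed $\tau$. This contradiction establishes the arc condition.

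To bound $|X|$, suppose for contradiction that $X$ contains distinct vertices $v_0,\dots,v_k$. Using that $G$ is Hausdorff and that in a graph-like space every vertex has a neighbourhood basis of arcwise-connected open sets, choose pairwise disjoint arcwise-connected open neighbourhoods $W_\ell\ni v_\ell$. Since $v_\ell\in\overline A$, each $W_\ell$ contains some $a_\ell\in A$, say $a_\ell\in A_{i_\ell}$; pick $j$ above all the $i_\ell$. Inside $W_\ell$ there is an arc from $a_\ell$ to $v_\ell$, and it meets $A_j$ (at $a_\ell\in A_{i_\ell}\subseteq A_j$) and $B_j$ (at $v_\ell\in B\subseteq B_j$), hence contains a vertex of $A_j\cap B_j$ lying in $W_\ell$. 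As the $W_\ell$ are disjoint this produces $k+1$ distinct vertices of $A_j\cap B_j$, contradicting that $(A_j,B_j)$ has order at most $k$. Hence $|X|\le k$, and $(A\cup X,B)$ is the required separation.

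The step I expect to be most delicate is the arc condition, since it is where the topology of $G$ and the combinatorics of the chain interact most tightly; the other ingredients are routine once one has isolated the relevant properties of graph-like spaces (edge interiors open, Hausdorff, arcwise-connected neighbourhood bases at vertices). It seems worth stating these properties explicitly, both to keep the argument transparent and because, as remarked in the text, the same proof is meant to go through for path spaces.
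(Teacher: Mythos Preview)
Your verification of the arc condition is correct. The problem lies earlier: the choice $X=\overline A\cap B$ need not have at most $k$ elements, so the final paragraph cannot be rescued simply by a better proof of the bound. The hypothesis you invoke there---that every vertex of a graph-like space has a basis of arcwise-connected open neighbourhoods---is not part of the definition, and in fact fails badly enough to make your $X$ too large.

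Here is an explicit counterexample with $k=1$. Let $G$ have vertex set $\{a_n:n\in\mathbb N\}\cup\{w_1,w_2\}$ and \emph{no edges}. Declare each $\{a_n\}$ open, and give $w_j$ the basic open neighbourhoods $\{w_j\}\cup\{a_n:n\equiv j\pmod 2,\ n\ge N\}$. One checks the vertex-separation axiom for graph-like spaces directly, and since there are no edges there are no non-trivial arcs, so every pair $(C,D)$ with $C\cup D=V$ is a separation. Take $A_i=\{a_0,\dots,a_i\}$ and $B_i=\{a_i,a_{i+1},\dots\}\cup\{w_1,w_2\}$, a chain of separations of order~$1$. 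Then $A=\{a_n:n\in\mathbb N\}$ and $B=\{w_1,w_2\}$; since every neighbourhood of each $w_j$ meets $A$, your set $X=\overline A\cap B$ equals $\{w_1,w_2\}$, of size $2>k$. (The lemma itself is fine here: $X=\emptyset$ works.)

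The underlying phenomenon is that topological closure does not detect arc-accessibility: a vertex can lie in $\overline A$ without any arc from $A$ reaching it, and such vertices bloat $\overline A\cap B$ without constraining any separator. The paper's proof sidesteps this by never invoking the ambient topology. It fixes a \emph{maximal} family $P_1,\dots,P_l$ of pairwise disjoint arcs from $A$ to $B$; since for a sufficiently large index $i$ each $P_j$ must meet $A_i\cap B_i$, one has $l\le k$. Then $x_j$ is taken to be the supremum of $P_j\cap A$ in the linear order of $P_j$, and $X:=\{x_1,\dots,x_l\}$. Maximality of the family is what forces every arc from $A$ to $B$ to meet $X$. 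This uses only the internal order of arcs and the defining property of the separations, which is also why it transfers verbatim to path spaces.
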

\begin{proof}
Let $((A_i,B_i))_{i \in I}$ be an increasing chain of separations of order at most $k\in \mathbb{N}$.
Let $A$ be the union of all $A_i$ and $B$ the intersection of all $B_i$.
If there are $k+1$ disjoint arcs $P_1, \ldots, P_{k+1}$ that meet both $A$ and $B$, then there is some sufficiently large $i$ such that all $P_j$ meet $A_i$.
As all $P_j$ also meet $B_i$, this implies that all $P_j$ meet $A_i \cap B_i$, contradicting the fact that $(A_i,B_i)$ has order at most $k$.
So let $P_1, \ldots, P_l$ be a maximum set of disjoint arcs meeting both $A$ and $B$.
Without loss of generality assume that the first vertex of each $P_j$ is contained in $A$ while its last vertex is contained in $B$.
For each $j$ let $x_j$ be the supremum of $P_j \cap A$ in $P_j$, and let $X$ be the set of all $x_j$.
By maximality of the family of the $P_j$, every vertex in $P_j x_j$ is contained in $A + x_j$, every vertex in $x P_j$ is contained in $B + x_j$, and every vertex of $A \cap B$ has to be contained in some $P_j$ and to be equal to $x_j$.
In particular $A \cap B \subseteq X$ and $|(A \cap B) \cup X| = |X| = l \leq k$.

Assume for a contradiction that there is an arc $Q$ meeting both $A$ and $B$ but not $X$.
Without loss of generality, the first vertex of $Q$ is in $A$ and the last vertex of $Q$ is in $B$.
For each $j$ such that $Q$ meets $P_jx_j$ let $q_j$ be the supremum in $Q$ of $Q \cap P_jx_j$, and let $q$ be the maximum of all $q_j$ (or the first vertex of $Q$ if $Q$ meets no $P_j x_j$).
Similarly, for each $j$ such that $qQ$ meets $x_jP_j$ let $r_j$ be the infimum in $qQ$ of $qQ \cap x_jP_j$ and let $r$ be the minimum of all $r_j$ (or the last vertex of $Q$ if $qQ$ meets no $x_jP_j$).
Then $qQr$ has its first vertex in $A$, its last vertex in $B$ and is internally disjoint from the $P_j$.
By maximality of the family $(P_j)$, either all inner vertices of $qQr$ are contained in $B \setminus A$, or $qQr$ has an inner vertex and all inner vertices are contained in $A \setminus B$.
Consider the case that all inner vertices of $qQr$ are contained in $B \setminus A$, and let $i$ be an index such that $q \in A_i$.
Then for all $i' \geq i$, $q \in A_{i'}$ and $r \in B_{i'}$ while all vertices other than $q$ are contained in $B_{i'} \setminus A_{i'}$.
As $qQr$ has to meet all $A_{i'} \cap B_{i'}$ with $i' \geq i$, $q$ is contained in all $B_{i'}$ and thus in $A \cap B$, a contradiction.
Next consider the case that $qQr$ has an inner vertex $y$ and all inner vertices are contained in $A \setminus B$.
Then $yQr$ starts in $A$ and ends in $B$ and, by maximality of the family $(P_j)$, $r$ is contained in some $P_j$.
As $Q$ avoids $X$, this last vertex comes later than $x_j$ in $P_j$.
So by the maximality of the family $(P_j)$, $P_jq$ without its last vertex does not contain a subarc meeting both $A$ and $B$, hence $x_j \in A \setminus B$ and $r$ is the successor of $x_j$ in $P_j$.
But then $x_j \in A_i \setminus B_i$ for some index $i$, and $r \in B_i$.
Thus the edge $x_j r$ has one end vertex in $A_i \cap B_i$, implying $r \in A_i \subseteq A$ and thus $r \in A \cap B$, contradicting the assumption that $Q$ be disjoint from $X$.
So $Q$ does not exist.
\end{proof}

In the situation of \cref{glsarelcd}, for any two separations of the form $(A\cup X,B)$ and $(A\cup Y,B)$, the infimum $(A\cup (X\cap Y),B)$ is also a separation of the graph-like space.
Thus there is a smallest $X$ for which $(A \cup X, B)$ is a separation, and this $X$ satisfies that $(A \cup X, B)$ is the supremum of the chain $(A_i,B_i)_{i\in I}$ in the universe of separations of the graph-like space.
In the special case of the separations of a graph, the set $X$ is always empty.
In graph-like spaces, that is not necessarily the case.

\subsection{Cyclic orders}\label{sec:cycord}

\begin{defn}\cite{Novak84}
	A set of triples $Z$ in $S\times S\times S$ is a \emph{cyclic order} of the set $S$ if it has the following four properties:
	\begin{itemize}
		\item (cyclic) $\forall a,b,c\in S:(a,b,c)\in Z\Rightarrow (b,c,a)\in Z$
		\item (antisymmetric) $\forall a,b,c\in S:(a,b,c)\in Z\Rightarrow (c,b,a)\notin Z$
		\item (linear) $\forall a,b,c\in S \text{ pairwise distinct}:(a,b,c)\notin Z\Rightarrow (c,b,a)\in Z$
		\item (transitive) $\forall a,b,c,d\in S:(a,b,c)\in Z\wedge (a,c,d)\in Z\Rightarrow (a,b,d)\in Z$.
	\end{itemize}
\end{defn}
So formally a cyclic order is a different set than its underlying ground set, but in this paper (except in the appendix) this distinction is not made.
Note that what is called a cyclic order here is sometimes (also in \cite{Novak84}) called a linear cyclic order or a complete cyclic order, with a cyclic order not necessarily being linear.
The distinction is not made here as all cyclic orders under consideration are linear.

For two elements $a$ and $b$ of $S$, the set of elements $c\in S$ satisfying $(a,c,b)\in Z$ is denoted by $\mathopen]a,b\mathclose[$.
The sets $\mathopen]a,b\mathclose[+a$, $\mathopen]a,b\mathclose[+b$ and $\mathopen]a,b\mathclose[+a+b$ are denoted by $[a,b\mathclose[$, $\mathopen]a,b]$ and $[a,b]$ respectively. When necessary to resolve ambiguities, we may add the cyclic order in which these are taken as a subscript. We call a subset $I$ of $S$ is an \emph{interval} if and only if for all $s,t\in I$ either $[s,t]\subseteq I$ or $[t,s]\subseteq I$.
Clearly, subsets of $S$ of the form $\mathopen]a,b\mathclose[$, $\mathopen]a,b]$, $[a,b\mathclose[$ or $[a,b]$ are all intervals of $Z$.
We call an interval \emph{non-trivial} if it is neither the empty set nor all of $S$.
Similarly to linear orders, an element $s\neq a$ is the \emph{successor} of $a$ in $S$ if $b\notin [a,s]$ for all for all other elements $b$.
Also, an element $p\neq a$ is the \emph{predecessor} of $a$ if $b\notin [p,a]$ for all other elements $b$.
A \emph{neighbor} of $a$ is an element that is a predecessor or successor of $a$.

\begin{rem}\cite[Lemma 1.4]{Novak82}
	The definition of a cyclic order implies that if $Z$ is a cyclic order on a set $S$ and $(s,s',t)$ is an element of $Z$ then $s$, $s'$ and $t$ are pairwise distinct.
\end{rem}

We will use cyclically ordered sets of a special kind as the index set of our $k$-pseudoflowers:

\begin{defn}\label{def:completionofcyclicorder}
Let $I$ be a cyclically ordered set with at least two elements. A cyclically ordered set $C$ is a \emph{cycle completion} of $I$ if $I$ is a subset of $C$, the cyclic order on $I$ is the one induced by $C$, and every non-trivial interval of $I$ can uniquely be written as $[v,w]\cap I$ for elements $v,w$ of $C\setminus I$.
\end{defn}

Intuitively, the cycle completion of a cyclic order arises from that cyclic order as follows:
Let $I$ be some cyclically ordered set.
If one envisions $I$ as boxes arranged in a circle according to the cyclic order (see also \cref{fig:cyccomp}), then it is possible to cut up the circle at two places without cutting through boxes, thereby dividing the set of boxes into two intervals.
If $I$ is finite, then every one of these ``cut points'' is between two boxes.
So $I$ and the set of possible cut points form together another cyclically ordered set.
If $I$ is infinite, then not every cut point is between two boxes, but still $I$ and the set of possible cut points form together a cyclically ordered set, the cycle completion of $I$.

Cycle completions can be formalised in several ways, and doing so via cuts, as is done in the appendix, is only one possibility of many.

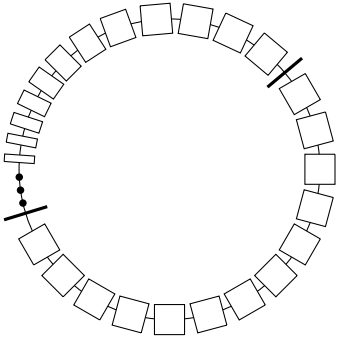
\begin{figure}
\centering
\begin{tikzpicture}
	\draw (0,0) ellipse [radius=2cm];
	\foreach \angle / \width in {
		0/1, 15/1, 30/1,
		50/1, 65/1, 80/1, 95/1,
		110/0.9, 123/0.8, 135/0.7, 145/0.6, 154/0.5, 162/0.4, 169/0.3, 176/0.25,
		345/1, 330/1, 315/1, 300/1, 285/1, 270/1, 255/1, 240/1, 225/1, 210/1
	}
	\path [rotate=\angle,xshift=2cm,yscale=\width, draw=black, fill=white] (0,0) +(0.2,0.2) -- +(0.2,-0.2) -- +(-0.2,-0.2) -- +(-0.2,0.2) -- +(0.2,0.2);
	\foreach \angle in {183, 188, 193}
	\draw (\angle:2cm) node [fill,circle, inner sep=1pt] {};
	\draw [very thick] (40:1.7) -- (40:2.3) (197:1.7) -- (197:2.3);
\end{tikzpicture}\caption{A set (whose elements are indicated by boxes) which is cyclically ordered (indicated by the arrangement of the boxes on a circle) and two ``cutting points'' dividing the cyclically ordered set into two intervals.}\label{fig:cyccomp}
\end{figure}

The cycle completion has several properties which we will use throughout the article and which are intuitively clear.
Also, although we did not find the exact construction of the cycle completion described in the literature, several very similar constructions are.
For these reasons, we will only state the properties needed in this section, and the proofs can be found in the appendix, together with a description of how to obtain the cycle completion from similar constructions.

The first of the facts about cycle completions which we need is that cycle completions exist and are essentially unique.
In order to compare different cyclic orders, we will use \emph{monotone} maps, that is maps $f$ between cyclic orders where $f(v)\in \mathopen]f(u),f(w)\mathclose[$ implies $v\in \mathopen]u,w\mathclose[$, and \emph{isomorphisms of cyclic orders}, bijective maps $f$ where $f(v)\in \mathopen]f(u),f(w)\mathclose[$ is equivalent to $v\in \mathopen]u,w\mathclose[$.
Note that if the image of some monotone map has at least three elements, then the preimage of every interval is also an interval.

\begin{lem}[\cref{uniquecyccomp,fromThomtoTcychom}]\label{existencecyccomp}
For every cyclically ordered set $I$ with at least two elements there is a cycle completion.
If $C$ and $C'$ are two cycle completions of $I$, then there is an isomorphism of cyclic orders $F:C\rightarrow C'$ such that the restriction of $F$ to $I$ is the identity.
\end{lem}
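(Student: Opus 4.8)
The plan is to establish existence by a direct construction and uniqueness by pinning down $C\setminus I$ intrinsically in terms of $(I,Z)$.

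For existence I would fix a base point $a_0\in I$ and turn $Z$ into a linear order $<$ on $I\setminus\{a_0\}$ by declaring $x<y$ exactly when $(a_0,x,y)\in Z$; the cyclic-order axioms (transitivity with first coordinate $a_0$, together with linearity and antisymmetry) give that $<$ is indeed a linear order. I would then form the set $\mathcal{D}$ of \emph{all} Dedekind cuts $(A,B)$ of $(I\setminus\{a_0\},<)$ --- not just the gaps, since an interval of $I$ that is half-open at an element of $I$ also needs an endpoint in $C\setminus I$ --- and set $C:=I\cup\mathcal{D}$ (disjoint union) with the evident cyclic order: going around $C$ one meets $a_0$, then the cut $(\emptyset,I\setminus\{a_0\})$, then the elements of $I\setminus\{a_0\}$ in their $<$-order with each cut $(A,B)$ inserted between $A$ and $B$, then the cut $(I\setminus\{a_0\},\emptyset)$, and back to $a_0$. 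It is then routine to verify the four cyclic-order axioms for $C$, that the cyclic order $C$ induces on $I$ is $Z$, and --- the substantive point --- that every non-trivial interval $J$ of $I$ equals $[v,w]\cap I$ for a unique pair $v,w\in\mathcal{D}$: after a short case distinction on whether $a_0\in J$, the only candidates for $v$ and $w$ are the two Dedekind cuts bounding $J$, and both lie in $\mathcal{D}$ precisely because we took all Dedekind cuts and not only the gaps.

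For uniqueness, let $C$ be an arbitrary cycle completion of $I$. The unique-representation hypothesis lets me define, for each non-trivial interval $J$ of $I$, its endpoints $\ell_C(J),\rho_C(J)\in C\setminus I$. I would then prove that the map $v\mapsto{<_v}$, where $x<_v y$ means $(v,x,y)\in Z_C$, is a bijection from $C\setminus I$ onto the set of linearisations of $Z$, a set manifestly depending only on $(I,Z)$. That each $<_v$ is a linearisation of $Z$ is a short deduction from the cyclic-order axioms; injectivity and surjectivity are where the unique-representation hypothesis enters --- two distinct new points with the same associated linear order, or the absence of a new point inducing a prescribed linearisation $<$, would each exhibit some non-trivial interval of $I$ (a suitable non-trivial initial segment of $<$, which exists whenever $|I|\geq 2$) with two different representations as $[v',w']\cap I$. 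Given two cycle completions $C,C'$, I would then define $F\colon C\to C'$ to be the identity on $I$ and, on $C\setminus I$, the bijection obtained by composing $v\mapsto{<_v}$ for $C$ with the inverse of $v'\mapsto{<_{v'}}$ for $C'$. It remains to check that $F$ transports $Z_C$ to $Z_{C'}$, which splits according to how many of the three arguments lie in $I$; in the mixed cases one re-expresses ``$v$ lies in the arc between $x$ and $y$'' in terms of $v$ being an endpoint of an appropriate interval of $I$, which $F$ preserves by construction.

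I expect the uniqueness half to be the main obstacle, and within it the claim that the unique-representation condition forces $C\setminus I$ to be \emph{exactly} the set of cuts: this is the only place where the hypothesis must be used in an essential way rather than merely the cyclic-order axioms, and the argument has to treat open, half-open and ``wrap-around'' intervals in the infinite case uniformly, including cuts that are limits from one or both sides. The remaining verifications --- the axioms for the constructed $C$, and the compatibility of $F$ with $Z_C$ --- are routine but voluminous; the cleanest organisation is probably to prove only that an arbitrary cycle completion is isomorphic over $I$ to the canonical model built in the existence step, and to obtain the general statement by composition.
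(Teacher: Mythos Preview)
Your proposal is correct. The existence half is essentially the paper's construction: the paper fixes a cut $L$ of the cyclic order (rather than deleting a base point), forms the linear order $D(L)$ on $S$ together with all initial segments of $L$, and passes to the induced cyclic order; your Dedekind cuts of $I\setminus\{a_0\}$ are in obvious bijection with the initial segments of the cut of $Z$ with least element $a_0$, so this is the same object presented slightly differently. Your observation that one must keep \emph{all} cuts, not only gaps, is exactly the point the paper makes when it notes that every $s\in S$ acquires both a predecessor and a successor in $D(L)$.

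For uniqueness you take a genuinely different route. The paper does not identify $C\setminus I$ with the set of cuts directly; instead it proves a general extension lemma (\cref{fromThomtoTcychom}): any surjective monotone map $f\colon I\to I'$ between cyclically ordered sets extends uniquely to a surjective monotone map between any chosen cycle completions. Specialising to $f=\mathrm{id}_I$ and invoking uniqueness of the extension on both composites gives the isomorphism. Your approach --- showing that $v\mapsto{<_v}$ is a bijection from $C\setminus I$ onto the set of cuts of $Z$ --- is exactly the statement that any cycle completion is canonically identified with the paper's explicit model $S\cup\mathcal{V}$, and your sketch of injectivity and surjectivity via the unique-representation clause can be made rigorous (for injectivity one first shows, as in the paper's \cref{distinctcutsboundinterval}, that $[v,w]\cap I$ is non-trivial for distinct $v,w$; for surjectivity one checks that two cuts of $Z$ sharing a non-trivial initial segment coincide, which follows from Nov\'ak's structure theorem for pairs of cuts). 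The trade-off is that the paper's detour through the extension lemma is what it actually needs later (it is \cref{cyclichoms1}), whereas your argument is more self-contained for the bare uniqueness statement but would not by itself deliver that lemma.
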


The fact that the cycle completion of a cyclically ordered set $I$ is unique up to isomorphism justifies calling it ``the'' cycle completion of $I$ and denoting it as $C(I)$.
We will also call the elements of $C(I)\setminus I$ \emph{cutpoints}, a terminology partly inspired by the term cut (see appendix).

\begin{lem}[\cref{distinctcutsboundinterval}]\label{distinctvertices}
For a cyclically ordered set $I$ with at least two elements let $v$ and $w$ be distinct elements of $C(I)$.
Then $[v,w]\cap I$ is a non-trivial interval of $I$.
\end{lem}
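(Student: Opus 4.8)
The plan is to unpack the definition of cycle completion and use the two ways in which a pair of distinct elements $v,w\in C(I)$ can sit relative to $I$. First I would recall what must be shown: $[v,w]\cap I$ is an interval of $I$ that is neither empty nor all of $I$. The ``interval'' part is essentially automatic, since $[v,w]$ is an interval of $C(I)$ and the cyclic order on $I$ is the one induced from $C(I)$; intersecting an interval of the big cyclic order with $I$ yields an interval of $I$ (this is the monotone-restriction remark right before the statement, applied to the inclusion $I\hookrightarrow C(I)$, provided $I$ has at least three elements — I would handle the degenerate case $|I|=2$ separately by a direct check). So the real content is non-triviality: $[v,w]\cap I\neq\emptyset$ and $[v,w]\cap I\neq I$.

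For non-triviality I would split into cases according to whether $v$ or $w$ lies in $I$ or in the set of cutpoints $C(I)\setminus I$. If both $v,w\in I$, then $v\in[v,w]\cap I$ so it is nonempty, and since $v\neq w$ we have $w\notin\,]v,w[\,$, and by linearity/antisymmetry there is an element of $I$ strictly on the other side (again using $|I|\ge 3$, or handling $|I|=2$ by hand), so $[v,w]\cap I\neq I$. The interesting case is when at least one of $v,w$, say $v$, is a cutpoint. Here I invoke the defining property of a cycle completion: every non-trivial interval of $I$ is of the form $[a,b]\cap I$ for cutpoints $a,b$, and conversely I want to know such sets are non-trivial. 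The cleanest route is: the complementary arc $[w,v]\cap I$ is also of interval shape, and $[v,w]\cap I$ together with $[w,v]\cap I$ cover $I$ and overlap only in $\{v,w\}\cap I$; so if $[v,w]\cap I$ were empty then $[w,v]\cap I=I$, meaning $I$ itself is written as $[w,v]\cap I$ with $w,v$ on its ``boundary'', and I would derive a contradiction with the uniqueness clause of the cycle completion definition (a trivial interval is not among the intervals that get a unique cutpoint representation) — or more simply, with the fact that cutpoints are genuinely ``between'' elements of $I$, which is part of what the appendix construction guarantees and which I may cite.

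The step I expect to be the main obstacle is pinning down exactly this last point rigorously from the axiomatic definition alone, namely that a cutpoint $v$ cannot be ``degenerate'' — that there really are elements of $I$ immediately on both sides of it, so that neither $[v,w]\cap I$ nor $[w,v]\cap I$ collapses. The definition of cycle completion only directly gives representations of \emph{non-trivial} intervals; squeezing out of it the statement that \emph{every} pair of distinct elements of $C(I)$ bounds a non-trivial interval requires a small argument, and I suspect the intended proof in the appendix sets up cutpoints as Dedekind-style cuts of the cyclic order, for which this is transparent. I would therefore either (a) phrase the argument to lean on the cut description from the appendix, or (b) prove a short preliminary sublemma: for any cutpoint $v$ and any $u\in I$, the sets $[v,u]\cap I$ and $[u,v]\cap I$ are both nonempty — using uniqueness of the $[v',w']\cap I$ representation to rule out two cutpoints collapsing to the same ``position'' and hence to rule out an empty side. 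Once that sublemma is in hand, the general case follows by taking an arbitrary $u\in I$, noting $u$ lies in one of $[v,w]$ or $[w,v]$ in $C(I)$, and transferring nonemptiness across using transitivity of the cyclic order; the ``not all of $I$'' direction is then symmetric.
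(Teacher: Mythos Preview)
Your plan has a real gap in the only case that matters, and it also spends effort on a case that is not part of the intended statement.

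First, the statement. The appendix version that this lemma cites is stated only for $v,w\in C(I)\setminus I$, i.e.\ for cutpoints. The version with arbitrary $v,w\in C(I)$ is in fact false: if $I=\{a,b,c\}$ with $(a,b,c)$ in the cyclic order and you take $v=a$, $w=c$, then $[v,w]_{C(I)}\cap I=\{a,b,c\}=I$, which is trivial. So your case analysis for $v,w\in I$ is chasing a claim that does not hold; restrict to cutpoints from the outset.

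For the cutpoint case you correctly locate the difficulty, but neither of your two proposed fixes closes it. Your first idea --- that ``a trivial interval is not among the intervals that get a cutpoint representation, contradiction'' --- is not supplied by the definition: the definition only says every \emph{non-trivial} interval has a \emph{unique} representation $[a,b]\cap I$ with $a,b$ cutpoints; it is silent on whether trivial intervals can also be written that way. Your sublemma in route~(b), that $[v,u]\cap I$ and $[u,v]\cap I$ are both nonempty for $u\in I$, is vacuously true (both closed intervals contain $u$) and therefore gives no leverage towards $[v,w]\cap I\neq\emptyset$ when \emph{both} endpoints are cutpoints; picking an auxiliary $u\in I$ only tells you that one of $[v,w]\cap I$ and $[w,v]\cap I$ is nonempty, which you already knew.

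The idea you are missing, and what the paper actually does, is to bootstrap from the existence clause to the uniqueness clause. Since $|I|\geq 2$ there is a non-trivial interval of $I$; by definition of cycle completion it equals $[x,y]\cap I$ for some cutpoints $x,y$. Now compare an arbitrary cutpoint $v$ to this fixed $y$: if $[v,y]\cap I=\emptyset$, then (because $[x,y]\cap I$ is nonempty) necessarily $x\notin[v,y]$, hence $v\in[x,y]$, and then $[x,v]\cap I=[x,y]\cap I$ --- the same non-trivial interval with two distinct cutpoint representations, contradicting uniqueness. The case $[y,v]\cap I=\emptyset$ is symmetric. So $[v,y]\cap I$ is non-trivial whenever $v\neq y$ (and $[v,x]\cap I$ is non-trivial if $v=y$). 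Running the same argument once more, now with $v$ in the role of $y$, yields that $[v,w]\cap I$ is non-trivial for your given $w\neq v$.
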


\begin{lem}[\cref{fromThomtoTcychom}]\label{cyclichoms1}
Let $I$ and $I'$ be cyclically ordered sets with at least two elements and let $f:I'\rightarrow I$ be a surjective monotone map. Then there is a unique surjective monotone map $F:C(I')\rightarrow C(I)$ whose restriction to $I'$ is $f$.
\end{lem}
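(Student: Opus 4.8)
The plan is to pass to the cut description underlying \cref{existencecyccomp}: for a cyclically ordered set $J$ with at least two elements I identify each cutpoint $c$ of $C(J)$ with the family $\mathcal{I}_c$ of those non-trivial intervals of $J$ having $c$ as their ``left endpoint'', i.e.\ those of the form $[c,d]_{C(J)}\cap J$ for some $d\in C(J)\setminus J$. By \cref{def:completionofcyclicorder} and \cref{distinctvertices} these are nonempty families of non-trivial intervals, each cutpoint is recovered from its family (two cutpoints with the same left-endpoint family would, by the uniqueness clause of \cref{def:completionofcyclicorder}, bound literally the same intervals and hence coincide), and conversely the possible families are exactly the order-theoretic ``cuts'' of $J$. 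I would first record this correspondence, possibly together with the analogous right-endpoint statement, as a short combinatorial lemma about cycle completions.

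Next I would use it to build $F$. First dispose of the case $|I|=2$ by hand (a finite check). For $|I|\ge 3$, surjectivity and monotonicity of $f$ give, by the note preceding \cref{existencecyccomp}, that preimages of intervals of $I$ are intervals of $I'$; a short argument adds that $f$ maps non-trivial intervals of $I'$ onto intervals of $I$ and that the fibres of $f$ are intervals of $I'$. Now for a cutpoint $c$ of $C(I')$ the intervals in $\mathcal{I}_c$ form a chain under inclusion (removing $c$ linearly orders $C(I')$), so their $f$-images form a chain of intervals of $I$. If none of these images is all of $I$, their collection is itself a left-endpoint cut of $I$ and so defines a cutpoint $F(c)$ of $C(I)$; otherwise $f$ is eventually constant ``from the left at $c$'' with some value $y\in I$ (this is where interval fibres are used) and I set $F(c)=y\in C(I)$. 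Setting $F|_{I'}=f$, one then checks: $F$ is well defined; $F$ is monotone, which amounts to $F$ respecting inclusion of the left-endpoint families and is inherited directly from $f$ being monotone; and $F$ is surjective, since by \cref{distinctvertices} every cutpoint of $C(I)$ corresponds to a non-trivial interval $K$ of $I$, and $f^{-1}(K)$ is a non-trivial interval of $I'$ whose endpoints in $C(I')$ map to the endpoints of $K$. Uniqueness is then immediate: the characterisation shows $F(c)$ is determined by the family $\{f(L):L\in\mathcal{I}_c\}$, hence depends only on $f$.

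The main obstacle is the bookkeeping around the two regimes: a cutpoint of $C(I')$ may map either to a cutpoint of $C(I)$ or to an old point of $I$, because $f$ can collapse a whole interval of $I'$ to a point and thereby ``fill in'' the gap at $c$. Verifying monotonicity of $F$ uniformly across this split, and checking that the left-endpoint-family description is genuinely insensitive to the choice of the representing cutpoint $d$, are the delicate points; the facts that $f$ preserves intervals in both directions and has interval fibres are exactly what is needed to push these through. Everything else — the finite case $|I|=2$, surjectivity, and uniqueness — is routine once this is in place.
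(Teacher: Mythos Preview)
Your overall strategy---identifying cutpoints with families of intervals and pushing these forward along $f$---is close in spirit to the paper's approach, but your case split for defining $F$ on cutpoints is wrong, and the resulting map need not be surjective. You send a cutpoint $v$ of $C(I')$ to an element of $I$ whenever some $f(L)$ with $L\in\mathcal{I}_v$ equals $I$. This is triggered far too often. Take $I'=\{1,\dots,6\}$ and $I=\{a,b,e\}$ cyclically, with $f$ sending $1,2\mapsto a$, $3,4\mapsto b$, $5,6\mapsto e$. For the cutpoint $v$ between $2$ and $3$, the largest interval in $\mathcal{I}_v$ is $\{3,4,5,6,1\}$, whose image is all of $I$, so your rule sets $F(v)\in I$. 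The unique correct extension, however, sends $v$ to the cutpoint of $C(I)$ between $a$ and $b$; with your assignment here (and the analogous ones at the other two fibre boundaries) $F$ hits no cutpoint of $C(I)$ at all.

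The paper's dichotomy is genuinely two-sided: $F(v)\in I$ exactly when there exist $s,t\in I'$ with $v\in[s,t]$ and $f$ constant on $[s,t]\cap I'$. Otherwise one picks $s,t$ on either side of $v$ with $f(s)\neq f(t)$; then $f([s,v]\cap I')$ and $f([v,t]\cap I')$ turn out to be \emph{disjoint} non-trivial intervals of $I$---this disjointness, not your ``eventually constant'' claim, is where the interval-fibre property is actually used---and $F(v)$ is the cutpoint of $C(I)$ separating them. Your one-sided family $\mathcal{I}_v$ cannot by itself distinguish a cutpoint sitting on a fibre boundary from one lying in a fibre's interior; you would need the analogous right-endpoint family as well and a comparison of the two. (Separately, your ``finite check'' for $|I|=2$ cannot succeed: the monotonicity hypothesis on $f$ is vacuous there, and for instance a map alternating between two values on a four-element $I'$ has no monotone surjective extension to the completions. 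This is a defect of the stated hypothesis rather than of your method, and the paper's own proof shares it.)
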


In particular this implies that the isomorphism from \cref{existencecyccomp} is unique.

\begin{lem}[\cref{fromcyccomphomtoalternatives}]\label{cyclichoms2}
Let $I$ and $I'$ be cyclically ordered sets with at least two elements. Let $F:C(I')\rightarrow C(I)$ be a surjective monotone map with $F(I')\subseteq I$. Then for every $v\in C(I)\setminus I$ there is an element $w$ of $C(I')$ such that $F^{-1}(v)=\{w\}$. This element $w$ satisfies $w\in C(I')\setminus I'$. Also $F(I')=I$.
\end{lem}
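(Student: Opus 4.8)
The plan is to prove the two assertions in order: first that $F^{-1}(v)$ is a single element of $C(I')\setminus I'$ for every $v\in C(I)\setminus I$, and then to deduce $F(I')=I$ from this.

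For the first assertion I would begin by noting that, since $|I|\geq 2$, every singleton $\{a\}$ with $a\in I$ is a non-trivial interval of $I$, so $C(I)\setminus I$ is non-empty and $|C(I)|\geq 3$. As $F$ is surjective, its image then has at least three elements, so the $F$-preimage of every interval of $C(I)$ — and in particular of every singleton — is an interval of $C(I')$. Now fix $v\in C(I)\setminus I$. Because $F(I')\subseteq I$ and $v\notin I$, the interval $F^{-1}(v)$ is disjoint from $I'$, so $F^{-1}(v)\subseteq C(I')\setminus I'$. If $F^{-1}(v)$ had two distinct elements $w_1$ and $w_2$, then, being an interval, it would contain $[w_1,w_2]_{C(I')}$ or $[w_2,w_1]_{C(I')}$; by \cref{distinctvertices} applied to $C(I')$, the set $[w_1,w_2]_{C(I')}\cap I'$ (and likewise $[w_2,w_1]_{C(I')}\cap I'$) is a non-trivial, hence non-empty, interval of $I'$, so $F^{-1}(v)$ would contain some $x\in I'$, whence $v=F(x)\in I$ — a contradiction. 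Hence $F^{-1}(v)=\{w\}$ for a single $w$, which lies in $C(I')\setminus I'$ as already observed.

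For the second assertion I would argue by contradiction, assuming that some $a\in I$ lies outside $F(I')$. Since $|I|\geq 2$, the set $\{a\}$ is a non-trivial interval of $I$, so $\{a\}=[v,w]_{C(I)}\cap I$ for some $v,w\in C(I)\setminus I$, which are necessarily distinct; since $v,w\notin I$ this yields $\mathopen]v,w\mathclose[_{C(I)}\cap I=\{a\}$. By the first assertion $F^{-1}(v)=\{v'\}$ and $F^{-1}(w)=\{w'\}$ with $v',w'\in C(I')\setminus I'$, and $v'\neq w'$ as $v\neq w$. By \cref{distinctvertices}, $[v',w']_{C(I')}\cap I'$ is a non-trivial, hence non-empty, interval of $I'$, so I can pick $x$ in it, and then $x\in\mathopen]v',w'\mathclose[_{C(I')}$ because $v',w'\notin I'$. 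Now $F(x)\in I$ since $x\in I'$; $F(x)\notin\{v,w\}$ since $x\notin\{v',w'\}$ while the preimages of $v$ and $w$ are exactly $\{v'\}$ and $\{w'\}$; and $F(x)\notin\mathopen]w,v\mathclose[_{C(I)}$ by applying the monotonicity of $F$ to the fact that $x\notin\mathopen]w',v'\mathclose[_{C(I')}$. Since $C(I)$ is the disjoint union of $\mathopen]v,w\mathclose[_{C(I)}$, $\{v,w\}$ and $\mathopen]w,v\mathclose[_{C(I)}$, this forces $F(x)\in\mathopen]v,w\mathclose[_{C(I)}\cap I=\{a\}$, so $a=F(x)\in F(I')$, a contradiction.

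The step I expect to require the most care is the last one: knowing only that $x$ lies on one of the two arcs of $C(I')$ between $v'$ and $w'$ does not by itself locate $F(x)$. One must combine this — via the contrapositive of monotonicity — with the facts that $v$ and $w$ have singleton preimages (so that $F(x)$ cannot slip onto the cutpoints $v$ or $w$) and that the corresponding arc of $C(I)$ meets $I$ only in $a$. This is precisely why the singleton-preimage statement has to be proved first. Everything else — the bookkeeping with intervals, the splitting of a linear cyclic order into the two arcs between two of its points, and the pull-back of intervals along a monotone map with image of size at least three — should be routine from the definitions and from \cref{distinctvertices}.
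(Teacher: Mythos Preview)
Your proposal is correct and follows essentially the same approach as the paper. Both proofs show that $F^{-1}(v)$ is an interval of $C(I')$ disjoint from $I'$ and hence a singleton (the paper phrases this as ``every interval of $C(I')$ with at least two elements contains an element of $I'$''), and for the second assertion both pick an element $x\in I'$ between the two cutpoint preimages and use monotonicity to force $F(x)$ onto the target element of $I$; the paper does this directly via the predecessor and successor of $s'$ in $C(I)$, whereas you use the defining property $\{a\}=[v,w]\cap I$ of the cycle completion, which amounts to the same thing.
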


So for every surjective monotone map $F:C(I')\rightarrow C(I)$ which satisfies $F(I')=I$ there is an injective monotone map $\hat{f}: C(I)\setminus I \rightarrow C(I')\setminus I'$ such that for all $v\in C(I)\setminus I$ the equation $F^{-1}(v)=\{\hat{f}(v)\}$ holds.

\begin{lem}[\cref{preimageofintervalbounds}]\label{cyclichoms3}
Let $F:C(I')\rightarrow C(I)$ be a surjective monotone map with $F(I')= I$, and $\hat{f}:C(I)\setminus I\rightarrow C(I')\setminus I'$ the injective monotone map with $F\circ \hat{f}(v)=v$ for all $v\in C(I)\setminus I$. Assume that $I$ has at least two elements. Then for all $v,w\in C(I)\setminus I$, $F^{-1}([v,w])=[\hat{f}(v),\hat{f}(w)]$ and $F^{-1}(\mathopen]v,w\mathclose[)=\mathopen]\hat{f}(v),\hat{f}(w)\mathclose[$.
\end{lem}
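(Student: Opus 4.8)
We may assume $v\neq w$, since for $v=w$ both asserted identities reduce to $F^{-1}(v)=\{\hat f(v)\}$, which holds by \cref{cyclichoms2}. Write $v':=\hat f(v)$ and $w':=\hat f(w)$. By \cref{cyclichoms2} the fibres over cutpoints are singletons, so $F^{-1}(v)=\{v'\}$ and $F^{-1}(w)=\{w'\}$, and in particular $v'\neq w'$. The plan is to prove the two ``open'' inclusions $\mathopen]v',w'\mathclose[\subseteq F^{-1}(\mathopen]v,w\mathclose[)$ and $\mathopen]w',v'\mathclose[\subseteq F^{-1}(\mathopen]w,v\mathclose[)$, then upgrade both to equalities by a partition argument, and finally read off the statements about $[v,w]$ by adjoining the (already known) fibres over $v$ and $w$.

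For the first inclusion I would take $x\in\mathopen]v',w'\mathclose[$ and show $F(x)\in\mathopen]v,w\mathclose[$. Since $x\neq v'$ (triples in a cyclic order have pairwise distinct entries) and $F^{-1}(v)=\{v'\}$, we get $F(x)\neq v$; likewise $F(x)\neq w$. Hence $F(x),v,w$ are pairwise distinct, so by linearity $F(x)$ lies in $\mathopen]v,w\mathclose[$ or in $\mathopen]w,v\mathclose[=\mathopen]F(w'),F(v')\mathclose[$. In the latter case monotonicity of $F$, applied to the triple $(w',x,v')$, would give $x\in\mathopen]w',v'\mathclose[$, contradicting $x\in\mathopen]v',w'\mathclose[$ by antisymmetry. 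So $F(x)\in\mathopen]v,w\mathclose[$, which proves the first inclusion; swapping the roles of $v$ and $w$ gives the second.

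To finish, note that $v'\neq w'$ makes $\mathopen]v',w'\mathclose[$, $\{v',w'\}$, $\mathopen]w',v'\mathclose[$ a partition of $C(I')$ (linearity gives the covering, antisymmetry together with pairwise-distinctness gives the disjointness), and similarly $\mathopen]v,w\mathclose[$, $\{v,w\}$, $\mathopen]w,v\mathclose[$ partition $C(I)$. Pulling the latter partition back along $F$ and using $F^{-1}(\{v,w\})=\{v',w'\}$ shows that $F^{-1}(\mathopen]v,w\mathclose[)$, $\{v',w'\}$, $F^{-1}(\mathopen]w,v\mathclose[)$ is again a partition of $C(I')$. Comparing the two partitions: any element of $F^{-1}(\mathopen]v,w\mathclose[)$ lying outside $\mathopen]v',w'\mathclose[$ would have to lie in $\{v',w'\}$ or in $\mathopen]w',v'\mathclose[\subseteq F^{-1}(\mathopen]w,v\mathclose[)$, both of which are disjoint from $F^{-1}(\mathopen]v,w\mathclose[)$. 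Hence $F^{-1}(\mathopen]v,w\mathclose[)=\mathopen]v',w'\mathclose[$, and adjoining $F^{-1}(\{v,w\})=\{v',w'\}$ yields $F^{-1}([v,w])=[v',w']$.

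The argument is essentially formal; the one point that needs care is the direction in which monotonicity is invoked. Monotonicity of $F$ transports betweenness only backward, from $C(I)$ to $C(I')$, so the forward conclusion ``$F(x)$ lies between $v$ and $w$'' cannot be obtained directly and is instead extracted from linearity after the opposite orientation has been ruled out. The hypothesis $v,w\notin I$ enters only through \cref{cyclichoms2}, to guarantee that the fibres $F^{-1}(v)$ and $F^{-1}(w)$ are singletons — which is precisely what keeps the three blocks of the pulled-back partition pairwise disjoint.
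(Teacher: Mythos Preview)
Your proof is correct and follows essentially the same approach as the paper's (see \cref{preimageofintervalbounds} in the appendix): both arguments use that the fibres over cutpoints are singletons to ensure $F(x)$ is distinct from $v$ and $w$, then combine linearity with the backward direction of monotonicity to obtain the forward implication. You have simply unfolded the paper's one-line biconditional ``$(v,x,w)\in T$ if and only if $(F(v),F(x),F(w))\in T'$'' into the explicit inclusion-plus-partition argument, and additionally handled the degenerate case $v=w$ which the appendix formulation excludes by hypothesis.
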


\section{\texorpdfstring{$k$-Pseudoflowers}{Pseudoflowers}}

In this section we will define $k$-pseudoflowers and prove a few basic facts about them.
This definition is derived from a definition of $k$-flowers in matroids in~\cite{ClarkWhittle}.
A $k$-flower is, essentially, a neat way to encode a collection of separations that mostly cross.
The definition we will give is a lot more complicated than the original one, and there are mainly two reasons for that:
First, a separation of a matroid is a bipartition of the ground set, and thus it is possible to encode several bipartitions in a partition of the ground set.
But in our context we are working with vertex separations, and elements of separators have to be contained in several of the sets corresponding to partition classes.
Second, we work in infinite structures and still want to obtain maximal $k$-pseudoflowers.
Thus we have to make limit processes work, and as a chain of separations of order $k$ may have a limit separation whose order is less than $k$, we have to allow that separations do not have order exactly $k$, but at most $k$.
Making limit processes work also requires to put elements of separators between petals, sometimes in addition to putting them into petals.
Also, every finite cyclic order is determined by its cardinality, but that is not the case for infinite cyclic orders, and thus we have to allow arbitrary cyclic orders as index sets and cannot work with an easily described representative.

Recall that we fixed, for the whole paper, a limit-closed universe of vertex separations on ground set $V$.

\begin{defn}\label{defn:pseudoflower}
Let $I$ be a set of size at least $2$ with a cyclic ordering and $(P_v)_{v\in C(I)}$ a family of vertex sets. Define $X=V\backslash \bigcup_{v\in C(I)} P_v$ and for all $v,w\in C(I)\setminus I$ let $V(v,w)=\bigcup_{z\in [v,w]}P_z\cup X$. Then $(P_i)_{i\in C(I)}$ is a \emph{$k$-pseudoflower} if
\begin{itemize}
\item For every $v\in C(I)\setminus I$ we have $|P_v|=\frac{k-|X|}{2}$;
\item For all distinct $v,w\in C(I)\setminus I$ the pair $S(v,w)=(V(v,w),V(w,v))$ is a separation of order at most $k$ and $V(v,w)\cap V(w,v)=P_v\cup P_w\cup X$;
\item For every $i\in I$ we have $P_{p(i)}\cup P_{s(i)}\subseteq P_i$, where $p(i)$ and $s(i)$ are the predecessor and successor of $i$ in $C(I)$ respectively; and
\item for all $i,i'\in I$,
\begin{displaymath}
\text{if $|P_i|=(k-|X|)/2$ and $P_i=P_{i'}$ then $i=i'$}.\tag{$\ast$}\label{star}
\end{displaymath}
\end{itemize}

A $k$-pseudoflower is \emph{finite} if the index set is finite.
A \emph{$k$-flower} is a $k$-pseudoflower in which all separations $S(v,w)$ are separations with order exactly $k$ such that neither $V(v,w) \setminus V(w,v)$ nor $V(w,v) \setminus V(v,w)$ is empty.
\end{defn}

For $v\neq w\in C(I) \setminus I$ the set $V(v,w)$ is the \emph{interval set} of $[v,w]$ and the separation $S(w,v)$ is the \emph{interval separation} of $[v,w]$.
For a non-trivial interval $I'\subseteq I$ let $v$ and $w$ be the unique elements of $C(I)\setminus I$ such that $I'=I\cap [v,w]$. Then the \emph{interval set} $V(I')$ of $I'$ is $V(v,w)$ and the \emph{interval separation} $S(I')$ of $I'$ is $S(w,v)$.
For each index $i\in I$ the \emph{petal} of $i$ is its interval set $V(\{i\})$ and the \emph{petal separation} of $i$ is its interval separation $S(\{i\})$. As is usual for maps defined on power sets, we shorten $V(\{i\})$ and $S(\{i\})$ to $V(i)$ and $S(i)$.
Note that $V(i)=P_i \cup X$ for all $i\in I$.
A $k$-pseudoflower is called a \emph{$k$-pseudoanemone} if the sets $P_v$ for $v\in C(I) \setminus I$ are all empty and a \emph{$k$-pseudodaisy} otherwise.
Note that the number $(k-|X|)/2$, which is the size of the sets $P_v$ with $v\in C(I)\setminus I$, indicates how far away a $k$-pseudodaisy is from being a $k$-pseudoanemone.
In this sense it plays a very similar role to the difference between the local connectivity of adjacent petals and the local connectivity of non-adjacent petals as used, for example, in \cite{AikinOxley}.
Also note that in a $k$-pseudoanemone, all the interval separations have $X$ as their separator.
In the case where the limit-closed universe comes from a graph $G$, this means that the set of petals corresponds to a partition of the components of $G-X$, and that $X$ has exactly $k$ elements.

\begin{rem}
In a graph, $k$-pseudoanemones correspond to partitions of the components of $G-X$.
Also there are no infinite $k$-pseudodaisies in graphs. See \cref{ex:infinitedaisy} for an example of an infinite $k$-pseudodaisy in a graph-like space.
\end{rem}

\subsection{Basic properties}

We start with a lemma that is, among other things, essential for determining separations of the form $S(I') \vee S(I'')$.

\begin{figure}
	\centering
	\begin{tikzpicture}
	\basicflower
	\orientation
	\labelofarcabove[blue]{90}{0}{1}{$[a,b]$}
	\labelofarcabove[picturegreen]{270}{180}{1}{$[c,d]$}
	\newnode{$a$}{90}
	\newnode{$b$}{0}
	\newnode{$c$}{-90}
	\newnode{$d$}{180}
	\end{tikzpicture}
	\caption{A cyclic order with two intervals. This figure depicts some of the notation used in \cref{intersectionofintervalsets,lemmaforV}.}
	\label{fig:intersectionofintervalsets}
\end{figure}
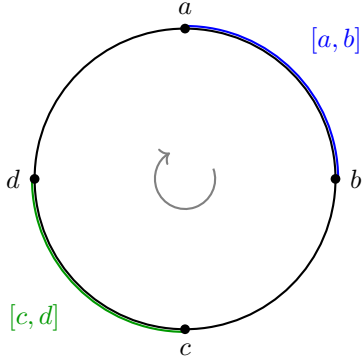

\begin{lem}\label{intersectionofintervalsets}
	Let $\Phi$ be a $k$-pseudoflower on index set $I$.
	Let $a$, $b$, $c$ and $d$ be elements of $C(I)\setminus I$ such that $a$, $b$ and $c$ are pairwise distinct and $b\in [a,c]$.
	Also assume that if $d\in \{a,b,c\}$, then $d=a$, and if $d\notin \{a,b,c\}$ then $d\in [c,a]$.
	Then $V(a,b)\cap V(c,d)=(P_a\cap P_d)\cup (P_b\cap P_c)\cup X$ and $V(a,c)\cap V(b,d)=V(b,c)\cup (P_a\cap P_d)$.
\end{lem}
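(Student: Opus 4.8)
The plan is to establish the first identity by a double inclusion and then read off the second one from it purely formally. Throughout I use two facts from \cref{defn:pseudoflower}: for distinct $v,w\in C(I)\setminus I$ one has $V(v,w)=\bigcup_{z\in[v,w]}P_z\cup X$ and $V(v,w)\cap V(w,v)=P_v\cup P_w\cup X$; moreover every $P_z$ is disjoint from $X$. I also need a handful of interval inclusions in $C(I)$, all of which follow from the cyclic-order axioms (in particular transitivity) once one knows how $a,b,c,d$ sit on the circle. Concretely, from $b\in\mathopen]a,c\mathclose[$ together with $d\in\mathopen]c,a\mathclose[$ (the degenerate case $d=a$, with only three points, will be checked by hand) transitivity gives that $a,b,c,d$ occur in this cyclic order, i.e.\ also $c\in\mathopen]b,d\mathclose[$ and $a\in\mathopen]d,b\mathclose[$; hence the inclusions $[c,d]\subseteq[b,a]$, $[a,b]\subseteq[d,c]$, $a\in[d,b]$, $c\in[b,d]$, and the equalities $[a,c]=[a,b]\cup[b,c]$ and $[b,d]=[b,c]\cup[c,d]$.

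For ``$\supseteq$'' in the first identity: $X\subseteq V(a,b)\cap V(c,d)$ by definition; $P_a\cap P_d\subseteq V(a,b)\cap V(c,d)$ since $a\in[a,b]$ and $d\in[c,d]$; and $P_b\cap P_c\subseteq V(a,b)\cap V(c,d)$ since $b\in[a,b]$ and $c\in[c,d]$. For ``$\subseteq$'': from $[c,d]\subseteq[b,a]$ we get $V(c,d)\subseteq V(b,a)$, so $V(a,b)\cap V(c,d)\subseteq V(a,b)\cap V(b,a)=P_a\cup P_b\cup X$; symmetrically, from $[a,b]\subseteq[d,c]$, $V(a,b)\cap V(c,d)\subseteq V(d,c)\cap V(c,d)=P_c\cup P_d\cup X$. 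Intersecting, $V(a,b)\cap V(c,d)\subseteq(P_a\cap P_c)\cup(P_a\cap P_d)\cup(P_b\cap P_c)\cup(P_b\cap P_d)\cup X$, and it remains to absorb the two ``wrong'' corners. For $P_b\cap P_d$: since $b\in[a,c]$ and $d\in[c,a]$ (both hypotheses), $P_b\cap P_d\subseteq V(a,c)\cap V(c,a)=P_a\cup P_c\cup X$, and being disjoint from $X$ this forces $P_b\cap P_d\subseteq(P_a\cap P_d)\cup(P_b\cap P_c)$. For $P_a\cap P_c$: since $a\in[d,b]$ and $c\in[b,d]$ (immediate when $d=a$, and from the cyclic order of $a,b,c,d$ otherwise), $P_a\cap P_c\subseteq V(d,b)\cap V(b,d)=P_b\cup P_d\cup X$, so likewise $P_a\cap P_c\subseteq(P_a\cap P_d)\cup(P_b\cap P_c)$. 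Together with ``$\supseteq$'' this yields the first identity.

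For the second identity, $[a,c]=[a,b]\cup[b,c]$ and $[b,d]=[b,c]\cup[c,d]$ give $V(a,c)=V(a,b)\cup V(b,c)$ and $V(b,d)=V(b,c)\cup V(c,d)$, whence
\[
V(a,c)\cap V(b,d)=\bigl(V(a,b)\cup V(b,c)\bigr)\cap\bigl(V(b,c)\cup V(c,d)\bigr)=V(b,c)\cup\bigl(V(a,b)\cap V(c,d)\bigr).
\]
Substituting the first identity, $V(a,c)\cap V(b,d)=V(b,c)\cup(P_a\cap P_d)\cup(P_b\cap P_c)\cup X$, and since $P_b\cap P_c\subseteq P_b\subseteq V(b,c)$ and $X\subseteq V(b,c)$, the right-hand side collapses to $V(b,c)\cup(P_a\cap P_d)$, as claimed.

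The step I expect to be the real work is the absorption of the corners $P_a\cap P_c$ and $P_b\cap P_d$ in the first identity: this is where the hypotheses $b\in[a,c]$ and $d\in[c,a]$ are actually used, and it forces one first to pin down the cyclic order of the four points $a,b,c,d$ (via transitivity, with the degenerate case $d=a$ checked separately) and then to pick exactly the right auxiliary interval separations $S(c,a)$ and $S(b,d)$ whose separators are the admissible corners; the remaining manipulations are routine interval arithmetic in $C(I)$ together with the two displayed properties of pseudoflowers.
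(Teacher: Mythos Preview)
Your proof is correct and uses essentially the same ingredients as the paper: both arguments rest on the four separator containments $V(a,b)\cap V(c,d)\subseteq P_v\cup P_w\cup X$ for the pairs $\{v,w\}\in\{\{a,b\},\{c,d\},\{a,c\},\{b,d\}\}$, and both derive the second identity from the first via the decomposition $V(a,c)=V(a,b)\cup V(b,c)$, $V(b,d)=V(b,c)\cup V(c,d)$. The only organisational difference is that the paper intersects all four separator bounds at once, so that the expression $(P_a\cup P_b\cup X)\cap(P_a\cup P_c\cup X)\cap(P_d\cup P_b\cup X)\cap(P_d\cup P_c\cup X)$ factors directly as $(P_a\cap P_d)\cup(P_b\cap P_c)\cup X$, whereas you intersect two of them first and then bring in the remaining two as separate ``absorption'' steps for the unwanted corners $P_a\cap P_c$ and $P_b\cap P_d$; this is the same work rearranged, and your version is perhaps more explicit about where each hypothesis on the cyclic position of $a,b,c,d$ enters.
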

\begin{proof}
	Clearly $V(a,b)\cap V(c,d)\subseteq V(a,c)\cap V(c,a)=P_a\cup P_c\cup X$.
	Similarly $V(a,b)\cap V(c,d)$ is a subset of $P_a\cup P_b\cup X$, $P_d\cup P_b\cup X$ and $P_d\cup P_c\cup X$.
	Together these subsetrelations imply
	\begin{equation*}
	\begin{split}
	&V(a,b)\cap V(c,d)\\
	&\quad \subseteq (P_a\cup P_b\cup X)\cap (P_a\cup P_c\cup X)\cap (P_d\cup P_b\cup X)\cap (P_d\cup P_c\cup X)\\
	&\quad =(P_a\cap P_d)\cup (P_b\cap P_c)\cup X.
	\end{split}
	\end{equation*}
	As also $(P_a\cap P_d)\cup (P_b\cap P_c)\cup X\subseteq V(a,b)\cap V(c,d)$, these two sets are equal.
	Thus
	\begin{equation*}
	\begin{split}
	V(a,c)\cap V(b,d)&=V(b,c)\cup (V(a,b)\cap V(c,d))\\
	&=V(b,c)\cup (P_a\cap P_d)\cup (P_b\cap P_c)\cup X\\
	&=V(b,c)\cup (P_a\cap P_d).\qedhere
	\end{split}
	\end{equation*}
\end{proof}

\begin{cor}\label{meetofintervalseps}
	If $P_a\cap P_d=\emptyset$ then $S(b,c)=S(a,c)\wedge S(b,d)$.\qedhere
\end{cor}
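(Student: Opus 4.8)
The plan is to unpack both sides of the claimed identity coordinatewise and read off the equality from \cref{intersectionofintervalsets} together with an elementary fact about how interval sets behave under unions of cyclic intervals.

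First I would recall that by definition $S(a,c)=(V(a,c),V(c,a))$ and $S(b,d)=(V(b,d),V(d,b))$, and that in a universe of vertex separations $(A,B)\wedge (C,D)=(A\cap C,\,B\cup D)$. Hence
\[
S(a,c)\wedge S(b,d)=\bigl(V(a,c)\cap V(b,d),\ V(c,a)\cup V(d,b)\bigr),
\]
while $S(b,c)=(V(b,c),V(c,b))$. So it suffices to prove the two set identities $V(a,c)\cap V(b,d)=V(b,c)$ and $V(c,a)\cup V(d,b)=V(c,b)$. The first is immediate from \cref{intersectionofintervalsets}, which gives $V(a,c)\cap V(b,d)=V(b,c)\cup (P_a\cap P_d)$, since the hypothesis $P_a\cap P_d=\emptyset$ reduces this to $V(b,c)$.

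For the second identity I would use that $V(v,w)=\bigcup_{z\in[v,w]}P_z\cup X$, so that $V(c,a)\cup V(d,b)=\bigl(\bigcup_{z\in [c,a]\cup[d,b]}P_z\bigr)\cup X$, and then verify at the level of the cyclic order $C(I)$ that $[c,a]\cup[d,b]=[c,b]$. Under the hypotheses of \cref{intersectionofintervalsets} the elements $a,b,c,d$ occur in this cyclic order (with possibly $d=a$), so $d\in[c,a]$ lets us split $[c,a]=[c,d]\cup[d,a]$, one checks $a\in[d,b]$ and $d\in[c,b]$ from the cyclic-order axioms applied to $(a,b,c)\in Z$ and $(c,d,a)\in Z$, giving $[d,b]=[d,a]\cup[a,b]$ and $[c,b]=[c,d]\cup[d,b]$, whence $[c,a]\cup[d,b]=[c,d]\cup[d,a]\cup[a,b]=[c,b]$; the degenerate case $d=a$ is handled directly since then $a\in[c,b]$ forces $[c,a]\cup[a,b]=[c,b]$. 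Substituting both identities back yields $S(a,c)\wedge S(b,d)=(V(b,c),V(c,b))=S(b,c)$.

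The only mildly delicate part is the purely combinatorial verification $[c,a]\cup[d,b]=[c,b]$ inside $C(I)$, which is routine bookkeeping with the cyclic-order axioms plus keeping track of the edge case $d=a$ permitted by \cref{intersectionofintervalsets}; I expect no real obstruction, since \cref{intersectionofintervalsets} already carries the substantive content on the intersection side. (As a sanity check one may also note directly that $[b,c]\subseteq[a,c]$ and $[b,c]\subseteq[b,d]$ while $[c,a],[d,b]\subseteq[c,b]$, so $S(b,c)$ is a common lower bound of $S(a,c)$ and $S(b,d)$, consistent with $\wedge$ being the infimum.)
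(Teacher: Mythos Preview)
Your argument is correct and follows the same line the paper intends: the paper states the corollary with no proof (just a \qedhere), treating it as immediate from \cref{intersectionofintervalsets}, and you have simply spelled out the two coordinatewise identities, the intersection side coming from the lemma and the union side $V(c,a)\cup V(d,b)=V(c,b)$ from the elementary cyclic-interval fact $[c,a]\cup[d,b]=[c,b]$. There is nothing to correct; your write-up just makes explicit the routine second-coordinate check that the paper suppresses.
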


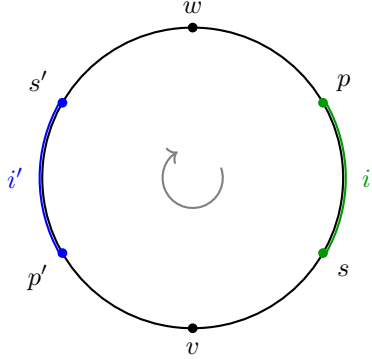
\begin{figure}
\centering
\begin{tikzpicture}
    \basicflower
    \orientation
    \newnode{$w$}{90}
    \newnode{$v$}{-90}
    \newnode[blue]{$p'$}{210}
    \newnode[blue]{$s'$}{150}
    \newnode[picturegreen]{$p$}{30}
    \newnode[picturegreen]{$s$}{-30}
    \labelofarcabove[blue]{210}{150}{1}{$i'$}
    \labelofarcabove[picturegreen]{30}{-30}{1}{$i$}
\end{tikzpicture}
    \caption{Some notation from the proof of \cref{flowerhasdisjointgluingsets}.}
    \label{fig:flowerhasdisjointgluingsets}
\end{figure}

The next lemma and the previous corollary will often be used together.

\begin{lem}\label{flowerhasdisjointgluingsets}
Let $(P_i)_{i\in C(I)}$ be a $k$-flower on index set $I$.
Then $P_v\cap P_w=\emptyset$ for all $v\not=w \in C(I)\setminus I$.
\end{lem}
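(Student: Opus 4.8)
The plan is to pick two distinct cutpoints $v, w \in C(I) \setminus I$ and show that if some vertex $x$ lies in $P_v \cap P_w$, then $x$ must lie in the separator $V(v,w) \cap V(w,v) = P_v \cup P_w \cup X$ both when we look at the separation $S(v,w)$ and when we ``slide'' one of the endpoints. Concretely, since $I$ has at least two elements, by \cref{distinctvertices} the interval $]v,w[\,\cap I$ and $]w,v[\,\cap I$ are non-trivial intervals, so each contains an element of $I$; hence we may choose $a, b \in C(I) \setminus I$ with $v \in\, ]a, b[$ and with $[a,b] \cap I$ strictly between (in fact I would take $a, b$ to be cutpoints flanking a single petal $i$ with $v$ on one side), and symmetrically cutpoints $c, d$ flanking a petal $i'$ on the other arc, arranged exactly as in the hypothesis of \cref{intersectionofintervalsets} (see \cref{fig:flowerhasdisjointgluingsets}). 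The point of this setup is that \cref{intersectionofintervalsets} then computes $V(a,b) \cap V(c,d) = (P_a \cap P_d) \cup (P_b \cap P_c) \cup X$, and by nesting of the petal sets ($P_v \subseteq P_b \cap P_c$ etc., using the third bullet of \cref{defn:pseudoflower} iterated along the arc) we get $P_v \cap P_w$ sitting inside this intersection.

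First I would reduce to the ``adjacent'' case: by the third bullet of \cref{defn:pseudoflower}, $P_{p(i)} \cup P_{s(i)} \subseteq P_i$ for $i \in I$, and iterating, for any cutpoint $z$ and any $i \in I$ not lying in the open arc ``cut off'' by $z$ we have $P_z \subseteq P_i$; more to the point, if $v, w$ are separated by petals $i \ne i'$ then $P_v \subseteq P_{s(v)\text{-side}}$ consideration shows $P_v \cup P_w$ is contained in the separator of the interval separation that isolates, on one side, everything between $i$ and $i'$. Then I would apply \cref{meetofintervalseps}: choosing the four cutpoints so that $P_a \cap P_d = \emptyset$ would force $S(b,c) = S(a,c) \wedge S(b,d)$, but the cleaner route is to directly use the order computation. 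Since $\Phi$ is a \emph{$k$-flower}, every interval separation has order exactly $k$ and both ``strict sides'' are nonempty; submodularity of the order function applied to $S(v,w)$ and a suitable slid copy $S(v', w)$ (with $v'$ a cutpoint just past the petal containing the vertices supposedly in $P_v \cap P_w$) gives
\[
|P_v \cup P_w \cup X| + |P_{v'} \cup P_w \cup X| \;\geq\; |(\text{corner separator})| + |(\text{other corner separator})|,
\]
and since each term on the left is exactly $k$ while each term on the right is at least $k$ (all interval separations in a $k$-flower have order $k$), equality holds throughout; this equality, combined with the flower condition that the strict sides are nonempty, forces the relevant intersection $P_v \cap P_{v'}$, hence $P_v \cap P_w$, to be empty.

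The cleanest packaging, which I would actually write, is: fix distinct $v, w \in C(I)\setminus I$, pick a cutpoint $u$ with $u \in\, ]v,w[$ such that $]v,u[\,\cap I \neq \emptyset$ and $]u,w[\,\cap I \neq \emptyset$ (possible by \cref{distinctvertices} since each open arc between two cutpoints that skips a petal is non-trivial — here I would need at least two petals on that arc; if the arc $]v,w[$ contains only one petal of $I$, slide instead on the other arc, and the two arcs together contain all of $I$ which has $\ge 2$ elements, so at least one arc works, or handle $|I| = 2$ directly by a short ad hoc argument). Then apply \cref{intersectionofintervalsets} with $(a,b,c,d) = (v, u, w, v)$ to get $V(v,u) \cap V(w,v) = (P_v \cap P_v) \cup (P_u \cap P_w) \cup X$ — wait, that degenerates; so I would instead take four genuinely distinct cutpoints and use the petal-nesting to sandwich $P_v \cap P_w$ between $\emptyset$ and $(P_a \cap P_d) \cup (P_b \cap P_c) \cup X$ with $a,b,c,d$ chosen so $P_a \cap P_d = \emptyset$ follows from an \emph{outer} pair of cutpoints for which the flower's nonempty-strict-side condition already gives disjointness, closing an induction on ``how far apart'' $v$ and $w$ are.

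\textbf{Main obstacle.} The genuine difficulty is the base case / degeneracy management: \cref{intersectionofintervalsets} requires four cutpoints in a specific cyclic arrangement with at most a single coincidence ($d = a$ allowed), so to place $v$ and $w$ as two of them I need enough ``room'' (enough petals of $I$) on the arcs between them, and when $|I|$ is small or when $v, w$ are cutpoints flanking the \emph{same} petal this fails and must be done by hand using submodularity of the order function together with the defining property of a $k$-\emph{flower} (order exactly $k$, both strict sides nonempty) — it is precisely there that the hypothesis ``flower'' (not merely ``pseudoflower'') is used, since a pseudoflower can have $P_v \cap P_w \neq \emptyset$ when an interval separation drops below order $k$. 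I expect the bulk of the written proof to be this careful case analysis of the cyclic positions of $v$ and $w$, with the submodularity inequality
\[
o\big(S(v, w)\big) + o\big(S(w', v')\big) \;\geq\; o\big(S(v, v')\big) + o\big(S(w', w)\big)
\]
(for suitably interleaved cutpoints $v', w'$) forced to an equality by the $k$-flower condition, and that equality squeezing the offending intersection to $\emptyset$.
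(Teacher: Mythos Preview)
You have missed the elementary argument entirely. The paper's proof is two lines of counting: by the $k$-\emph{flower} hypothesis the separation $S(v,w)$ has order exactly $k$, so its separator $V(v,w)\cap V(w,v)=P_v\cup P_w\cup X$ has exactly $k$ elements. By the first bullet of \cref{defn:pseudoflower} each of $P_v$ and $P_w$ has exactly $(k-|X|)/2$ elements, and by the definition of $X$ both are disjoint from $X$. Hence
\[
|P_v\cup P_w| \;=\; k-|X| \;=\; \frac{k-|X|}{2}+\frac{k-|X|}{2} \;=\; |P_v|+|P_w|,
\]
which forces $P_v\cap P_w=\emptyset$. No submodularity, no \cref{intersectionofintervalsets}, no induction, no case analysis on $|I|$.

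What you wrote is not a proof but a sequence of plans, each of which you yourself abandon (``wait, that degenerates''; ``handle $|I|=2$ directly by a short ad hoc argument''; ``closing an induction''). The submodularity route you sketch at the end could perhaps be pushed through, but you never actually carry out the base case you flag as the main obstacle, and the induction step is only asserted. More importantly, all of this machinery is aimed at recovering, in a roundabout way, exactly the equality $|P_v\cup P_w\cup X|=k$ that the $k$-flower hypothesis hands you for free for \emph{every} pair $v\neq w$ --- there is nothing to induct on. The lesson here is to look first at what the strengthened hypothesis (flower versus pseudoflower) gives you directly: it upgrades ``order at most $k$'' to ``order exactly $k$'', and disjointness is then immediate arithmetic.
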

\begin{proof}
By definition of a $k$-flower, the set $P_v\cup P_w \cup X$, which equals $V(v,w)\cap V(w,v)$, has exactly $k$ many elements.
Also $P_v$ and $P_w$ both have exactly $(k-|X|)/2$ many elements, so they must be disjoint.
\end{proof}

\begin{lem}\label{verticesinducesintervalsofV(C)}
For all $x\in \bigcup_{v\in C(I)\setminus I}P_v$ the set $\{w\in C(I):x\in P_w\}$ is an interval of $C(I)$.
\end{lem}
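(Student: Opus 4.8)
The statement to prove is that for $x \in \bigcup_{v \in C(I)\setminus I} P_v$, the set $W_x := \{w \in C(I) : x \in P_w\}$ is an interval of $C(I)$. Recall that a subset $J$ of a cyclically ordered set is an interval iff for all $s,t \in J$ either $[s,t] \subseteq J$ or $[t,s] \subseteq J$. So the plan is: fix $s,t \in W_x$ distinct, and show one of the two closed arcs between them lies entirely in $W_x$.

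The key structural input is the second bullet of \cref{defn:pseudoflower}: for distinct $v,w \in C(I)\setminus I$ we have $V(v,w) \cap V(w,v) = P_v \cup P_w \cup X$, together with the fact (from the definitions following \cref{defn:pseudoflower}) that $V(v,w) = \bigcup_{z \in [v,w]} P_z \cup X$ — so $P_z \subseteq V(v,w)$ for every $z \in [v,w]$. The third bullet, $P_{p(i)} \cup P_{s(i)} \subseteq P_i$ for $i \in I$, will be needed to pass from cutpoints to the elements of $I$ sitting between them. First I would handle the case where $s,t$ are both in $C(I) \setminus I$: pick any $z \in [s,t]$; then $P_z \subseteq V(s,t)$, and also (taking the complementary arc) whenever $z \in [t,s]$ we get $P_z \subseteq V(t,s)$. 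If $z$ lies in both arcs, i.e. $z \in \{s,t\}$, there is nothing to prove. Otherwise $z$ lies in exactly one of the two open arcs, say $z \in \mathopen]s,t\mathclose[$; then $x \in P_s$ gives $x \in V(t,s)$ (since $s \in [t,s]$) and $x \in P_t$ gives $x \in V(s,t)$ — wait, more cleanly: since $x \in P_s \subseteq V(s,t) \cap V(t,s)$... Let me restructure: the clean way is to show that if $x \in P_s \cap P_t$ for cutpoints $s,t$, and $z \in \mathopen]s,t\mathclose[$ is a cutpoint, then $x \in P_z$. Since $x \in P_t$ and $t \in [z, s]$ (going around the long way, as $z \in \mathopen]s,t\mathclose[$ means $s,z,t,s$ so $t \in \mathopen]z,s\mathclose[$), $x \in V(z,s)$; since $x \in P_s$ and $s \in [s,z]$, $x \in V(s,z)$; hence $x \in V(z,s) \cap V(s,z) = P_z \cup P_s \cup X$. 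Now $x \notin X$ since $x \in \bigcup_{v \in C(I)\setminus I} P_v$ and $X = V \setminus \bigcup_{v\in C(I)} P_v$ is disjoint from every $P_v$. So $x \in P_z \cup P_s$. To exclude $x \in P_s \setminus P_z$ I would instead run the symmetric argument also from the $t$ side, or better: apply the identity with the roles arranged so that the only possible outcome is $P_z$ — concretely, choosing a third cutpoint appropriately, or invoking \cref{intersectionofintervalsets}. Actually the cleanest fix: we get $x \in P_z \cup P_s \cup X$ from one pair and, symmetrically using that $z \in \mathopen]s,t\mathclose[$ so also considering the pair $(z,t)$ analogously, $x \in P_z \cup P_t \cup X$; intersecting, $x \in P_z \cup ((P_s \cup X) \cap (P_t \cup X)) = P_z \cup (P_s \cap P_t) \cup X$, which does not immediately help. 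The genuinely clean route is \cref{intersectionofintervalsets} with $a=s$, $b=z$, $c=t$, $d=s$ (so $d=a$): it gives $V(s,z) \cap V(t,s) = (P_s \cap P_s) \cup (P_z \cap P_t) \cup X = P_s \cup (P_z\cap P_t) \cup X$ — hmm, that still has $P_s$. The right choice is $a = z$, then: there exist cutpoints such that $x \in P_z$ falls out as the intersection. I expect the correct bookkeeping to be: apply \cref{intersectionofintervalsets} to conclude $V(s,z)\cap V(z,t) = (P_s \cap P_t)\cup (P_z\cap P_z)\cup X \supseteq P_z$, and separately $x \in V(s,z)$ and $x\in V(z,t)$, hence $x \in (P_s\cap P_t)\cup P_z \cup X$; if moreover $P_s\cap P_t$ can be shown empty we are done, but for a pseudoflower (not a flower) it need not be. So the final clean argument is: from $x \in P_s$, $x \in V(s,z)$ (as $s \in [s,z]$) and from $x \in P_t$, $x \in V(z,t)$ (as $t \in [z,t]$); by \cref{intersectionofintervalsets} $V(s,z) \cap V(z,t) = (P_s\cap P_t) \cup P_z \cup X$, so $x \in (P_s \cap P_t)\cup P_z \cup X$; but also, reversing, $x\in V(z,s)\cap V(t,z)$ need not hold. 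I will present it via the direct identity $V(s,z)\cap V(z,t) = P_s\cup P_z\cup X$ applied with the specific geometry — and I flag that resolving exactly which corner identity kills the stray $P_s$-term is the one bit of bookkeeping to nail down.

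For the remaining cases, where $s$ and/or $t$ lie in $I$ rather than $C(I)\setminus I$: I would reduce to the cutpoint case using the third bullet of \cref{defn:pseudoflower}. If $i \in I \cap W_x$, then $x \in P_i \supseteq P_{p(i)}$ and $x \in P_i \supseteq P_{s(i)}$, so the neighboring cutpoints $p(i), s(i)$ of $i$ also lie in $W_x$; conversely if a cutpoint $v$ has both its neighbors... no, we need the other direction too: if consecutive cutpoints $v, w$ (with the single element $i \in I$ strictly between them, i.e. $\mathopen]v,w\mathclose[ \cap I = \{i\}$, $v = p(i)$, $w = s(i)$) both lie in $W_x$, does $i \in W_x$? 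From $x \in P_v = P_{p(i)} \subseteq P_i$ we immediately get $i \in W_x$. Good — so in fact membership of $i$ in $W_x$ is \emph{equivalent} to membership of (either of) its neighboring cutpoints. This lets me replace any $s \in I$ by $p(s)$ (equivalently $s(s)$) and likewise for $t$, reducing to the already-handled cutpoint case; then I re-extract the $I$-elements inside the resulting arc using the same equivalence. The one subtlety is when $W_x$ contains an element of $I$ but I have phrased the interval condition for cutpoints — I need to check that "the closed arc $[p(s), s(t)]$ of $C(I)$ is contained in $W_x$" plus the neighbor equivalence really yields "$[s,t]$ or $[t,s]$ is contained in $W_x$", which is a short verification using that the $I$-elements of a closed cutpoint-to-cutpoint arc are exactly those $i$ with a neighboring cutpoint in the arc.

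The main obstacle I anticipate is purely the corner-intersection bookkeeping in the cutpoint case: choosing the right three or four cutpoints and the right instance of \cref{intersectionofintervalsets} (or of the basic identity $V(v,w)\cap V(w,v) = P_v\cup P_w\cup X$) so that the conclusion is exactly $x \in P_z$ rather than $x \in P_z \cup P_s \cup X$ or similar. Since $x \notin X$ is automatic, the real content is eliminating the spurious $P_s$ (or $P_t$) term, and I believe the way to do that is to apply the identity to the pair $(z', z'')$ of cutpoints that are the immediate cutpoint-neighbors of $z$ on either side within $\mathopen]s,t\mathclose[$ — but handling the case where $z$ has no such neighbor (an infinite pseudoflower) may require instead taking a limit or using \cref{distinctvertices}. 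Everything else is routine manipulation of intervals in a cyclic order.
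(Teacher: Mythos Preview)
Your direct approach has a genuine gap at two places.

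First, the attempt to show that for a cutpoint $z$ strictly between cutpoints $s,t\in W_x$ one has $x\in P_z$ cannot succeed as stated, and the difficulty you flag (``eliminating the spurious $P_s$-term'') is not merely bookkeeping. The statement you are trying to prove at that point is in general false: it may well be the \emph{other} arc $[t,s]$ that lies in $W_x$, in which case there really are cutpoints $z\in\mathopen]s,t\mathclose[$ with $x\notin P_z$. No choice of corner identity will get around this, because you have not yet used any information distinguishing the two arcs. Your suggested fallback (passing to immediate cutpoint-neighbours of $z$, or taking a limit) does not supply that information either.

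Second, your reduction from $I$-elements to cutpoints is backwards. From $P_{p(i)}\subseteq P_i$ you can conclude that $x\in P_{p(i)}$ implies $x\in P_i$, not the reverse. So the implication ``$i\in W_x\Rightarrow p(i),s(i)\in W_x$'' is unjustified, and the claimed equivalence fails.

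The paper's proof resolves both issues at once by arguing by contradiction. If $W_x$ is not an interval, there exist $b,d\in W_x$ and $a,c\notin W_x$ alternating around $C(I)$ (so $b\in[a,c]$, $d\in[c,a]$). Now replace $a,c$ by neighbouring cutpoints $a',c'$: since $P_{a'}\subseteq P_a$ and $x\notin P_a$, also $x\notin P_{a'}$. This is the correct direction of the inclusion, and it works precisely because one is replacing points \emph{outside} $W_x$ rather than inside it. With $a',c'$ cutpoints one gets
\[
x\in P_b\cap P_d\subseteq V(a',c')\cap V(c',a')=P_{a'}\cup P_{c'}\cup X,
\]
a contradiction since $x\notin P_{a'}\cup P_{c'}$ and $x\notin X$. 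The contradiction setup is what furnishes the two points on opposite arcs that your direct argument was missing.
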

\begin{proof}
Assume otherwise. Then there are pairwise distinct $a,b,c,d\in C(I)$ such that $b\in [a,c]$, $d\in [c,a]$, $x\in P_b\cap P_d$ and $x\notin P_a\cup P_c$.
If $a \in I$ then let $a'$ be the predecessor of $a$ in $C(I)$, otherwise let $a' = a$.
Similarly, if $c \in I$ then let $c'$ be the predecessor of $c$ in $C(I)$, otherwise let $c' = c$.
Then $x \notin P{a'} \cup P_{c'}$, and $a',b,c',d$ are distinct elements of $C(I) \setminus I$.
Now
\begin{displaymath}
    x \in P_b \cap P_d \subseteq V(a',c') \cap V(c',a') = P_{a'}\cup P_{c'} \cup X,
\end{displaymath}
a contradiction.
\end{proof}

\subsection{\texorpdfstring{Concatenations of $k$-pseudoflowers}{Concatenations of pseudoflowers}}\label{concatenations}

A separation $(A,B)$ is \emph{displayed} by a $k$-pseudoflower if it is an interval separation of that $k$-pseudoflower.
We are now going to define a pre-order on the set of $k$-pseudoflowers which has the property that if $\Phi\leq \Psi$ then all separations displayed by $\Phi$ are also displayed by $\Psi$.

\begin{defn}
A $k$-pseudoflower $\Phi'$ \emph{extends} another $k$-pseudoflower $\Phi$, written $\Phi\leq \Phi'$, if the sets $X$ and $X'$ coincide and there is a surjective map $F:C(I')\rightarrow C(I)$ respecting the cyclic ordering such that $F(I')= I$ and $V(v,w)= X\cup \bigcup_{z\in F^{-1}([v,w])}P'_z$ for all $v,w\in C(I)\setminus I$.
If $\Phi'$ extends $\Phi$, then $\Phi$ is a \emph{concatenation} of $\Phi'$.
\end{defn}

Note that if $\Phi \leq \Psi \leq \Phi$ for two $k$-pseudoflowers $\Phi$ and $\Psi$ then property (\ref{star}) $\Phi$ and $\Psi$ are equal up to renaming of the index sets.

Given a map $F:C(I')\rightarrow C(I)$ that witnesses that $\Phi\leq \Phi'$, by \cref{cyclichoms2} there is for every $v\in C(I)\setminus I$ a unique $w\in C(I')$ with $F(w)=v$.
Furthermore $w\in C(I')\setminus I'$, and $P_v=P_w$ by definition of $\Phi\leq \Phi'$.
This allows us to identify $F^{-1}(C(I')\setminus I')$ with $C(I')\setminus I'$, and thus view $\Phi$ as a substructure of $\Phi'$.
(See \cref{sec:limits} for a more detailed explanation of how this identification can be done for a whole chain of $k$-pseudoflowers.)
Note that the witness of $\Phi\leq \Phi'$ might not be unique, and that the identification of cutpoints depends on the witness.
Conversely, given a finite set $D$ of cutpoints of a $k$-pseudoflower $\Phi'$ on index set $I'$, we can take the subflower $\Phi'(D)$ of $\Phi'$ whose cutpoints correspond to $D$ as follows:
Let $I$ be a cyclic order with $|D|$ many elements, and let $\hat{f}: C(I)\setminus I \rightarrow D$ be an isomorphism of cyclic orders.
Extend the inverse of $\hat{f}$ to a surjective monotone map $F:C(I')\rightarrow C(I)$ with $F(I')\subseteq I$.
For $v\in C(I)\setminus I$ there is a unique $d\in D$ with $F^{-1}(v)=d$, define $P_v=P_d$.
For $i\in I$ let $p$ and $s$ be the predecessor and successor of $i$ in $C(I)$ respectively, and define $P_i=\bigcup_{z\in F^{-1}([p,s])}P_z$.
Then $(P_z)_{z\in C(I)}$ is a $k$-pseudoflower, and $F$ witnesses $(P_z)_{z\in C(I)}\leq \Phi'$.
Note that we need this construction only for finite $D$, but it works for all sets of cutpoints $D$ that can be expressed as $C(I)\setminus I$ for some cyclically ordered set $I$.

\begin{lem}\label{intervalcontainingset}
Let $\Phi$ be a $k$-pseudoflower that extends a $k$-flower with three petals.
Then for every $z\in C(I)$ there is $v\in C(I)\setminus I$ such that $P_z\cap P_v=\emptyset$.
Furthermore, for every $i\in I$ and all non-empty $Y\subseteq P_i$ the set $\{z\in C(I) : Y\subseteq P_z\}$ is a non-trivial interval of $C(I)$.
\end{lem}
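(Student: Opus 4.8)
The plan is to prove the two assertions separately, using throughout that $\Phi$ extends some $k$-flower $\Phi_0$ with exactly three petals. So fix a witness $F\colon C(I)\to C(I_0)$ of $\Phi_0\le\Phi$, where $I_0$ has three elements, and let $v_1,v_2,v_3$ be the three elements of $C(I)\setminus I$ that $F$ maps onto the three cutpoints of $\Phi_0$; these are well defined by \cref{cyclichoms2}, and by the discussion following the definition of ``extends'' the sets $P_{v_1},P_{v_2},P_{v_3}$ are exactly the three sets $P_u$ with $u\in C(I_0)\setminus I_0$ of $\Phi_0$. As $\Phi_0$ is a $k$-flower, \cref{flowerhasdisjointgluingsets} shows $P_{v_1},P_{v_2},P_{v_3}$ are pairwise disjoint, and each is disjoint from $X$. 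For the first assertion, let $z\in C(I)$. The points $v_1,v_2,v_3$ split $C(I)$ into three closed arcs whose union is $C(I)$, so after relabelling we may assume $z\in[v_1,v_2]$; then $P_z\subseteq V(v_1,v_2)$ directly from the definition of $V(v_1,v_2)$, and likewise $v_3\in[v_2,v_1]$ gives $P_{v_3}\subseteq V(v_2,v_1)$. Hence $P_z\cap P_{v_3}\subseteq V(v_1,v_2)\cap V(v_2,v_1)=P_{v_1}\cup P_{v_2}\cup X$, and intersecting once more with $P_{v_3}$ and using pairwise disjointness gives $P_z\cap P_{v_3}=\emptyset$. Thus $v:=v_3\in C(I)\setminus I$ is as required.

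For the second assertion fix $i\in I$ and a non-empty $Y\subseteq P_i$, and put $J:=\{z\in C(I):Y\subseteq P_z\}$. Then $i\in J$, so $J\ne\emptyset$; and applying the first assertion to $z=i$ yields a cutpoint $v$ with $P_i\cap P_v=\emptyset$, so $Y\cap P_v=\emptyset$ and hence $v\notin J$, giving $J\ne C(I)$. It remains to show $J$ is an interval. Suppose it is not. Unwinding the definition of an interval exactly as at the start of the proof of \cref{verticesinducesintervalsofV(C)}, but with $J$ in place of $\{w:x\in P_w\}$, we obtain pairwise distinct $a,b,c,d\in C(I)$ with $b\in\mathopen]a,c\mathclose[$ and $d\in\mathopen]c,a\mathclose[$ such that $b,d\in J$ but $a,c\notin J$; that is, $Y\subseteq P_b\cap P_d$ while $Y\not\subseteq P_a$ and $Y\not\subseteq P_c$. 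Choose $y\in Y\setminus P_a$ and $y'\in Y\setminus P_c$; then $y,y'\in P_b\cap P_d\cap P_i$, and $y,y'\notin P_v$ because $P_i\cap P_v=\emptyset$.

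We now invoke \cref{verticesinducesintervalsofV(C)} for $y$ and for $y'$. This is legitimate even though $y,y'$ need not lie in any $P_u$ with $u\in C(I)\setminus I$: the proof of that lemma only needs that predecessors in $C(I)$ of elements of $I$ again lie in $C(I)\setminus I$ (it never modifies the two ``good'' indices), so it actually shows that $\{w\in C(I):x\in P_w\}$ is an interval of $C(I)$ for \emph{every} $x\in V\setminus X$. Hence $A_y:=\{w:y\in P_w\}$ and $A_{y'}:=\{w:y'\in P_w\}$ are intervals of $C(I)$. Since $A_y$ is an interval containing $b$ and $d$, it contains $[b,d]$ or $[d,b]$; as $a\in[d,b]\setminus A_y$ it must contain $[b,d]$, so $C(I)\setminus A_y\subseteq C(I)\setminus[b,d]=\mathopen]d,b\mathclose[$, and therefore $v\in\mathopen]d,b\mathclose[$ because $v\notin A_y$. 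Symmetrically, as $c\in[b,d]\setminus A_{y'}$ the interval $A_{y'}$ contains $[d,b]$, so $v\in C(I)\setminus[d,b]=\mathopen]b,d\mathclose[$. But $\mathopen]d,b\mathclose[\cap\mathopen]b,d\mathclose[=\emptyset$, a contradiction; hence $J$ is an interval, and by the previous paragraph a non-trivial one.

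The heart of the argument is the final paragraph: one has to set up the four points $a,b,c,d$ so that the single cutpoint $v$ provided by the first assertion (applied to $i$) is simultaneously pushed into both complementary open arcs $\mathopen]d,b\mathclose[$ and $\mathopen]b,d\mathclose[$. The only other delicate point is the mild extension of \cref{verticesinducesintervalsofV(C)} to all vertices of $V\setminus X$, which is needed because a vertex of $Y\subseteq P_i$ may fail to lie in any $P_u$ with $u$ a cutpoint.
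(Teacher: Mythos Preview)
Your proof is correct. The first assertion is proved exactly as in the paper. For the second assertion you take a genuinely different route, so a brief comparison is in order.

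The paper argues directly: for each $y\in Y$ the set $C_y=\{z\in C(I):y\in P_z\}$ is an interval (by the extension of \cref{verticesinducesintervalsofV(C)} that you also invoke), each $C_y$ contains $i$, and by the first part there is a cutpoint $v$ with $P_i\cap P_v=\emptyset$, so no $C_y$ contains $v$. Cutting the cyclic order at $v$ turns each $C_y$ into a linear interval through $i$; an arbitrary intersection of such intervals is again an interval through $i$, so $J=\bigcap_{y\in Y}C_y$ is a non-trivial interval. This is a one-line argument once the $C_y$ are known to be intervals.

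Your argument instead assumes $J$ fails to be an interval, extracts four points $a,b,c,d$ in the usual way, and then uses only two carefully chosen elements $y\in Y\setminus P_a$ and $y'\in Y\setminus P_c$ to force the single cutpoint $v$ from the first part into both $\mathopen]d,b\mathclose[$ and $\mathopen]b,d\mathclose[$. This is more hands-on but perfectly valid; it has the minor advantage of making explicit exactly which two elements of $Y$ are doing the work, whereas the paper's intersection argument treats all of $Y$ at once.

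Both approaches rely on the observation that the proof of \cref{verticesinducesintervalsofV(C)} never uses that $b$ and $d$ are cutpoints (only $a$ and $c$ are replaced by their predecessors), so the conclusion holds for every $x\in V\setminus X$, not just for $x$ lying in some $P_v$ with $v$ a cutpoint. You flag this explicitly; the paper uses it silently. Your justification of this extension is sound.
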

\begin{proof}
As $\Phi$ extends a $k$-flower with three petals, $C(I)\setminus I$ contains three distinct elements $v_1$, $v_2$ and $v_3$ such that $P_{v_1}$, $P_{v_2}$ and $P_{v_3}$ are pairwise disjoint.
By renaming if necessary assume that $z\in [v_1,v_2]$ and $v_3\in [v_2,v_1]$.
Then
\begin{displaymath}
    P_z \cap P_{v_3} \subseteq V(v_1,v_2) \cap V(v_2,v_1) = P_{v_1} \cup P_{v_2} \cup X.
\end{displaymath}
As every $P_y$ with $y\in C(I)$ is disjoint from $X$, this implies that $P_z\cap P_{v_3}=\emptyset$.

Let $Y\subseteq P_i$ and for every $y\in P_i$ let $C_y$ be the interval of those $z\in C(I)$ with $y\in P_z$.
By the previous paragraph there exists $v\in C(I)\setminus I$ such that $P_i\cap P_v=\emptyset$.
As all $C_y$ contain $i$ but none contains $v$, the intersection of the $C_y$ is a non-trivial interval of $C(I)$.
\end{proof}

We already stated that given two $k$-pseudoflowers $\Phi\leq \Psi$, there might be several maps witnessing the relation.
It may not be obvious from the definition, but if $\Phi$ is a sufficiently meaningful $k$-pseudoflower, then two witnessing maps can only differ in very restricted ways.
The rest of this section contains an analysis of the exact ways in which they can differ.

For $k$-pseudoflowers $\Phi\leq \Phi'$ such that $\Phi$ extends a $k$-flower with three petals let $I'_N$ be the set of those $i'\in I'$ such that there are witnesses of $\Phi\leq \Phi'$ that map $i'$ to different elements of $I$.

\begin{lem}\label{nonuniqueimagesunderwitnesses}
Let $i'\in I_N'$.
Then the set of images of $i'$ under witnesses of $\Phi\leq \Phi'$ contains only two elements of $I$, and those have a common neighbor $v(i')$ in $C(I)$ that satisfies $P_{i'}'=P_{v(i')}$.
\end{lem}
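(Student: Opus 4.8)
The plan is to analyze how two witnesses $F, G : C(I') \to C(I)$ of $\Phi \leq \Phi'$ can disagree on a single $i' \in I_N'$, using the rigidity coming from the fact that $\Phi$ extends a $k$-flower with three petals. First I would fix two witnesses $F$ and $G$ with $F(i') = u \neq w = G(i')$ in $I$. Both $F$ and $G$ restrict to monotone maps $C(I') \to C(I)$, and by \cref{cyclichoms2} each induces an injective monotone map $\hat f, \hat g : C(I) \setminus I \to C(I') \setminus I'$ with $F^{-1}(v) = \{\hat f(v)\}$ and $G^{-1}(v) = \{\hat g(v)\}$ for $v \in C(I) \setminus I$; moreover $P_v = P_{\hat f(v)}' = P_{\hat g(v)}'$. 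The key point is that the cutpoints of $\Phi$ are ``pinned'': since $\Phi$ extends a $k$-flower with three petals, by \cref{intervalcontainingset} every $P_z$ (in particular every $P_v$, $v \in C(I)\setminus I$) is disjoint from some $P_{v''}$, and the interval-separation data $V(v,w) = X \cup \bigcup_{z \in F^{-1}([v,w])} P_z'$ must be the \emph{same} set whether computed via $F$ or via $G$. I would use this, together with \cref{intersectionofintervalsets} and \cref{meetofintervalseps}, to show that $\hat f$ and $\hat g$ must actually agree: for distinct cutpoints $v, w \in C(I) \setminus I$ the set $V(v,w)$ determines, via its intersections with the various $V(v', w')$, exactly which $P_z'$ are ``inside'' the interval, and the three-petal flower gives enough disjoint reference sets $P_{v_1}, P_{v_2}, P_{v_3}$ to separate these possibilities. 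Hence $\hat f = \hat g =: \hat h$, so $F$ and $G$ differ only in how they route elements of $I'$, not cutpoints.

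Next I would locate where $i'$ can go. Since $F$ and $G$ agree on all cutpoints, and $F, G$ are monotone and surjective, for each cutpoint $v \in C(I)\setminus I$ the preimage $F^{-1}(\mathopen]v,v'\mathclose[)$ of an ``open gap'' between consecutive cutpoints is an interval of $C(I')$ (by \cref{cyclichoms3}, or directly by monotonicity), and $F$ maps it onto the interval of $C(I)$ strictly between $\hat h(v)$'s image and the next. The element $i'$ must lie in such a gap-preimage for $F$ and in a possibly different one for $G$; since $F$ and $G$ agree on the cutpoints $\hat h(v)$ of $\Phi'$ that bound these gaps, and since $i'$ is strictly between the same two consecutive cutpoints of $\Phi'$ under both maps (an element of $I'$ cannot be a cutpoint, and its position relative to the fixed cutpoints of $\Phi'$ is intrinsic), the images $u = F(i')$ and $w = G(i')$ must both lie in the image of one single gap of $\Phi'$. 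Now I claim $i'$ is the unique element of $I'$ in that gap: if the gap-preimage contained two or more elements of $I'$, the inherited linear order on the gap would force $F$ (hence $G$) to send them to distinct, order-consistent elements of $I$, and then no reordering is possible, contradicting $i' \in I_N'$. So the relevant gap of $\Phi'$ contains exactly the single element $i'$, whence its image under $F$ is a single element (forcing the gap of $\Phi$ to be empty between two consecutive cutpoints or to contain just $u$), and likewise for $G$; therefore $u$ and $w$ are each a single element $i$ of $I$ sandwiched between two consecutive cutpoints of $\Phi$, and these two consecutive cutpoints are $\hat h(v)$ for the two cutpoints $v$ of $\Phi$ bounding the image-gap. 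This shows $u$ and $w$ are each adjacent (in $C(I)$) to those two cutpoints, so $u$ and $w$ share both neighbors; in particular they have a common neighbor $v(i') \in C(I)\setminus I$.

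Finally I would nail down $P_{i'}' = P_{v(i')}$ and that only two values of $i$ occur. For the cardinality claim: the image of $i'$ under any witness must be an element of $I$ strictly between the two fixed consecutive cutpoints of $\Phi$, and we just argued the gap between those cutpoints contains at most the relevant elements; but a witness is a \emph{map}, so $i'$ has a well-defined image under each witness, and as the gap of $C(I)$ between two consecutive cutpoints contains at most one element of $I$ on each ``side'' — more precisely, the argument above forces the gap to contain at most two elements of $I$, namely the successor $u$ of one cutpoint and the predecessor $w$ of the other (which coincide if the gap has one element) — the set of possible images has at most two elements. For $P_{i'}' = P_{v(i')}$: take the witness $F$ with $F(i') = u$. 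Since $i'$ sits between the two cutpoints $\hat h(v_-)$ and $\hat h(v_+)$ of $\Phi'$ with $F(\hat h(v_\pm)) = v_\pm$ and $u$ the successor of $v_-$ in $C(I)$, the defining relation $V(v_-, v_+) = X \cup \bigcup_{z \in F^{-1}([v_-,v_+])} P_z'$ combined with $P_{v_-} \cup P_u' \subseteq P_{F^{-1}(u)}' = $ (the petal $P_i$ of $\Phi$ at $i = u$) and the analogous inclusion for the $G$-picture, where $u$ becomes a \emph{cutpoint-adjacent} endpoint, pins $P_{i'}'$ between $P_{v_-}$-type and $P_{v(i')}$-type constraints; using \cref{intervalcontainingset} and the three-petal disjointness to rule out strict inclusions, one gets $P_{i'}' = P_{v(i')}$, and moreover $|P_{i'}'| = (k - |X|)/2$. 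I expect the main obstacle to be the first step — proving $\hat f = \hat g$, i.e.\ that the witnesses cannot disagree on cutpoints — since this is where one must genuinely exploit the three-petal hypothesis via the intersection calculus of \cref{intersectionofintervalsets} rather than pure order-theory; once the cutpoints are pinned, the rest is a careful but routine unwinding of what monotone surjections between cycle completions can do on the finitely many ``one-element gaps.''
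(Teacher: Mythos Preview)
Your central first step is false: witnesses of $\Phi\leq\Phi'$ do \emph{not} in general agree on cutpoints, so the claim $\hat f=\hat g$ cannot be proved. In fact this is precisely the content of the lemma immediately following this one in the paper, which shows that the preimage of a cutpoint $v\in C(I)\setminus I$ is the same under all witnesses \emph{only when $v$ is not of the form $v(i')$ for some $i'\in I_N'$}. Concretely: once the present lemma is established and $i'\in I_N'$ has two possible images that are the two neighbours of $v(i')$ in $C(I)$, the paper's \cref{modifywitnesses,newwitness} manufacture a witness $F$ sending $i'$ to one neighbour and a witness $G$ sending $i'$ to the other; under $F$ the cutpoint $\hat f(v(i'))$ must lie on one side of $i'$ in $C(I')$, under $G$ on the other, so $\hat f(v(i'))\neq\hat g(v(i'))$. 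Since your whole gap-analysis (locating $i'$ between ``fixed'' cutpoints of $\Phi'$, concluding there are only two possible images, etc.) rests on cutpoint-pinning, the argument collapses here rather than merely facing an obstacle.

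The paper's proof takes a completely different route that avoids comparing witnesses on cutpoints. It works directly with the petal data: from two distinct images $i_1,i_2$ of $i'$ one gets $P'_{i'}\subseteq P_{i_1}\cap P_{i_2}$, and the three-petal hypothesis (via \cref{intervalcontainingset}) forces $|P'_{i'}|=(k-|X|)/2$. Property~(\ref{star}) then rules out $P'_{i'}=P_i$ for any $i\in I$. The set $C=\{z\in C(I):P'_{i'}\subseteq P_z\}$ is a non-trivial interval, and a short argument using a vertex $x\in P_i\setminus P'_{i'}$ for any ``interior'' $i\in C\cap I$ shows that any witness must send $i'$ into one of the two end-intervals of $C\cap I$; this gives at most two possible images, neighbours in $I$, and their common neighbour $v(i')$ then satisfies $P'_{i'}=P_{v(i')}$ by one more application of the separator calculus. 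The key idea you are missing is to argue via the interval $C$ of \emph{indices containing $P'_{i'}$} rather than via cutpoint preimages.
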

\begin{proof}
In order to show that $P'_{i'}=P_v$ for some $v\in C(I)\setminus I$, let $i_1\neq i_2$ be possible images of $i'$ under witnesses of $\Phi\leq\Phi'$.
Then in particular $P'_{i'}\subseteq P_{i_1}$ and $P'_{i'}\subseteq P_{i_2}$.
By \cref{intervalcontainingset} there is $v_1\in C(I)\setminus I$ such that $P_{v_1}\cap P_{i_1}=\emptyset$.
Without loss of generality $v_1\in [i_1,i_2]$, and as $i_1\neq i_2$ there is $v_2\in [i_2,i_1]\setminus I$.
So
\begin{displaymath}
P'_{i'}\subseteq P_{i_1}\cap P_{i_2}\subseteq V(v_1,v_2)\cap V(v_2,v_1)=P_{v_1}\cup P_{v_2}\cup X,
\end{displaymath}
implying that $P'_{i'}\subseteq P_{v_2}$ and thus by first and third condition of $k$-pseudoflowers that $|P'_{i'}|=|P_{v_2}|=(k-|X|)/2$.

If there is $i\in I$ with $P_i=P'_{i'}$, then every witness $F$ of $\Phi\leq \Phi'$ has to map some $i_F'$ to $i$, and $P'_{i_F'}=P_i=P'_{i'}$ then implies by (\ref{star}) that $i_{F'}=i'$.
Thus in this case every witness of $\Phi\leq \Phi'$ maps $i'$ to $i$, contradicting $i'\in I'_N$.

Let $F$ be some witness of $\Phi\leq \Phi'$, then $P'_{i'}\subseteq P_{F(i')}$.
By \cref{intervalcontainingset} there is $z_1\in C(I)\setminus I$ such that $P_{F(i')}\cap P_{z_1}=\emptyset$, and the set $C:=\{z\in C(I): P'_{i'}\subseteq P_z\}$ is a non-trivial interval of $C(I)$.
There is $z_1'\in C(I')\setminus I'$ with $F(z_1')=z_1$.

Assume that there is $i\in C\cap I$ that is (in the linear order of the interval $C$) neither the biggest nor the smallest element of $C\cap I$.
Then the two neighbors $s$ and $p$ of $i$ in $C(I)$ are also contained in $C$, thus $P_s=P_p=P'_{i'}\neq P_i$.
Hence there is $x\in P_i$ with $x\notin P_z$ for all other $z\in C(I)$, and there is $z_2\in C(I')$ with $x\in P_{z_2}$.
Now $F(z_2)=i$.
If $i'\in \mathopen]z_1',z_2\mathclose[$ then $F(i')\in \mathopen]z_1,i]$ and otherwise $i'\in \mathopen]z_2,z_1'\mathclose[$, implying $F(i')\in [i, z_1]$.
Both $\mathopen]z_1,i]\cap C$ and $[i,z_1\mathclose[\cap C$ are intervals of $C$.

As $F$ was chosen as an arbitrary witness of $\Phi\leq \Phi'$, this implies that $C\cap I$ contains an interval of at most two elements that contains all images of $i'$ under witnesses of $\Phi\leq \Phi'$.
As $i'\in i_N'$ this implies that there are exactly two such images, and that they are neighbors in $I$ and hence have a common neighbor $v(i')$ in $C(I)$.
Then $v(i')\notin I$, and
\begin{displaymath}
    P'_{i'}\subseteq V(v_1,v(i'))\cap V(v(i'),v_1)=X\cup P_{v_1}\cup P_{v(i')}.
\end{displaymath}
As $P'_{i'}$ is disjoint from $X$ and $P_{v_1}$ and is at least as big as $P_{v(i')}$, this implies $P'_{i'}=P_{v(i')}$.
\end{proof}

\begin{lem}\label{witnessexistspreimage}
For every $i\in I$ there is some $i'\in I'\setminus I'_N$ such that every witness of $\Phi\leq \Phi'$ maps $i'$ to $i$.
\end{lem}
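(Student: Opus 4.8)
The plan is to argue by contradiction, exploiting that all witnesses of $\Phi\leq\Phi'$ agree on $I'\setminus I'_N$: if some witness maps some element of $I'\setminus I'_N$ to $i$, that element does the job, so I would assume that for every witness $F$ the set $J_F:=\{i'\in I':F(i')=i\}$ is contained in $I'_N$. Note $J_F\neq\emptyset$ since $F(I')=I$.

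First I would restrict the possible members of $J_F$. If $i'\in I'_N$ is mapped to $i$ by some witness, then by \cref{nonuniqueimagesunderwitnesses} the two images of $i'$ are neighbours in $I$, one of them is $i$, and their common neighbour $v(i')$ in $C(I)$ satisfies $P'_{i'}=P_{v(i')}$; as $v(i')$ is a neighbour of $i$ in $C(I)$ we get $v(i')\in\{p(i),s(i)\}$, and $|P'_{i'}|=(k-|X|)/2$ by the first condition of a $k$-pseudoflower. Two distinct such $i'$ with the same value of $v(i')$ would have equal petals of size $(k-|X|)/2$, so (\ref{star}) for $\Phi'$ would identify them; hence there is at most one candidate $i'_p$ with $v(i'_p)=p(i)$ and at most one candidate $i'_s$ with $v(i'_s)=s(i)$, and $J_F\subseteq\{i'_p,i'_s\}$ for every witness. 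If neither exists then $J_F=\emptyset$, impossible; if only one exists, say $i'_p$, then $J_F=\{i'_p\}$ for every witness, so every witness maps $i'_p$ to $i$, contradicting $i'_p\in I'_N$. So both $i'_p$ and $i'_s$ exist, and $i'_p\neq i'_s$ because $p(i)\neq s(i)$ (here I use $|I|\geq 3$, which holds since $\Phi$ extends a $k$-flower with three petals).

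The real work is now to contradict the existence of both candidates. Write $i^-$ and $i^+$ for the predecessor and successor of $i$ in $I$; by \cref{nonuniqueimagesunderwitnesses}, $i^-,p(i),i,s(i),i^+$ are consecutive in $C(I)$, the images of $i'_p$ are $\{i^-,i\}$ and those of $i'_s$ are $\{i,i^+\}$. I would pick a witness $G$ with $G(i'_p)=i^-$ and a witness $H$ with $H(i'_s)=i^+$; then $i'_p\notin J_G$ forces $J_G=\{i'_s\}$ and $i'_s\notin J_H$ forces $J_H=\{i'_p\}$. Monotonicity of $G$ and $H$ together with $p(i)\in\mathopen]i^-,i\mathclose[$ and $s(i)\in\mathopen]i,i^+\mathclose[$ places both cutpoints $\hat{f}_G(p(i))$ (the unique $G$-preimage of $p(i)$) and $\hat{f}_H(s(i))$ in the same open arc $\mathopen]i'_p,i'_s\mathclose[$ of $C(I')$. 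I would then check that this arc contains no element of $I'$: such an element $x$ is mapped by $G$ into $\{i^-,p(i),i\}$ and by $H$ into $\{i,s(i),i^+\}$, hence — being in $I'$, so mapped into $I$, and with $G(x)=i$ excluded by $J_G=\{i'_s\}$ and $H(x)=i$ excluded by $J_H=\{i'_p\}$ — we get $G(x)=i^-$ and $H(x)=i^+$, so $x\in I'_N$ has the two non-neighbouring images $i^-,i^+$, contradicting \cref{nonuniqueimagesunderwitnesses}. Next I would check that $\mathopen]i'_p,i'_s\mathclose[$ contains at most one cutpoint: for two cutpoints $a\neq b$ in it, \cref{distinctvertices} applied to the cyclically ordered set $I'$ gives that $[a,b]\cap I'$ is a non-trivial, hence non-empty, interval of $I'$ contained in $\mathopen]i'_p,i'_s\mathclose[$, which we just saw is disjoint from $I'$. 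Therefore $\hat{f}_G(p(i))=\hat{f}_H(s(i))=:w$. Now the definition of $\Phi\leq\Phi'$ gives $P'_w=P_{p(i)}$ (from $G$) and $P'_w=P_{s(i)}$ (from $H$), while \cref{nonuniqueimagesunderwitnesses} gives $P'_{i'_p}=P_{p(i)}$ and $P'_{i'_s}=P_{s(i)}$; hence $P'_{i'_p}=P'_{i'_s}$. This set has size $(k-|X|)/2$, so (\ref{star}) for $\Phi'$ yields $i'_p=i'_s$, the contradiction sought.

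I expect the genuine obstacle to be the geometric analysis of the arc $\mathopen]i'_p,i'_s\mathclose[$ in $C(I')$ in the last paragraph — arguing from monotonicity of the two witnesses $G$ and $H$, and the cyclic-order lemmas of \cref{sec:cycord}, that this arc avoids $I'$ and carries a single cutpoint, and then extracting the clash $P'_{i'_p}=P'_{i'_s}$ from the two witnesses. The candidate count and the two degenerate cases are routine once \cref{nonuniqueimagesunderwitnesses} and property (\ref{star}) are available.
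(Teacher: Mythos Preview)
Your overall strategy matches the paper's: reduce to exactly two candidates $i'_p,i'_s$ attached to the two neighbours $p(i),s(i)$ of $i$ in $C(I)$, and then force $P'_{i'_p}=P'_{i'_s}$ to contradict~(\ref{star}). The paper is much terser---it simply lets $v$ be the common neighbour of the two candidates in $C(I')$ and uses $P'_v\subseteq P'_{i_1}\cap P'_{i_2}$---whereas you try to justify this adjacency via an arc analysis in $C(I')$.

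There is, however, a genuine gap in your arc argument. You conclude that any $x\in\mathopen]i'_p,i'_s\mathclose[\cap I'$ satisfies $G(x)=i^-$ and $H(x)=i^+$, and then claim these are ``non-neighbouring images'', contradicting \cref{nonuniqueimagesunderwitnesses}. But the standing hypothesis is only that $\Phi$ extends a $k$-flower with \emph{three} petals, so $|I|$ may equal~$3$; in that case $i^-$ and $i^+$ \emph{are} neighbours in $I$ (their common neighbour in $C(I)$ is the cutpoint opposite $i$), and \cref{nonuniqueimagesunderwitnesses} is not violated. So the step ``this arc contains no element of $I'$'' fails for $|I|=3$, and with it your conclusion $\hat f_G(p(i))=\hat f_H(s(i))$.

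The fix is to bypass the arc analysis. From $J_H=\{i'_p\}$ and \cref{cyclichoms3} you already get $[\hat f_H(p(i)),\hat f_H(s(i))]\cap I'=\{i'_p\}$, so by \cref{distinctvertices} $\hat f_H(s(i))$ is the \emph{successor} of $i'_p$ in $C(I')$. Since $H$ is a witness, $P'_{\hat f_H(s(i))}=P_{s(i)}$, while the third pseudoflower axiom gives $P'_{\hat f_H(s(i))}\subseteq P'_{i'_p}=P_{p(i)}$. Both sets have size $(k-|X|)/2$, so $P_{s(i)}=P_{p(i)}$ and hence $P'_{i'_p}=P'_{i'_s}$, the desired contradiction---uniformly for all $|I|\geq 3$, and without ever needing $i'_p$ and $i'_s$ to be adjacent. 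This is essentially what the paper's ``common neighbour'' line is reaching for.
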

\begin{proof}
Let $I'_i$ be the set of those elements of $I'$ that are mapped to $i$ by some witness of $\Phi\leq\Phi'$.
If $I'_i$ contains only one element, then the lemma holds, so assume otherwise.
Assume for a contradiction that all elements of $I'_i$ are contained in $I'_N$.
Denote the neighbors of $i$ in $C(I)$ by $s$ and $p$, then by \cref{nonuniqueimagesunderwitnesses} every element $i'\in I'_i$ satisfies either $P_{i'}=P_s$ or $P_{i'}=P_p$, hence there are exactly two such elements $i_1$ and $i_2$.
Again by \cref{nonuniqueimagesunderwitnesses}, $P_{i_1}$ and $P_{i_2}$ have both the same size as all sets $P_v$ with $v \in C(I) \setminus I$.
But if $v$ denotes the common neighbour of $i_1$ and $i_2$ in $C(I')$, then $P_v \subseteq P_{i_1} \cap P_{i_2}$, implying $P_{i_1} = P_{i_2}$ and thus contradicting property (\ref{star}).
\end{proof}

\begin{lem}
    For every $v\in C(I)\setminus I$ that is not of the form $v(i')$ for some $i'\in I'_N$ there is a unique $v'\in C(I')\setminus I'$ that is mapped to $v$ by all witnesses of $\Phi\leq \Phi'$.
\end{lem}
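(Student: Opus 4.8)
The plan is to extract a candidate cutpoint from one witness via the structure map of \cref{cyclichoms2}, and then to argue that no other witness can send $v$ to a different cutpoint without making $v$ one of the excluded cutpoints $v(i')$, $i'\in I'_N$. First, fix any witness $F$ of $\Phi\leq\Phi'$. By \cref{cyclichoms2} and the injective monotone map $C(I)\setminus I\to C(I')\setminus I'$ it provides, $F^{-1}(v)$ is a single cutpoint $\{v'\}$ of $C(I')$; since $G^{-1}(v)$ is a single cutpoint for every witness $G$ as well, it suffices to prove $G^{-1}(v)=\{v'\}$ for all $G$.

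Suppose not: some witness $G$ has $G^{-1}(v)=\{v_G\}$ with $v_G\neq v'$. As $\Phi$, hence $\Phi'$, extends a $k$-flower with three petals, $|I|\geq 3$; choose $i_1,i_2\in I$ with $v\in\mathopen]i_1,i_2\mathclose[$ so that $i_1$ and $i_2$ have no common neighbour of $C(I)$ on the arc $\mathopen]i_2,i_1\mathclose[$ — taking $i_1,i_2$ to be the predecessor and successor of $v$ in $C(I)$ when those exist, and otherwise a suitable pair available because $|I|\geq 3$. By \cref{witnessexistspreimage} there are $i_1',i_2'\in I'\setminus I'_N$ that every witness maps to $i_1,i_2$. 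Since $F$ and $G$ are surjective monotone maps onto $C(I)$, preimages of intervals are intervals, so $x\in\mathopen]u,w\mathclose[$ forces $F(x)\in[F(u),F(w)]$, and likewise for $G$; applying this with the anchors $i_1',i_2'$ shows that both $v'$ and $v_G$ lie in $\mathopen]i_1',i_2'\mathclose[$, and after possibly swapping $F$ and $G$ we may assume the cyclic order along this arc is $i_1',v',v_G,i_2'$.

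Now pick $a'\in\mathopen]v',v_G\mathclose[\,\cap I'$, which is non-empty by \cref{distinctvertices}. The implication above gives $F(a')\in\mathopen]v,i_2]\cap I$ and $G(a')\in[i_1,v\mathclose[\,\cap I$, two disjoint subsets of $C(I)$, so $F(a')\neq G(a')$ and hence $a'\in I'_N$. Let $v(a')$ be the common neighbour in $C(I)$ of the two witness-images of $a'$ provided by \cref{nonuniqueimagesunderwitnesses}; these images are exactly $F(a')$ and $G(a')$, they are consecutive in $I$, and $v$ lies strictly between them on the arc $\mathopen]G(a'),F(a')\mathclose[$. By the choice of $i_1,i_2$ the common neighbour cannot be a cutpoint on the far arc $\mathopen]F(a'),G(a')\mathclose[$, so $v(a')=v$ — contradicting the hypothesis on $v$ and completing the proof.

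I expect the main obstacle to be this last step, namely verifying that $v(a')=v$ rather than the cutpoint on the opposite side of the consecutive pair $\{F(a'),G(a')\}$; this is immediate when $v$ has a predecessor and a successor in $C(I)$ (so that $\mathopen]i_1,i_2\mathclose[=\{v\}$, forcing $\{F(a'),G(a')\}=\{i_1,i_2\}$, which by $|I|\geq 3$ have no common neighbour on the other arc), but in an infinite cyclic order $v$ need not have neighbours, and there one has to fall back on property~(\ref{star}) — distinct elements of $\mathopen]v',v_G\mathclose[\,\cap I'$ carry distinct petals of size $(k-|X|)/2$ — together with \cref{verticesinducesintervalsofV(C)}, \cref{intervalcontainingset}, and the interval-set condition in the definition of a witness.
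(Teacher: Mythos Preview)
Your approach is essentially correct, and it is genuinely different from the paper's: the paper works with the vertex sets $P_z$ and the interval-set condition to pin down a unique $\hat{\imath}\in\mathopen]v_1,v_2\mathclose[$ and show $P_{\hat\imath}=P_v$, whereas you argue purely with the cyclic-order combinatorics using \cref{witnessexistspreimage} and \cref{nonuniqueimagesunderwitnesses}. Your route is more elementary in that it never touches the underlying universe of separations.

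However, the last step is both slightly mis-argued and easier than you fear. In the sentence ``By the choice of $i_1,i_2$ the common neighbour cannot be a cutpoint on the far arc $\mathopen]F(a'),G(a')\mathclose[$'' you are applying to the pair $\bigl(F(a'),G(a')\bigr)$ a property you only arranged for the pair $(i_1,i_2)$; as stated this is a non-sequitur. The clean fix is to observe that your condition on $i_1,i_2$ is equivalent to $\mathopen]i_2,i_1\mathclose[\cap I\neq\emptyset$, and since $F(a')\in\mathopen]v,i_2]$ and $G(a')\in[i_1,v\mathclose[$ the far arc $\mathopen]F(a'),G(a')\mathclose[$ of $C(I)$ contains $\mathopen]i_2,i_1\mathclose[$, hence also an element of $I$. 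Consequently the consecutiveness of $F(a')$ and $G(a')$ in $I$ forces the near arc $\mathopen]G(a'),F(a')\mathclose[$ (through $v$) to be the one free of $I$-elements; by \cref{distinctvertices} this arc carries a unique cutpoint, which is simultaneously $v(a')$ and $v$. So $v(a')=v$ in every case, and your final paragraph's detour through property~(\ref{star}), \cref{verticesinducesintervalsofV(C)} and \cref{intervalcontainingset} is unnecessary. (Note also that when $v$ has no neighbour in $C(I)$ it cannot equal any $v(i')$, so the hypothesis is automatic there.)
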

\begin{proof}
    As $\Phi$ extends a $k$-flower with three petals, there is $i\in I$ such that $P_v\cap P_i=\emptyset$ and $v$ is not a neighbor of $i$.
    Denote the predecessor and successor of $i$ in $C(I)$ by $p$ and $s$ respectively.
    In particular $p\neq v\neq s$.
    Let $i'$ be an element of $I'$ that is mapped to $i$ by every witness that $\Phi\leq \Phi'$ and denote its successor in $C(I')$ by $s'$.

    Assume that there are witnesses $F_1$ and $F_2$ of $\Phi\leq \Phi'$ such that $F_1^{-1}(v)=v_1$ and $F_2^{-1}(v)=v_2$ and $v_1\in \mathopen]s,v_2\mathclose[$.
    Then for $j\in \{1,2\}$, $F_j^{-1}(s)\in [s',v_j]$ and thus $V(s',v_2)\subseteq V(p,v)$ and $V(v_1,s')\subseteq V(v,s)$.
    Hence every element of $V(v_1,v_2)$ is contained in $V(i)\cup V(s,v)$ and in $V(v,s)$.
    But $V(v,s)\cap (V(i)\cup V(s,v))=V(i) \cup P_v$ and $V(v_1,v_2)\cap (V(i)\cup P_v)\subseteq V(v_1,v_2)\cap V(v_2,v_1)=X\cup P_v$, hence every element of $V(v_1,v_2)$ is contained in $X\cup P_v$.
    So $P_{\hat{\imath}}=P_v$ for all $\hat{\imath}\in [v_1,v_2]$ and in particular there can be at most one such $\hat{\imath}$.
    
    As $v_1\neq v_2$, some such $\hat{\imath}$ exists.
    Then $F_2(\hat{\imath})\in [s,v]$ and, because $P_{\hat{\imath}}\cap P_i=P_v\cap P_i=\emptyset$, $F_1(\hat{\imath})\in [v,p]$.
    So $\hat{\imath}\in I'_N$ and $v(\hat{\imath})=v$, contradicting the assumptions about $v$.
\end{proof}

Let $F$ be a witness that $\Phi\leq \Phi'$ and $f$ the restriction of $F$ to $I'$.
Let $f':I'\rightarrow I$ be obtained from $f$ by choosing, for every $i'\in I'_N$, some neighbor of $v(i')$ in $C(I)$ as the image of $f'$ and mapping every other element of $I'$ to its image under $f$.
Let $F':C(I')\rightarrow C(I)$ be the unique surjective map respecting the cyclic order whose restriction to $I'$ is $f'$ (see \cref{cyclichoms1}).

\begin{lem}\label{modifywitnesses}
The map $F'$ is well-defined.
\end{lem}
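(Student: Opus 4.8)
The statement to prove is that the map $F'$ constructed just before \cref{modifywitnesses} is well-defined. By \cref{cyclichoms1}, there is a unique surjective monotone map $F':C(I')\to C(I)$ whose restriction to $I'$ is $f'$, \emph{provided} that $f'$ is itself a surjective monotone map from $I'$ to $I$. So the content of the lemma is exactly: the map $f'$ obtained from $f$ by rerouting each $i'\in I'_N$ to a chosen neighbour of $v(i')$ is still surjective and monotone. Surjectivity is the easy half: by \cref{witnessexistspreimage}, for every $i\in I$ there is an element of $I'\setminus I'_N$ mapped to $i$ by \emph{every} witness, in particular by $f$; since $f$ and $f'$ agree outside $I'_N$, $f'$ is still surjective. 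The real work is monotonicity.

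To prove monotonicity of $f'$, I would argue that $f'$ differs from the (monotone) map $f$ only on the set $I'_N$, and that on $I'_N$ the modification is ``local'' in a controlled sense. Concretely, fix $i'\in I'_N$ with the two possible images $i_1,i_2$ (the two neighbours of $v(i')$ in $C(I)$, by \cref{nonuniqueimagesunderwitnesses}), and say $f$ sends $i'$ to $i_1$ while $f'$ sends it to $i_2$. Since $i_1$ and $i_2$ are neighbours in $I$ with $v(i')$ their common neighbour in $C(I)$, the only element of $C(I)$ strictly between them is $v(i')$, which lies in $C(I)\setminus I$ and hence is not in the image of either $f$ or $f'$. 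I would then check the monotonicity condition $f'(b)\in\,]f'(a),f'(c)[\,\Rightarrow b\in\,]a,c[$ triple by triple: if none of $a,b,c$ lies in $I'_N$, this is just monotonicity of $f$; if one or more of them does, one uses that moving $f(i')$ from $i_1$ to the adjacent $i_2$ cannot change whether $f(\cdot)$ lies in an open interval $]f(a),f(c)[$ unless that interval is so small that it is ``pinched'' exactly at $\{i_1,i_2\}$ — but such an interval would contain the cutpoint $v(i')$, which is not in the image, ruling it out. The key auxiliary fact I would isolate first is therefore: \emph{if $v(i')$ and $v(i'')$ are common neighbours associated to two elements of $I'_N$, either these cutpoints coincide (hence $i'$ and $i''$ share the same pair $\{i_1,i_2\}$) or they are distinct and the open intervals they bound are well-separated}; this is essentially \cref{verticesinducesintervalsofV(C)} and \cref{intervalcontainingset} applied to the interval $\{z : P'_{i'}\subseteq P_z\}$, which by \cref{nonuniqueimagesunderwitnesses} has the form $[i_1,i_2]$ or equals $\{v(i')\}$-bounded.

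A cleaner route that I would try in parallel: show directly that $f'$ \emph{is} the restriction to $I'$ of some honest witness $F'$, by exhibiting $F'$ as a composition. Namely, each individual rerouting of a single $i'\in I'_N$ from $i_1$ to $i_2$ can be realised by a monotone self-map, and since the $v(i')$ for distinct non-identified $i'$ are distinct cutpoints (so the reroutings have disjoint ``support'' in $C(I)$), these commute and can be performed simultaneously. The main obstacle I anticipate is the bookkeeping when several elements of $I'_N$ share the \emph{same} associated cutpoint $v(i')$ — a priori more than one $i'$ could have $P'_{i'}=P_{v(i')}$ — but property (\ref{star}) together with \cref{nonuniqueimagesunderwitnesses} forces such an $i'$ to be essentially unique (two of them would have equal $P'$-sets of size $(k-|X|)/2$), so in fact the map $i'\mapsto v(i')$ is injective on $I'_N$ and the reroutings really are independent. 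Once that injectivity is in hand, monotonicity of $f'$ reduces to checking it one cutpoint at a time, which is the routine case analysis sketched above.
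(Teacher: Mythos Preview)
Your overall plan matches the paper's: reduce the well-definedness of $F'$ to surjectivity and monotonicity of $f'$, get surjectivity from \cref{witnessexistspreimage}, and then argue monotonicity by changing one element of $I'_N$ at a time. The paper carries out exactly this reduction. Where your proposal diverges from the paper, and where it has a genuine gap, is in the treatment of the single-change case.

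Your argument for a single change is that moving $f(i')$ from one neighbour of $v(i')$ to the other ``cannot change whether $f(\cdot)$ lies in an open interval $]f(a),f(c)[$'' unless that interval is pinched at $v(i')$, which you then rule out because $v(i')\notin I$. This is not a valid argument. The dangerous case is not when the interval in $I$ is small; it is when $f(a)$, $f(b)$, $f(c)$ are \emph{not} pairwise distinct, so that monotonicity of $f$ says nothing about the triple $(a,b,c)$, while after the change $f'(a),f'(b),f'(c)$ \emph{become} pairwise distinct and $f'(b)\in\mathopen]f'(a),f'(c)\mathclose[$. Concretely, say $f'$ agrees with $f$ except that $f'(i_1)$ is the successor of $f(i_1)$, and suppose $f(i_1)=f(i_3)$; then $f'(i_2)\in\mathopen]f'(i_1),f'(i_3)\mathclose[$ is a genuinely new constraint, and nothing in your ``pinched interval'' heuristic gives $i_2\in\mathopen]i_1,i_3\mathclose[$. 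The paper handles precisely this case by bringing in an auxiliary element $i_1'\in I'\setminus I'_N$ that is mapped to $f'(i_1)$ by \emph{every} witness (this exists by \cref{witnessexistspreimage}); one then compares the two intervals $f^{-1}(f(i_1))$ and $f'^{-1}(f'(i_1))$, which between them contain $i_1,i_3$ and $i_1,i_1'$ respectively but neither contains $i_2$, and reads off $i_2\in\mathopen]i_1,i_3\mathclose[$ from the resulting interval arrangement. Your sketch lacks this auxiliary-element idea, and without it the single-change case does not close.

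Your alternative route, constructing $F'$ directly as some composition of ``monotone self-maps'' realising the individual reroutings, is not developed enough to evaluate; as written it risks circularity, since showing such a map is a witness (which is \cref{newwitness}) presupposes that it is monotone, i.e.\ that $F'$ is well-defined. The injectivity of $i'\mapsto v(i')$ you isolate is correct (by property~(\ref{star}) and \cref{nonuniqueimagesunderwitnesses}) but is not what drives the argument; the paper's reduction to single changes works regardless of whether distinct $i'\in I'_N$ share a cutpoint.
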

\begin{proof}
    It suffices to show that $f'$ is surjective and respects the cyclic order.
    As for every $i\in I$ there is some $i'\in I'$ that is mapped to $i$ by all witnesses of $\Phi\leq \Phi'$ and is in particular not contained in $I_N'$, $f'$ is indeed surjective.
    
    In order to show that $f'$ respects the cyclic order, consider elements $i_1$, $i_2$ and $i_3$ of $I'$ such that $f'(i_2)\in \mathopen]f'(i_1),f'(i_3)\mathclose[$.
    In order to show that $i_2\in \mathopen]i_1,i_3\mathclose[$ it suffices to consider the case that $f'$ differs from $f$ in at most three elements of $I$, and for that it is enough to show that $f'$ respects the cyclic order if it maps only one element of $I'$ differently than $f$.
    So assume that $f=f'$ except that $f'(i_1)$ is the successor of $f(i_1)$, the other cases are symmetric.
    Let $i_1'$ be an element of $I'$ that is mapped to $f'(i_1)$ by all witnesses of $\Phi\leq \Phi'$.
    In particular $f(i_1')=f'(i_1')$, and $f(i_1)$ is the predecessor of $f(i_1')$.
    If $f(i_1)\neq f(i_3)$, then $i_2\in \mathopen]i_1,i_3\mathclose[$ because $f$ respects the cyclic order.
    Otherwise $f^{-1}(f(i_1))$ is an interval of $I'$ containing both $i_1$ and $i_3$ but neither $i_2$ nor $i_1'$, and $f'^{-1}(f'(i_1))$ is an interval of $I'$ containing both $i_1$ and $i_1'$ but neither $i_2$ nor $i_3$.
    Together with $i_2\in \mathopen]i_1,i_3\mathclose[$ this implies that $i_1\in \mathopen]i_3,i_1'\mathclose[$ and hence that $i_1\in \mathopen]i_3,i_2\mathclose[$.
\end{proof}

\begin{lem}\label{newwitness}
The map $F'$ is a witness of $\Phi\leq \Phi'$.
\end{lem}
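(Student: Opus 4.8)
The plan is to verify directly that $F'$ satisfies the defining conditions of a witness of $\Phi\le\Phi'$: namely that $X=X'$ (which is inherited from $F$ and untouched by the construction), that $F'$ is surjective and respects the cyclic order (already established in \cref{modifywitnesses}), and that $V(v,w)=X\cup\bigcup_{z\in F'^{-1}([v,w])}P'_z$ for all distinct $v,w\in C(I)\setminus I$. Only the last equation requires work, and by \cref{cyclichoms3} applied to both $F$ and $F'$ it suffices to compare the preimages $F^{-1}([v,w])$ and $F'^{-1}([v,w])$ for $v,w\in C(I)\setminus I$, since $F$ is already known to be a witness. The key point will be that $F'$ and $F$ agree on $C(I')\setminus I'$ and differ only by moving certain elements $i'\in I'_N$ from one side of $v(i')$ to the other.

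The main steps I would carry out are as follows. First, using the lemma preceding \cref{modifywitnesses} (the one producing, for each $v\in C(I)\setminus I$ not of the form $v(i')$, a unique $v'\in C(I')\setminus I'$ mapped to $v$ by all witnesses), together with \cref{cyclichoms2}, observe that every witness — in particular both $F$ and $F'$ — induces the \emph{same} injective monotone map $\hat f:C(I)\setminus I\to C(I')\setminus I'$; this is exactly what lets \cref{cyclichoms3} compute $F'^{-1}([v,w])=[\hat f(v),\hat f(w)]=F^{-1}([v,w])$ as subsets of $C(I')$ \emph{except possibly} for the elements of $I'_N$. Second, analyze those elements: if $i'\in I'_N$ with $v(i')\notin\{v,w\}$ and more generally $v(i')\notin[\hat f(v),\hat f(w)]$ and $v(i')\notin[\hat f(w),\hat f(v)]$ is impossible, so $v(i')$ lies in (the cutpoint-closure of) exactly one of the two arcs; since the two possible $f$-images of $i'$ are the two neighbors of $v(i')$, both $f(i')$ and $f'(i')$ lie on the same side of $\{v,w\}$, hence $i'\in F^{-1}([v,w])$ iff $i'\in F'^{-1}([v,w])$. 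This shows $F^{-1}([v,w])=F'^{-1}([v,w])$ for all $v,w\in C(I)\setminus I$. Third, conclude
\[
X\cup\bigcup_{z\in F'^{-1}([v,w])}P'_z \;=\; X\cup\bigcup_{z\in F^{-1}([v,w])}P'_z \;=\; V(v,w),
\]
the last equality because $F$ is a witness.

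The step I expect to be the main obstacle is the second one: pinning down, for $i'\in I'_N$, that the two candidate images $f(i')$ and $f'(i')$ genuinely lie on the same side of every cutpoint-pair $\{v,w\}\subseteq C(I)\setminus I$. This needs \cref{nonuniqueimagesunderwitnesses} (the two images are neighbors of $v(i')$, which is itself a cutpoint, so $v(i')\neq v,w$) and \cref{distinctvertices} (so that $[v,w]\cap I$ and $[w,v]\cap I$ are genuine non-trivial intervals that partition $I$ and each side is determined by whether it contains $v(i')$); one must be slightly careful that a cutpoint adjacent to $v(i')$ could coincide with $v$ or $w$ — but it cannot, since $v(i')\in C(I)\setminus I$ already has both neighbors in $I$, so its neighbors are not cutpoints. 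Once this is clear, the equality of preimages is immediate and the rest is bookkeeping that we have set up via \cref{cyclichoms3}.
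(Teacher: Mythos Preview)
Your central claim, that $F^{-1}([v,w])=F'^{-1}([v,w])$ for all cutpoints $v,w\in C(I)\setminus I$, is false, and the gap is exactly the case you never treat: $v(i')\in\{v,w\}$. Suppose $i'\in I'_N$ with $v(i')=v$, and $f$ and $f'$ send $i'$ to the two different neighbours of $v$ in $C(I)$; say $f(i')$ is the predecessor of $v$ and $f'(i')$ is the successor. Then $f(i')\notin[v,w]$ while $f'(i')\in[v,w]$, so $i'\in F'^{-1}([v,w])\setminus F^{-1}([v,w])$. Your first step is already wrong for the same reason: the lemma you cite only guarantees that $\hat f_F(v)=\hat f_{F'}(v)$ when $v$ is \emph{not} of the form $v(i')$; when $v=v(i')$ the two cutpoint preimages are the two neighbours of $i'$ in $C(I')$ and are genuinely different. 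So \cref{cyclichoms3} gives two different intervals $[\hat f_F(v),\hat f_F(w)]$ and $[\hat f_{F'}(v),\hat f_{F'}(w)]$.

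The paper's proof does not try to make the preimages equal. It observes that the symmetric difference of $F^{-1}([v,w])$ and $F'^{-1}([v,w])$ consists only of elements $i'\in I'_N$ with $v(i')\in\{v,w\}$ together with (at most) one neighbour of each such $i'$ in $C(I')$. For these elements, \cref{nonuniqueimagesunderwitnesses} gives $P'_{i'}=P_{v(i')}\in\{P_v,P_w\}$, and since a neighbour $n'$ of $i'$ in $C(I')$ is a cutpoint with $P'_{n'}\subseteq P'_{i'}$ and $|P'_{n'}|=|P_v|$, also $P'_{n'}\in\{P_v,P_w\}$. As both preimage intervals already contain a cutpoint whose $P'$-set equals $P_v$ (namely their respective left endpoints) and one equal to $P_w$, adding or removing these boundary elements does not change $X\cup\bigcup_z P'_z$. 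That is the missing idea: equality of the unions, not of the index sets.
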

\begin{proof}
Let $v$ and $w$ be distinct elements of $C(I)\setminus I$.
If $F'^{-1}([v,w])$ differs from $F^{-1}([v,w])$, then it does so in elements $i$ of $I_N'$ with $v(i)=v$ or $v(i)=w$ and one of their neighbors.
Thus
\begin{displaymath}
X\cup \bigcup_{z\in F^{-1}([v,w])} P_z'=X\cup \bigcup_{z\in F'^{-1}([v,w])} P_z'.\qedhere
\end{displaymath}
\end{proof}

\subsection{\texorpdfstring{Tangles and profiles in $k$-pseudoflowers}{Tangles and profiles in pseudoflowers}}

The theory presented in this paper will be presented for two very similar objects: Profiles and tangles.
The main results hold for the more general object, namely profiles.
Thus they also hold for tangles, and in this case the proof for tangles is shorter.
As the term tangle is more widely known than the term profile, the main results will be presented for both tangles and profiles, with the main differences to be found in this subsection.
In the rest of the paper, we will simply work with a set $\mathcal{P}$ of $k+1$-profiles, and as every $k+1$-tangle is also a $k+1$-profile the reader is free to think of a set of tangles.

The following definitions are again to be found in \cite{ASS}.
Given a separation systems $S_k$ of a universe of vertex separations, a \emph{consistent orientation} of $S_k$ is a set $O \subseteq S_k$ such that for all $s \in S_k$, exactly one element of the set $\{s,s^*\}$ is contained in $O$, and such that for two elements $r,s \in O$ if $r^* \leq s$ then $\{r,r^*\} = \{s,s^*\}$.
A \emph{$k$-profile} of $S_k$ is a consistent orientation $P$ such that for all elements $r$ and $s$ of $P$ $(r\vee s)^*\notin P$ holds.
A $k$-tangle of $S_k$ is a consistent orientation $T$ of $S_k$ such that for all elements $(A_1,B_1)$, $(A_2,B_2)$ and $(A_3,B_3)$ of $T$, $(A_1,B_1) \vee (A_2,B_2) \vee (A_3,B_3)$ is not cosmall, which for universes of vertex separations means that $A_1 \cup A_2 \cup A_3 \neq V$.

Given a graph, a $k$-profile of that graph is defined to be a profile of $S_k$ in the universe of vertex separations of $G$.
So it is obvious what the definition of a $k$-profile of a graph-like space or a limit-closed universe of vertex separations should be: A $k$-profile of the universe of all vertex separations.
Similarly, a $k$-tangle of a limit-closed universe of vertex separations is a tangle of $S_k$.

Readers interested in the technical details might note that if the universe is the universe of vertex separations of a graph, then this definition of a $k$-tangle is not exactly the same as the definition of a $k$-tangle of a graph.
The two definitions are very close, however, and every $k$-tangle of the graph is also a $k$-tangle of the universe. 

For finite $k$-flowers, a $(k+1)$-tangle points towards a unique petal $i$ of the flower in the sense that the $(k+1)$-tangle contains $S(i)$.
A slightly weaker form of this property also holds in infinite $k$-pseudoflowers.
Given a $k$-pseudoflower $\Psi$ on index set $I$ a $k+1$-profile $P$ \emph{is located by $\Psi$} if there is some $v \in C(I) \setminus I$ such that either $\forall w\in C(I)\setminus I -v:S(v,w)\in P$ or $\forall w\in C(I)\setminus I - v:S(w,v)\in P$.

For tangles we always have this property.
\begin{lem}\label{placetanglesinflower}
Let $\Phi$ be a $k$-pseudoflower and let $T$ be a $k+1$-tangle.
Then $T$ is located by $\Phi$.
\end{lem}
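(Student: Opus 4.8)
The plan is to use the tangle axiom --- that for any three separations $(A_1,B_1),(A_2,B_2),(A_3,B_3)$ in a $(k+1)$-tangle $T$, the union $A_1\cup A_2\cup A_3$ is not all of $V$ --- to rule out too many ``small sides'' pointing inward. Concretely, for each pair of distinct cutpoints $v,w\in C(I)\setminus I$ the tangle $T$ orients the interval separation $S(v,w)$ one way or the other, since $S(v,w)$ has order at most $k$. I want to find a single cutpoint $v$ such that $T$ orients every $S(v,w)$ ``away from $v$'', i.e.\ either $S(v,w)\in T$ for all $w$ or $S(w,v)\in T$ for all $w$.

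First I would set up the right partial structure on $C(I)\setminus I$. For each ordered pair $(v,w)$ of distinct cutpoints, say $v$ \emph{beats} $w$ if $S(w,v)\in T$ (so the small side $V(w,v)$, which contains $P_w$, is the ``$A$-side'' --- intuitively $T$ lives on the $v$-side). I would then show this is close to a tournament-like relation and that $T$ cannot ``chase its tail'' around the cyclic order. The key computation: if $v_1,v_2,v_3$ are three cutpoints in cyclic order and $T$ orients the three interval separations so that each $V(v_{j},v_{j+1})$ (indices mod $3$) has its separator-complement-small-side pointing the ``wrong'' way, then the three relevant $A$-sides cover $V$, contradicting the tangle axiom. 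Precisely, $V(v_1,v_2)\cup V(v_2,v_3)\cup V(v_3,v_1)=V$ because every element of $V$ lies in some $P_z$ (or in $X$, which is in all three), and $z$ lies in one of the three arcs $[v_1,v_2],[v_2,v_3],[v_3,v_1]$. Hence $T$ cannot contain all three of $S(v_2,v_1),S(v_3,v_2),S(v_1,v_3)$; by the cyclic/antisymmetric symmetry, it also cannot contain all three of the reverses in the other rotational sense --- but actually one of the two rotational orientations is always consistent, and I'd extract from this that the ``beats'' relation, restricted to any finite set of cutpoints, has no $3$-cycle, hence (being essentially a tournament up to the consistency of $T$) is a linear order on each finite subset, compatibly as the finite subsets grow.

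From here I would argue there is a ``sink'' cutpoint. Pick any cutpoint $v_0$; if some $w$ beats $v_0$, replace $v_0$ by $w$; I need this to terminate or stabilize. In the infinite case I'd instead use a compactness/limit argument: consider the orientations $O_w=\{x : x\text{-side chosen}\}$... more cleanly, for each finite set $D\subseteq C(I)\setminus I$ the consistent orientation induced by $T$ on the (finite) set of interval separations among $D$ is located by the subpseudoflower $\Phi(D)$ --- this is exactly the finite case, where a $(k+1)$-tangle of a finite $k$-pseudoflower points at a unique petal, giving a cutpoint $v_D\in D$ that is the sink among $D$. The sinks $v_D$ form a directed system under the ``beats'' linear orders, and by a König-type / filter argument (the family of intervals $\{w : v_D\text{ not beaten within }D\}$ has the finite intersection property inside the cycle completion, which is compact in the relevant order sense) there is a cutpoint $v$ that is the sink for cofinally many $D$, hence for all $w$ the separation $S(v,w)$ (or uniformly $S(w,v)$) lies in $T$. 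That $v$ witnesses that $T$ is located by $\Phi$.

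The main obstacle I anticipate is the infinite limit step: in a finite $k$-flower ``the tangle points at a unique petal'' is classical, but here the index set may be an arbitrary infinite cyclic order, so I cannot just take a maximum of the ``beats'' order. I expect to need \cref{distinctvertices} (so that $[v,w]\cap I$ is a genuine non-trivial interval, hence $V(v,w)$ really is the separator-plus-one-side as intended) and the cycle-completion machinery of \cref{sec:cycord} to make sense of passing to limits of nested intervals, together with the observation above that the three interval sets around any triangle of cutpoints cover $V$. A secondary subtlety is handling the degenerate possibility that $C(I)\setminus I$ has very few elements or that $I$ itself is small, but $|I|\ge 2$ forces $|C(I)\setminus I|\ge 2$, and a case with exactly two cutpoints is immediate since then there is only one interval separation to orient.
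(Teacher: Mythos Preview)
Your tournament framing is a valid intuition, and the observation that the tangle covering axiom forbids both rotational $3$-cycles (so the ``beats'' relation is a genuine linear order on any finite set of cutpoints) is correct. But the passage from ``sink on every finite $D$'' to a global sink is where your outline has a real gap. Having a transitive tournament on an infinite set does not by itself give a minimum: think of $\mathbb{Z}$ with its usual order. Your proposed fix via ``the family of intervals has the finite intersection property, and the cycle completion is compact'' is not pinned down, and as written it is not clear what filter you are intersecting or why its intersection lands on a cutpoint rather than on an element of $I$ or on nothing at all. To make this work you would have to use \emph{consistency} of $T$ (not just the covering axiom) to show that the beats-order is actually a cut of the cyclic order on $C(I)\setminus I$, and then invoke the order-completeness of the cycle completion to extract the endpoint. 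You never explicitly use consistency in your sketch, and that is precisely the missing structural ingredient.

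The paper's proof is much shorter and avoids any limit or inverse-system machinery. Fix one cutpoint $u$ and look at $V'=\{w\in (C(I)\setminus I)-u : S(u,w)\in T\}$. Consistency of $T$ (together with the fact that $S(u,w_1)\le S(u,w_2)$ whenever $w_1\in[u,w_2]$) makes $V'$ an initial segment of the linear order on $(C(I)\setminus I)-u$ obtained by cutting at $u$; by completeness of the cycle completion this initial segment is either $\mathopen]u,v\mathclose[$ or $\mathopen]u,v]$ for a single cutpoint $v$. One then checks directly, using the tangle property once (applied to two interval separations whose $A$-sides together with a third cover $V$), that this $v$ is the witness: in the first case $S(v,w)\in T$ for all $w$, in the second $S(w,v)\in T$ for all $w$. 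No K\"onig-type argument, no net of finite subflowers, no filter --- just one base point, one interval, one endpoint.

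So the conceptual content of your sketch is not wrong, but you are rediscovering, in a roundabout way, exactly the step the paper does in two lines: consistency gives you the interval form, completeness gives you the endpoint $v$, and the tangle property certifies $v$. Drop the tournament packaging and argue directly.
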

\begin{proof}
Let $I$ be the index set of $\Phi$ and let $u\in C(I)\setminus I$. Let $V'$ be the set of all elements $w$ of $(C(I)\setminus I)-u$ such that $S(u,w)\in T$.
If $V'$ is empty or $(C(I)\setminus I)-u$, then $u$ witnesses that $T$ is located by $\Phi$, so assume otherwise.
As $T$ is consistent, $V'$ is an interval of $C(I)\setminus I$ and thus of the form $\mathopen]u,v\mathclose[\cap C(I)\setminus I$ or $\mathopen]u,v]\cap C(I)\setminus I$ for some $v\in C(I)\setminus I$.
First consider the case that $V'$ is of the form $\mathopen]u,v\mathclose[\cap C(I)\setminus I$ for some $v\in C(I)\setminus I$.
Then $S(u,v)$ is not an element of $T$ and thus $S(v,u)\in T$.
Hence for all $w\in \mathopen]v,u\mathclose[$ we have $S(v,w)\in T$ by consistency of $T$.
Also for all $w\in \mathopen]u,v\mathclose[$ we have that $S(u,w)\in T$ so because also $S(v,u)\in T$ the tangle property implies $S(v,w)\in T$.
So in this case we are done.
Now consider the case that $V'$ is of the form $\mathopen]u,v]$ for some $v\in C(I)\setminus I-u$.
Then $S(u,v)\in T$, so also $S(w,v)\in T$ for all $w\in [u,v\mathclose[$ by consistency.
Furthermore for all $w\in \mathopen]v,u\mathclose[$ we have $S(u,w)\notin T$, so $S(w,u)\in T$ and thus $S(w,v)\in T$ by the tangle property.
So in this case we are done as well.
\end{proof}

For general $k+1$-profiles, the statement holds with an additional assumption that the $k$-pseudoflower is sufficiently large.

\begin{figure}
\centering
\begin{tikzpicture}
	\basicflower
	\orientation
    \labelofsep{90}{0}{$S(t,s)$\\$\in P$}{0}
	\newnode{$s$}{90}
	\newnode{$t$}{0}
	\newnode{$x$}{-90}
	\newnode{$y$}{180}
	\newnode{$v$}{45}
\end{tikzpicture}
\caption{$S(t,s)$ is an interval separation of a $k$-pseudoflower and is contained in a $k+1$-profile $P$. See also \cref{profilelocatedinpseudoflower}.}
\label{fig:profilelocatedinpseudoflower}
\end{figure}
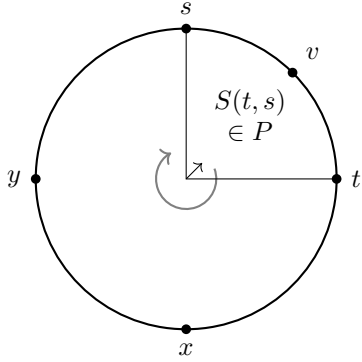

\begin{lem}\label{profilelocatedinpseudoflower}
Let $\Phi$ be a $k$-pseudoflower which extends a $k$-flower with at least four petals and let $P$ be a $k+1$-profile.
Then $P$ is located by $\Psi$.
\end{lem}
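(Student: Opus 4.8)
The plan is to reuse as much of the proof of \cref{placetanglesinflower} as possible and to supply, at the single point where it uses the tangle property, an argument that uses only the profile property (the $\vee$-closure of $P$ inside $S_{k+1}$). Write $I$ for the index set of $\Phi$. As in \cref{placetanglesinflower}, fix a cutpoint $u\in C(I)\setminus I$ and set $W_u=\{w\in(C(I)\setminus I)-u:S(u,w)\in P\}$; consistency of $P$ makes $W_u$ an interval of $C(I)\setminus I$, and if $W_u$ is empty or all of $(C(I)\setminus I)-u$ then $u$ already witnesses that $P$ is located by $\Phi$. So I may assume $W_u$ is non-trivial, say $W_u=\mathopen]u,v\mathclose[\cap(C(I)\setminus I)$ or $W_u=\mathopen]u,v]\cap(C(I)\setminus I)$ for a cutpoint $v\neq u$, and aim to show that $v$ is a location. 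For $w\in\mathopen]v,u\mathclose[$ one has $[v,w]\subseteq[v,u]$, hence $S(v,w)\leq S(v,u)\in P$, so $S(v,w)\in P$ by consistency; the closed case is symmetric, with the two orientations of the boundary separation interchanged. The only genuinely new task is to show $S(v,w)\in P$ also for $w\in\mathopen]u,v\mathclose[$, where we know only that $S(u,w)\in P$.

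For that I would bring in the embedded four-petal flower. It furnishes cutpoints $v_1,v_2,v_3,v_4$ of $\Phi$, in this cyclic order, with $P_{v_1},\dots,P_{v_4}$ pairwise disjoint, each of size $(k-|X|)/2$, and with a nonempty ``slice'' between consecutive ones; in particular every chord separation $S(v_i,v_j)$ has order at most $k$ and is oriented by $P$. The key computation is that when two chords \emph{overlap} — $v_k\in\mathopen]v_i,v_j\mathclose[$ and $v_j\in\mathopen]v_k,v_l\mathclose[$ — their join is again a chord, $S(v_i,v_j)\vee S(v_k,v_l)=S(v_i,v_l)$, as one reads off using \cref{intersectionofintervalsets}, and it too has order at most $k$; so the profile property applies to all these joins. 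Hence, from the orientations $P$ gives the two ``diagonals'' $S(v_1,v_3)$ and $S(v_2,v_4)$, I would deduce (say $S(v_1,v_3)\vee S(v_2,v_4)=S(v_1,v_4)\in P$, and similarly under the three rotations) that $P$ is ``concentrated'' in one of the four slices, intersect this with the interval $W_u$ of the first paragraph, and iterate the construction with finer choices of cutpoints inside that slice (the concatenation machinery of \cref{concatenations} mediating between $\Phi$ and such finite sub-flowers) until the candidate location is driven down to $v$. With only three petals this breaks: the only joins one can form are the ``abutting'' ones such as $S(v_1,v_2)\vee S(v_2,v_3)\vee S(v_3,v_1)=\bigl(V,(P_{v_1}\cup P_{v_2}\cup P_{v_3})\cup X\bigr)$, of order $(3k-|X|)/2>k$ in the pseudodaisy case, so the profile property imposes nothing and the statement is false — compare the infinite $k$-pseudodaisy of \cref{ex:infinitedaisy}.

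The main obstacle I expect is the bookkeeping in the second paragraph. First, one must check at every stage that the separation fed to the profile property really has order at most $k$, which relies on the disjointness and ``fatness'' of the four special gluing sets together with \cref{intersectionofintervalsets}, \cref{meetofintervalseps} and \cref{verticesinducesintervalsofV(C)} — and this is precisely why four petals rather than three are needed. Second, one has to organise the many cyclic-order cases (the positions of $u,v,w$ and the $v_i$ relative to one another, and whether the various one-sided intervals are half-open or closed) and make the iteration converge: as in \cref{placetanglesinflower}, a one-sided interval of $C(I)\setminus I$ determined by $P$ has its boundary at a cutpoint, and the monotone chain of candidate locations converges, in the cycle completion, to a cutpoint — or to a point of $I$, one of whose two neighbouring cutpoints then locates $P$.
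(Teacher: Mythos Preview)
Your first paragraph correctly handles the ``easy'' direction by consistency, and you correctly identify that the four-petal flower is what makes the profile property usable (via \cref{meetofintervalseps}, which needs disjoint gluing sets). But the iteration you sketch in the second paragraph is both vague and unnecessary, and this is where the gap lies. You speak of ``finer choices of cutpoints inside that slice'', but the four cutpoints $v_1,\dots,v_4$ with pairwise disjoint $P_{v_i}$ are fixed once and for all by the embedded $k$-flower; arbitrary further cutpoints of $\Phi$ need not have disjoint gluing sets, so \cref{meetofintervalseps} cannot be invoked for joins between them, and the concatenation machinery of \cref{concatenations} does not manufacture new disjointness. Without that, your iteration has no engine, and the convergence discussion at the end of your third paragraph is addressing a difficulty that should never arise.

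The paper's proof avoids iteration entirely. It fixes the four special cutpoints $s,t,x,y$ first (rather than an arbitrary $u$), uses the profile property on the two diagonals $S(s,x)$ and $S(t,y)$ to find one slice---say $S(t,s)\in P$---and then defines $v$ in one stroke as the supremum in $[s,t]$ of $\{w\in[s,t\mathclose[\setminus I : S(t,w)\in P\}$. A short case split on whether $v=t$, $S(v,x)\in P$, or $S(x,v)\in P$ then finishes. The organising trick your sketch is missing is this: every join that is fed to the profile property, for instance $S(v,w)=S(v,x)\vee S(t,w)$ or $S(w,v)=S(w,y)\vee S(x,v)$, is arranged so that the disjointness hypothesis of \cref{meetofintervalseps} involves two of the four \emph{fixed} cutpoints (here $P_x\cap P_t=\emptyset$, respectively $P_y\cap P_x=\emptyset$), while the ``moving'' cutpoints $v,w$ sit in positions where no disjointness is demanded. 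So rather than refining the cutpoints, one routes every join through the fixed four.
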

\begin{proof}
Let $I$ be the index set of $\Psi$. Let $s$, $t$, $x$ and $y$ be elements of $C(I)\setminus I$ such that $\Phi(\{s,t,x,y\})$ is a $k$-flower with four petals and $t\in[s,x]$ and $y\in [x,s]$.
    Then $P$ contains an orientation of $S(s,x)$ and an orientation of $S(t,y)$, and by \cref{meetofintervalseps,flowerhasdisjointgluingsets} $P$ also contains the supremum of those two orientations.
    So $P$ contains the inverse of a petal separation of $\Phi(\{s,t,x,y\})$, without loss of generality assume that $P$ contains $S(t,s)$.

	In the interval $[s,t]$ let $v$ be the supremum of $\{w\in [s,t\mathclose[\setminus I\colon S(t,w)\in P\}$.
	There are three cases: $v=t$, $S(v,x)\in P$ and $S(x,v)\in P$.
	In the first case we have $\forall w\in C(I)\setminus I-v:S(v,w)\in P$ and are done.
	
	Consider the second case: $v\neq t$ and $S(v,x)\in P$.
	Then $S(v,w)=S(v,x)\vee S(t,w)$ for all $w\in [s,v\mathclose[$ by \cref{meetofintervalseps,flowerhasdisjointgluingsets} and thus $S(v,w)\in P$.
	Hence $\forall w\in C(I)\setminus I-v:S(v,w)\in P$.
	
	In the last case, $v\neq t$ and $S(x,v)\in P$.
	Again $S(t,v)=S(t,s)\vee S(x,v)$ and thus $S(t,v)\in P$.
	If $t$ is the successor of $v$ in $C(I)\setminus I$, then $S(t,v)\in P$ implies that $\forall w\in C(I)\setminus I-v:S(w,v)\in P$.
	Otherwise let $w\in \mathopen]v,t\mathclose[\setminus I$.
	By the definition of $v$, $P$ does not contain $S(t,w)$, so $S(t,w)=S(t,s)\vee S(y,w)$ implies that $S(y,w)$ is not an element of $P$.
	Thus $P$ contains $S(w,y)$, and $S(w,v)=S(w,y)\vee S(x,v)$ implies that $S(w,v)\in P$.
	As this is true for all $w\in \mathopen]v,t\mathclose[\setminus I$, $\forall w\in C(I)\setminus I-v:S(w,v)\in P$ holds.
\end{proof}

For the remainder of this article, instead of differentiating between tangles and profiles, we will always work more generally with profiles located by the relevant flowers. Since, as noted above, \cref{placetanglesinflower} also works for tangles of graphs, our approach encompasses this case as well.

We want to have $k$-pseudoflowers that distinguish as many elements of $\mathcal{P}$ as possible in the following sense:
We call two $\leq k$-separations \emph{equivalent (with respect to $\mathcal{P}$}) if they are are contained in the same elements of $\mathcal{P}$.
An equivalence class $E$ of $\leq k$-separations is \emph{displayed} by a $k$-pseudoflower $\Phi$ if an element of $E$ is displayed by $\Phi$.
We are interested in $k$-pseudoflowers displaying as many equivalence classes as possible.
For that, we also consider the following relation on $k$-pseudoflowers.
Note that it is not a partial order as it is not antisymmetric.

\begin{defn}
Let $\Phi$ and $\Phi'$ be two $k$-pseudoflowers. Define $\Phi\preccurlyeq \Phi'$ if and only if every relevant separation displayed by $\Phi$ is equivalent to a separation displayed by $\Phi'$. If $\Phi\preccurlyeq \Phi'\preccurlyeq \Phi$, then the $k$-pseudoflowers are \emph{equivalent}.
\end{defn}

\section{Extending flowers}
Two separations \emph{properly cross} if all of their corner separations are contained in profiles of $\mathcal{P}$.

Let $\Psi$ be a $k$-pseudoflower and $i$ a petal of $\Psi$ with predecessor $p$ and successor $s$. A separation $(C,D)$ properly crossing $S(i)$ is \emph{anchored} at $v \in C(I)\setminus I$ if $(C,D)\wedge S(i)=S(v,p)$ and $(D,C)\wedge S(i)=S(s,v)$.
\begin{lem}\label{findanchoringset}
    Let $\Phi$ be a $k$-pseudoflower distinguishing at least three elements of $\mathcal{P}$ located by $\Phi$.
    Let $(C,D)$ be a separation of order $k$ which properly crosses some petal separation $S(i)$ of $\Phi$.
    Then there is a separation $(C',D')$ properly crossing $S(i)$ such that
    \begin{itemize}
        \item $(C,D)\vee S(i)=(C',D')\vee S(i)$ and $(D,C)\vee S(i)=(D',C')\vee S(i)$.
        \item $(C',D')$ or its inverse is anchored at some $v\in C(I)\setminus I$.
    \end{itemize}
\end{lem}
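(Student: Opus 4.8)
\emph{Overall strategy.} The plan is to leave $(C,D)$ unchanged on the petal $V(p,s)=P_i\cup X$ of $S(i)$ and to re-cut only the complementary ``big side'' $V(s,p)$ so that there $(C,D)$ becomes an interval separation of $\Phi$. First I would reformulate the goal: writing $S(i)=S(s,p)=(V(s,p),V(p,s))$, whose separator is $P_p\cup P_s\cup X\subseteq V(p,s)$, a direct computation shows that the two required equalities $(C,D)\vee S(i)=(C',D')\vee S(i)$ and $(D,C)\vee S(i)=(D',C')\vee S(i)$ hold exactly when $C'$ and $D'$ agree with $C$ and $D$ on the whole petal $V(p,s)$. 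Thus we may alter $(C,D)$ only inside $V(s,p)$, and not on the separator of $S(i)$; and being anchored at a cutpoint $v\in\mathopen]s,p\mathclose[$ means precisely that, inside $V(s,p)$, the new separation equals $V(v,p)$ on the $C$-side and $V(s,v)$ on the $D$-side. Finally, since $(C,D)$ properly crosses $S(i)$, all eight corner separations of $(C,D)$ and $S(i)$ lie in profiles of $\mathcal P$, hence have order at most $k$.

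\emph{Finding the anchor.} From the hypothesis that $\Phi$ distinguishes at least three elements of $\mathcal P$ located by $\Phi$, I would first obtain that $\Phi$ extends a $k$-flower with three petals, so that \cref{intervalcontainingset,meetofintervalseps,flowerhasdisjointgluingsets,verticesinducesintervalsofV(C)} apply to the interval sets $V(w,p)$ for cutpoints $w\in\mathopen]s,p\mathclose[$. Running $w$ along these cutpoints, the interval separations $S(w,p)$ form a chain descending from $S(i)$ towards a small separation, and using submodularity of the order function together with the bound $\le k$ on the corners of $(C,D)$ with $S(i)$, I would show that the cutpoints $w$ at which $(C,D)$ ``already agrees with $S(w,p)$ on $V(s,p)$'' form a final segment of the cutpoints in $\mathopen]s,p\mathclose[$. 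Its endpoint is the desired $v$; if that endpoint is reached only as a limit of cutpoints, limit-closedness of $\mathcal U$ guarantees $v\in C(I)\setminus I$. The symmetric analysis from the $s$-side pins $v$ down, and in the mirrored situation it yields the ``inverse anchored'' case.

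\emph{Construction and verification.} Define $(C',D')$ by keeping $C,D$ on $V(p,s)$ and replacing their restrictions to $V(s,p)$ by $V(v,p)$ and $V(s,v)$ respectively; by the choice of $v$ the two prescriptions agree on the separator $P_p\cup P_s\cup X$, so this is well-defined and $C'\cup D'=V$. Then $(C',D')\wedge S(i)=S(v,p)$ and $(D',C')\wedge S(i)=S(s,v)$ (using $V(p,s)\cup V(s,v)=V(p,v)$), so $(C',D')$ or its inverse is anchored at $v$; the joins with $S(i)$ are unchanged by the reformulation; the order of $(C',D')$ is at most $k$, which one checks from the sizes $|P_z|=(k-|X|)/2$ of the cutpoint sets $P_z$ together with the order bounds on the corners of $(C,D)$; and $(C',D')$ properly crosses $S(i)$, since four of its corners with $S(i)$ coincide with corners of $(C,D)$ and $S(i)$, while the remaining four are the interval separations $S(v,p),S(p,v),S(s,v),S(v,s)$ of $\Phi$, which are oriented into profiles of $\mathcal P$ on both sides because $\mathcal P$ distinguishes at least three elements located by $\Phi$.

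\emph{Main obstacle.} I expect the real work to be the anchor-choice step together with the reconciliation on the separator $P_p\cup P_s\cup X$ of $S(i)$: because $(C',D')$ is forced to equal $(C,D)$ there, one must choose $v$ so that $(C,D)$ restricted to $P_p\cup P_s\cup X$ already has exactly the shape of the corresponding restriction of $S(v,p)$ --- equivalently $P_p\cup X\subseteq C$, $P_s\cup X\subseteq D$ and $C\cap P_s=P_s\cap V(v,p)$, or the mirrored pattern. This separator bookkeeping is the vertex-separation phenomenon that has no counterpart in the matroid setting (where separators are empty), and showing that such a $v$ exists is where the order being exactly $k$, the three-petal structure from $\Phi$ distinguishing three profiles (via \cref{intervalcontainingset,meetofintervalseps}), and limit-closedness of $\mathcal U$ all have to be brought together; the rest is comparatively routine.
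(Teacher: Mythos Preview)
Your reformulation in the first paragraph is correct, but the core step --- how to find the anchor $v$ --- is where the proposal has a genuine gap and diverges from the paper. The paper does not try to read $v$ off from $(C,D)$ geometrically. Instead it uses the profiles: from proper crossing and the hypothesis one arranges two profiles $P_2,P_3\in\mathcal P$ that both contain $S(i)^*$, disagree on $(C,D)$, and are distinguished by $\Phi$; any cutpoint $v$ at which $\Phi$ separates $P_2$ from $P_3$ is the anchor. The separation $(C',D')$ is then built by two lattice operations, $(C'',D'')=(C,D)\vee S(v,p)$ followed by $(D',C')=(D'',C'')\vee S(s,v)$; these automatically lie in $\mathcal U$, and each time the order is bounded by $k$ because the relevant corner is contained in one of the four profiles $P_1,\dots,P_4$. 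No limit-closedness is used; the argument is entirely finitary submodularity-plus-profiles.

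Your alternative --- take $v$ as the endpoint of a ``final segment'' of cutpoints $w$ at which $(C,D)$ ``already agrees with $S(w,p)$ on $V(s,p)$'' --- is never made precise, and I do not see how it can be: the corner $(C,D)\wedge S(i)$ is in general not an interval separation of $\Phi$, so there is no canonical $v$ attached to $(C,D)$ alone. This feeds directly into what you flag as the main obstacle. Your gluing construction needs $(C,D)$ restricted to $P_p\cup P_s\cup X$ to already coincide with $V(v,p)$ there (so in particular $P_p\cup X\subseteq C$), but nothing in your anchor-finding step enforces this, and a priori $(C,D)$ satisfies no such constraint; in the paper's $\vee$-construction the separator bookkeeping falls out automatically and never has to be matched by a choice of $v$. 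A further issue is that defining $(C',D')$ by gluing set-restrictions need not produce an element of $\mathcal U$, whereas $\vee$ and $\wedge$ do so by definition. Finally, ``$\Phi$ distinguishes three located profiles'' does not obviously yield ``$\Phi$ extends a $k$-flower with three petals'', so Lemmas such as \cref{flowerhasdisjointgluingsets} and \cref{intervalcontainingset} are not directly available; the paper works with the profiles themselves rather than that structural consequence.
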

\begin{proof}
    Let $P_1 \in \mathcal{P}$ be a profile which contains both $(C,D)$ and $S(i)$.
    If $\Phi$ distinguishes two profiles which contain both $(C,D)$ and $S(i)^*$ then let $P_2 \in \mathcal{P}$ be a profile which contains both $(D,C)$ and $S(i)^*$.
    Then some profile $P_3 \in \mathcal{P}$ which contains both $(C,D)$ and $S(i)^*$ is distinguished from $P_2$ by $\Phi$.
    If $\Phi$ does not distinguish any profiles which contain both $(C,D)$ and $S(i)^*$, then let $P_3$ be some such profile.
    As $\Phi$ distinguishes at least three profiles, it distinguishes some profile $P_2$ from both $P_1$ and $P_3$.
    Because $P_2$ is distinguished from $P_1$, it contains $S(i)^*$.
    Because $P_2$ is also distinguished from $P_3$, it does not contain $(C,D)$, so it contains $(D,C)$.
    In both cases, $P_2$ contains $S(i)^*$ and $(D,C)$, while $P_3$ contains $S(i)^*$ and $(C,D)$.
    Furthermore $\Phi$ distinguishes $P_2$ and $P_3$.
    
    Denote the predecessor of $i$ in $C(I)$ by $p$ and the successor by $s$.
    As $\Phi$ distinguishes $P_2$ and $P_3$, there is a cutpoint $v\in C(I)\setminus I$ such that $S(v,s)$ distinguishes $P_2$ and $P_3$.
    Because both $P_2$ and $P_3$ contain $S(i)^*$, $S(v,p)$ also distinguishes $P_2$ and $P_3$, and $P_2$ contains $S(v,s)$ if and only if it contains $S(v,p)$.
    
    From here on consider the case that $P_3$ contains $S(v,p)$, the other case is symmetric.
    Define $(C'',D'')=(C,D)\vee S(v,p)$.
    Because $P_3$ contains both $(C,D)$ and $S(v,p)$ and $P_2$ contains both $S(p,v)$ and $(D,C)$, the separation $(C'',D'')$ has order $k$ and distinguishes $P_2$ and $P_3$.
    Then $(C'',D'')\vee S(i)=(C,D)\vee S(v,p)\vee S(i)=(C,D)\vee S(i)$ and in particular $(C'',D'')\vee S(i)$ is the same as $(C,D)\wedge S(i)$.
    Let $P_4$ be a profile that contains both $S(i)$ and $(D,C)$.
    Then $(D'',C'')\leq (C,D)$ implies that $(D'',C'')$ is contained in $P_4$.
    The profile $P_3$ contains both $(C'',D'')$ and $S(i)^*$ and the profile $P_4$ contains both $(D'',C'')$ and $S(i)$, so $(D'',C'')\vee S(i)$ is a separation of order $k$.
    Also $(D'',C'')\vee S(i)\leq (D,C)\vee S(i)$. Because $V(v,p)\cup V(s,p)=V$, we have $D\cup V(s,p)=(D\cap V(p,v))\cup V(s,p)$. The set $D\cup V(s,p)$ is the left side of $(D,C)\vee S(i)$ and $(D\cap V(p,v))\cup V(s,p)$ is the left side of the separation $(D'',C'')\vee S(i)$. Also, the right side of $(D,C)\vee S(i)$ is a subset of the right side of $(D'',C'')\vee S(i)$. Because these two separations have the same order, they are equal.
    
    Define $(D',C')=(D'',C'')\vee S(s,v)$. Just as in the previous paragraph $(C',D')$ is a separation of order $k$ distinguishing $P_2$ and $P_3$.
    Also $(D',C')\vee S(i)=(D'',C'')\vee S(i)$ and $(C',D')\vee S(i)=(C'',D'')\vee S(i)$.
    Furthermore $P_1$ contains $(C',D')$ and $S(i)$, and $P_3$ contains $(D',C')$ and $S(i)^*$, so $(C',D')\wedge S(i)$ has order $k$.
    Because $S(v,p)\leq (C',D')\leq S(v,s)$, also
    \begin{displaymath}
    (V(v,p)\cup P_s, V(p,v)) = S(v,s)\wedge S(i)\geq (C',D')\wedge S(i)\geq S(v,p)\wedge S(i)=S(v,p),
    \end{displaymath}
    which, as $(C',D')\wedge S(i)$ has the same order as $S(v,p)$, implies that $(C',D')\wedge S(i)=S(v,p)$. Similarly $(D',C')\wedge S(i)=S(s,v)$.
\end{proof}
\begin{lem} \label{anchoreq}
Let $\Psi$ be a $k$-pseudoflower and $i$ a petal of $\Psi$ with predecessor $p$ and successor $s$. If a $k$-separation $(C,D)$ properly crossing $S(i)$ is anchored at $v \in C(I) \setminus I$, then $C \cap P_s = \emptyset$, $D \cap P_p = \emptyset$,$X \subseteq C \cap D$ and $(C \cap D) \setminus V(p,s) = P_v$.
\end{lem}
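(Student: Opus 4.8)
The plan is to convert the two equations defining ``anchored at $v$'' into four set equalities, read off the easy containments, use \cref{intersectionofintervalsets} to describe $C\cap D$ away from the petal $P_i$, and then run a short submodularity/counting argument to kill the pairwise overlaps among $P_p$, $P_s$, $P_v$; once those overlaps vanish, the four assertions drop out.

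First I would fix notation: since $i\in I$ and $|I|\ge 2$, both $p$ and $s$ lie in $C(I)\setminus I$ (two distinct cutpoints are never adjacent in $C(I)$ by \cref{distinctvertices}), $v\in\mathopen]s,p\mathclose[$, and $|P_p|=|P_s|=|P_v|=(k-|X|)/2=:\ell$. Writing $S(i)=S(s,p)=(V(s,p),V(p,s))$ with $V(p,s)=P_i\cup X$, the relation $(C,D)\wedge S(i)=S(v,p)$ unfolds to $C\cap V(s,p)=V(v,p)$ and $D\cup V(p,s)=V(p,v)$, and $(D,C)\wedge S(i)=S(s,v)$ unfolds to $D\cap V(s,p)=V(s,v)$ and $C\cup V(p,s)=V(v,s)$. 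From $X\subseteq V(v,p)\subseteq C$ and $X\subseteq V(s,v)\subseteq D$ we get $X\subseteq C\cap D$ at once. The two ``$\cup V(p,s)$'' equations give $(C\cap D)\cup V(p,s)=(C\cup V(p,s))\cap(D\cup V(p,s))=V(v,s)\cap V(p,v)$, which by \cref{intersectionofintervalsets} (with $a=v$, $b=p$, $c=s$, $d=v$, using $p\in[v,s]$) equals $V(p,s)\cup P_v$; so $(C\cap D)\setminus V(p,s)=P_v\setminus P_i$. The two ``$\cap V(s,p)$'' equations give $(C\cap D)\cap V(s,p)=V(v,p)\cap V(s,v)=P_v\cup(P_p\cap P_s)\cup X$ (again \cref{intersectionofintervalsets}), hence $C\cap D=P_v\cup(C\cap D\cap P_i)\cup X$. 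Finally $C\cap P_s=V(v,p)\cap P_s$ and $D\cap P_p=V(s,v)\cap P_p$, and since $s\notin[v,p]$ is ``flanked'' by $p$ and $v$ (resp.\ $p\notin[s,v]$ by $s$ and $v$), \cref{verticesinducesintervalsofV(C)} forces $C\cap P_s\subseteq(P_p\cap P_s)\cup(P_v\cap P_s)$ and $D\cap P_p\subseteq(P_p\cap P_s)\cup(P_p\cap P_v)$. At this stage all four assertions follow provided $P_p$, $P_s$, $P_v$ are pairwise disjoint: then $P_v\cap P_i=(P_v\cap P_p)\cup(P_v\cap P_s)=\emptyset$, turning the displayed equation into $(C\cap D)\setminus V(p,s)=P_v$, and collapsing the last two inclusions.

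It remains to prove the disjointness. Here I would use that $(C,D)$ has order \emph{exactly} $k$: together with $C\cap D=P_v\cup(C\cap D\cap P_i)\cup X$ and $P_v\subseteq C\cap D$ this yields $|C\cap D\cap P_i|=\ell+|P_v\cap P_i|$. Computing the corner separators, $(C,D)\vee S(i)$ has separator $X\cup(C\cap D\cap P_i)\cup P_s$ and $(D,C)\vee S(i)$ has separator $X\cup(C\cap D\cap P_i)\cup P_p$; substituting $|C\cap D\cap P_i|$, $|P_s|=|P_p|=\ell$ and the sizes of the relevant pairwise intersections, their orders come out to $k+|P_p\cap P_v|-|P_p\cap P_s|$ and $k+|P_s\cap P_v|-|P_p\cap P_s|$. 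Since $(C,D)$ properly crosses $S(i)$, these corners lie in profiles of $\mathcal{P}$ and so have order at most $k$, which gives $|P_p\cap P_v|\le|P_p\cap P_s|$ and $|P_s\cap P_v|\le|P_p\cap P_s|$. Moreover $S(v,p)=(C,D)\wedge S(i)$ and $S(s,v)=(D,C)\wedge S(i)$ are themselves corners of the proper crossing; as in \cref{findanchoringset} one gets that these have order $k$, i.e.\ $|P_v\cup P_p\cup X|=|P_v\cup P_s\cup X|=k$, which (since $|P_v|+|P_p|+|X|=|P_v|+|P_s|+|X|=k$) forces $P_v\cap P_p=P_v\cap P_s=\emptyset$.

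The genuinely hard step — which I expect to be the main obstacle — is passing from this to $P_p\cap P_s=\emptyset$, equivalently to $S(i)$ itself having order $k$: the pure metric bounds above are all consistent with $P_p\cap P_s\ne\emptyset$, so one has to use the proper crossing beyond mere corner order bounds, namely the profiles witnessing \emph{both} orientations of each corner, to exclude a corner (e.g.\ one whose left side is all of $V$) that no profile in $\mathcal{P}$ can orient. Granting $P_p\cap P_s=\emptyset$, combined with $P_v\cap P_p=P_v\cap P_s=\emptyset$ from the previous paragraph, the whole chain of reductions closes: $X\subseteq C\cap D$ was already shown, $(C\cap D)\setminus V(p,s)=P_v\setminus P_i=P_v$, and $C\cap P_s=D\cap P_p=\emptyset$.
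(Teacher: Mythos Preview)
Your reduction to the pairwise disjointness of $P_p,P_s,P_v$ is exactly right, and your derivations $(C\cap D)\setminus V(p,s)=P_v\setminus P_i$, $C\cap P_s=(P_p\cap P_s)\cup(P_v\cap P_s)$, $D\cap P_p=(P_p\cap P_s)\cup(P_v\cap P_p)$, together with your corner-order formulas, are all correct. In fact you have done considerably more work than the paper does here: the paper's proof simply asserts that ``$P_s$ does not meet $V(v,p)$'' and that ``$P_p$ does not meet $D$'', and for the last equation just writes ``deleting $V(p,s)$ and simplifying''. As your analysis shows, the first assertion is exactly the statement $P_s\cap(P_p\cup P_v)=\emptyset$, and the ``simplification'' tacitly uses $P_v\cap P_i=\emptyset$. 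So the paper leans on precisely the disjointness that you isolate.

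There is, however, a genuine gap in your step~9. Knowing that $S(v,p)=(C,D)\wedge S(i)$ is a corner of a proper crossing only tells you it lies in some profile of $\mathcal P$, hence has order $\le k$; it does \emph{not} force order exactly $k$. Concretely, $|P_v\cup P_p\cup X|=k-|P_v\cap P_p|$, so your claim ``order $=k$'' is literally the statement $P_v\cap P_p=\emptyset$ you are trying to prove, and the appeal to \cref{findanchoringset} does not supply it. Your corner-order bounds $|P_v\cap P_p|\le|P_p\cap P_s|$ and $|P_v\cap P_s|\le|P_p\cap P_s|$ are correct, but submodularity is tight here (both sides equal $2k-|P_p\cap P_s|$), so no further information is available from orders alone. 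The obstruction you flag at the end --- ruling out $P_p\cap P_s\ne\emptyset$ via the profiles witnessing the proper crossing --- is therefore not an afterthought but the entire missing argument; once $P_p\cap P_s=\emptyset$ your bounds collapse to $P_v\cap P_p=P_v\cap P_s=\emptyset$ and everything closes, but without it neither your proof nor the paper's is complete.
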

\begin{proof}
Since $P_s$ is contained in the separator of $S(i)$, but does not meet $V(v,p)$ it cannot meet $C$. Similarly $(C,D)\wedge S(i)=S(s,v)$ implies that $P_p$ does not meet $D$.
The third statement is immediate from the fact that $X$ occurs in the separators of both $S(s,v)$ and $S(v,p)$.

For the last equation, note that $(C,D)\wedge S(i)=S(v,p)$ implies both $C \cap V(s,p) = V(v,p)$ and $D \cup V(p,s) = V(p,v)$. Taking the intersection gives $(C \cap D \cap V(s,p)) \cup (C \cap V(s,p) \cap V(p,s))  = V(v,p) \cap V(p,v)$. Deleting $V(p,s)$ on both sides and simplifying gives the result.
\end{proof}

\begin{lem}\label{subdividepetalfinite}
    Let $\Phi$ be a $k$-pseudoflower distinguishing at least three elements of $\mathcal{P}$ located by $\Phi$.
    Let $(C,D)$ be a separation of order $k$ which properly crosses some petal separation $S(i)$ of $\Phi$ and is anchored at $v\in C(I)\setminus I$.
    Let $p$ be and $s$ be the predecessor and successor of $i$ in $C(I)$ respectively.
    Then there is an extension $\Phi'$ of $\Phi$, witnessed by $F$, such that
    \begin{itemize}
        \item For all $i'\in I\setminus \{i\}$ there is only one element contained in $F^{-1}(i')$.
        \item $F^{-1}(i)$ contains exactly three elements $i_1$, $m$ and $i_2$.
        By switching the names of $i_1$, $m$ and $i_2$ if necessary we can assume that $m$ is the successor of $i_1$ and the predecessor of $i_2$.
        \item $P_m=(C\cap D)\setminus V(s,p)$.
        \item The interval set of $i_1$ is $C\cap V(i)$ and the interval set of $i_2$ is $D\cap V(i)$.
        \item $(C,D)$ is an interval separation of $\Phi'$.
    \end{itemize}
\end{lem}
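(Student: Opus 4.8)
The plan is to obtain $\Phi'$ by subdividing the petal $i$ into three petals $i_1,m,i_2$ with $m$ between $i_1$ and $i_2$, equipping the two new cutpoints flanking $m$ with the separator of $(C,D)$, and arranging that $(C,D)$ itself appears as the interval separation cutting off $C\cap V(i)$. First I would fix the combinatorics: let $I'$ be $I$ with $i$ replaced by three elements $i_1,m,i_2$, cyclically ordered so that the surjection $g\colon I'\to I$ sending $i_1,m,i_2$ to $i$ and fixing everything else is monotone. By \cref{cyclichoms1}, $g$ extends uniquely to a surjective monotone $F\colon C(I')\to C(I)$, and by \cref{cyclichoms2} I identify $C(I)\setminus I$ with its preimage in $C(I')\setminus I'$; then $F^{-1}(i)$ consists of $i_1,m,i_2$ together with exactly two cutpoints $a$ (between $i_1$ and $m$) and $b$ (between $m$ and $i_2$), and by \cref{cyclichoms3} applied to $[p,s]$ the cyclic order in $C(I')$ around the subdivided block is $p,i_1,a,m,b,i_2,s$.

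Next I would set $P'_z:=P_z$ for $z\in C(I)\setminus\{i\}$, and $P'_{i_1}:=C\cap P_i$, $P'_{i_2}:=D\cap P_i$, $P'_m:=P'_a:=P'_b:=C\cap D\cap P_i$. The key quantitative fact is $|C\cap D\cap P_i|=(k-|X|)/2$. To prove it I use the anchoring identities $C\cap V(s,p)=V(v,p)$ and $D\cap V(s,p)=V(s,v)$ to get $C\cap D\cap V(s,p)=V(v,p)\cap V(s,v)$, and then \cref{verticesinducesintervalsofV(C)} together with the facts $C\cap P_s=\emptyset$, $D\cap P_p=\emptyset$ from \cref{anchoreq} to show $V(v,p)\cap V(s,v)\cap(C\cap D)=P_v\cup X$ (a vertex of it outside $X\cup P_v$ would lie in $P_p\cap P_s$, hence in $P_s$, hence not in $C$). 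Consequently $C\cap D$ is the union of $P_v\cup X$ with $C\cap D\cap V(i)$, these meeting exactly in $X$ (since $V(s,p)\cup V(i)=V$ and $P_v\cap V(i)=\emptyset$), so from $|C\cap D|=k$ one gets $|C\cap D\cap V(i)|=(k+|X|)/2$ and hence $|C\cap D\cap P_i|=(k-|X|)/2$; the same computation also gives $P'_m=(C\cap D)\setminus V(s,p)$. Finally $P'_{i_1}\cup P'_{i_2}=(C\cup D)\cap P_i=P_i$ since $C\cup D=V$, so $\bigcup_{z\in C(I')}P'_z=\bigcup_{z\in C(I)}P_z$ and $X'=X$.

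With the data in place I would verify the pseudoflower axioms. The size condition for cutpoints is the count above. The inclusions $P'_{p(z)}\cup P'_{s(z)}\subseteq P'_z$ are direct (for instance $P'_p=P_p\subseteq C\cap P_i=P'_{i_1}$ since $P_p\subseteq C$ by the anchoring, and $P'_a=C\cap D\cap P_i\subseteq C\cap P_i$). For property (\ref{star}) the only pairs needing attention are among $i_1,m,i_2$: if $\Phi$ is a pseudodaisy then $P_p\neq\emptyset$, and $P_p\subseteq C\cap P_i$ together with $D\cap P_p=\emptyset$ forces $C\cap P_i\not\subseteq D$, so $|P'_{i_1}|>(k-|X|)/2=|P'_m|$ and symmetrically for $i_2$, while $P'_{i_1}\neq P'_{i_2}$ follows similarly; if $\Phi$ is a pseudoanemone then $(k-|X|)/2=0$ and one only needs $P'_{i_1},P'_{i_2},\emptyset$ pairwise distinct, which holds because $(C,D)$ crossing (not being nested with) $S(i)$ rules out $C\cap P_i=\emptyset$, $D\cap P_i=\emptyset$ and $P_i\subseteq C\cap D$. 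For distinct cutpoints $v',w'$ of $C(I')$, I would check that $(V'(v',w'),V'(w',v'))$ lies in the universe and has separator $P'_{v'}\cup P'_{w'}\cup X$; the order bound $\le k$ then follows by cardinality since every cutpoint of $\Phi'$ has $|P'|=(k-|X|)/2$. This is a case analysis on which of $a,b$ occur among $v',w'$: when both are old, any interval set straddling the subdivided block picks up exactly $P'_{i_1}\cup P'_{i_2}=P_i$, so $V'(v',w')=V(v',w')$ and $S'(v',w')=S(v',w')$; when $a$ or $b$ occurs, the anchoring equations rewrite $V'(v',w')$ as a union of an interval set of $\Phi$ with one of $C\cap V(i)$, $D\cap V(i)$, $C\cap D\cap P_i$, which presents $(V'(v',w'),V'(w',v'))$ as a corner of $(C,D)$ with an interval separation of $\Phi$ (hence an element of the universe), and a direct intersection computation using \cref{intersectionofintervalsets} and \cref{anchoreq} yields the stated separator. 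That $F$ witnesses $\Phi\le\Phi'$ uses the same "block contributes $P_i$" identity, and the remaining bullets are read off directly: $V'(i_1)=P'_{i_1}\cup X=C\cap V(i)$, $V'(i_2)=P'_{i_2}\cup X=D\cap V(i)$, and $(C,D)=S'(v,a)$ because $V'(v,a)=V(v,p)\cup(C\cap V(i))=C$ and $V'(a,v)=(D\cap V(i))\cup V(s,v)=D$.

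The step I expect to cost the most is this last verification: organising the case analysis so that every new interval separation is recognised either as an old interval separation or as a corner of $(C,D)$ with an appropriate interval separation of $\Phi$, and then carrying out the attendant intersection computations — with careful attention to orientations and to which of $a,b$ plays which role — to confirm the separator. A smaller but genuine subtlety is the pseudoanemone case of (\ref{star}), where one must invoke that $(C,D)$ and $S(i)$ are not nested rather than merely that $(C,D)$ has order $k$.
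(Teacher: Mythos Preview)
Your approach is essentially the paper's, but you misread the role of $m$: in the paper, $I'$ is obtained from $I$ by replacing $i$ with only \emph{two} indices $i_1,i_2$, and $m$ is the single new cutpoint between them (cf.\ the figure caption ``creating a new cutvertex $m$''). Then $F^{-1}(i)=\{i_1,m,i_2\}$ literally has three elements, and only $P'_{i_1},P'_{i_2},P'_m$ need to be defined. Your version places $m$ in $I'$ and introduces two further cutpoints $a,b$ with $P'_a=P'_b=P'_m$, so $F^{-1}(i)$ has five elements, contradicting the stated conclusion.

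Beyond not matching the statement, this over-subdivision leaves a genuine gap in your verification of~(\ref{star}). Since now $m\in I'$ with $|P'_m|=(k-|X|)/2$, you must rule out $P'_m=P_j$ for every old $j\in I\setminus\{i\}$, not just for $i_1,i_2$; you only check the latter. This can fail: for instance, in a $k$-pseudoanemone $\Phi$ that already has one old index $j$ with $P_j=\emptyset$ (which (\ref{star}) in $\Phi$ permits, as long as there is at most one such), your $\Phi'$ has $P'_m=\emptyset=P'_j$ with $m\neq j$, so (\ref{star}) fails and $\Phi'$ is not a $k$-pseudoflower. The paper's placement of $m$ as a cutpoint avoids this entirely: (\ref{star}) then only involves $i_1,i_2$ against old indices, and the paper shows, via the same proper-crossing argument you sketch, that neither $P'_{i_1}$ nor $P'_{i_2}$ can have size $(k-|X|)/2$. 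Once you make $m$ a cutpoint, your case analysis for the second axiom also collapses to the single case $y=m$, and the separation you need is exactly the paper's corner identity $S'(x,m)=S(x,p)\vee(C,D)$.
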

\begin{proof}

Let $\Phi'$ be obtained from $\Phi$ by replacing $i \in I$ with $i_1$, $m$ and $i_2$ in order and setting $P_m = (C \cap D) \setminus V(s,p)$, $P_{i_1} = C \cap P_i$ and $P_{i_2}= D \cap P_i$.

We will now prove that $\Phi'$ is a $k$-pseudoflower. 
The third condition is trivial.
By \cref{anchoreq} we know that $C \cap D$ consists of exactly $P_m$, $X$ and $P_v$, so $|P_m| = k-|P_v|-|X|=\frac{k-|X|}{2}$.

Assume for a contradiction that (\ref{star}) fails. We may assume without loss of generality that $|P_{i_1}|=\frac{k-|X|}{2}$. It follows that $P_m=P_{i_1}$ and thus $P_{i_1} \subseteq P_{i_2}$. But then $(D,C) \vee S(i)^*$ is cotrivial with witness $(C,D)$ and therefore is contained in every profile, contradicting the fact that $S(i)$ properly crosses $(C,D)$.

To show that $\Phi'$ is a $k$-pseudoflower, it thus remains to show the second condition.
Let $x,y \in C(I)$ be arbitrary. Because $\Phi$ was a $k$-pseudoflower, w.l.o.g. $y=m$. Furthermore, since $P_{i_1}$ and $P_{i_2}$ meet only in $P_m$, we have $V(x,m) \cap V(m,x)=P_x \cup P_m \cup X$.

Thus it is enough to show that $(V(x,m),V(m,x))$ is a separation. Without loss of generality $x$ occurs in the interval in the interval from $m$ to $v$ in $C(I)$, the other case is analogous. Note that, as $(C,D)$ is anchored at $v$,
$S(v,p)\leq (C,D)\leq S(v,s)$ and thus $V(v,p)\subseteq C\subseteq V(v,s)$.
So
\begin{displaymath}
V(x,p)\cup C = V(x,p)\cup (C\cap V(i)) = V(x,m)
\end{displaymath}
and similarly $V(p,x)\cap D=V(m,x)$.
Hence $S(v,p)\vee (C,D) = S(x,m)$, implying that $S(x,m)$ is indeed a separation.

Thus $\Phi'$ is a $k$-pseudoflower. By construction, $(C,D)$ appears as the interval separation $S(m,v)$. 
\end{proof}

\begin{figure}
    \centering
    \begin{tikzpicture}
\basicflower
\orientation

\labelofarcbelow[pictureorange]{130}{50}{1}{$i$}
\labelofarcabove[picturegreen]{130}{100}{1}{$i_1$}
\labelofarcabove[blue]{100}{50}{1}{$i_2$}
\newnode{$v$}{-110}
\newnode{$p$}{130}
\newnode{$m$}{100}
\newnode{$s$}{50}
\labelofsep{-110}{100}{$(C,D)$}{0}
\end{tikzpicture}
    \caption{The the index $i$ is replaced by the indices $i_1$ and $i_2$, creating a new cutvertex $m$. This allows us to display the separation $(C,D)$, see also \cref{subdividepetalfinite}.}
    \label{fig:subdividepetalinfinite}
\end{figure}
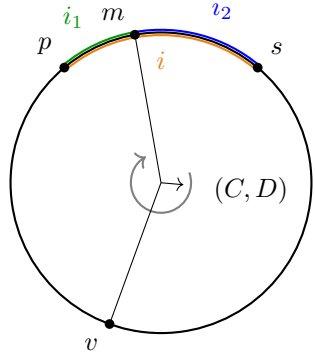

\section{\texorpdfstring{Maximal $k$-pseudoflowers}{Maximal pseudoflowers}}

Let $(\Phi_j)_{j\in J}$ be a $\leq$-chain of $k$-pseudoflowers which distinguish at least three elements of $\mathcal{P}$ that are located by the $\Phi_j$. In this section we will prove that there is a $k$-pseudoflower which is an upper bound of the chain $(\Phi_j)_{j\in J}$. The proof works for both $k$-pseudodaisies and $k$-pseudoanemones, but can be shortened a lot if the $\Phi_j$ are $k$-pseudoanemones as follows:
The careful construction of the sets $P_v$ with $v\in V_N$ in \cref{sec:addV_N} becomes redundant as all those sets are necessarily empty.
For the same reason, in \cref{sec:redundantpetals} there is no need to derive the sets $P_v$ with $v\in C(I\setminus I_N)\setminus (I\setminus I_N)$ from their counterparts in $C(I)\setminus I_N$.
Furthermore, it is possible to carry the following stronger version of (\ref{star}) through the limit process: That no petal should be empty.
This stronger version then makes all witnesses of comparability of $k$-pseudoflowers unique, making the search for a compatible choice of witnesses unnecessary.
Also, the definition of $I'$ can be made easier, as it simply consists of those $i\in I$ with $P_i\neq \emptyset$, and $C(I')$ is more obviously related to $C$.
Lastly, in $k$-anemones the condition of \cref{meetofintervalseps} that $P_a\cap P_d=\emptyset$ always holds, which means that the proof in \cref{sec:intervalsareseps} that all interval separations of the newly constructed $k$-pseudoflower are indeed interval separations can be streamlined.

\subsection{Finding compatible witnesses}

The first step in constructing an upper bound of a chain of $k$-pseudoflowers is to construct the index set.
For that we would like to take the inverse limit of the index sets, and for that we need compatible witnesses of the comparability of the $k$-pseudoflowers in the chain.
Phrased more formally, we need for every $j\leq l\in J$ a witness $F_{lj}$ of $\Phi_j\leq \Phi_l$ such that for $j\leq l\leq m\in J$ the concatenation of $F_{lj}$ with $F_{ml}$ is $F_{mj}$.

In \cref{concatenations} we showed that, given sufficiently large $k$-pseudoflowers $\Phi\leq \Phi'$ on index sets $I$ and $I'$, only few elements $i$ of $I'$ have several possible images under witnesses of $\Phi\leq \Phi'$, and that picking some possible image for every such $i$ gives again a witness of $\Phi\leq \Phi'$.
So defining the $F_{lj}$ amounts to finding a way of making all these choices in a way that is compatible over the whole chain of $k$-pseudoflowers.
For that, we first show how these choices interact in a chain of only three $k$-pseudoflowers.

So for the next two lemmas let $\Phi_1\leq \Phi_2\leq \Phi_3$ be $k$-pseudoflowers on index sets $I_1$, $I_2$ and $I_3$ respectively such that $\Phi_1$ extends a $k$-flower with three petals.
For some $i_3\in I_3$ we are now interested in possible images of $i_3$ under witnesses of $\Phi_2\leq \Phi_3$, their images under witnesses of $\Phi_1\leq \Phi_2$ and how those relate to witnesses of $i_3$ under witnesses of $\Phi_1\leq \Phi_3$.

\begin{lem}\label{witnessescompatible1}
If $i_3$ has two possible images $i_1$ and $i_1'$ under $\Phi_1\leq \Phi_3$, and there is $i_2\in I_2$ whose possible witnesses under $\Phi_1\leq \Phi_2$ are also $i_1$ and $i_1'$, then all witnesses of $\Phi_2\leq \Phi_3$ map $i_3$ to $i_2$.
\end{lem}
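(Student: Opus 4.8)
The plan is to characterise $i_3$ and $i_2$ as the \emph{unique} petals of $\Phi_3$ and $\Phi_2$ whose associated vertex set equals one fixed set of the minimal possible size $(k-|X|)/2$, and then to observe that any witness of $\Phi_2\leq\Phi_3$ is forced to match these two petals up. Throughout I write $P^{(\ell)}_z$ for the petal family of $\Phi_\ell$ at $z\in C(I_\ell)$, $\ell\in\{1,2,3\}$, and recall that the set $X$ is common to $\Phi_1,\Phi_2,\Phi_3$.

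First I would apply \cref{nonuniqueimagesunderwitnesses} twice, once to $\Phi_1\leq\Phi_3$ and once to $\Phi_1\leq\Phi_2$ (both legitimate since $\Phi_1$ extends a $k$-flower with three petals). From the first application, since $i_3$ has exactly the two images $i_1$ and $i_1'$, these are neighbours in $I_1$ and their common neighbour $v\in C(I_1)\setminus I_1$ satisfies $P^{(3)}_{i_3}=P^{(1)}_v$. From the second application, $i_2$ having the same two images forces $P^{(2)}_{i_2}=P^{(1)}_{v'}$ for a common neighbour $v'$ of $i_1,i_1'$ in $C(I_1)$; but neighbouring petals of $I_1$ have a unique common neighbour in $C(I_1)$ (here I use $|I_1|\geq 3$, which follows from $\Phi_1$ extending a three-petal flower), so $v'=v$ and hence $P^{(2)}_{i_2}=P^{(1)}_v=P^{(3)}_{i_3}$. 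Note $|P^{(1)}_v|=(k-|X|)/2$ by the first defining property of a $k$-pseudoflower.

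Next I would extract two immediate consequences of property~(\ref{star}): since $|P^{(3)}_{i_3}|=(k-|X|)/2$, the petal $i_3$ is the only petal of $\Phi_3$ with associated set $P^{(1)}_v$, and likewise $i_2$ is the only petal of $\Phi_2$ with associated set $P^{(1)}_v$. I would also record the routine fact that for any witness $F\colon C(I')\to C(I)$ of an extension $\Phi\leq\Phi'$ and any $i'\in I'$ one has $P'_{i'}\subseteq P_{F(i')}$: writing $p,s$ for the two cutpoints of $C(I)$ adjacent to $F(i')$, the defining equation of $\leq$ gives $V(p,s)=X\cup\bigcup_{z\in F^{-1}([p,s])}P'_z$, which contains $P'_{i'}$ since $i'\in F^{-1}(F(i'))$, whereas $V(p,s)=P_p\cup P_{F(i')}\cup P_s\cup X=P_{F(i')}\cup X$ by the third defining property; as $P'_{i'}$ is disjoint from $X$, the claim follows.

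Finally I would take an arbitrary witness $G\colon C(I_3)\to C(I_2)$ of $\Phi_2\leq\Phi_3$. Since $G(I_3)=I_2$ by the definition of an extension, some petal $j\in I_3$ satisfies $G(j)=i_2$, and the routine fact yields $P^{(3)}_j\subseteq P^{(2)}_{i_2}=P^{(1)}_v$; because every petal of $\Phi_3$ has size at least $(k-|X|)/2=|P^{(1)}_v|$, this forces $P^{(3)}_j=P^{(1)}_v$, and then the uniqueness statement for $\Phi_3$ gives $j=i_3$, so $G(i_3)=i_2$. The steps needing real care are the uniqueness of the common neighbour of $i_1,i_1'$ in $C(I_1)$ and the bookkeeping across three pseudoflowers; the one conceptual point is to argue backwards --- pulling the petal $i_2$ back along $G$ --- rather than pushing $i_3$ forward, which would require controlling the size of $P^{(2)}_{G(i_3)}$ and is not obviously possible.
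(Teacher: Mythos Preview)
Your proof is correct and follows essentially the same shape as the paper's: identify $P^{(3)}_{i_3}=P^{(2)}_{i_2}=P^{(1)}_v$ via \cref{nonuniqueimagesunderwitnesses}, then use property~(\ref{star}) to pin down a preimage of $i_2$ as $i_3$. The one difference worth noting is that the paper invokes \cref{witnessexistspreimage} to produce a single $i_3'\in I_3$ mapped to $i_2$ by \emph{all} witnesses of $\Phi_2\leq\Phi_3$ at once and then shows $i_3'=i_3$, whereas you argue per witness, using only the surjectivity $G(I_3)=I_2$ built into the definition of an extension. Your route is slightly more self-contained since it avoids the extra lemma; the paper's route packages the ``for all witnesses'' quantifier into a single preimage.
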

\begin{proof}
    By \cref{witnessexistspreimage} there is some $i_3'$ whose unique image under $\Phi_2\leq \Phi_3$ is $i_2$.
    Then $i_3'$ has $i_1$ and $i_1'$ as possible images under $\Phi_1\leq \Phi_3$ and $P_{i_3}=P_{i_3'}$ by \cref{nonuniqueimagesunderwitnesses}, so $i_3=i_3'$.
\end{proof}

\begin{lem}\label{witnessescompatible2}
If $i_3$ has two possible images $i_1$ and $i_1'$ under $\Phi_1\leq \Phi_3$, $i_1'$ successor of $i_1$ in $I_1$, and $i_3$ has two possible images $i_2$ and $i_2'$ under $\Phi_2\leq \Phi_3$, $i_2'$ successor of $i_2$ in $I_2$, then every witness of $\Phi_1\leq \Phi_2$ maps $i_2$ to $i_1$ and $i_2'$ to $i_1'$.
\end{lem}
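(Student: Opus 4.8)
The plan is to reduce everything to the behaviour of an arbitrary witness $F_{21}$ of $\Phi_1\leq\Phi_2$ on the three consecutive elements $i_2,v_2,i_2'$ of $C(I_2)$, where $v_2$ is the cutpoint of $I_2$ between $i_2$ and $i_2'$ (the common neighbour of $i_2,i_2'$ in $C(I_2)$, since $i_2'$ is the successor of $i_2$ in $I_2$); likewise let $v_1\in C(I_1)\setminus I_1$ be the cutpoint between $i_1$ and $i_1'$. Since $i_3$ has two images under each of $\Phi_2\leq\Phi_3$ and $\Phi_1\leq\Phi_3$, \cref{nonuniqueimagesunderwitnesses} gives $v(i_3)=v_2$ for the first pair, $v(i_3)=v_1$ for the second, and $P_{i_3}=P_{v_2}=P_{v_1}$ (here $P_{i_3}$ is computed in $\Phi_3$, $P_{v_2}$ in $\Phi_2$, $P_{v_1}$ in $\Phi_1$). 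First I would record that the composite of a witness of $\Phi_1\leq\Phi_2$ with a witness of $\Phi_2\leq\Phi_3$ is again a witness of $\Phi_1\leq\Phi_3$: it is surjective and monotone, it carries the respective index sets onto each other, and its defining identity follows by using \cref{cyclichoms3} to rewrite the preimage of $[v,w]$ under the first map as an interval $[\hat f(v),\hat f(w)]$ bounded by cutpoints and then applying the defining identity of the second map. Consequently, composing $F_{21}$ with a witness of $\Phi_2\leq\Phi_3$ sending $i_3$ to $i_2$ (one exists because $i_2$ is a possible image) gives a witness of $\Phi_1\leq\Phi_3$ sending $i_3$ to $F_{21}(i_2)$, so $F_{21}(i_2)\in\{i_1,i_1'\}$; symmetrically $F_{21}(i_2')\in\{i_1,i_1'\}$.

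Next I would use \cref{witnessescompatible1} to control $F_{21}$ near $v_1$. If some $i'\in I_2$ had $\{i_1,i_1'\}$ as its set of images under witnesses of $\Phi_1\leq\Phi_2$, then \cref{witnessescompatible1} would force every witness of $\Phi_2\leq\Phi_3$ to send $i_3$ to $i'$, contradicting that $i_3$ has the two distinct images $i_2,i_2'$. Hence no element of $I_2$ has image set $\{i_1,i_1'\}$; in particular $v_1$ is not of the form $v(i')$ for any $i'$ with a non-unique image under $\Phi_1\leq\Phi_2$, since by \cref{nonuniqueimagesunderwitnesses} such an $i'$ would have image set exactly $\{i_1,i_1'\}$, the two neighbours of $v_1$ in $C(I_1)$. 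Therefore (by the lemma describing which cutpoints of $C(I)\setminus I$ have a unique preimage across all witnesses) there is a unique cutpoint $v_2^{\ast}\in C(I_2)\setminus I_2$ mapped to $v_1$ by every witness of $\Phi_1\leq\Phi_2$, and $P_{v_2^{\ast}}=P_{v_1}=P_{v_2}$. The crucial claim is $v_2^{\ast}=v_2$. Granting it, $F_{21}^{-1}(v_1)=\{v_2\}$, so \cref{cyclichoms3} yields $F_{21}^{-1}(i_1)=\mathopen]\hat f(p_{i_1}),v_2\mathclose[$ and $F_{21}^{-1}(i_1')=\mathopen]v_2,\hat f(s_{i_1'})\mathclose[$ for the relevant cutpoints; as $i_2$ is the immediate predecessor and $i_2'$ the immediate successor of $v_2$ in $C(I_2)$, this gives $F_{21}(i_2)=i_1$ and $F_{21}(i_2')=i_1'$, as required.

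The main obstacle is the crucial claim $v_2^{\ast}=v_2$, i.e.\ ruling out that a witness of $\Phi_1\leq\Phi_2$ sends $i_2$ and $i_2'$ to a common petal of $\Phi_1$ (which, via an interval-preimage argument and surjectivity onto $C(I_1)$, would also force $v_2$ there). Assuming $v_2^{\ast}\neq v_2$, one obtains two distinct cutpoints of $\Phi_2$ with the common gluing set $Q:=P_{i_3}$, with $v_2^{\ast}$ lying (in $C(I_2)$) beyond one of $i_2,i_2'$ from $v_2$; I expect to derive a contradiction by transporting this configuration into $C(I_3)$ along a witness of $\Phi_2\leq\Phi_3$ and comparing it with the position that $i_3$ (whose petal is $Q\cup X$) must occupy relative to the preimage of $v_1$ under a witness of $\Phi_1\leq\Phi_3$ sending $i_3$ to $i_1'$, using \cref{intervalcontainingset} for the interval $\{z\in C(I_3):Q\subseteq P_z\}$ and a cutpoint whose gluing set is disjoint from $Q$. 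The remaining possibility $F_{21}(i_2)=i_1'$, $F_{21}(i_2')=i_1$ is the easy case: monotonicity of $F_{21}$ would then force $F_{21}(j)\in[i_1,i_1']\cap I_1=\{i_1,i_1'\}$ for every $j\in I_2$, contradicting surjectivity of $F_{21}$ onto $I_1$ together with $|I_1|\geq 3$ (which holds since $\Phi_1$ extends a $k$-flower with three petals).
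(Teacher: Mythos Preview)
Your opening is correct and mirrors the paper: composing witnesses shows $F_{21}(i_2),F_{21}(i_2')\in\{i_1,i_1'\}$, and \cref{witnessescompatible1} rules out any element of $I_2$ having image set $\{i_1,i_1'\}$, so each of $i_2,i_2'$ has a \emph{unique} image under every witness of $\Phi_1\leq\Phi_2$. Your monotonicity argument for the swapped case $i_2\mapsto i_1'$, $i_2'\mapsto i_1$ is also fine.

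The gap is exactly where you flag it: the claim $v_2^{\ast}=v_2$. Knowing $P_{v_2^{\ast}}=P_{v_2}$ is not enough (in a $k$-pseudoanemone all such sets are empty), and your sketch of ``transporting into $C(I_3)$ and comparing with a witness sending $i_3$ to $i_1'$'' is not carried out. The difficulty is genuine: if both $i_2$ and $i_2'$ map to $i_1$, then the interval $\{i_2,v_2,i_2'\}$ is forced into $F_{21}^{-1}(i_1)$, so $F_{21}(v_2)=i_1\neq v_1$ and $v_2^{\ast}$ really sits elsewhere in $C(I_2)$; to reach a contradiction you still have to compare two \emph{different} witnesses of $\Phi_1\leq\Phi_3$ at the level of $I_3$, and your outline does not say how.

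The paper avoids the cutpoint claim entirely. After obtaining unique images as above, it picks $\hat\imath_1\in I_1\setminus\{i_1,i_1'\}$ and a fixed preimage $\hat\imath_3\in I_3$, and, assuming for contradiction that $i_2,i_2'$ both map to $i_1'$, takes $j\in I_3$ with unique image $i_2$ under $\Phi_2\leq\Phi_3$ (from \cref{witnessexistspreimage}). Using a witness of $\Phi_2\leq\Phi_3$ sending $i_3\mapsto i_2'$ yields $j\in\mathopen]\hat\imath_3,i_3\mathclose[$. Then the key step is to \emph{manufacture} a witness of $\Phi_1\leq\Phi_3$ sending $j\mapsto i_1'$, $i_3\mapsto i_1$, $\hat\imath_3\mapsto\hat\imath_1$, which is legitimate by \cref{modifywitnesses,newwitness}; this forces $j\in\mathopen]i_3,\hat\imath_3\mathclose[$, a contradiction. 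Finally, with the ``same image'' cases excluded, a single reference point $\hat\imath_2$ in $C(I_2)$ pins down which of the two distinct images is which. This cyclic-order argument with auxiliary elements is the missing idea; it replaces your unproved $v_2^{\ast}=v_2$ by a short computation in $I_3$.
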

\begin{figure}
    \centering
    \begin{tikzpicture}
    \orientation
    \begin{scope}[scale=1.15]
    \basicflower
    \labelofarcabove[red]{-5}{5}{1}{$i_3$}
    \labelofarcabove[blue]{30}{40}{1}{$j$}
    \labelofarcabove[red]{175}{185}{1}{$\hat{\imath}_3$}
    \end{scope}
    \begin{scope}[scale=0.8]
    \basicflower
    \labelofarcabove[pictureorange]{-15}{0}{1}{$i_2'$}
    \labelofarcabove[picturepurple]{0}{15}{1}{$i_2$}
    \labelofarcabove[picturegreen]{170}{190}{1}{$\hat{\imath}_2$}
    \end{scope}
    \begin{scope}[scale=0.45]
    \basicflower
    \labelofarcabove[pictureorange]{-20}{0}{1}{$i_1'$}
    \labelofarcabove[picturepurple]{0}{20}{1}{$i_1$}
    \labelofarcabove[red]{160}{200}{1}{$\hat{\imath}_1$}
    \end{scope}
    \end{tikzpicture}
    \caption{Three compatible $k$-pseudoflowers $\Phi_1\leq\Phi_2\leq \Phi_3$. The outer circle depicts $\Phi_3$ and three elements of its index set, the inner circle depicts $\Phi_1$.}
    \label{fig:witnessescompatible}
\end{figure}
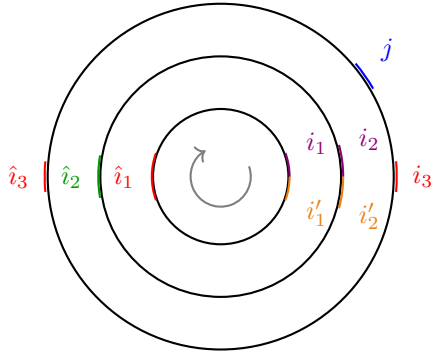
\begin{proof}
    See \cref{fig:witnessescompatible} for a depiction of the notation used in this proof.
    As a concatenation of witnesses is again a witness, all witnesses of $\Phi_1\leq \Phi_2$ map $i_2$ to one of $i_1$ and $i_1'$, and similarly for $i_2'$.
    If one element of $I_2$ had both $i_1$ and $i_1'$ as possible images under $\Phi_1\leq \Phi_2$, then by the previous lemma $i_3$ could not have both $i_2$ and $i_2'$ as possible images under $\Phi_2\leq \Phi_3$.
    Hence no element of $I_2$ has both $i_1$ and $i_1'$ as possible images under witnesses of $\Phi_1\leq \Phi_2$, and in particular both $i_2$ and $i_2'$ have a unique image under such witnesses.
    Let $\hat{\imath}_1$ be an element of $I_1$ that is neither $i_1$ nor $i_1'$, and let $\hat{\imath}_3$ be an element of $I_3$ that is mapped to $\hat{\imath}_1$ by all witnesses of $\Phi_1\leq \Phi_3$.
    
    Assume for a contradiction that both $i_2$ and $i_2'$ have the same image in $I_1$ under witnesses of $\Phi_1\leq\Phi_2$, and that this image is $i_1'$ (the other case is symmetric).
    Let $j$ be an element of $I_3$ that is mapped to $i_2$ by all witnesses of $\Phi_2\leq \Phi_3$.
    As no witness of $\Phi_2\leq \Phi_3$ maps $\hat{\imath}_3$ to $i_2$ or $i_2'$ and there is some witness that maps $i_3$ to $i_2'$, we have $j\in \mathopen]\hat{i_3},i_3\mathclose[$.
    Also, by \cref{modifywitnesses} (and because $j\neq i_3$) there is a witness of $\Phi_1\leq \Phi_3$ that maps $j$ to $i_1'$, $i_3$ to $i_1$ and $\hat{i_3}$ to $\hat{i_1}$.
    This implies $j\in \mathopen]i_3,\hat{\imath}_3\mathclose[$, a contradiction.
    
    So $i_2$ and $i_2'$ have distinct unique images under witnesses of $\Phi_1\leq \Phi_2$.
    If $\hat{\imath}_2$ denotes the image of $\hat{\imath}_3$ under some witness of $\Phi_2\leq \Phi_3$, then $\hat{\imath}_2\in \mathopen]i_2',i_2\mathclose[$ and $\hat{\imath}_1\in \mathopen]i_1',i_1\mathclose[$ implies that indeed all witnesses of $\Phi_1\leq \Phi_2$ map $i_2$ to $i_1$ and $i_2'$ to $i_1'$.
\end{proof}

Now we want to use these two lemmas to define the maps $F_{lj}$.
For all indices $j,l\in J$ with $j\leq l$ define a map $f_{lj}:I_l\rightarrow I_j$ as follows:
For $i\in I_l$, if there are indices $j=j_0<j_1<\ldots <j_n=l$, $n\geq 1$, in $J$ such that for $i_n=i$ and for every $m\leq n$ every witness of $\Phi_{j_{m-1}}\leq \Phi_{j_m}$ maps $i_m$ to the same element $i_{m-1}$ of $I_{m-1}$, then let $f_{lj}(i)=i_0$ (first case).
Otherwise there are two possible images $i_j$ and $i_j'$ of $i$ under witnesses of $\Phi_j\leq \Phi_l$ such that $i_j'$ is the successor of $i_j$.
In this case let $f_{lj}(i)=i_j'$ (second case).

\begin{lem}
	For all $j\leq l\in J$ and all $i\in I_j$, $f_{lj}(i)$ is well defined.
\end{lem}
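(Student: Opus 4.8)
The statement asserts that $f_{lj}(i)$ is well defined, which requires checking two things: first, that the two cases in the definition are exhaustive and mutually exclusive; and second, that within the first case the element $i_0$ does not depend on the choice of the chain $j = j_0 < j_1 < \cdots < j_n = l$. The plan is to handle these in turn, leaning on \cref{nonuniqueimagesunderwitnesses}, \cref{witnessescompatible1}, and \cref{witnessescompatible2}.

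\emph{Exhaustiveness and exclusivity.} For any $j \le l$, the map $F_{lj}$ on $I_l$ has, for each $i \in I_l$, either a unique possible image in $I_j$ or, by \cref{nonuniqueimagesunderwitnesses}, exactly two possible images which are neighbours in $I_j$; in the latter case one is the successor of the other, so the ``$i_j'$ successor of $i_j$'' description in the second case is unambiguous. If $i$ has a unique image under $\Phi_j \le \Phi_l$, I claim the first case applies with the trivial chain $j_0 = j$, $j_1 = l$: indeed ``every witness of $\Phi_j \le \Phi_l$ maps $i$ to the same element'' is then literally the hypothesis. Conversely, if the first case applies via some chain $j = j_0 < \cdots < j_n = l$, then composing witnesses shows every witness of $\Phi_j \le \Phi_l$ (being a concatenation of witnesses along the chain) maps $i$ to $i_0$, so $i$ has a unique image and the second case cannot apply. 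Hence exactly one of the two cases holds for each $i$.

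\emph{Independence of the chain in the first case.} Suppose $j = j_0 < \cdots < j_n = l$ and $j = j_0' < \cdots < j_m' = l$ are two chains witnessing the first case, producing $i_0$ and $i_0'$ respectively. As noted above, both $i_0$ and $i_0'$ are then the image of $i$ under \emph{every} witness of $\Phi_j \le \Phi_l$ — because along either chain the composite of any choice of witnesses is a witness of $\Phi_j \le \Phi_l$, and by hypothesis that composite sends $i$ to $i_0$ (resp.\ $i_0'$) regardless of the intermediate choices. Since $\Phi_j \le \Phi_l$ certainly has at least one witness, $i_0 = i_0'$.

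\emph{Main obstacle.} The genuinely delicate point is the interaction needed to rule out a subtle inconsistency: one must make sure that the two-case dichotomy is stable, i.e.\ that it cannot happen that $i$ has a unique image under $\Phi_j \le \Phi_l$ yet some refinement of the chain forces a branching, or vice versa. This is precisely where \cref{witnessescompatible1} and \cref{witnessescompatible2} do the work: \cref{witnessescompatible1} says that if an intermediate flower $\Phi_{j_t}$ already ``resolves'' the two possible images of $i$ into a single element, then the map from $\Phi_{j_t}$ upward is rigid there, so no branching can reappear below; \cref{witnessescompatible2} controls the case where the branching persists through an intermediate flower, pinning down how the successor relation is transported. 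I expect the cleanest writeup to invoke the observation, already made in the text, that a concatenation of witnesses is a witness, and then to reduce everything to the statement that ``$i$ has a unique image under $\Phi_j \le \Phi_l$'' is a property depending only on the pair $(j,l)$ — at which point both well-definedness claims follow immediately. The only real care is bookkeeping: ensuring that in the first case the element $i_0$ obtained really is independent of how finely the chain $j_0 < \cdots < j_n$ is chosen, which is exactly the ``every witness maps $i$ to $i_0$'' reformulation above.
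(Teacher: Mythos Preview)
Your argument has a genuine gap. The key step is the parenthetical claim that every witness of $\Phi_j\leq\Phi_l$ is ``a concatenation of witnesses along the chain'' $j=j_0<\cdots<j_n=l$. This is asserted without proof, it is not established anywhere in the paper, and it is far from obvious: a witness $F:C(I_l)\to C(I_j)$ is just a surjective monotone map with the right interval-set property, and there is no reason a priori that it should factor through any prescribed intermediate $C(I_{j_r})$. Once this claim is removed, your equivalence ``first case applies $\Leftrightarrow$ $i$ has a unique image under $\Phi_j\leq\Phi_l$'' collapses in the forward direction, and with it both your exclusivity argument and your independence-of-chain argument. Your ``Main obstacle'' paragraph correctly identifies where the difficulty lies, but invoking \cref{witnessescompatible1} and \cref{witnessescompatible2} does not close the gap: those lemmas describe how branchings interact across a \emph{single} intermediate flower, they do not show that a non-factoring witness cannot exist.

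The paper avoids this factorisation question entirely. It proves well-definedness only in the first case (the second case is unambiguous by \cref{nonuniqueimagesunderwitnesses}) and does so by taking two chains $j_0\leq\cdots\leq j_n$ and $l_0\leq\cdots\leq l_m$ that both satisfy the first-case hypothesis, then arguing by induction on $n+m$ that they yield the same $i_0$. The inductive step interleaves the two chains: one inserts $j_{n-1}$ into the second chain between some $l_{r-1}$ and $l_r$, observes (using only that \emph{compositions} of witnesses are witnesses, never the converse) that $i_r$ must have a unique image in $I_{j_{n-1}}$ equal to $i_{n-1}$, and then applies the induction hypothesis to the shorter pair of chains $j_0\leq\cdots\leq j_{n-1}\leq l_r$ and $l_0\leq\cdots\leq l_r$. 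Neither \cref{witnessescompatible1} nor \cref{witnessescompatible2} is used here; they enter only in the next lemma on compatibility of the $f_{lj}$.
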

\begin{proof}
    In order to show that $f_{lj}(i)$ is well-defined, it suffices to consider the case where $f_{lj}(i)$ is defined via the first case.
	Assume that $j=j_0\leq \ldots j_n=l$ in $J$ and $j=l_0\leq \ldots \leq l_m=l$ are two chains of elements of $J$ via which $f_{lj}(i)$ could be defined.
	We will show by induction on $n+m$ that the value of $f_{lj}(i)$ is the same in both cases.
	If one of $n$ and $m$ is $0$, then $j=l$ and the claim holds, so assume otherwise.
	Furthermore, if $j_{n-1}=l_{m-1}$, then it suffices to apply the induction hypothesis to the chains $j_0\leq \ldots \leq j_{n-1}$ and $l_0\leq \ldots \leq l_{m-1}$.
	So assume that $j_{n-1} < l_{m-1}$, the other case is symmetric.
	Let $r$ be an integer such that $l_{r-1}\leq j_{n-1}\leq l_r$, and $i_r$ the image of $i$ in $I_{l_r}$ under the chain of the $l_s$.
	If $i_r$ has several images under witnesses of $\Phi_{j_{n-1}}\leq \Phi_{l_r}$, then $i$ also has several images under witnesses of $\Phi_{j_{n-1}}\leq \Phi_l$.
	So $i_r$ has a unique image $i_s$ under witnesses of $\Phi_{j_{n-1}}\leq \Phi_{l_r}$, and that image is also the unique image of $i$ under witnesses of $\Phi_{j_{n-1}}\leq \Phi_l$.
	Then $j_0\leq \ldots \leq j_{n-1}\leq l_r$ and $l_0\leq \ldots \leq l_r$ are chains on which, because $n+r\leq n+m-1$, the induction hypothesis can be applied.
	Thus the lemma holds.
\end{proof}

\begin{lem}
	For all $j\leq l\leq m$ in $J$ and $i\in I_m$, $f_{lj}\circ f_{ml}(i)=f_{mj}(i)$.
\end{lem}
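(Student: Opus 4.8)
To prove $f_{lj}\circ f_{ml}(i)=f_{mj}(i)$ I would proceed by a case analysis on which clause of the definition produces each of the values $f_{ml}(i)$, $f_{lj}(f_{ml}(i))$ and $f_{mj}(i)$: the \emph{first case} (a chain with stable images) or the \emph{second case} (exactly two possible images, take the successor). The two recurring tools are that a concatenation of witnesses is again a witness, and that by \cref{nonuniqueimagesunderwitnesses} an element has, under the witnesses of a comparability, either exactly one or exactly two possible images, the latter being consecutive in the relevant index set. I would also first isolate the following auxiliary statement $(\dagger)$: if $\Phi_0\leq\Phi_1\leq\Phi_2$ are $k$-pseudoflowers as in this section, $i\in I_2$ has a unique image $i_1$ under all witnesses of $\Phi_1\leq\Phi_2$, and $i_1$ has a unique image $i_0$ under all witnesses of $\Phi_0\leq\Phi_1$, then $i$ has the unique image $i_0$ under all witnesses of $\Phi_0\leq\Phi_2$ (and, conversely, a unique image of $i$ under $\Phi_0\leq\Phi_2$ splits compatibly through $\Phi_1$). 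By induction along a chain, $(\dagger)$ gives that ``$f_{ba}(i)$ arises via the first case with value $i_0$'' is equivalent to ``every witness of $\Phi_a\leq\Phi_b$ maps $i$ to $i_0$''.

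Granting $(\dagger)$, the cases run as follows. If $f_{ml}(i)$ and $f_{lj}(f_{ml}(i))$ both arise via the first case, witnessed by chains $l=l_0<\dots<l_r=m$ and $j=p_0<\dots<p_t=l$ with stable images, then the concatenation $j=p_0<\dots<p_t=l_0<\dots<l_r=m$ again has stable images (the two chains agree at $l$ on the image $f_{ml}(i)$), so it witnesses that $f_{mj}(i)$ arises via the first case with value $f_{lj}(f_{ml}(i))$; by well-definedness of the first case this is $f_{mj}(i)$. If $f_{ml}(i)$ arises via the first case while $f_{lj}(f_{ml}(i))=j_1'$ arises via the second case, with $j_1,j_1'=s_{I_j}(j_1)$ the two images of $f_{ml}(i)$ under $\Phi_j\leq\Phi_l$, then composing witnesses of $\Phi_j\leq\Phi_l$ realising $j_1$ and $j_1'$ with the concatenated chain-witness of $\Phi_l\leq\Phi_m$ (which sends $i$ to $f_{ml}(i)$) exhibits $j_1$ and $j_1'$ as possible images of $i$ under $\Phi_j\leq\Phi_m$; by \cref{nonuniqueimagesunderwitnesses} they are then its only two, and by $(\dagger)$ the value $f_{mj}(i)$ cannot be in the first case, so $f_{mj}(i)=j_1'$. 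Finally, if $f_{ml}(i)$ arises via the second case, with $i_1,i_1'=s_{I_l}(i_1)$ the two images of $i$ under $\Phi_l\leq\Phi_m$ and $f_{ml}(i)=i_1'$, then by $(\dagger)$ (contrapositive) $i$ has two images $a_1,a_1'=s_{I_j}(a_1)$ under $\Phi_j\leq\Phi_m$, and \cref{witnessescompatible2} applied to $\Phi_j\leq\Phi_l\leq\Phi_m$ forces every witness of $\Phi_j\leq\Phi_l$ to send $i_1\mapsto a_1$ and $i_1'\mapsto a_1'$; in particular $i_1'=f_{ml}(i)$ has the unique image $a_1'$ under $\Phi_j\leq\Phi_l$, so $f_{lj}(f_{ml}(i))=a_1'=f_{mj}(i)$ (this argument also shows that the sub-case where $f_{lj}(f_{ml}(i))$ itself arises via the second case does not occur).

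The main obstacle is $(\dagger)$. The easy half (a unique image under $\Phi_0\leq\Phi_2$ splits compatibly) I would prove directly: if $i$ had two images $i_1\neq i_1'$ under $\Phi_1\leq\Phi_2$ then, composing with a fixed witness of $\Phi_0\leq\Phi_1$, both $i_1$ and $i_1'$ map to the single image $i_0$ under every witness of $\Phi_0\leq\Phi_1$; but $i_1,i_1'$ are consecutive in $I_1$ with a cutpoint $v$ between them, so the preimage of $i_0$ under a witness $C(I_1)\to C(I_0)$ would be an interval containing $i_1$ and $i_1'$ but not $v$, hence containing all of $I_1$, contradicting surjectivity onto the at-least-three-element set $I_0$; the compatibility of the split is then similar. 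The hard half — that a stable chain really does produce a unique image — I would prove by contradiction: if some witness of $\Phi_0\leq\Phi_2$ sent $i$ to $\tilde\imath_0\neq i_0$, then $i$ lies in the ``$N$''-set for $\Phi_0\leq\Phi_2$, so by \cref{nonuniqueimagesunderwitnesses} $i_0$ and $\tilde\imath_0$ are consecutive in $I_0$ with a common cutpoint $v\in C(I_0)\setminus I_0$ having $P_v=P_i$ of minimal size $(k-|X|)/2$; by \cref{witnessescompatible1} in contrapositive form (using the unique image $i_1$) no element of $I_1$ has $\{i_0,\tilde\imath_0\}$ as its pair of images under $\Phi_0\leq\Phi_1$, so by \cref{cyclichoms2} and the lemma on cutpoints not of the form $v(i')$ the cutpoint $v$ lifts uniquely to $C(I_1)$ and then to $C(I_2)$; tracking these lifts through the witnesses, together with \cref{intervalcontainingset} and property (\ref{star}) to control the petals near $v$, forces the same kind of oversized preimage interval and hence a contradiction with surjectivity. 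Assembling these pieces gives $(\dagger)$, and the pseudoanemone case ($(k-|X|)/2=0$) is separate but much easier, since there property (\ref{star}) leaves at most one empty petal, the witnesses of comparabilities are essentially unique, and the identity is immediate.
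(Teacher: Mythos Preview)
Your case-split is on which clause defines $f_{ml}(i)$, whereas the paper splits on which clause defines $f_{mj}(i)$. Both decompositions can be made to work, but your execution rests on the auxiliary equivalence $(\dagger\dagger)$ (``first case for $f_{ba}(i)$'' $\Leftrightarrow$ ``$i$ has a unique image under every witness of $\Phi_a\leq\Phi_b$''), and your proof of the pieces of $(\dagger)$ that feed into this has real problems.

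The concrete error is in your ``easy half''. You argue: if $i$ has two images $i_1,i_1'$ under $\Phi_1\leq\Phi_2$, then under any witness $G:C(I_1)\to C(I_0)$ the preimage $G^{-1}(i_0)$ is an interval containing $i_1$ and $i_1'$ but not the cutpoint $v$ between them, forcing it to wrap around and contain all of $I_1$. But nothing prevents $G(v)=i_0$; a witness may send a cutpoint of $C(I_1)$ to an element of $I_0$, and then $G^{-1}(i_0)$ is simply the short interval $\{i_1,v,i_1'\}$, with no contradiction. This is exactly the implication you invoke in your case~3 (``by $(\dagger)$ (contrapositive) $i$ has two images under $\Phi_j\leq\Phi_m$''), so that case is not established. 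Your ``hard half'' sketch is also too vague at the crucial step (``tracking these lifts \ldots forces the same kind of oversized preimage interval''); no such interval is produced by the ingredients you name.

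The paper sidesteps $(\dagger)$ entirely. By splitting first on $f_{mj}$ and fixing a witnessing chain $j=j_0<\cdots<j_n=m$ with $j_{r-1}\leq l\leq j_r$, it can read off the needed uniqueness facts about $\Phi_{j_{r-1}}\leq\Phi_l$ and $\Phi_l\leq\Phi_{j_r}$ directly by composing witnesses with the fixed chain-steps, never needing the global statement that a first-case chain forces a unique overall image. Your case-split can in fact be salvaged the same way: in case~3, rather than invoking $(\dagger)$, split further on whether $f_{mj}$ is first or second case and, in the first-case branch, insert $l$ into the chosen chain to identify $\{i_l,i_l'\}$ with the two images of $i_r$ under $\Phi_l\leq\Phi_{j_r}$; then both have the unique image $i_{r-1}$ under $\Phi_{j_{r-1}}\leq\Phi_l$ and you are done. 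Case~2 can be repaired analogously. So the strategy is viable, but as written the reliance on $(\dagger)$ is a genuine gap.
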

\begin{proof}
	First consider the case that $f_{mj}(i)$ is defined via the first case.
	Let $j_0\leq \ldots j_n$ be a sequence via which $f_{mj}(i)$ could have been defined, with $j_{r-1}\leq l\leq j_r$.
	If $i_r$ has two possible images $i_l,i_l'$ under witnesses of $\Phi_l\leq \Phi_{j_r}$, then $f_{ml}(i)$ is one of those.
	Also, every witness of $\Phi_{r-1}\leq \Phi_l$ maps both $i_l$ and $i_l'$ to $i_{r-1}$, implying that both $f_{lj}(i_l)$ and $f_{lj}(i_l')$ are defined via the first case and are equal to $f_{mj}(i)$.
	So in this case $f_{lj}\circ f_{ml}(i)=f_{mj}(i)$.
	If $i_r$ has a unique image $i_l$ under witnesses of $\Phi_j\leq \Phi_r$, then all witnesses of $\Phi_{r-1}\leq \Phi_l$ map $i_l$ to $i_{r-1}$, and thus $f_{ml}(i)$ is defined via the first case and also $f_{lj}(i_l)$ is defined via the first case.
	As $f_{mj}(i)$ is well defined, this implies that $f_{lj}\circ f_{ml}(i)=f_{mj}(i)$.
	
	So assume that $f_{mj}(i)$ is defined via the second case, and that $i_j$ and $i_j'$ are the two possible images of $i$ under witnesses of $\Phi_j\leq \Phi_m$.
	If there is $i_l\in I_l$ such that both $i_l$ and $i_l'$ are images of witnesses of $\Phi_j\leq \Phi_l$, then by \cref{witnessescompatible1} every witness of $\Phi_l\leq \Phi_m$ maps $i$ to $i_l$.
	Thus $f_{mj}(i)=f_{ml}(i_l)=f_{ml}\circ f_{lj}(i)$.
	So assume that there is no $i_l \in I_l$ that has $i_j$ and $i_j'$ as possible images under witnesses of $\Phi_j\leq \Phi_l$.
	By renaming assume that $i_j'$ is the successor of $i_j$.
	Then by \cref{witnessescompatible2} there are $i_l$ and $i_l'$ in $I_l$ such that every witness of $\Phi_j\leq \Phi_l$ maps $i_l$ to $i_j$ and $i_l'$ to $i_j'$.
	Also, $i_l'$ is the successor of $i_l$, and every witness of $\Phi_l\leq \Phi_m$ maps $i$ to $i_l$ or $i_l'$.
	Then $f_{ml}(i)$ cannot be defined via the first case, because that would imply that $f_{mj}(i)$ would also be defined via the first case.
	So $f_{mj}(i)=i_j'=f_{lj}(i_l')$ and $f_{ml}(i)=i_l'$.
\end{proof}

For $j\leq l\in J$ let $F_{lj}$ be the unique extension of $f_{lj}$ to a surjective map $C(I_l)\rightarrow C(I_j)$ respecting the cyclic order with $F_{lj}(I_l)=I_j$, which exists by \cref{cyclichoms1}.

\begin{lem}
    The maps $F_{lj}$ are witnesses of $\Phi_j\leq \Phi_l$ and are compatible in the sense that for $j\leq l\leq m\in J$ the concatenation of $F_{lj}$ and $F_{ml}$ is $F_{mj}$.
\end{lem}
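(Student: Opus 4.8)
The plan is to verify the two claims separately. For the first claim—that each $F_{lj}$ is a witness of $\Phi_j\leq\Phi_l$—I would argue as follows. By construction $F_{lj}$ is the unique surjective cyclic-order-respecting extension of $f_{lj}$ with $F_{lj}(I_l)=I_j$, so the only thing to check is the interval-set equation $V(v,w)=X\cup\bigcup_{z\in F_{lj}^{-1}([v,w])}P_z^{(l)}$ for all distinct $v,w\in C(I_j)\setminus I_j$. The natural approach is induction on the length of a chain $j=j_0<j_1<\dots<j_n=l$ realising $f_{lj}$ as a composition of ``one-step'' choices of the kind analysed in \cref{modifywitnesses,newwitness}. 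Concretely, it suffices to show that whenever $f_{lj}$ agrees with the restriction of a genuine witness $F$ except on the finitely many indices of $I_l$ that have two possible images under $\Phi_j\leq\Phi_l$, and at each such index we have chosen a neighbour of the relevant cutpoint $v(i')$, the resulting map is still a witness; this is exactly the content of \cref{newwitness} once we know $F'$ is well-defined, which is \cref{modifywitnesses}. So the first claim reduces to checking that $f_{lj}$ differs from the restriction of some honest witness only at elements of $I_l'{}_N$ (the indices with non-unique image), and by at most one cyclic-order step each—this is immediate from the definition of $f_{lj}$ via its two cases and from \cref{nonuniqueimagesunderwitnesses}.

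For the compatibility claim—that the concatenation of $F_{lj}$ and $F_{ml}$ equals $F_{mj}$—note that by \cref{cyclichoms1} each of $F_{mj}$ and $F_{lj}\circ F_{ml}$ is a surjective cyclic-order-respecting map $C(I_m)\to C(I_j)$, and such a map is uniquely determined by its restriction to $I_m$. So it suffices to show $f_{lj}\circ f_{ml}=f_{mj}$ on $I_m$, which is precisely the already-established lemma immediately preceding this statement. Once that restriction-level equality is in hand, uniqueness in \cref{cyclichoms1} forces the extensions to agree.

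Putting these together: for fixed $j\leq l$, first confirm via the two-case definition of $f_{lj}$ that $f_{lj}$ coincides with the restriction of some witness $F$ of $\Phi_j\leq\Phi_l$ outside $I'_N$ and is within one cyclic-order step of it at each element of $I'_N$; apply \cref{modifywitnesses} and \cref{newwitness} to conclude $F_{lj}$ is a witness; then invoke the preceding lemma $f_{lj}\circ f_{ml}=f_{mj}$ together with the uniqueness clause of \cref{cyclichoms1} to get $F_{lj}\circ F_{ml}=F_{mj}$.

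I expect the main obstacle to be the first bullet: one must carefully check that the map $f_{lj}$ produced by the two-case recipe really is obtainable from a single honest witness by a bounded, one-step-at-a-time modification of the form handled in \cref{modifywitnesses}, rather than some more drastic rearrangement. The subtlety is that $f_{lj}$ is defined by a global rule involving chains $j=j_0<\dots<j_n=l$, and one has to match this up with the local picture of \cref{nonuniqueimagesunderwitnesses} (two possible images that are neighbours sharing a common cutpoint $v(i')$) uniformly across all indices of $I_l$ simultaneously; the compatibility lemmas \cref{witnessescompatible1,witnessescompatible2} are exactly what make this global-to-local passage consistent, so the proof will lean on them even though they were stated for chains of length three.
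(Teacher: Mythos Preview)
Your proposal is correct and matches the paper's approach: the paper also observes that $f_{lj}$ differs from the restriction of any witness of $\Phi_j\leq\Phi_l$ only at elements of $I_N'$ (and there by a choice of neighbour of $v(i')$), invokes \cref{newwitness} directly---no induction on chain length is needed, since \cref{modifywitnesses,newwitness} allow all the choices at $I_N'$ to be made simultaneously starting from any single witness---and then derives compatibility from the preceding lemma $f_{lj}\circ f_{ml}=f_{mj}$ together with the uniqueness clause of \cref{cyclichoms1}. Your anticipated obstacle dissolves once you note that both cases in the definition of $f_{lj}$ yield, for each $i\in I_l$, a genuine possible image of $i$ under some witness of $\Phi_j\leq\Phi_l$, so $f_{lj}$ is already of the form $f'$ in \cref{modifywitnesses}.
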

\begin{proof}
As $f_{lj}$ can be obtained from the restriction of any witness of $\Phi_j\leq \Phi_l$ to $I_l$ by changing the images of some elements of $I_j$ to another image they can have under witnesses of $\Phi_j\leq \Phi_l$, by \cref{newwitness} $F_{lj}$ is also a witness of $\Phi_j\leq \Phi_l$.
Furthermore the restriction of $F_{lj}\circ F_{ml}$ to $I_m$ is equal to $f_{mj}$, and $F_{lj}\circ F_{ml}$ is surjective, respects the cyclic order and satisfies $F_{mj}(I_m)=I_j$.
Thus, by uniqueness of the extensions, $F_{mj}=F_{lj}\circ F_{ml}$.
\end{proof}

\begin{cor}
There are witnesses $F_{lj}$ of $\Phi_j\leq \Phi_l$, one for all pairs of indices $j\leq l\in J$, such that for all $j\leq l\leq m\in J$ the concatenation of $F_{lj}$ and $F_{ml}$ is $F_{mj}$.\qed
\end{cor}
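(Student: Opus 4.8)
The plan is to simply invoke the lemma immediately preceding the corollary. That lemma already constructs, for every pair $j \leq l \in J$, a surjective cyclic-order-respecting map $F_{lj} : C(I_l) \to C(I_j)$ with $F_{lj}(I_l) = I_j$, proves that each such $F_{lj}$ is a witness of $\Phi_j \leq \Phi_l$, and verifies the compatibility $F_{lj} \circ F_{ml} = F_{mj}$ for all $j \leq l \leq m$ in $J$. So I would take precisely this family $(F_{lj})_{j \leq l}$, and the corollary follows with no additional argument; it is just the existential repackaging of the explicit construction carried out in this subsection.

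If I instead wanted to reprove the statement from scratch, I would proceed as the subsection does. First, for a single pair $j \leq l$, I would use \cref{nonuniqueimagesunderwitnesses,witnessexistspreimage} to pin down the shape of an arbitrary witness: the only indices $i \in I_l$ on which witnesses of $\Phi_j \leq \Phi_l$ can disagree have exactly two admissible images, which are neighbours in $I_j$ with a common cutpoint $v(i)$, and by \cref{modifywitnesses,newwitness} re-choosing admissible images for such indices always produces another witness. Next I would define $f_{lj} : I_l \to I_j$ by the two-case rule: along any refining chain $j = j_0 < \ldots < j_n = l$ on which the image of $i$ becomes unique I take that value, and otherwise I take the successor of the two admissible images. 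Then I would extend each $f_{lj}$ to $F_{lj}$ via \cref{cyclichoms1}, and deduce $F_{lj} \circ F_{ml} = F_{mj}$ from the index-set identity $f_{lj} \circ f_{ml} = f_{mj}$ together with the uniqueness clause of \cref{cyclichoms1}, since $F_{lj} \circ F_{ml}$ is surjective, respects the cyclic order, and restricts to $f_{mj}$ on $I_m$.

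The main obstacle, which the subsection isolates into \cref{witnessescompatible1,witnessescompatible2}, is establishing that the ``take the successor'' rule is consistent along chains of length at least three, that is, that $f_{lj}$ is well-defined and satisfies $f_{lj} \circ f_{ml} = f_{mj}$. The delicate points are that an ambiguity present at the outer level $\Phi_j \leq \Phi_l$ may already be resolved at an intermediate stage, forcing the intermediate witness to be uniquely determined there (\cref{witnessescompatible1}), and that a genuine two-fold ambiguity may persist through an intermediate stage, so that the successor must be chosen coherently at every level (\cref{witnessescompatible2}). Once these two lemmas are in hand, the remainder is routine bookkeeping with the inverse-limit indexing already set up in this section, and the corollary is nothing but the explicit conclusion of that construction.
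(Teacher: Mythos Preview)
Your proposal is correct and matches the paper exactly: the corollary is marked with a \qed\ and no proof, since it is nothing but an existential restatement of the preceding lemma, which already constructs the $F_{lj}$, shows each is a witness of $\Phi_j\leq\Phi_l$, and verifies $F_{lj}\circ F_{ml}=F_{mj}$. Your optional ``from scratch'' sketch also accurately summarises the subsection's argument.
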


\subsection{\texorpdfstring{Inverse limits of $k$-pseudoflowers}{Inverse limits of pseudoflowers}}\label{sec:limits}

As we showed in the last section, there is a compatible family of witnesses $F_{lj}$ of $\Phi_j\leq \Phi_l$, one for every pair of indices $j\leq l\in J$.
For the following construction we fix one such family.
Then there is an inverse limit $I$ of the $I_j$ with projections $(\pi_j)_{j\in J}$.
Because all $F_{l j}$ respect the cyclic order, the inverse limit also has a cyclic order which is respected by the projections. Further, because all the $F_{l j}$ are surjective, so are the projections.
We will construct a partition $\Psi:=(P_i)_{i\in C(I)}$ that is an upper bound of the chain $(\Phi_j)_{j\in J}$ and is a $k$-pseudoflower except that it may violate property (\ref{star}).
So far we can only define $P_i$ with $i\in I$: It is the intersection of all sets $P_{\pi_j(i)}$ with $j\in J$.

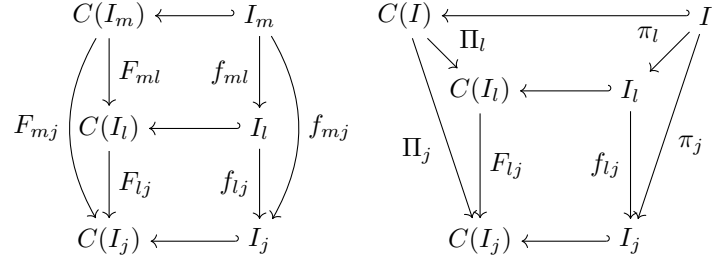
\begin{figure}
\centering
\begin{tikzpicture}
\draw node (A) at (1,0) {$I_j$};
\draw node (B) at (1,1.5) {$I_l$};
\draw node (C) at (1,3) {$I_m$};
\draw node (AM) at (-1,0) {$C(I_j)$};
\draw node (BM) at (-1,1.5) {$C(I_l)$};
\draw node (CM) at (-1,3) {$C(I_m)$};
\foreach \st/\ed/\l/\o in {B/A/f_{lj}/', C/B/f_{ml}/', C/A/f_{mj}/bend left, BM/AM/F_{lj}/, CM/BM/F_{ml}/}
\draw[->] (\st) to [\o, edge label = $\l$] (\ed);
\draw[->] (CM) to [bend right, edge label'=$F_{mj}$] (AM);
\foreach \st/\ed in {A/AM, B/BM, C/CM}
\draw[{Hooks[left]}->] (\st) to (\ed);
\end{tikzpicture}
\begin{tikzpicture}
\draw node (A) at (1,0) {$I_j$};
\draw node (B) at (1,2) {$I_l$};
\draw node (C) at (2,3) {$I$};
\draw node (AM) at (-1,0) {$C(I_j)$};
\draw node (BM) at (-1,2) {$C(I_l)$};
\draw node (CM) at (-2,3) {$C(I)$};
\foreach \st/\ed/\l/\s in {C/A/\pi_j/, B/A/f_{lj}/', C/B/\pi_l/', CM/AM/\Pi_j/', BM/AM/F_{lj}/, CM/BM/\Pi_l/near end}
\draw [->] (\st) to [\s,edge label = $\l$](\ed);
\foreach \st/\ed in {A/AM, B/BM, C/CM}
\draw [{Hooks[left]}->] (\st) -- (\ed);
\end{tikzpicture}
\caption{To the left: The maps $(F_{lj})_{j\leq l\in J}$ extend the compatible maps $(f_{lj})_{j\leq l\in J}$ and are therefore compatible witnesses that the $k$-pseudoflowers are comparable. To the right: The maps $(\Pi_j)_{j\in J}$ defined via the projections $(\pi_j)_{j\in J}$ are compatible with the maps $(F_{lj})_{j\leq l\in J}$.}
\label{fig:my_label}
\end{figure}

We will now relate $C(I)$ to the cyclically ordered sets $C(I_j)$.
By \cref{cyclichoms1} every projection $\pi_j:I\rightarrow I_j$ can be extended uniquely to a surjective map $\Pi_j:C(I)\rightarrow C(I_j)$ that respects the cyclic order.
For $j\leq l\in J$, the restriction of $F_{lj}\circ \Pi_l$ to $I_l$ is $\pi_j$, and hence $F_{lj}\circ \Pi_l=\Pi_j$.
By \cref{cyclichoms2}, the cutpoints of the $C(I_j)$ can be identfied with each other and with cutpoints of $C(I)$ as follows:
Given $j\leq l\leq n\in J$ and $v\in C(I_j)\setminus I_j$, let $u$ be the unique element of $C(I_l)\setminus I_l$ with 
$F_{lj}(u)=v$ and let $u'$ be the unique element of $C(I_n)\setminus I_n$ with $F_{nl}(u')=u$.
Then $u'$ is also the unique element of $C(I_n)\setminus I_n$ with $F(nj)(u')=v$.
Also, if $w$ is the unique element of $C(I)\setminus I$ with $\Pi_l(w)=u$, then $w$ is the unique element of $C(I) \setminus I$ with $\Pi_j(w)=v$.
So it is well-defined to identify $v$ with $w$ and with all cutpoints that get mapped to $v$ via some $F_{lj}$.
We do this identification for all cutpoints of all $C(I_j)$ and denote the set of cutpoints of $C(I)$ that are identified with a cutpoint of some $C(I_j)$ by $V_F$.
Then for $v\in V_F$, $P_v$ is the same for all $j\in J$ where $v$ is a cutpoint, so we take that vertex set also to be $P_v$ for $\Psi$.
Also note that, for $v,w\in V_F$, $V(v,w)$ does not depend on the $k$-pseudoflower $\Phi_j$ with respect to which it is defined.
But $V(v,w)$ taken in $\Psi$ is not yet defined, and when it is we will first have to show that it is equal to $V(v,w)$ taken in some $\Phi_j$.

In order to properly distinguish here, for cutpoints $v$ and $w$ of $C(I)$ we introduce the notation
\begin{align*}
    V'(v,w)&= X\cup P_v\cup P_w\cup\bigcup_{z\in [v,w]\cap (I\cup V_F)}P_z \\
    \intertext{and}\hat{V}(v,w)&=X \cup \bigcup_{z\in [v,w]\cap C(I)} P_z
\end{align*}
while the notation $V(v,w)$ is reserved for the case $v,w\in V_F$ and the value taken in some $\Phi_j$.
Note that up to now, in many cases $V'$ and $\hat{V}$ are not yet well-defined because $P_z$ is not yet defined for cutpoints $z\notin V_F$.
Those sets $P_z$ are going to be defined later.

For $v,w\in V_F$, we can see that $V'(v,w)$ and $V(v,w)$ are the same:

\begin{lem}\label{Vvwcompatible3}
For all distinct $v,w\in V_F$ we have $V(v,w)\setminus X=\bigcup_{z\in [v,w]}P_z$ where the interval is taken in $I\cup V_F$.
\end{lem}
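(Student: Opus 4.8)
The plan is to fix, using that $J$ is directed, an index $j$ with $v=\hat{f}_j(\bar v)$ and $w=\hat{f}_j(\bar w)$ for distinct cutpoints $\bar v,\bar w\in C(I_j)\setminus I_j$; then $V(v,w)$, computed in $\Phi_j$, equals $X\cup\bigcup_{z\in[\bar v,\bar w]_{C(I_j)}}P^{\Phi_j}_z$, and \cref{cyclichoms3} applied to $\Pi_j$ gives $[v,w]_{C(I)}=\Pi_j^{-1}([\bar v,\bar w]_{C(I_j)})$, so that $[v,w]_{I\cup V_F}=[v,w]_{C(I)}\cap(I\cup V_F)$. I would also record that $P_z=\bigcap_{l\in J}P^{\Phi_l}_{\Pi_l(z)}$ for every $z\in I\cup V_F$: for $z\in I$ this is the definition, and for $z\in V_F$, writing $z=\hat{f}_m(\bar z)$, the witness identity $P_u=P_{F(u)}$ handles the levels where $\Pi_l(z)$ is a cutpoint while the third pseudoflower axiom, via the petal formula of a witness, shows the remaining terms merely contain $P_z$. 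Granting these, ``$\supseteq$'' is immediate: for $z\in[v,w]_{I\cup V_F}$ one has $\Pi_j(z)\in[\bar v,\bar w]_{C(I_j)}$, so $P_z\subseteq P^{\Phi_j}_{\Pi_j(z)}\subseteq V(v,w)\setminus X$.

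For ``$\subseteq$'' I would take $x\in V(v,w)\setminus X$; if $x\in P_v\cup P_w$ we are done since $v,w\in[v,w]_{I\cup V_F}$, so assume otherwise. Setting $C_l(x)=\{z\in C(I_l):x\in P^{\Phi_l}_z\}$, each $C_l(x)$ is a nonempty interval of $C(I_l)$ by \cref{intervalcontainingset} (and \cref{verticesinducesintervalsofV(C)} for cutpoint gluing sets), and using the petal formula at a neighbour of a petal of $\Phi_j$ containing $x$, together with $x\notin P_v\cup P_w$ and \cref{cyclichoms3}, one checks that $C_l(x)\subseteq\ ]\Pi_l(v),\Pi_l(w)[$ for all $l\ge j$; in particular $C_l(x)$ avoids the cutpoints $\Pi_l(v),\Pi_l(w)$.

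Next: if $C_l(x)\cap I_l=\emptyset$ for some $l\ge j$, then $C_l(x)$ is an interval of $C(I_l)$ consisting only of cutpoints, hence --- cutpoints being pairwise non-adjacent in a cycle completion, by \cref{distinctvertices} --- a single cutpoint $\{\bar z\}$, and then $\hat{f}_l(\bar z)\in V_F$ lies in $]v,w[_{C(I)}$ with $x\in P_{\hat{f}_l(\bar z)}$, finishing the proof. Otherwise $C_l(x)\cap I_l$ is for every $l\ge j$ a non-trivial interval of $I_l$, so $C_l(x)\cap I_l=[\bar a^{(l)},\bar b^{(l)}]\cap I_l$ for unique cutpoints $\bar a^{(l)},\bar b^{(l)}$, and one verifies $]\bar a^{(l)},\bar b^{(l)}[\ \subseteq C_l(x)\subseteq[\bar a^{(l)},\bar b^{(l)}]$. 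The crux is to show these endpoint cutpoints cohere along the chain: the maps $F_{ml}$ restrict to surjections $C_m(x)\twoheadrightarrow C_l(x)$ for $j\le l\le m$ (again using non-adjacency of cutpoints, so that a cutpoint gluing set lies inside a neighbouring petal, plus the petal formula), and feeding $C_m(x)\subseteq F_{ml}^{-1}(C_l(x))$ and $C_l(x)=F_{ml}(C_m(x))$ into the endpoint-preserving form of \cref{cyclichoms3} forces $F_{ml}(\bar a^{(m)})=\bar a^{(l)}$ and $F_{ml}(\bar b^{(m)})=\bar b^{(l)}$. Hence $\bar a^{(j)}$ is identified in $V_F$ with an element $a^{\infty}$ satisfying $\Pi_l(a^{\infty})=\bar a^{(l)}$ for all $l\ge j$, likewise $\bar b^{(j)}$ with some $b^{\infty}$, and $a^{\infty}\ne b^{\infty}$, $]a^{\infty},b^{\infty}[\ \subseteq\ ]v,w[_{C(I)}$.

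To conclude, by \cref{distinctvertices} there is $i\in I\cap\ ]a^{\infty},b^{\infty}[\ \subseteq[v,w]_{I\cup V_F}$. For every $l$, $\Pi_l(i)$ lies strictly between $\Pi_l(a^{\infty})=\bar a^{(l)}$ and $\Pi_l(b^{\infty})=\bar b^{(l)}$ (it equals neither, as these are the unique $\Pi_l$-preimages of $a^{\infty},b^{\infty}\ne i$), so $\Pi_l(i)\in\ ]\bar a^{(l)},\bar b^{(l)}[\ \subseteq C_l(x)$ for $l\ge j$, while $\Pi_l(i)\in C_l(x)$ for $l<j$ as well, since then $\Pi_l(i)$ is the image of $\Pi_j(i)\in C_j(x)$ under a coarsening witness, under which gluing sets only grow; hence $x\in\bigcap_l P^{\Phi_l}_{\Pi_l(i)}=P_i$, that is, $x\in\bigcup_{z\in[v,w]_{I\cup V_F}}P_z$, as required. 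The step I expect to be the main obstacle is the coherence $F_{ml}(\bar a^{(m)})=\bar a^{(l)}$ of the endpoint cutpoints: this is exactly where one genuinely needs the surjectivity $C_m(x)\twoheadrightarrow C_l(x)$ of the restricted witnesses, combined with the fact that preimages of intervals under the cyclic homomorphisms are intervals with the predicted cutpoint endpoints; everything else is routine manipulation of the maps $\Pi_l$, $F_{ml}$, $\hat{f}_{ml}$ and the cutpoint identifications.
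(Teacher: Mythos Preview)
Your approach is sound in outline but takes a much longer route than the paper, and the detour introduces a genuine gap in the surjectivity step you correctly flag as the crux.

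The paper's proof of the hard inclusion ``$\subseteq$'' is a two--line dichotomy. Take $u\in V(v,w)\setminus X$; if $u\in V(w,v)$ too, then $u\in P_v\cup P_w$ and we are done, so assume $u\notin V(w,v)$. Now either $u\in P_z$ for some $z\in V_F$, in which case $z\notin[w,v]$ (else $u\in V(w,v)$) so $z\in[v,w]$ and we are done; or $u$ lies in no cutpoint gluing set. In the second case, for every $j$ there is a \emph{unique} $i_j\in I_j$ with $u\in P^{\Phi_j}_{i_j}$: if $u$ were in two petals of $\Phi_j$, separating them by a pair of cutpoints would force $u$ into a cutpoint gluing set of $\Phi_j$, hence into some $P_z$ with $z\in V_F$. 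Uniqueness makes the $i_j$ automatically compatible under the $F_{lj}$, so they define a single $i\in I$ with $u\in P_i$; and $u\notin V(w,v)$ forces $i\in[v,w]$.

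Your argument instead tracks the whole interval $C_l(x)$ and attempts to show that its endpoint cutpoints $\bar a^{(l)},\bar b^{(l)}$ cohere along the chain. The justification you give for the needed surjectivity $F_{ml}\colon C_m(x)\twoheadrightarrow C_l(x)$ invokes ``a cutpoint gluing set lies inside a neighbouring petal''. In infinite pseudoflowers a cutpoint need not have a neighbouring petal, and precisely then surjectivity can fail: if $x\in P^{\Phi_m}_{p}$ for a cutpoint $p$ with no successor in $C(I_m)$, then the petal $i_l$ succeeding $p_l=F_{ml}(p)$ in $C(I_l)$ lies in $C_l(x)$, but $F_{ml}^{-1}(i_l)=\mathopen]p,s\mathclose[$ may be disjoint from $C_m(x)$, and the endpoint identification $F_{ml}(\bar b^{(m)})=\bar b^{(l)}$ then breaks down. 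Whenever this happens, $x\in P_z$ for the element $z\in V_F$ corresponding to $p$, and one is finished immediately by taking that $z$; but your case split (on whether $C_l(x)\cap I_l=\emptyset$) does not isolate this situation, so the argument as written has a hole.

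The fix is exactly the paper's dichotomy: first dispose of the case $x\in P_z$ for some $z\in V_F$; in the remaining case each $C_l(x)$ is a single petal index, and the elaborate interval--endpoint machinery collapses to the paper's one--line inverse--limit argument.
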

\begin{proof}
Let us first show that $V(v,w) \setminus X$ is a subset of $\bigcup_{z\in [v,w]}P_z$. For this, let $u\in V(v,w)\setminus X$. If also $u\in V(w,v)$ then $u\in P_v\cup P_w$ and we are done, so assume that $u\notin V(w,v)$. If there is $z\in V_F$ such that $u\in P_z$, then $u\notin V(w,v)$ implies that $z\notin [w,v]$ and thus $z\in [v,w]$. If there is no $z\in V_F$ such that $u\in P_z$, then for all $j\in J$ there is a unique $i_j\in I_j$ such that $u\in P_{i_j}$. In this case also $F_{lj}(i_l)=i_j$ for all $j\leq l\in J$. So there is $i\in I$ such that $\Pi_j(i)=i_j$ for all $j\in J$, and $u\in P_i$. Again $u\notin V(v,w)$ implies $i\in [v,w]$.

To prove the other inclusion, let $j\in J$ be sufficiently large such that $C(I_j)$ contains $v$, $w$ and $z$ if $z\in V_F$. Then $\Phi_j$ witnesses that $P_z\subseteq V(v,w)$. Furthermore all $P_z$ are disjoint from $X$.
\end{proof}
\begin{cor}
$X=V(G)\setminus \bigcup_{z\in I\cup V_F}P_z$.\qed
\end{cor}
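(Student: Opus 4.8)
The plan is to deduce the corollary directly from \cref{Vvwcompatible3}; the only extra ingredient is the trivial observation that every set $P_z$ with $z\in I\cup V_F$ avoids $X$.

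First I would record the inclusion $X\subseteq V\setminus\bigcup_{z\in I\cup V_F}P_z$, i.e.\ that $X$ is disjoint from $P_z$ for each $z\in I\cup V_F$. For $z\in V_F$ this holds because, by construction of $\Psi$, the set $P_z$ agrees with a set $P_v$ for some cutpoint $v$ of some $C(I_j)$, and $P_v$ is disjoint from the common set $X$ of the $\Phi_j$ by the definition of a $k$-pseudoflower (\cref{defn:pseudoflower}). For $z\in I$ it holds because $P_z=\bigcap_{j\in J}P_{\pi_j(z)}$ is contained in any $P_{\pi_j(z)}$, which again avoids $X$. In particular $\bigcup_{z\in I\cup V_F}P_z\subseteq V\setminus X$.

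For the reverse inclusion I would fix two distinct elements $v,w\in V_F$; such elements exist because each index set $I_j$ has at least two elements, hence each $C(I_j)\setminus I_j$, and therefore $V_F$, contains at least two cutpoints. Taking $j\in J$ large enough that $v$ and $w$ are both cutpoints of $C(I_j)$, the pair $S(v,w)=(V(v,w),V(w,v))$ is a separation of the universe in $\Phi_j$, so $V(v,w)\cup V(w,v)=V$. Applying \cref{Vvwcompatible3} to the ordered pairs $(v,w)$ and $(w,v)$, with intervals taken in $I\cup V_F$ and using $[v,w]\cup[w,v]=I\cup V_F$, I get
\[
V\setminus X=\bigl(V(v,w)\setminus X\bigr)\cup\bigl(V(w,v)\setminus X\bigr)=\bigcup_{z\in[v,w]}P_z\;\cup\;\bigcup_{z\in[w,v]}P_z=\bigcup_{z\in I\cup V_F}P_z.
\]
Combining the two inclusions yields $X=V\setminus\bigcup_{z\in I\cup V_F}P_z$, which is the claim.

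I do not anticipate any genuine obstacle here; the only points that need a line of care are that $V_F$ really contains two distinct cutpoints (so that \cref{Vvwcompatible3} applies and $S(v,w)$ is a separation) and that the two cyclic intervals $[v,w]$ and $[w,v]$ of $I\cup V_F$ together cover all of $I\cup V_F$.
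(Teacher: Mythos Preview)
Your argument is correct and is exactly the intended deduction from \cref{Vvwcompatible3}; the paper itself gives no explicit proof of this corollary and simply marks it as immediate. You have spelled out precisely the two inclusions the paper leaves implicit.
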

\begin{cor}
For all distinct $v,w\in V_F$ we have $V(v,w)=V'(v,w)$.\qed
\end{cor}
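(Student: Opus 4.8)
The plan is to read this off directly from \cref{Vvwcompatible3}. First I would recall that for $v,w\in V_F$ the set $V(v,w)$ is, by the convention fixed just before \cref{Vvwcompatible3}, the interval set computed inside any $k$-pseudoflower $\Phi_j$ whose cutpoints include both $v$ and $w$, and that such an interval set always contains $X$ by definition of a $k$-pseudoflower. Hence $V(v,w)=X\cup\bigl(V(v,w)\setminus X\bigr)$.

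Next I would apply \cref{Vvwcompatible3}, which gives $V(v,w)\setminus X=\bigcup_{z\in[v,w]}P_z$ with the interval taken in $I\cup V_F$. Since $v$ and $w$ lie in $I\cup V_F$, the interval $[v,w]$ taken in the induced cyclic order on $I\cup V_F$ agrees with $[v,w]\cap(I\cup V_F)$ computed in $C(I)$, so this reads $V(v,w)=X\cup\bigcup_{z\in[v,w]\cap(I\cup V_F)}P_z$.

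Finally I would observe that $v$ and $w$ are themselves elements of $[v,w]\cap(I\cup V_F)$, so the sets $P_v$ and $P_w$ appearing explicitly in the definition of $V'(v,w)$ are already subsumed in the union $\bigcup_{z\in[v,w]\cap(I\cup V_F)}P_z$. Therefore $V'(v,w)=X\cup P_v\cup P_w\cup\bigcup_{z\in[v,w]\cap(I\cup V_F)}P_z=X\cup\bigcup_{z\in[v,w]\cap(I\cup V_F)}P_z=V(v,w)$, which is the claim.

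I do not expect any real obstacle here: this is a bookkeeping corollary whose only subtlety is matching the index set $[v,w]\cap(I\cup V_F)$ in the definition of $V'$ against the interval in $I\cup V_F$ used in \cref{Vvwcompatible3}, and recalling that $V(v,w)$ contains $X$ so that the leading $X$ may be restored after passing through \cref{Vvwcompatible3}.
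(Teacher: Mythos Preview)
Your argument is correct and is exactly the unpacking the paper intends by its bare \qed: apply \cref{Vvwcompatible3}, reinsert $X$, and note that $P_v$ and $P_w$ are already picked up by the union over $[v,w]\cap(I\cup V_F)$.
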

\begin{cor}\label{SofvandwforVF}
For all distinct $v,w\in V_F$ the pair $(V'(v,w),V'(w,v))$
is a separation of order at most $k$ with separator $P_v\cup P_w\cup X$.\qed
\end{cor}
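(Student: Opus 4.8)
The plan is to reduce the statement to the second defining property of $k$-pseudoflowers applied to one of the flowers $\Phi_j$ in the chain, and then translate the resulting separation through the identifications set up above. First I would observe that, since $v$ and $w$ both lie in $V_F$, there are indices $j_v,j_w\in J$ such that $v$ is identified with a cutpoint of $C(I_{j_v})$ and $w$ is identified with a cutpoint of $C(I_{j_w})$. As $J$ is a chain I may pick $j\in J$ with $j\geq j_v$ and $j\geq j_w$; by the compatibility of the cutpoint identifications with the maps $F_{lj}$ discussed before \cref{Vvwcompatible3}, both $v$ and $w$ are then (identified with) cutpoints of $C(I_j)$, and the associated vertex sets $P_v$, $P_w$ and the interval set $V(v,w)$ computed in $\Phi_j$ agree with the ones attached to $\Psi$.

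Next, since $\Phi_j$ is a $k$-pseudoflower and $v,w$ are distinct cutpoints of $C(I_j)$, its second defining property says that $S(v,w)=(V(v,w),V(w,v))$, with the interval sets taken in $\Phi_j$, is a separation of order at most $k$ with $V(v,w)\cap V(w,v)=P_v\cup P_w\cup X$, i.e.\ with separator $P_v\cup P_w\cup X$. Because $V(v,w)$ does not depend on the choice of the flower $\Phi_j$ (as already noted), and because the corollary just above gives $V(v,w)=V'(v,w)$ and, applying it to the pair $(w,v)$, also $V(w,v)=V'(w,v)$, this statement about $\Phi_j$ rewrites verbatim as: $(V'(v,w),V'(w,v))$ is a separation of order at most $k$ with separator $P_v\cup P_w\cup X$. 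That is exactly the claim.

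I do not expect a genuine obstacle here — the result is essentially immediate, which is why it is stated as a corollary — and the only point needing care is the bookkeeping of the cutpoint identifications: one must be sure that "$v$ is a cutpoint of $C(I_{j_v})$" and "$v$ is a cutpoint of $C(I_j)$" refer, for all sufficiently large $j$, to the same vertex set $P_v$ and hence to the same candidate separation $S(v,w)$. This coherence was already established in the paragraphs preceding \cref{Vvwcompatible3}, so the argument amounts to invoking those identifications together with \cref{Vvwcompatible3} and its corollary.
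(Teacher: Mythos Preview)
Your proposal is correct and follows essentially the same route as the paper: choose $j$ large enough that both $v$ and $w$ are cutpoints of $C(I_j)$, use the $k$-pseudoflower axioms for $\Phi_j$ to get that $(V(v,w),V(w,v))$ is a separation of order at most $k$ with separator $P_v\cup P_w\cup X$, and then invoke the preceding corollary $V(v,w)=V'(v,w)$ to rewrite this as the desired statement. The paper treats this as immediate (hence the \qed with no displayed proof), and your write-up simply makes explicit the bookkeeping that the paper leaves implicit.
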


\subsection{Completing the index set}\label{sec:addV_N}

Let $V_N$ be the set of cutpoints of $C(I)$ that are not contained in $V_F$.
In this subsection we will define the values $P_z$ with $z\in V_N$.

\begin{lem}\label{prop:vnotinPsiF}
Every $v\in V_N$ has a unique neighbor in $C(I)$, which is an element of $I$.
\end{lem}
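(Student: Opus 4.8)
The plan is to exhibit the unique neighbour of $v$ as an element $i\in I$ obtained as a compatible thread of the images $\Pi_j(v)$. First I would show that $\Pi_j(v)\in I_j$ for every $j\in J$. Since each $\pi_j$ is surjective we have $\Pi_j(I)=I_j$, so \cref{cyclichoms2} applies to $\Pi_j\colon C(I)\to C(I_j)$: for each cutpoint $u$ of $C(I_j)$ the preimage $\Pi_j^{-1}(u)$ is a single cutpoint of $C(I)$, and by the construction of $V_F$ these are precisely the elements of $V_F$. As $v\in V_N$, the image $\Pi_j(v)$ therefore cannot be a cutpoint, so $i_j:=\Pi_j(v)\in I_j$. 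Because $F_{lj}\circ\Pi_l=\Pi_j$ and $F_{lj}$ restricts to $f_{lj}$ on $I_l$, we get $f_{lj}(i_l)=i_j$ whenever $j\leq l$; hence $(i_j)_{j\in J}$ is a thread of the inverse system and determines an element $i\in I$ with $\pi_j(i)=i_j$ for all $j$.

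Next I would localise $v$ and $i$ inside the fibres of the $\Pi_j$. For each $j$ let $p_j$ and $s_j$ be the preimages under $\Pi_j$ of the predecessor and successor of $i_j$ in $C(I_j)$; by the first paragraph these lie in $V_F$, and \cref{cyclichoms2,cyclichoms3} give $\Pi_j^{-1}(i_j)=\mathopen]p_j,s_j\mathclose[$. Since $\Pi_j(v)=\Pi_j(i)=i_j$, both $v$ and $i$ lie in this open interval for every $j$. Now $\mathopen]p_j,s_j\mathclose[$ is an interval of $C(I)$ containing $v$ and $i$ but not $p_j$, so it contains exactly one of the two arcs $[v,i]$ and $[i,v]$. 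If $\Pi_l$ is constant on one of these arcs then, by $F_{lj}\circ\Pi_l=\Pi_j$, so is $\Pi_j$; together with directedness of $J$ this forces the same arc to lie in $\mathopen]p_j,s_j\mathclose[$ for all $j$ at once. Without loss of generality $[v,i]\subseteq\mathopen]p_j,s_j\mathclose[$ for all $j$.

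From this I would deduce $\mathopen]v,i\mathclose[=\emptyset$, so that $i$ is the successor of $v$. Indeed, if some $z\in\mathopen]v,i\mathclose[$ belonged to $I$, then $z\in[v,i]\subseteq\mathopen]p_j,s_j\mathclose[=\Pi_j^{-1}(i_j)$ for all $j$ would give $\pi_j(z)=i_j$ for all $j$ and hence $z=i$, a contradiction. If instead $z\in\mathopen]v,i\mathclose[$ were a cutpoint, then \cref{distinctvertices} applied to the distinct cutpoints $v$ and $z$ produces some $y\in I$ with $y\in\mathopen]v,z\mathclose[\subseteq\mathopen]v,i\mathclose[$, reducing to the first case. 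For uniqueness, suppose $v$ also had a predecessor $p$. If $p$ were a cutpoint, \cref{distinctvertices} would put an element of $I$ into the empty interval $\mathopen]p,v\mathclose[$; hence $p\in I$. Since $v\in\mathopen]p_j,s_j\mathclose[$ and $\mathopen]p,v\mathclose[=\emptyset$, $p$ cannot lie outside $[p_j,s_j]$ (otherwise $p_j\in\mathopen]p,v\mathclose[$), and as $p\notin V_F\supseteq\{p_j,s_j\}$ this gives $p\in\mathopen]p_j,s_j\mathclose[$ for all $j$, so $\pi_j(p)=i_j$ for all $j$ and $p=i$. But then $p=i$ would be simultaneously the predecessor and successor of $v$, forcing $C(I)=\{v,i\}$ and contradicting that $C(I)$ also contains the at least two elements of $I$. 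Hence $i$ is the unique neighbour of $v$, and $i\in I$. (If the other arc were the relevant one above, the symmetric argument shows $i$ is the unique predecessor of $v$.)

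The crux is the middle step: upgrading ``$i$ lies in every fibre $\Pi_j^{-1}(i_j)$'' to ``$i$ is an immediate neighbour of $v$''. This needs both the dichotomy that the same arc between $v$ and $i$ sits inside $\mathopen]p_j,s_j\mathclose[$ for all $j$ simultaneously — so that one can conclude $\pi_j(z)=i_j$ for every $j$ at once — and \cref{distinctvertices}, which forbids any further cutpoint from being wedged between $v$ and $i$.
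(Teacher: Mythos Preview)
Your proof is correct and rests on the same two ingredients as the paper's: the thread $i=(i_j)_j$ with $i_j=\Pi_j(v)\in I_j$, and \cref{distinctvertices} to exclude cutpoints from sitting next to $v$. The difference is structural. You prove existence of a neighbour first, via the ``arc dichotomy'' argument (exactly one of $[v,i]$, $[i,v]$ lies in each fibre $\mathopen]p_j,s_j\mathclose[$, and directedness forces it to be the same arc throughout), and then derive uniqueness. The paper does it the other way round and more economically: it observes that a surjective monotone map sends a neighbour of $v$ to $\Pi_j(v)$ or a neighbour of it in $C(I_j)$; since any neighbour $z$ of $v$ must lie in $I$ (by \cref{distinctvertices}) and the neighbours of $i_j$ in $C(I_j)$ are cutpoints, this forces $\Pi_j(z)=i_j$ for all $j$, hence $z=i$. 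Existence then follows because the intersection of the nested fibres is an interval whose only element of $I$ is $i$. Your route works but the ``neighbours map to neighbours'' observation replaces the whole arc-dichotomy paragraph with a single line.
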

\begin{proof}
As $v\in V_N$, no $\Pi_j$ maps $v$ to a cutpoint.
Let $i$ be the element of $I$ with $\pi_j(i)=\Pi_j(v)$ for all $j\in J$.
If it exists, let $z$ be a neighbor of $v$ in $C(I)$.
For all $j\in J$, $\Pi_j(z)$ is $\Pi_j(v)$ or one of its neighbors in $C(I_j)$.
Also no cutpoint can be a neighbor of another cutpoint by \cref{distinctvertices}, so $z\in I$ and $z=i$.
Hence if $v$ has a neighbor in $C(I)$ then that neighbor is $i$.
As $i$ is the only element of $I$ with $\Pi_j(i)=\Pi_j(v)$ for all $j\in J$, $i$ is indeed a neighbor of $v$ in $C(I)$.
\end{proof}

Now we want to define $P_v$ for $v\in V_N$. By \cref{prop:vnotinPsiF} there is a unique $i\in I$ which is a neighbor of $v$ in $C$. For every $j\in J$ let $u_j$ be the predecessor and $w_j$ the successor of $\Pi_j(i)$ in $C(I_j)$. Let $z\in V_F$ be a cutpoint such that for all sufficiently large $j\in J$ both $S(z,w_j)$ and $S(z,u_j)$ distinguish two elements of $\mathcal{P}$. In particular, for all sufficiently large indices $j$, $P_z$ is disjoint from $P_{u_j}$ and $P_{w_j}$ and thus $P_z$ is disjoint from $V(u_j,w_j)$.

\begin{figure}
    \centering
    \begin{tikzpicture}
\basicflower
\orientation
\labelofarcbelow[picturegreen]{130}{60}{1}{$\Pi_j(i)$}
\labelofarcabove[red]{100}{75}{1}{$i$}
\labelofsep[pictureorange]{100}{-130}{$(Y,Z)$}{1}
\labelofsep[blue]{230}{130}{$S(u_j,z)$}{1}
\newnode{$z$}{-130}
\newnode{$u_j$}{130}
\newnode{$v$}{100}
\newnode{$w_j$}{60}
\node at (\flowerradius +0.5cm,0) {};
\end{tikzpicture}
\begin{tikzpicture}
\basicflower
\orientation
\labelofarcbelow[picturegreen]{130}{60}{1}{$\Pi_j(i)$}
\labelofarcabove[red]{100}{75}{1}{$i$}
\labelofsep[blue]{100}{-50}{$(Y^1,Z^1)$}{1}
\labelofsep[pictureorange]{230}{100}{$(Z^2,Y^2)$}{1}
\labelofsep[red]{-50}{-130}{$S(z_2,z_1)$}{1}
\newnode{$z_2$}{-130}
\newnode{$z_1$}{-50}
\newnode{$u_j$}{130}
\newnode{$v$}{100}
\newnode{$w_j$}{60}
\node at (-\flowerradius - .5cm,0) {};
\end{tikzpicture}
    \caption{To the left: In the case that $v\in V_N$ is the predecessor of its neighbor $i$ in $C(I)$, the set $P_v$ is defined via the limit of the separations $S(z,u_j)$ for some suitable $z\in V_F$.\newline To the right: $P_v$ does not depend on the choice of $z$, see also \cref{Pvindofz}.}
    \label{fig:limitextended}
\end{figure}
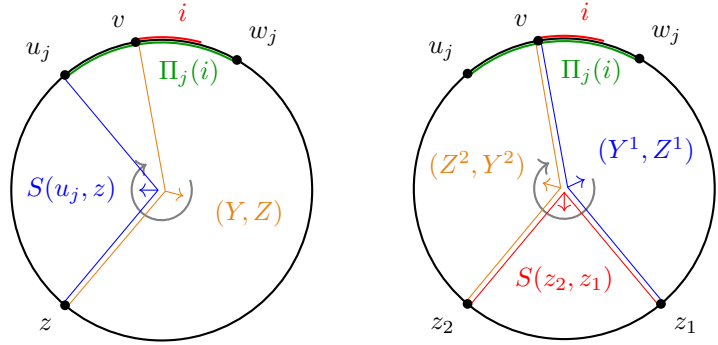

If $v$ is the predecessor of $i$, then let $(Y,Z)$ be the supremum of the set $\{S(z,u)\colon u\in \mathopen]z,v\mathclose[\cap V_F\}$ and otherwise let $(Y,Z)$ be the infimum of the set $\{S(z,w)\colon w\in \mathopen]v,z\mathclose[\cap V_F\}$.
In both cases $(Y,Z)$ has order at most $k$ because the order function is limit-closed.
Define $P_v:=(Y\cap Z)\setminus (P_z\cup X)$.
As $S(z,u_j) \leq (Y,Z)\leq S(z,w_j)$ for all sufficiently large $j\in J$, $Y\cap Z$ contains $P_z$.
Also $(Y,Z)$ distinguishes two elements of $\mathcal{P}$ so it has order $k$.
Hence $P_v$ has $(k-|X|)/2$ many elements.

\begin{lem}\label{placePv}
$P_v\subseteq V(u_j,w_j)$ for all $j\in J$.
\end{lem}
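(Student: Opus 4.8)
The plan is to prove $P_v\subseteq V(u_j,w_j)$ first for all sufficiently large $j\in J$, directly from the construction of $(Y,Z)$ together with \cref{intersectionofintervalsets}, and then to transfer it to every $j$ by a monotonicity argument along the chain. The split is forced because the inequalities $S(z,u_j)\leq(Y,Z)\leq S(z,w_j)$ recorded just before the statement are only available for large $j$.

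For the first step I would fix $j$ large enough that $S(z,u_j)\leq(Y,Z)\leq S(z,w_j)$ holds and that the cutpoint $z$ of $C(I)$ is also a cutpoint of $C(I_j)$ (which is possible since $z\in V_F$). Reading the two inequalities componentwise gives $Y\subseteq V(z,w_j)$ and $Z\subseteq V(u_j,z)$, hence $Y\cap Z\subseteq V(z,w_j)\cap V(u_j,z)$. Now I would apply \cref{intersectionofintervalsets} inside $\Phi_j$ with $a=z$, $b=u_j$, $c=w_j$ and $d=z$: the three cutpoints $z,u_j,w_j$ are pairwise distinct (since $S(z,u_j)$ and $S(z,w_j)$ distinguish elements of $\mathcal{P}$, so in particular $z\neq u_j$ and $z\neq w_j$), and $u_j\in[z,w_j]$ because the open arc $\mathopen]u_j,w_j\mathclose[$ meets $C(I_j)$ only in $\Pi_j(i)\in I_j$, so the cutpoint $z$ lies on the complementary arc. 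The second identity of \cref{intersectionofintervalsets} then reads $V(z,w_j)\cap V(u_j,z)=V(u_j,w_j)\cup P_z$, so $Y\cap Z\subseteq V(u_j,w_j)\cup P_z$; since $P_v=(Y\cap Z)\setminus(P_z\cup X)$ this yields $P_v\subseteq V(u_j,w_j)$. The case $v=s(i)$ is symmetric, with infima in place of suprema.

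For the second step, the third condition in \cref{defn:pseudoflower} gives $[u_j,w_j]\cap C(I_j)=\{u_j,\Pi_j(i),w_j\}$ and $P_{u_j}\cup P_{w_j}\subseteq P_{\Pi_j(i)}$, so $V(u_j,w_j)=X\cup P_{\Pi_j(i)}$ for every $j$. If $j\leq l$ in $J$, then the witness $F_{lj}$ of $\Phi_j\leq\Phi_l$ satisfies $F_{lj}(\Pi_l(i))=\Pi_j(i)\in[u_j,w_j]$, so $\Pi_l(i)\in F_{lj}^{-1}([u_j,w_j])$, and the defining formula $V(u_j,w_j)=X\cup\bigcup_{z'\in F_{lj}^{-1}([u_j,w_j])}P_{z'}$ of the relation $\Phi_j\leq\Phi_l$ (the $P_{z'}$ being the petals of $\Phi_l$) shows $V(u_l,w_l)=X\cup P_{\Pi_l(i)}\subseteq V(u_j,w_j)$. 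As $J$ is a chain, for an arbitrary $j$ I can choose $l\geq j$ large enough for the first step to apply to $l$, and then $P_v\subseteq V(u_l,w_l)\subseteq V(u_j,w_j)$, as required. I expect the main obstacle to be the first step, and within it the positioning of the auxiliary cutpoint $z$ relative to $u_j$ and $w_j$ in $C(I_j)$, which is exactly what makes \cref{intersectionofintervalsets} applicable; once one notes that for large $j$ the point $z$ is itself a cutpoint of $C(I_j)$ and hence avoids the open arc $\mathopen]u_j,w_j\mathclose[$, everything else is routine manipulation of interval sets.
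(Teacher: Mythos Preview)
Your proof is correct and follows essentially the same route as the paper: both reduce to large $j$, read off $Y\cap Z\subseteq V(z,w_j)\cap V(u_j,z)$ from $S(z,u_j)\leq(Y,Z)\leq S(z,w_j)$, and then apply \cref{intersectionofintervalsets} to get $V(z,w_j)\cap V(u_j,z)=V(u_j,w_j)\cup P_z$. The paper compresses your second step into the single sentence ``It suffices to show the claim for all $j\in J$ such that $z$ is a cutpoint of $C(I_j)$'' without justification, whereas you spell out the monotonicity $V(u_l,w_l)\subseteq V(u_j,w_j)$ explicitly; you also make the placement of $z$ on the complementary arc explicit, which the paper leaves implicit.
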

\begin{proof}
It suffices to show the claim for all $j\in J$ such that $z$ is a cutpoint of $C(I_j)$.
As $S(z,u_j)\leq (Y,Z)\leq S(z,w_j)$ by definition, $Y\cap Z$ is contained in $V(z,w_j)\cap V(u_j,z)$, which equals $V(u_j,w_j)\cup P_z$ by \cref{intersectionofintervalsets}.
As $P_v$ is a subset of $Y\cap Z$ and is disjoint from $P_z$, the lemma holds.
\end{proof}
\begin{cor}\label{cor:placePv}
$P_v\subseteq P_i$.
\end{cor}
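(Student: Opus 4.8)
The plan is to read off the corollary directly from \cref{placePv} together with the definition of $P_i$ as an intersection of petals. Recall that $P_i$ was defined to be $\bigcap_{j\in J}P_{\pi_j(i)}$, and that $\pi_j(i)=\Pi_j(i)$ since $\Pi_j$ extends $\pi_j$. So it suffices to show $P_v\subseteq P_{\pi_j(i)}$ for every $j\in J$.

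First I would identify $V(u_j,w_j)$ with a petal of $\Phi_j$. Since $u_j$ and $w_j$ are by definition the predecessor and successor of $\Pi_j(i)=\pi_j(i)$ in $C(I_j)$, the interval $[u_j,w_j]\cap I_j$ equals $\{\pi_j(i)\}$, so $V(u_j,w_j)$ is exactly the petal $V(\pi_j(i))$, which (as noted after \cref{defn:pseudoflower}) equals $P_{\pi_j(i)}\cup X$. Then \cref{placePv} gives $P_v\subseteq V(u_j,w_j)=P_{\pi_j(i)}\cup X$ for every $j\in J$.

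To finish, I would use that $P_v$ is disjoint from $X$: this is immediate from its construction as $P_v=(Y\cap Z)\setminus(P_z\cup X)$. Combining this with the inclusion of the previous paragraph yields $P_v\subseteq P_{\pi_j(i)}$ for every $j\in J$, and intersecting over $j$ gives $P_v\subseteq\bigcap_{j\in J}P_{\pi_j(i)}=P_i$, as desired.

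I do not expect any real obstacle here: the corollary is a one-line consequence of \cref{placePv}, the bookkeeping being only that $V(u_j,w_j)$ is the petal $P_{\pi_j(i)}\cup X$ and that $P_v\cap X=\emptyset$ by construction. The only thing worth stating carefully is the identification $V(u_j,w_j)=P_{\pi_j(i)}\cup X$, which is exactly the meaning of $u_j,w_j$ being the neighbours of $\pi_j(i)$ in $C(I_j)$.
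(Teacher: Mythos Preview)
Your proposal is correct and follows essentially the same approach as the paper: both use \cref{placePv} to get $P_v\subseteq V(u_j,w_j)$, identify $V(u_j,w_j)=P_{\pi_j(i)}\cup X$ via the third pseudoflower axiom (the paper spells this out as $X\cup P_{\Pi_j(i)}\cup P_{u_j}\cup P_{w_j}=X\cup P_{\Pi_j(i)}$, while you quote the equivalent fact $V(i)=P_i\cup X$ from after \cref{defn:pseudoflower}), observe $P_v\cap X=\emptyset$, and intersect over $j$.
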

\begin{proof}
As every $\Phi_j$ is a $k$-pseudoflower, for sufficiently large $j\in J$ we have
\begin{align*}
    V(u_j,w_j)=X \cup P_{\Pi_j(i)}\cup P_{u_j}\cup P_{w_j} = X \cup P_{\Pi_j(i)}.
\end{align*}
Because $P_v$ is disjoint from $X$, this implies $P_v\subseteq \bigcap_{j\in J}P_{\Pi_j(i)}=P_i$.
\end{proof}

\begin{cor}\label{cutpointcontainedinneighbor}
For $i'\in I$, if $u$ is a neighbor of $i'$ in $C(I)$ then $P_u\subseteq P_{i'}$.
\end{cor}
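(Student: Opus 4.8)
The plan is to split according to whether the neighbour $u$ of $i'$ lies in $V_N$ or in $V_F$, after first observing that $u$ must be a cutpoint at all. Indeed, since $i'\in I$, the singleton $\{i'\}$ is a non-trivial interval of $I$, so by \cref{def:completionofcyclicorder} it equals $[p,s]\cap I$ for unique $p,s\in C(I)\setminus I$, and these $p,s$ are precisely the predecessor and successor of $i'$ in $C(I)$. Hence any neighbour $u$ of $i'$ in $C(I)$ lies in $C(I)\setminus I=V_F\cup V_N$.

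The case $u\in V_N$ needs essentially nothing new. By \cref{prop:vnotinPsiF} the cutpoint $u$ has a unique neighbour in $C(I)$ and that neighbour lies in $I$; since being a neighbour is symmetric and $i'$ is a neighbour of $u$, this unique neighbour is $i'$, so \cref{cor:placePv} already gives $P_u\subseteq P_{i'}$.

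For $u\in V_F$ I would first pick $j_0\in J$ with $\Pi_{j_0}(u)\in C(I_{j_0})\setminus I_{j_0}$; by the identification of cutpoints set up in \cref{sec:limits}, for every $j\ge j_0$ the image $\Pi_j(u)$ is a cutpoint of $C(I_j)$ with $P_{\Pi_j(u)}=P_u$, while $\Pi_j(i')=\pi_j(i')\in I_j$. The core local claim is that for every such $j$ the element $\Pi_j(u)$ is a neighbour of $\pi_j(i')$ in $C(I_j)$: assuming without loss of generality that $u$ is the predecessor of $i'$ in $C(I)$, so $\mathopen]u,i'\mathclose[=\emptyset$, any element of $\mathopen]\Pi_j(u),\pi_j(i')\mathclose[$ would, by surjectivity of $\Pi_j$ together with the fact that $\Pi_j$ respects the cyclic order, pull back to an element of $\mathopen]u,i'\mathclose[$, which is impossible; and $\Pi_j(u)\neq \pi_j(i')$ because one is a cutpoint and the other is not. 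Granting the claim, the third defining condition of the $k$-pseudoflower $\Phi_j$ yields $P_u=P_{\Pi_j(u)}\subseteq P_{\pi_j(i')}$ for all $j\ge j_0$. Finally, since by the definition of extension each witness $F_{lj}$ restricts on $I_l$ to a map sending $\pi_l(i')$ to $\pi_j(i')$ and never enlarges the petal set of an index, the sets $P_{\pi_l(i')}$ decrease along the chain $J$; as $\{l\in J:l\ge j_0\}$ is cofinal in $J$, this gives $P_u\subseteq \bigcap_{l\ge j_0}P_{\pi_l(i')}=\bigcap_{l\in J}P_{\pi_l(i')}=P_{i'}$, as desired.

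The only step with real content is the local claim that $\Pi_j$ carries the adjacent pair $\{u,i'\}$ to an adjacent pair in $C(I_j)$, and this is where I expect whatever (minor) difficulty there is to sit; it is handled purely via surjectivity and cyclic-order-preservation of $\Pi_j$. Everything else is bookkeeping with the cutpoint identifications and the decreasing-petals consequence of the chain condition, both already established.
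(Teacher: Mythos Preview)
Your proposal is correct and follows essentially the same approach as the paper's proof: both split into the cases $u\in V_N$ (handled via \cref{cor:placePv}) and $u\in V_F$ (handled by noting $P_u\subseteq P_{\pi_j(i')}$ for all sufficiently large $j$ and then intersecting). You spell out in more detail the two steps the paper leaves implicit---that $\Pi_j$ sends the adjacent pair $(u,i')$ to an adjacent pair in $C(I_j)$, and that the petal sets $P_{\pi_j(i')}$ decrease along the chain so that the cofinal intersection equals $P_{i'}$---but the structure is the same.
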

\begin{proof}
If $u\in V_N$, then $i'$ is the unique neighbor of $u$ and $P_u\subseteq P_{i'}$ by \cref{cor:placePv}.
If $u\in V_F$, then $P_u\subseteq P_{\Pi_j(i')}$ for all $j\in J$ for which $u$ is a cutpoint, also implying $P_u\subseteq P_{i'}$.
\end{proof}

\begin{lem}\label{Pvindofz}
The set $P_v$ does not depend on the choice of $z$.
\end{lem}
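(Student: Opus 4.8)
\emph{Proof idea.} The plan is to show that if $z,z'\in V_F$ are two cutpoints for which the construction of $P_v$ above is legitimate, then the resulting vertex sets coincide. By the symmetry of the two cases in the definition I assume throughout that $v$ is the predecessor of its neighbour $i\in I$ in $C(I)$. Write $(Y,Z)$ and $(Y',Z')$ for the two separations obtained — the suprema of $\{S(z,u):u\in\mathopen]z,v\mathclose[\cap V_F\}$ and of $\{S(z',u):u\in\mathopen]z',v\mathclose[\cap V_F\}$ respectively — and $P_v^z=(Y\cap Z)\setminus(P_z\cup X)$, $P_v^{z'}=(Y'\cap Z')\setminus(P_{z'}\cup X)$ for the two candidate sets. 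Since $z,z',v$ are three distinct points of a cyclic order, exactly one of $z,z'$ lies in the open interval determined by the other and $v$; after relabelling I assume $z\in\mathopen]z',v\mathclose[$, so that $[z',v\mathclose[\,\supseteq\,[z,v\mathclose[$. Finally I fix $j\in J$ large enough that $z$ and $z'$ are cutpoints of $C(I_j)$ and that $S(z,u_j),S(z,w_j),S(z',u_j),S(z',w_j)$ all distinguish elements of $\mathcal{P}$, so in particular $P_z$ and $P_{z'}$ are each disjoint from $P_{u_j}$ and from $P_{w_j}$; here $u_j,w_j$ denote the predecessor and successor of $\Pi_j(i)$ in $C(I_j)$, and then $z,z'$ lie in the arc $\mathopen]w_j,u_j\mathclose[$ with $z$ between $z'$ and $u_j$.

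The key step is an anchor-free description of the bulk of $P_v^z$. Writing $A=\bigcup_{u\in\mathopen]z,v\mathclose[\cap V_F}V(z,u)$ and $B=\bigcap_{u\in\mathopen]z,v\mathclose[\cap V_F}V(u,z)$, limit-closedness gives $Y=A\cup X_0^z$ for some $X_0^z$ contained in the separator $Y\cap Z$, and $Z=B$; moreover by \cref{Vvwcompatible3} (and since the $u_j$ are cofinal below $v$ in $V_F$) the set $A$ is $X$ together with the union of all $P_y$ with $y\in[z,v\mathclose[\cap(I\cup V_F)$. Since $i\in[u,z]$ for every $u\in\mathopen]z,v\mathclose[$ one has $P_i\subseteq B$, and since $Y\cap Z=P_z\cup P_v^z\cup X\subseteq P_z\cup P_i\cup X$ by \cref{cor:placePv}, a short computation (using that no element of $P_i$ lies in $P_z$, see below) shows $(A\cap B)\setminus(P_z\cup X)=P_i\cap A=:K_z$, whence $P_v^z=K_z\cup\bigl(X_0^z\setminus(P_z\cup X)\bigr)$ with $K_z\subseteq P_v^z$. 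Next I argue $K_z=K_{z'}$. As $[z',v\mathclose[\,\supseteq\,[z,v\mathclose[$, only indices $y\in[z',z\mathclose[$ could cause a discrepancy; but if some $x\in P_i$ lay in $P_y$ for such a $y$, then by \cref{intervalcontainingset} applied inside the genuine pseudoflower $\Phi_j$ the non-trivial interval $\{y'\in C(I_j):x\in P_{y'}\}$ contains $\Pi_j(i)$ and $\Pi_j(y)$, and one of the two arcs of $C(I_j)$ joining these points contains $z$ while the other contains $z'$, so this interval contains $z$ or $z'$, i.e. $x\in P_z$ or $x\in P_{z'}$ — which is impossible: an element of $P_i$ lying in $P_z$ (say) would, by the analogous argument applied to the non-adjacent points $z$ and $\Pi_j(i)$, also lie in $P_{u_j}$ or $P_{w_j}$, contradicting the disjointness of $P_z$ from those two sets. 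Hence $K:=K_z=K_{z'}$ is independent of the anchor, and it remains to rule out a surplus of $P_v^z$ over $K$.

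This last point — showing $X_0^z\setminus(P_z\cup X)\subseteq K$, which together with $|P_v^z|=|P_v^{z'}|=(k-|X|)/2$ forces $P_v^z=K=P_v^{z'}$ — is the one genuinely delicate part, since it concerns the order-completing sets produced by limit-closedness. The plan here is to exploit the chain identity $S(z,u)=S(z',u)\wedge S(z,w_j)$, valid for $u\in\mathopen]u_j,v\mathclose[\cap V_F$ by \cref{meetofintervalseps} applied with $(a,b,c,d)=(z',z,u,w_j)$ — its hypotheses $z\in[z',u]$, $w_j\in[u,z']$ and $P_{z'}\cap P_{w_j}=\emptyset$ holding by the choice of $j$ and the relabelling. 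Passing to suprema of the two chains and using the pseudoflower identity $V(z,w_j)\cap V(w_j,z)=P_z\cup P_{w_j}\cup X$ together with $Y'\cap Z'=P_{z'}\cup P_v^{z'}\cup X$, one expresses $Y\cap Z$ in terms of the data attached to $(Y',Z')$; since every separation occurring here has order exactly $k$ (each distinguishes two elements of $\mathcal{P}$), comparing the sizes of the separators leaves no slack for $X_0^z$ to contribute a vertex outside $K$. I expect this control of the order-completing sets to be the main obstacle; everything else reduces to checking the cyclic-position hypotheses of \cref{meetofintervalseps,intervalcontainingset} and to routine set manipulation.
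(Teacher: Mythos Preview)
Your setup and the anchor-free quantity $K=P_i\cap A$ are sensible, and the argument that $K_z=K_{z'}$ via \cref{intervalcontainingset} inside $\Phi_j$ is correct. The genuine gap is exactly where you flag it: the control of the limit-closure correction $X_0^z$. Your proposed route is the meet identity $S(z,u)=S(z',u)\wedge S(z,w_j)$ followed by ``passing to suprema of the two chains''. But suprema of chains do not commute with $\wedge$ in this universe: writing things out, $\sup_u\bigl((A'_u,B'_u)\wedge(C,D)\bigr)$ and $\bigl(\sup_u(A'_u,B'_u)\bigr)\wedge(C,D)$ agree on the second coordinate but can differ on the first precisely by the correction sets the limit-closure adds. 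So the very objects $X_0^z$ you need to pin down are the ones your identity fails to control. The size comparison you invoke at the end only tells you $|P_v^z|=|P_v^{z'}|=(k-|X|)/2$, which you already knew; it does not by itself force $X_0^z\setminus(P_z\cup X)\subseteq K$.

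The paper sidesteps this entirely by working with a \emph{join} identity instead of a meet. With $z_1\in[v,z_2]$ (your $z',z$ swapped), one has $S(z_1,u_j)=S(z_1,u_l)\vee S(z_2,u_j)$ for every $j\ge l$, since $P_{u_l}\cap P_{z_2}=\emptyset$ by the order-$k$ hypothesis on $S(z_2,u_l)$. Taking the supremum over $j$ now commutes cleanly with the fixed join, giving $(Y^1,Z^1)=S(z_1,u_l)\vee(Y^2,Z^2)$. From this one reads off $P_v^1\subseteq Z^1\subseteq Z^2$ and $P_v^1\subseteq Y^2\cup V(z_1,u_l)$; combining with $P_v^1\subseteq V(u_l,w_l)$ from \cref{placePv} and \cref{intersectionofintervalsets} yields $P_v^1\subseteq Y^2$, hence $P_v^1\subseteq Y^2\cap Z^2$. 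Finally $P_v^1\subseteq\bigcap_j V(u_j,w_j)$ forces $P_v^1\cap V(z_1,z_2)=\emptyset$, so $P_v^1$ avoids $P_{z_2}\cup X$ and therefore $P_v^1\subseteq P_v^2$; equality follows from $|P_v^1|=|P_v^2|$. The moral: to make a limit argument go through here, arrange things so that the chain-supremum interacts with $\vee$, not with $\wedge$.
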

\begin{proof}
Let $z_1$ and $z_2$ be two possible choices for $z$, assume $z_1\in [v,z_2]$. Denote the sets defined by $z_n$ by a superscript index $n$, e.g.\ $(Y^1,Z^1)$ is the separation $(Y,Z)$ defined by $z_1$.
This proof is for the case that $v$ is the predecessor of $i$, the proof of the other case is symmetric.
Let $l$ be sufficiently large that both $S(z_1,w_l)$ and $S(z_2,u_l)$ are defined in $\Phi_l$ and distinguish two elements of $\mathcal{P}$.

As $S(z_1,u_j)\geq S(z_2,u_j)$ for all $j\geq l$, also $(Y^1,Z^1)\geq (Y^2,Z^2)$.
Furthermore for all $j\geq l$,
\begin{align*}
    S(z_1,u_l)\vee (Y^2,Z^2)\geq S(z_1,u_l) \vee S(z_2,u_j)=S(z_1,u_j).
\end{align*}
So $(Y^1,Z^1)\leq S(z_1,u_l)\vee (Y^2,Z^2)\leq (Y^1,Z^1)$ and thus the two separations are equal.
In particular $P_v^1\subseteq Z^1\subseteq Z^2$ and $P_v^1\subseteq Y^2\cup V(z_1,u_l)$.
Because also $P_v^1\subseteq V(u_l,w_l)$, together we have
\begin{align*}
    P_v^1\subseteq V(u_l,w_l)\cap (Y^2\cup V(z_1,u_l))=(V(u_l,w_l)\cap Y^2) \cup (X\cup P_{u_l})\subseteq Y^2
\end{align*}
where the equality holds by \cref{intersectionofintervalsets}.
Hence $P_v^1\subseteq Y^2\cap Z^2$.
Also $P_v^1\subseteq V(u_j,w_j)$ for all sufficiently large $j\in J$ implies that $P_v^1\cap V(z_1,z_2)=\emptyset$ and thus $P_v^1=P_v^2$ as those two sets have the same size.
\end{proof}

\begin{lem}\label{Vvzcompatible1}
$(Y,Z)=(V'(z,v), V'(v,z))$.
\end{lem}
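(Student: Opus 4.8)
The plan is to treat the case where $v$ is the predecessor of $i$ in $C(I)$ (the other case being symmetric) and to exploit the fact that $(Y,Z)$ is, by definition, the supremum of the increasing chain $\{S(z,u) : u \in\, ]z,v[\, \cap V_F\}$. Since for $u \in V_F$ we have $S(z,u) = (V'(z,u), V'(u,z))$ by \cref{Vvwcompatible3} and its corollaries, and the universe is limit-closed, the first step is to record that $Y = \bigl(\bigcup_u V'(z,u)\bigr) \cup W$ for some $W \subseteq V$, that $Z = \bigcap_u V'(u,z)$, and that $|Y \cap Z| \leq k$; because $(Y,Z)$ distinguishes two elements of $\mathcal{P}$, in fact $|Y \cap Z| = k$. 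Combining this with $P_v \subseteq Y \cap Z$ (by construction), $P_z \subseteq Y \cap Z$, $X \subseteq Y \cap Z$, and the observation that $P_v$, $P_z$, $X$ are pairwise disjoint with $|P_v| + |P_z| + |X| = k$, I would conclude $Y \cap Z = P_v \cup P_z \cup X$.

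The crux is to show that the $V_F$-cutpoints are cofinal in the arc $]z,v[$. For each sufficiently large $j$ I would let $\tilde u_j$ be the unique $\Pi_j$-preimage of the predecessor $u_j$ of $\Pi_j(i)$ in $C(I_j)$; by \cref{cyclichoms2} this is a cutpoint, hence lies in $V_F$, and from $\Pi_j(v) = \Pi_j(i)$ together with the choice of $z$ (a cutpoint of $C(I_j)$ distinct from $u_j$ and $w_j$) one checks that $\tilde u_j \in\, ]z,v[$. The key claim is $\sup_j \tilde u_j = v$: if the supremum were some $\zeta < v$ on the arc from $z$ to $v$, then $]\zeta,v[\, \subseteq \Pi_j^{-1}(\Pi_j(i))$ for every $j$, so every element of $]\zeta,v[$ would be mapped to $\Pi_j(i)$ by all $\Pi_j$ and hence would equal $i$ in the inverse limit; since $i \notin\, ]z,v[$ this would force $]\zeta,v[\, = \emptyset$, i.e.\ $v$ would have a predecessor in $C(I)$, contradicting \cref{prop:vnotinPsiF}, by which the unique neighbor of $v$ is its successor $i$. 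Granting the claim, for every $t \in [z,v[\, \cap (I \cup V_F)$ there is $u \in\, ]z,v[\, \cap V_F$ with $t \in [z,u]$ (take $u = t$ when $t \in V_F \setminus \{z\}$, and $u = \tilde u_j$ for large $j$ otherwise), so $P_t \subseteq V'(z,u)$; with the obvious reverse inclusion this gives $\bigcup_u V'(z,u) = X \cup \bigcup_{t \in [z,v[\, \cap (I \cup V_F)} P_t$.

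From here everything is bookkeeping. The identity just obtained together with $P_v \subseteq Y$ gives $V'(z,v) \subseteq Y$, while $Y = \bigl(\bigcup_u V'(z,u)\bigr) \cup W \subseteq \bigl(X \cup \bigcup_{t \in [z,v[\, \cap (I \cup V_F)} P_t\bigr) \cup (Y \cap Z) = V'(z,v)$, so $Y = V'(z,v)$. The unions defining $V'(z,v)$ and $V'(v,z)$ jointly cover $X$ and every set $P_t$ with $t \in I \cup V_F$, so $V'(z,v) \cup V'(v,z) = V$ (using the corollary to \cref{Vvwcompatible3}), and $Y \cup Z = V$ because $(Y,Z)$ is a separation; hence, since $Y \cup Z = V$, $Z = (Z \setminus Y) \cup (Y \cap Z) = (V \setminus Y) \cup (Y \cap Z) = (V \setminus V'(z,v)) \cup (P_v \cup P_z \cup X) \subseteq V'(v,z)$, using $V \setminus V'(z,v) \subseteq V'(v,z)$. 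The reverse inclusion is immediate: $X \subseteq Z$, $P_v \subseteq Z$, and for every $t \in [v,z] \cap (I \cup V_F)$ and any $u \in\, ]z,v[\, \cap V_F$ one has $[v,z] \subseteq [u,z]$, whence $P_t \subseteq V'(u,z)$ and so $P_t \subseteq Z$; thus $V'(v,z) \subseteq Z$ and $Z = V'(v,z)$.

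I expect the main obstacle to be the cofinality claim in the second paragraph — verifying that the cutpoints $\tilde u_j$ lie in $]z,v[$ and accumulate at $v$ — since that is where the structure of the inverse limit near the $V_N$-cutpoint $v$, in particular the fact that $v$ has no predecessor in $C(I)$, really gets used. Once that is in place, the remaining steps are routine manipulations with the identity $V = X \cup \bigcup_{z \in I \cup V_F} P_z$ and limit-closedness of the universe.
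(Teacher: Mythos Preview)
Your proof is correct and follows essentially the same route as the paper's: both establish $Y=V'(z,v)$ and $Z=V'(v,z)$ by combining the limit-closed description of the supremum $(Y,Z)$ with the fact that the $V_F$-cutpoints $\tilde u_j$ accumulate at $v$ along the arc $]z,v[$, and both use the identity $Y\cap Z=P_v\cup P_z\cup X$ established just before the lemma. You are actually more explicit than the paper about this cofinality step, which the paper hides in the bare assertion $V'(z,v)=P_v\cup\bigcup_{t\in[z,v]\cap V_F}V(z,t)$; the one imprecision is that your phrase ``hence would equal $i$ in the inverse limit'' literally applies only to elements of $I$, but since $\zeta$ and $v$ are distinct cutpoints, $]\zeta,v[\cap I=\emptyset$ already contradicts \cref{distinctvertices}, so the argument goes through.
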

\begin{proof}
We are going to show the lemma in the case that $v$ is the predecessor of $i$, the other case is symmetric.

We have $V'(z,v)=P_v\cup \bigcup_{t\in [z,v]\cap V_F}V(z,t)\subseteq Y$. Assume there is $u\in Y\setminus V'(z,v)$, then $u\notin V(z,u_j)$ for all sufficiently large $j\in J$. So $u\in Z$ and thus $u\in P_z\cup P_v\cup X$, a contradiction. So $Y=V'(v,z)$.

We have $V'(v,z)\subseteq V'(u_j,z)=V(u_j,z)$ for all sufficiently large $j\in J$ and thus $V'(v,z)\subseteq Z$. Let $u\in Z\setminus (X\cup P_v)$ and let $w\in I\cup V_F$ be some element such that $u\in P_w$. If $w\in [v,z]$, then also $u\in V'(v,z)$. Otherwise $w\in [z,v]$, so $w\in [z,u_j]$ for some $j\in J$ and thus $u\in V(z,u_j)\subseteq Y$. In this case we have $u\in Y\cap Z= P_v\cup P_z\cup X\subseteq V'(v,z)$. Thus $Z\subseteq V'(v,z)$, and these two sets are equal.
\end{proof}
\begin{cor}\label{Sofvwforzandv}
$(V'(z,v)\cup P_v,V'(v,z)\cup P_v)$ is a separation with separator $P_v\cup P_z\cup X$.
\end{cor}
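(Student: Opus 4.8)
The plan is to deduce this directly from \cref{Vvzcompatible1} together with the basic order-theoretic properties of vertex separations. By \cref{Vvzcompatible1} we already know that $(Y,Z) = (V'(z,v),V'(v,z))$ is a separation, and from the construction of $P_v$ we have $P_v = (Y\cap Z)\setminus(P_z\cup X)$, so that $Y\cap Z = P_v\cup P_z\cup X$. The separation whose separator we want to compute is $(V'(z,v)\cup P_v, V'(v,z)\cup P_v) = (Y\cup P_v, Z\cup P_v)$. First I would observe that $P_v\subseteq Y$ and $P_v\subseteq Z$, so in fact $Y\cup P_v = Y$ and $Z\cup P_v = Z$; hence the pair in the statement is literally equal to $(Y,Z)$, which is already known to be a separation of order at most $k$, and indeed of order exactly $k$ since it distinguishes two elements of $\mathcal P$. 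Its separator is $Y\cap Z = P_v\cup P_z\cup X$, which is exactly what is claimed.

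Concretely, the proof is a two-line unravelling: cite \cref{Vvzcompatible1} to get $(Y,Z)=(V'(z,v),V'(v,z))$; note from the definition $P_v=(Y\cap Z)\setminus(P_z\cup X)$ that $P_v\subseteq Y\cap Z$, whence $V'(z,v)\cup P_v = Y$ and $V'(v,z)\cup P_v = Z$; and finally note that $Y\cap Z = P_v\cup(P_z\cup X)$ because, conversely, $P_z\cup X\subseteq Y\cap Z$ follows from $S(z,u_j)\le (Y,Z)\le S(z,w_j)$ for large $j$ (as already recorded in the paragraph defining $P_v$). This gives the separator as $P_v\cup P_z\cup X$.

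I do not anticipate a genuine obstacle here; the one small point requiring care is making sure the two inclusions $P_z\cup X\subseteq Y\cap Z$ and $P_v\subseteq Y\cap Z$ are both invoked, so that the equality $Y\cap Z = P_v\cup P_z\cup X$ is an equality and not merely a containment — but both are already established in the surrounding text, so this is purely bookkeeping.

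\begin{proof}
By \cref{Vvzcompatible1} the pair $(Y,Z)=(V'(z,v),V'(v,z))$ is a separation, and as it distinguishes two elements of $\mathcal{P}$ it has order exactly $k$.
By definition $P_v=(Y\cap Z)\setminus(P_z\cup X)$, so in particular $P_v\subseteq Y\cap Z\subseteq Y$ and $P_v\subseteq Y\cap Z\subseteq Z$.
Hence $V'(z,v)\cup P_v=Y\cup P_v=Y$ and $V'(v,z)\cup P_v=Z\cup P_v=Z$, so that the pair in the statement is exactly $(Y,Z)$, a separation of order $k$.
Its separator is $Y\cap Z$.
On the one hand $P_v\subseteq Y\cap Z$ by the above, and $P_z\cup X\subseteq Y\cap Z$ since $S(z,u_j)\leq(Y,Z)\leq S(z,w_j)$ for all sufficiently large $j\in J$; on the other hand $P_v=(Y\cap Z)\setminus(P_z\cup X)$ gives $Y\cap Z\subseteq P_v\cup P_z\cup X$.
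Therefore $Y\cap Z=P_v\cup P_z\cup X$, as claimed.
\end{proof}
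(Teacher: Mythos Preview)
Your proof is correct and takes essentially the same approach as the paper: the corollary is stated there without proof, and your argument simply unpacks the immediate consequences of \cref{Vvzcompatible1} together with the facts already recorded in the paragraph defining $P_v$ (namely $P_v=(Y\cap Z)\setminus(P_z\cup X)$ and $P_z\subseteq Y\cap Z$). Your observation that $P_v\subseteq Y\cap Z$ makes the ``$\cup\, P_v$'' in the statement redundant is exactly the point.
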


\subsection{\texorpdfstring{Interval separations are indeed $k$-separations}{Interval separations are indeed separations}}\label{sec:intervalsareseps}
The goal of this section is to show that the interval separations of the limit we are constructing have the properties required in the definition of pseudoflowers.
\begin{lem}\label{VprimetoV}
For all distinct $v,w\in V_F\cup V_N$ we have $V'(v,w)=\hat{V}(v,w)$.
\end{lem}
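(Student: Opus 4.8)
The plan is to prove the two inclusions separately, the easy one being $V'(v,w)\subseteq\hat V(v,w)$: indeed $X$ occurs in both, the points $v$ and $w$ lie in $[v,w]\cap C(I)$, and every $z\in[v,w]\cap(I\cup V_F)$ also lies in $[v,w]\cap C(I)$, so each set appearing in the union that defines $V'(v,w)$ also appears in the one defining $\hat V(v,w)$. All the content is therefore in the reverse inclusion, which amounts to saying that the cutpoints of $V_N$ lying strictly inside the interval $[v,w]$ contribute nothing new.

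For the reverse inclusion I would fix $u\in\hat V(v,w)$; if $u\in X$ there is nothing to do, so write $u\in P_z$ for some $z\in[v,w]\cap C(I)$. Since $C(I)=I\cup V_F\cup V_N$ there are three cases. If $z\in I\cup V_F$ then $P_z$ is literally one of the sets whose union defines $V'(v,w)$. If $z\in\{v,w\}$ then $P_z\in\{P_v,P_w\}$, again a summand of $V'(v,w)$. The only remaining case is $z\in V_N$ with $z\in\mathopen]v,w\mathclose[$. Here I would apply \cref{prop:vnotinPsiF} to get the unique neighbor $i$ of $z$ in $C(I)$, noting that $i\in I$, and then \cref{cutpointcontainedinneighbor} (using that the neighbour relation is symmetric, since predecessors and successors are unique when they exist) to get $P_z\subseteq P_i$. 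It remains to check that $i\in[v,w]$; once this is known, $i\in[v,w]\cap I\subseteq[v,w]\cap(I\cup V_F)$, so $P_i\subseteq V'(v,w)$ and hence $u\in P_z\subseteq P_i\subseteq V'(v,w)$, which closes the last case and the proof.

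The only step that is not a direct citation is the claim $i\in[v,w]$, and I expect this mild point to be the main obstacle. I would argue by contradiction from the cyclic-order axioms: if $i\notin[v,w]$ then $i\in\mathopen]w,v\mathclose[$, so $C(I)$ carries the relations $(v,z,w)$ and $(w,i,v)$; transitivity yields $(w,i,z)$, and a further application yields $v\in\mathopen]i,z\mathclose[$ as well as $w\in\mathopen]z,i\mathclose[$. But $i$ is the immediate predecessor or the immediate successor of $z$, so one of $\mathopen]i,z\mathclose[$, $\mathopen]z,i\mathclose[$ is empty, a contradiction. More conceptually, $[v,w]$ is an interval of $C(I)$ and $z\in\mathopen]v,w\mathclose[$ is not one of its two boundary points, so every neighbour of $z$ — in particular $i$ — must lie in $[v,w]$. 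Beyond this cyclic-order bookkeeping there is nothing delicate: no limit arguments or order computations are needed, since every $P_z$ with $z\in C(I)$ has already been defined (the ones with $z\in V_N$ in \cref{sec:addV_N}).
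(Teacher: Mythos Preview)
Your proof is correct and follows essentially the same approach as the paper's: both handle the easy inclusion directly, and for the reverse inclusion both reduce to the case $z\in V_N\cap\mathopen]v,w\mathclose[$, pass to the unique neighbour $i\in I$ of $z$, use $P_z\subseteq P_i$, and observe $i\in[v,w]$. The only cosmetic difference is that the paper cites \cref{cor:placePv} directly (which already gives $P_z\subseteq P_i$ for $z\in V_N$ with neighbour $i$), whereas you go through \cref{cutpointcontainedinneighbor} and the symmetry of the neighbour relation; the paper also asserts $i\in[v,w]$ without the cyclic-order argument you spell out.
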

\begin{proof}
$V'(v,w)\subseteq \hat{V}(v,w)$ is clear by definition of $V'(v,w)$.
In order to show the reverse inclusion, let $t$ be an element of $[v,w]$. and $u\in P_t$.
If $t\in \{v,w\}$ or $t\in I\cup V_F$, then clearly $P_t\subseteq V'(v,w)\cup P_v\cup P_w$.
Otherwise $t$ has a neighbor $i\in I$, and $i\in [v,w]$.
Then by \cref{cor:placePv}, $P_v\subseteq P_i\subseteq V'(v,w)$.
\end{proof}

\begin{lem}\label{lemmaforV}
(The cyclic order is illustrated in \cref{fig:intersectionofintervalsets}).
Let $a$, $b$, $c$ and $d$ be elements of $V_F\cup V_N$ such that
\begin{itemize}
    \item $b\in [a,c]$ and $d\in [c,a]$
    \item $(V'(a,c),V'(c,a))$ is a separation with separator $X\cup P_a\cup P_c$
    \item $(V'(b,d),V'(d,b))$ is a separation with separator $P_b\cup P_d\cup X$.
    \item $P_a\cap P_d=\emptyset$.
\end{itemize}
Then $(V'(b,c),V'(c,b))$ is a separation with separator $P_b\cup P_c\cup X$.
\end{lem}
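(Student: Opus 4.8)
The plan is to mimic the corner-computation from \cref{intersectionofintervalsets}, but now carried out with the sets $V'$ (equivalently $\hat V$ by \cref{VprimetoV}) and using the two hypothesised separations as ``anchors'' in place of interval separations of a genuine $k$-pseudoflower. Write $r=(V'(a,c),V'(c,a))$ and $s=(V'(b,d),V'(d,b))$ for the two given separations. The key observation is that $(V'(b,c),V'(c,b))$ should arise as the corner $r\wedge s$ — more precisely, as $(V'(a,c),V'(c,a))\wedge (V'(b,d),V'(d,b))$ — because $b\in[a,c]$ and $d\in[c,a]$ place the four cutpoints exactly as in the situation of \cref{intersectionofintervalsets}. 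So the first step is to compute this infimum explicitly: its left side is $V'(a,c)\cap V'(b,d)$ and its right side is $V'(c,a)\cup V'(d,b)$.

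The heart of the argument is therefore the set-theoretic identity $V'(a,c)\cap V'(b,d)=V'(b,c)$, together with the complementary identity for the right sides, under the hypothesis $P_a\cap P_d=\emptyset$. To establish this I would first record a ``local intersection'' fact analogous to the one in \cref{intersectionofintervalsets}: since $r$ and $s$ are separations with the stated separators, every element of $V'(a,c)\cap V'(c,a)$ lies in $X\cup P_a\cup P_c$ and every element of $V'(b,d)\cap V'(d,b)$ lies in $X\cup P_b\cup P_d$. Intersecting $V'(a,c)$ with $V'(b,d)$ and splitting according to whether a vertex also lies in the ``opposite'' interval sets, one gets, exactly as in the proof of \cref{intersectionofintervalsets},
\begin{displaymath}
V'(a,c)\cap V'(b,d)\ \subseteq\ V'(b,c)\cup (P_a\cap P_d)\cup (P_b\cap P_c)\cup X,
\end{displaymath}
and since $P_a\cap P_d=\emptyset$ and $(P_b\cap P_c)\cup X\subseteq V'(b,c)$ the right-hand side collapses to $V'(b,c)$; the reverse inclusion $V'(b,c)\subseteq V'(a,c)\cap V'(b,d)$ is immediate because $[b,c]\subseteq[a,c]$ and $[b,c]\subseteq[b,d]$ (using $b\in[a,c]$, $d\in[c,a]$, and \cref{cutpointcontainedinneighbor} to handle $P$-sets indexed by cutpoints in $V_N$ whose $C(I)$-neighbor lies in the relevant interval). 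The dual computation for the right sides, $V'(c,a)\cup V'(d,b)=V'(c,b)$, is the symmetric statement. For both of these the only genuinely new input beyond the manipulations in \cref{intersectionofintervalsets} is \cref{VprimetoV}, which lets me treat $V'$ as if it were the ``honest'' union $\hat V$ over the whole of $C(I)$, so that unions and intersections of interval sets behave set-theoretically the way they do for intervals of $C(I)$.

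Having identified $(V'(b,c),V'(c,b))$ as the infimum $r\wedge s$ in the surrounding universe $\mathcal{U}$, it is automatically a separation (a universe of vertex separations is closed under $\wedge$), so the only remaining point is the \emph{order} bound, i.e.\ that its separator is exactly $P_b\cup P_c\cup X$. One inclusion is the computation just done: the separator is $V'(b,c)\cap V'(c,b)$, and by the local intersection fact applied to the pair $(b,c)$ — or directly from the displayed identities — this is contained in $P_b\cup P_c\cup X$. For the reverse inclusion one checks that $P_b$, $P_c$ and $X$ each lie in both $V'(b,c)$ and $V'(c,b)$: $X$ is in every $V'$ by definition; $P_b\subseteq V'(b,c)$ and $P_b\subseteq V'(c,b)$ because $b$ is an endpoint of both intervals $[b,c]$ and $[c,b]$, and likewise for $P_c$. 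Hence the separator is precisely $P_b\cup P_c\cup X$, finishing the proof.

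The main obstacle I expect is bookkeeping rather than ideas: making the ``split according to which opposite interval a vertex lies in'' argument fully rigorous when some of $a,b,c,d$ lie in $V_N$ (so that $V'$ genuinely differs from a clean union over $C(I)$ and one must invoke \cref{VprimetoV} and \cref{cutpointcontainedinneighbor} to push $P$-sets at $V_N$-cutpoints into the right interval sets), and carefully checking the degenerate cases allowed by the hypotheses ($d=a$, or $b$ a neighbour of $a$ or $c$, etc.) where some of the intervals collapse. None of these should require a new idea, but each has to be handled so that the inclusion chain above is valid in all configurations.
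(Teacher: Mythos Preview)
Your proposal is correct and follows essentially the same approach as the paper: both identify $(V'(b,c),V'(c,b))$ as the corner $r\wedge s$ and verify the required set identities, with the paper arguing slightly more directly via the complement $V'(c,b)=V'(c,a)\cup V'(d,b)$ rather than decomposing $V'(a,c)$ as you do. One small imprecision: with only the two separator hypotheses (rather than the four available in the setting of \cref{intersectionofintervalsets}) your displayed bound actually picks up extra terms $P_a\cap P_b$ and $P_c\cap P_d$, but these are harmlessly absorbed into $V'(b,c)$; and once you invoke \cref{VprimetoV} the worry about $V_N$-cutpoints and \cref{cutpointcontainedinneighbor} evaporates, since $\hat V$ decomposes over intervals of $C(I)$ by definition.
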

\begin{proof}
We show that
\begin{align*}
    (V'(b,c),V'(c,b))=(V'(a,c),V'(c,a)) \wedge (V'(b,d),V'(d,b)).
\end{align*}
$V'(c,b)=V'(c,a)\cup V'(d,b)$ and $V'(b,c)\subseteq V'(a,c)\cap V'(b,d)$ clearly hold.
Let $u\in V'(c,b)\setminus (P_b\cup P_c\cup X)$.
Because $P_a\cap P_d$ is empty, either $u\in V'(c,a)\setminus (P_a\cup P_c\cup X)$ or $u\in V'(d,b)\setminus (P_b\cup P_d\cup X)$.
In the first case, because $(V'(a,c),V'(c,a))$ is a separation with separator $P_a\cup P_c\cup X$, $u$ is an element of $V'(c,a)\setminus V'(a,c)$.
In the second case we similarly get $u\in V'(d,b)\setminus V'(b,d)$.
Hence in both cases $u\notin V'(a,c)\cap V'(b,d)$, showing that $V'(a,c)\cap V'(b,d)=V'(b,c)$.
Furthermore, $V'(b,c)\cap V'(c,b)\subseteq P_b\cup P_c \cup X$, and so the inclusion must be an equality.
\end{proof}

\begin{lem}\label{Phiispseudoflower}
For all distinct $v,w\in V_F\cup V_N$ the pair $(\hat{V}(v,w),\hat{V}(w,v))$ is a separation with separator $X\cup P_v\cup P_w$.
\end{lem}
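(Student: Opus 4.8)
The plan is to reduce everything to the base case $v,w\in V_F$. By \cref{VprimetoV} it is enough to show that $(V'(v,w),V'(w,v))$ is a separation with separator $X\cup P_v\cup P_w$ for all distinct $v,w\in V_F\cup V_N$, and when $v,w\in V_F$ this is exactly \cref{SofvandwforVF}. So I would handle the general case by peeling off the cutpoints lying in $V_N$ one at a time, each time using \cref{lemmaforV} to recognise the resulting interval set as a separation.

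First, suppose exactly one of $v,w$ lies in $V_N$; replacing the pair by its inverse, assume $v\in V_N$. Let $i\in I$ be the unique neighbour of $v$ in $C(I)$ (\cref{prop:vnotinPsiF}) and let $z\in V_F$ be the cutpoint used in the construction of $P_v$, so that $(V'(z,v),V'(v,z))$ is a separation with separator $X\cup P_v\cup P_z$ by \cref{Sofvwforzandv}. If $w=z$ there is nothing to do, so assume $w\neq z$; then $w$ lies in exactly one of the arcs $\mathopen]z,v\mathclose[$, $\mathopen]v,z\mathclose[$. In either case I would apply \cref{lemmaforV} with $\{b,c\}=\{v,w\}$ (ordered so that $b\in[a,c]$), with one of $a,d$ equal to $z$ — the corresponding separation hypothesis coming from \cref{Sofvwforzandv} — and the other equal to an auxiliary cutpoint $e\in V_F$ in the arc between $z$ and $v$ that avoids $w$, the corresponding separation hypothesis being \cref{SofvandwforVF} since $e,w\in V_F$. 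The remaining hypothesis $P_a\cap P_d=\emptyset$ of \cref{lemmaforV} then reads $P_e\cap P_z=\emptyset$, and the conclusion is precisely that $(V'(v,w),V'(w,v))$ is a separation with separator $X\cup P_v\cup P_w$.

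Granting the one-cutpoint case, the case $v,w\in V_N$ follows in the same way: choosing $z_w\in V_F$ from the construction of $P_w$, the pair $(V'(v,z_w),V'(z_w,v))$ is now a known separation by the previous case (and its inverse), and one more application of \cref{lemmaforV} with $\{b,c\}=\{v,w\}$ and $\{a,d\}=\{z_w,e'\}$ for a suitable auxiliary $e'\in V_F$ finishes the argument, the required separations being supplied by the previous case and by \cref{SofvandwforVF}. So the statement reduces to the base case $v,w\in V_F$ plus a bounded number of applications of \cref{lemmaforV}.

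I expect the main obstacle to be the choice of the auxiliary cutpoint $e$ (resp.\ $e'$): it must lie in a prescribed open arc between $z$ and $v$, namely the one avoiding $w$, and must satisfy $P_e\cap P_z=\emptyset$. This is exactly the step that trivialises for pseudoanemones, where every cutpoint has $P_e=\emptyset$ so the condition $P_a\cap P_d=\emptyset$ of \cref{lemmaforV} is free. In general one uses that $V_F$ is cofinal towards $v$ along the side of $v$ opposite its neighbour $i$ — this is built into $P_v$ being defined as a supremum, resp.\ infimum, over $\mathopen]z,v\mathclose[\cap V_F$ resp.\ $\mathopen]v,z\mathclose[\cap V_F$, so the relevant arc really does meet $V_F$ — together with the fact that the limit flower still extends a $k$-flower with three petals, which via the intersection estimates of \cref{intersectionofintervalsets} and \cref{intervalcontainingset} lets one locate a cutpoint in that arc whose $P$-set avoids $P_z$. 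Carrying out this positional bookkeeping, together with the degenerate situations in which $w$ is very close to $v$ or to $z$, is where the real work lies; once the auxiliary cutpoint is in hand, the rest is a direct invocation of \cref{lemmaforV}, \cref{SofvandwforVF} and \cref{Sofvwforzandv}.
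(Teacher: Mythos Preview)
Your plan is essentially the paper's proof: reduce via \cref{VprimetoV}, settle $v,w\in V_F$ by \cref{SofvandwforVF}, then handle one and two elements of $V_N$ by invoking \cref{Sofvwforzandv} for the pair $\{z,v\}$ and applying \cref{lemmaforV} with an auxiliary cutpoint. The paper's auxiliary cutpoint $t$ is exactly your $e$, and the both-in-$V_N$ case is dispatched the same way, by one further application of \cref{lemmaforV} resting on the mixed case already established.

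Where your write-up diverges from the paper is the mechanism for producing $e$ with $P_e\cap P_z=\emptyset$ in the prescribed arc. You invoke \cref{intervalcontainingset}, but that lemma gives no positional control and applies to $k$-pseudoflowers, which the limit object is not yet known to be. The paper instead exploits the \emph{defining} property of $z$: it was chosen so that for all large $j$ both $S(z,u_j)$ and $S(z,w_j)$ distinguish elements of~$\mathcal{P}$. Taking $e=u_j$ or $e=w_j$ (depending on which side of $v$ avoids $w$) then places $e$ in the right arc automatically, and the fact that $S(e,z)$ distinguishes profiles forces it to have order exactly $k$, whence $P_e\cap P_z=\emptyset$. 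This simultaneously handles position and disjointness and makes the ``real work'' you flag essentially a one-liner. One minor slip: in your both-in-$V_N$ paragraph the second separation hypothesis cannot come from \cref{SofvandwforVF}, since neither of $v,w$ lies in $V_F$; it comes from \cref{Sofvwforzandv} (for the pair $\{w,z_w\}$) or from the mixed case just proved.
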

\begin{proof}
By \cref{VprimetoV} it suffices to show that $(V'(v,w),V'(w,v))$ is a separation with the correct separator for all distinct elements $v$ and $w$ of $V_F\cup V_N$.
If both $v$ and $w$ are contained in $V_F$ then this is true by \cref{SofvandwforVF}. Consider first the case that exactly one is contained in $V_F$, by switching the names we may assume $w\in V_F$.
If $w$ is a suitable candidate for $z$ in the definition of $P_v$, then $(V'(w,v),V'(v,w))$ is a separation with separator $P_v\cup P_w\cup X$ by \cref{Sofvwforzandv}.
So assume otherwise.
We will consider the case that for all sufficiently large $j\in J$, $S(w,u_j)$ does not distinguish elements of $\mathcal{P}$, the case where $S(w,w_j)$ does not distinguish elements of $\mathcal{P}$ is symmetric.

Let $z\in V_F$ from which $P_v$ might have been defined.
Then by choice of $z$ there is $t\in V_F$ such that $S(t,z)$ distinguishes elements of $\mathcal{P}$ and such that $t\in [v,z]$ if $w\in [z,v]$ and $t\in [z,v]$ if $w\in [v,z]$.
Also $(V'(z,v),V'(v,z))$ is a separation with separator $P_z\cup P_v\cup X$ by \cref{Sofvwforzandv} and we already saw that $(V'(w,t),V'(t,w))$ is a separation with separator $P_w\cup P_t\cup X$.
Furthermore $S(t,z)=(V'(t,z),V'(z,t))$, and because $S(t,z)$ distinguishes elements of $\mathcal{P}$ and thus has connectivity $k$ this implies $P_t\cap P_z=\emptyset$.
Now we can apply \cref{lemmaforV}.
If $t\in [v,z]$ then we apply \cref{lemmaforV} for $a=z$, $b=w$, $c=v$ and $d=t$.
Otherwise we apply the lemma for $a=t$, $b=v$, $c=w$ and $d=z$.
In both cases $(V'(w,v),V'(v,w))$ is a separation with separator $P_v\cup P_w\cup X$.

Now assume that both $v$ and $w$ are contained in $V_N$. Because every $\Phi_j$ distinguishes at least three elements of $\mathcal{P}$, by swapping the names of $v$ and $w$ if necessary we may assume that there are $t$ and $u$ in $V_F$ such that $t\in [v,w]$ and $u\in [t,w]$ and $S(t,u)$ distinguishes elements of $\mathcal{P}$.
We already showed that $(V'(v,u),V'(u,v))$ and $(V'(t,w),V'(w,t))$ are $k$-separations with separator $P_v\cup P_u\cup X$ and $P_t\cup P_w\cup X$ respectively. Furthermore $P_t\cap P_u=\emptyset$ because $S(t,u)$ distinguishes elements of $\mathcal{P}$, thus we may apply \cref{lemmaforV} to $a=u$, $b=w$, $c=v$ and $t=d$ and we are done.
\end{proof}

\subsection{Deletion of redundant petals}\label{sec:redundantpetals}

$\Phi$ is now nearly a $k$-pseudoflower, the only property missing is (\ref{star}). In order to fix that, we are going to delete troublesome elements from $I$.

Let $\mathcal{Y}$ be the set of subsets of $V$ of size $(k-|X|)/2$ that are of the form $P_i$ for some $i\in I$.
In order to ensure that (\ref{star}) holds, it suffices to delete, for every $Y\in \mathcal{Y}$, all but one index $i\in I$ with $P_i=Y$.
For that, we first define one element we want to keep:
If there is $j\in J$ and $i_j\in I_j$ whose petal is $Y$, then we want to define $i_Y$ to be an element of $I$ with $\pi_j(i_Y)=i_j$.
If that does not exist, then we pick $i_Y$ arbitrarily with $P_{i_Y}=Y$.

Actually if $i_Y$ is defined via the first case, then $i_Y$ is uniquely determined:

\begin{lem}
Let $Y\in \mathcal{Y}$ and $i_j$ be an index element of some $\Phi_j$ whose petal is $Y$.
Then there is a unique $i\in I$ with $\pi_j(i)=i_j$, and if $P_{i_l}=Y$ for some $l\in J$ and $i_l\in I_l$, then $\pi_l(i)=i_l$.
\end{lem}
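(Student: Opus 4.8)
The plan is to read the claim as a statement about threads of the inverse limit $I=\varprojlim_{j\in J}I_j$. Since $|Y|=(k-|X|)/2$, property~(\ref{star}) already tells us that each $\Phi_m$ contains \emph{at most one} index whose petal equals $Y$; so what has to be shown is that these indices, wherever they exist, fit together into exactly one thread $i\in I$ whose $j$-th coordinate is the given $i_j$, and then that this $i$ has the required coordinate at every $l$ where an index with petal $Y$ exists.

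The first ingredient I would isolate is a monotonicity property of witnesses: if $F$ witnesses $\Phi\le\Phi'$ and $F(i')=i$ with $i'\in I'$, $i\in I$, then $P_{i'}\subseteq P_i$. This holds because $V(i)=V(p(i),s(i))$ for the cutpoint neighbours $p(i),s(i)$ of $i$ in $C(I)$, whence $P_i\cup X=X\cup\bigcup_{z\in F^{-1}([p(i),s(i)])}P'_z$ by the definition of a witness, while $i'\in F^{-1}(\{i\})\subseteq F^{-1}([p(i),s(i)])$ and $P'_{i'}\cap X=\emptyset$. Since moreover every petal has size at least $(k-|X|)/2$ (because $P_i\supseteq P_{p(i)}\cup P_{s(i)}$ with $|P_{p(i)}|=|P_{s(i)}|=(k-|X|)/2$), this forces, for each $l\ge j$, that the fibre $f_{lj}^{-1}(i_j)$ is a single element $\iota_l$ with $P_{\iota_l}=Y$: $f_{lj}$ is the restriction to $I_l$ of the witness $F_{lj}$, hence maps onto $I_j$, so the fibre is nonempty; every element of it has petal $\subseteq Y$, hence $=Y$, hence is unique by~(\ref{star}).

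Next I would assemble the thread: put $\iota_j=i_j$, keep $\iota_l$ ($l\ge j$) as above, and set $\iota_l=f_{jl}(i_j)$ for $l\le j$. That $(\iota_m)_{m\in J}$ lies in the inverse limit, i.e.\ $f_{ml}(\iota_m)=\iota_l$ for all $l\le m$, is a three-case check using the composition law $f_{lj}\circ f_{ml}=f_{mj}$ of the transition maps: for $j\le l\le m$ it follows from $f_{lj}(f_{ml}(\iota_m))=f_{mj}(\iota_m)=i_j$ together with $f_{lj}^{-1}(i_j)=\{\iota_l\}$, and for the configurations $l\le m\le j$ and $l\le j\le m$ it is immediate from the composition law. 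This gives $i\in I$ with $\pi_j(i)=i_j$, and uniqueness is then automatic: any $i'\in I$ with $\pi_j(i')=i_j$ has $\pi_l(i')\in f_{lj}^{-1}(i_j)=\{\iota_l\}$ for $l\ge j$ and $\pi_l(i')=f_{jl}(\pi_j(i'))=\iota_l$ for $l\le j$, so $i'=i$.

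Finally, for the last assertion, let $i_l\in I_l$ with $P_{i_l}=Y$; by~(\ref{star}) it is the unique index of $\Phi_l$ with petal $Y$, so it suffices to show $P_{\pi_l(i)}=Y$. If $l\ge j$ this is exactly $P_{\iota_l}=Y$, already proved. If $l\le j$, then $\pi_l(i)=f_{jl}(i_j)$; using that $F_{jl}$ witnesses $\Phi_l\le\Phi_j$ and maps $I_j$ onto $I_l$, pick $i_j'\in I_j$ with $F_{jl}(i_j')=i_l$; the monotonicity property gives $P_{i_j'}\subseteq P_{i_l}=Y$, hence $P_{i_j'}=Y$, hence $i_j'=i_j$ by~(\ref{star}) in $\Phi_j$, so $\pi_l(i)=f_{jl}(i_j)=i_l$. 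I expect the main obstacle to be precisely the monotonicity property of witnesses together with the inverse-limit bookkeeping; once those are in place the argument is a direct consequence of~(\ref{star}) and the surjectivity of the maps $F_{lj}$.
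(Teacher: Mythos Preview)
Your proof is correct and follows essentially the same line as the paper's: both arguments show that for $l\ge j$ the fibre $f_{lj}^{-1}(i_j)$ is a singleton whose element has petal $Y$, using the monotonicity $P_{i'}\subseteq P_{F(i')}$ of witnesses together with~(\ref{star}), and then deduce existence and uniqueness of the thread. The only notable differences are cosmetic: the paper invokes \cref{witnessexistspreimage} to obtain a preimage (where plain surjectivity of $f_{lj}$, as you use, already suffices) and appeals to surjectivity of the inverse-limit projections for existence, whereas you construct the thread and verify its coherence by hand; you also spell out the $l\le j$ case of the final assertion, which the paper leaves implicit.
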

\begin{proof}
Let $l\in J$ with $j\leq l$.
By \cref{witnessexistspreimage} there is some $i_l\in I_l$ that is mapped to $i_j$ by all witnesses of $\Phi_j\leq \Phi_l$, in particular $F_{lj}(i_l)=i_j$.
Thus $P_{i_l}$ is contained in $P_{i_j}$ and has size at least $(k-|X|)/2$.
But $P_{i_j}$ also has size $(k-|X|)/2$, so $P_{i_l}=P_{i_j}=Y$.
By (\ref{star}), $i_l$ is the only element of $I_l$ with $P_{i_l}=Y$, and thus $i_l$ is the only element of $I_l$ with $F_{lj}(i_l)=i_j$.

So if $i$ and $i'$ are elements of $I$ with $\pi_j(i)=\pi_j(i')=i_j$, then $\pi_l(i)=\pi_l(i')$ for all $l\geq j$, implying that $i=i'$.
By construction of $I$ there is some $i\in I$ with $\pi_j(i)=i_j$, thus there is a unique such element.
\end{proof}

The previous lemma implies that if there is some index $i_j$ of some $\Phi_j$ with $P_{i_j}=Y$, then there is $i\in I$ such that all indices of any $\Phi_j$ whose petal is $Y$ are the image of $i$ under $\pi_j$.

Obtain $I'$ from $I$ by deleting, for every element $Y$ of $\mathcal{Y}$, all $i\in I-i_Y$ with $P_i=Y$.
Obtain $C$ from $C(I)$ by deleting $I\setminus I'$ and, for all $i\in I\setminus I'$, the predecessor of $i$ in $C(I)$.

The following lemma implies that the elements of $I'$ are close to being ``dense'' in $C(I)$:

\begin{lem}\label{nondeleteddense}
Let $w$ and $w'$ be distinct elements of $V_F$ such that for some $Y\in \mathcal{Y}$, $P_z=Y$ for all $z\in [w,w']\setminus I$.
Then for some $i'\in [w,w']\cap I$ there is $j\in J$ with $P_{\pi_j(i')}=Y$ or there is $y\in P_{i'}$ such that $y\notin P_{i''}$ for all $i''\in I-i'$.
\end{lem}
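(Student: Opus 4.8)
The plan is to argue by contradiction: suppose that for every $i' \in [w,w'] \cap I$, both alternatives fail. This means that for every $i' \in [w,w'] \cap I$ we have $P_{\pi_j(i')} \neq Y$ for all $j \in J$, and moreover every $y \in P_{i'}$ lies in $P_{i''}$ for some $i'' \in I - i'$ (so $i'$ is not ``pinned'' by any private vertex). I would first observe that since $\Psi$ extends each $\Phi_j$, and the $\Phi_j$ are assumed to distinguish at least three elements of $\mathcal{P}$, we can invoke \cref{intervalcontainingset}: for each $i' \in [w,w'] \cap I$ and each non-empty $Y_0 \subseteq P_{i'}$, the set $\{z \in C(I) : Y_0 \subseteq P_z\}$ is a non-trivial interval of $C(I)$. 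The key point is that $P_z = Y$ for all cutpoints $z$ strictly between $w$ and $w'$; combined with the containment $P_{p(i')} \cup P_{s(i')} \subseteq P_{i'}$ (which holds in $\Psi$ by \cref{cutpointcontainedinneighbor}), this forces $Y \subseteq P_{i'}$ for every $i' \in (w,w') \cap I$, hence $Y$ is a common sub-vertex-set of all these petals.

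**The core combinatorial step.** Now I would look at a petal $P_{i'}$ with $i' \in (w,w') \cap I$ that is not equal to $Y$ (if all such petals equalled $Y$, then (\ref{star}) as already established on $\Psi$ would force $(w,w') \cap I$ to have at most one element, and a short separate argument using the cutpoint structure handles that case). Pick $y \in P_{i'} \setminus Y$. By \cref{intervalcontainingset}, $C_y := \{z \in C(I) : y \in P_z\}$ is a non-trivial interval containing $i'$; since $y \notin Y = P_z$ for cutpoints $z$ between $w$ and $w'$ other than $w,w'$ themselves, this interval cannot contain those cutpoints, so $C_y \subseteq [w,w'] \cap C(I)$ is actually squeezed into a subinterval not reaching past $w$ or $w'$. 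By our contradiction hypothesis $y \in P_{i''}$ for some $i'' \neq i'$, so $C_y$ has at least two elements of $I$; taking $i'$ to be an endpoint of $C_y \cap I$, its neighbour on the ``inside'' also contains $y$, but its neighbour on the ``outside'' — which is either a cutpoint equal to $Y$ (not containing $y$) or another index — must not contain $y$. Tracking how $y$ enters and leaves along the interval, and using that each $P_{i'}$ arises as an intersection $\bigcap_j P_{\pi_j(i')}$, I would derive that for large enough $j$, the images $\pi_j(i')$ for $i'$ ranging over a nontrivial sub-interval all coincide (since $\pi_j$ is monotone and the intervals collapse), and the common image $i_j$ then satisfies $Y \subseteq P_{i_j}$; pushing further and using that $|P_{i_j}|$ has size $\ge (k-|X|)/2 = |Y|$ whenever $\pi_j(i')$ is a cutpoint image, I get $P_{\pi_j(i')} = Y$ for that $i'$, contradicting the first failed alternative.

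**Where the difficulty lies.** The main obstacle I anticipate is the careful bookkeeping in the limit: $P_{i'}$ for $i' \in I$ is only defined as $\bigcap_{j} P_{\pi_j(i')}$, and the images $\pi_j(i')$ shrink as $j$ grows, so ``$P_{\pi_j(i')} = Y$ for some $j$'' is genuinely stronger than ``$P_{i'} = Y$'' — one must extract a single witnessing index $j$, not a limiting statement. The argument has to use that the petals $P_z$ for cutpoints $z$ between $w,w'$ are constantly equal to $Y$ to force the corresponding cutpoint-images in $C(I_j)$ to have petal exactly $Y$ (via the size bound $|P_v| = (k-|X|)/2$ for $v \in C(I_j) \setminus I_j$), and then monotonicity of $\Pi_j$ plus \cref{cyclichoms2} to squeeze the index images $\pi_j(i')$ between these two equal-to-$Y$ cutpoint images, so that the whole block has petal $Y$ in $\Phi_j$. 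The other delicate point is handling the degenerate case where $[w,w'] \cap I$ is very small (zero or one element), where the statement should be checked directly from the definition of $C(I)$ and the fact that $w, w'$ are distinct cutpoints bounding a non-trivial interval by \cref{distinctvertices}.
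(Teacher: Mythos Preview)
Your proposal has a real gap: you invoke property (\ref{star}) for $\Psi$ to bound $|(w,w')\cap I|$ in the case where every $P_{i'}$ equals $Y$, but (\ref{star}) may fail for $\Psi$ --- repairing that failure is precisely the purpose of this subsection, so you cannot use it here. You also assume that ``$\Psi$ extends each $\Phi_j$'' in order to invoke \cref{intervalcontainingset}, but $\Phi_j\leq\Phi'$ is only proved later in \cref{Phiprimeispseudoflower}. Your ``core combinatorial step'' for the case $P_{i'}\supsetneq Y$ has the right setup but then misses the immediate conclusion: once you pick $y\in P_{i'}\setminus Y$, the neighbours of $i'$ in $C(I)$ are cutpoints in $[w,w']\setminus I$ with $P$-set $Y\not\ni y$, so the interval $\{z: y\in P_z\}$ (the proof of \cref{verticesinducesintervalsofV(C)} uses only \cref{Phiispseudoflower}, not (\ref{star})) is the singleton $\{i'\}$. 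That already yields the second alternative; the limit argument you sketch about collapsing projections is neither needed nor clearly formulated --- in particular, $\pi_j(i')$ is always in $I_j$, never a cutpoint, so your size comparison ``whenever $\pi_j(i')$ is a cutpoint image'' does not make sense.

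The paper's proof is direct and avoids the problematic case altogether. It fixes a single $j$ large enough that $w,w'\in C(I_j)$, picks any $i_j\in [w,w']_{C(I_j)}\cap I_j$, and lifts to $i'\in I$. If $P_{i_j}=Y$, the first alternative holds for $i'$ on the spot. Otherwise it picks $y\in P_{i_j}\setminus Y$ and uses the key fact (\cref{Vvwcompatible3}) that $V(p,s)$, for $p,s$ the neighbours of $i_j$ in $C(I_j)$, coincides whether computed in $\Phi_j$ or in the limit structure; this produces $z\in [w,w']\cap I$ with $y\in P_z$. The neighbours of $z$ in $C(I)$ then have $P$-set $Y$, so the separator of $S(z)$ is $Y\cup X$ and $y$ is private to $z$, giving the second alternative. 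By working in $\Phi_j$ first, the paper never has to confront the situation ``$P_{i'}=Y$ in the limit but $P_{\pi_j(i')}\neq Y$ for all $j$'' that your contradiction framing forces you to analyse.
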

\begin{proof}
Let $j\in J$ be large enough that $C(I_j)$ contains both $w$ and $w'$.
Then there is $i_j\in [w,w']\cap I_j$, where the interval is taken in $C(I_j)$, and some $i'\in I$ with $\pi_j(i')=i_j$.
For the predecessor $p$ and the successor $s$ of $i_j$ in $C(I_j)$, $P_s=P_p=Y$.
If $P_{i_j}=Y$, then the lemma holds, so assume otherwise.
Let $y$ be an element of $P_{i_j}$ that is not contained in $Y$.
Then $s$ and $p$ are contained in $[w,w']$ and $V(p,s)$ does not depend on whether it is taken in $(P_z)_{z\in C(I)}$ or in $\Phi_j$.
Hence there is $z\in [w,w']$ such that $y\in P_z$, and $z$ has to be contained in $I$.
So $y\in P_z$ for some $z\in [w,w']\cap I$, implying that the predecessor $p'$ and successor $s'$ of $z$ in $C(I)$ both have $Y$ as vertex set.
Then the separating set of the separation $S(z)$ is $Y$, implying that $y$ is only contained in $P_z$ and in no $P_{i''}$ with $i''\in I-z$.
\end{proof}

So every element of $C(I)\setminus C$ is close to some element of $C$:

\begin{lem}\label{emulatedeleted}
Let $z\in C(I)\setminus C$.
Then there is $v\in C\setminus I'$ such that $[z,v]$ is finite, no element of $[z,v\mathclose[$ is contained in $C$, and $P_{z'}=P_v$ for all $z'\in [z,v\mathclose[$.
\end{lem}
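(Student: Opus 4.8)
The plan is to walk forward from $z$ along $C(I)$ through the gap $C(I)\setminus C$ and show that one lands in $C$ after at most two steps. Recall that $C$ is obtained from $C(I)$ by deleting $I\setminus I'$ together with the predecessor in $C(I)$ of each $i\in I\setminus I'$, so $C(I)\setminus C$ is the disjoint union of $I\setminus I'$ and $\{p(i):i\in I\setminus I'\}$, and $C\cap I=I'$.

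First I would determine $P_z$. If $z\in I\setminus I'$ then $z$ was deleted, so $P_z=Y$ for some $Y\in\mathcal{Y}$; set $i_0:=z$. If instead $z=p(i_0)$ for a deleted index $i_0$, then $P_{i_0}=Y\in\mathcal{Y}$, and since $z$ is a neighbour of $i_0$, \cref{cutpointcontainedinneighbor} gives $P_z\subseteq P_{i_0}=Y$, which with $|P_z|=(k-|X|)/2=|Y|$ gives $P_z=Y$. In either case $i_0\in I\setminus I'$ has $P_{i_0}=Y$, and the interval from $z$ to $i_0$ is $\{z,i_0\}$, both outside $C$ and both of $P$-value $Y$.

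Next I would iterate the following step. Given a deleted index $i_m$ with $P_{i_m}=Y$, let $c_m$ be its successor in $C(I)$; this is a cutpoint, since two elements of $I$ are never neighbours in $C(I)$ (the non-trivial interval $\{i_m\}$ of $I$ has the form $[u,w]\cap I$ for cutpoints $u,w$, which forces a cutpoint between $i_m$ and every other index). As $c_m$ is a neighbour of $i_m$, \cref{cutpointcontainedinneighbor} and a cardinality count give $P_{c_m}=Y$. If $c_m\in C$ I stop and set $v:=c_m$. Otherwise $c_m\notin I$ forces $c_m=p(i_{m+1})$, where $i_{m+1}:=s(c_m)$ is a deleted index, and again $P_{c_m}\subseteq P_{i_{m+1}}$ together with $|P_{i_{m+1}}|=(k-|X|)/2$ gives $P_{i_{m+1}}=Y$; then I continue. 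The key claim is that the walk stops with $m\le 1$. Suppose not: then $i_1$ is deleted, the cutpoints $c_0=s(i_0)=p(i_1)$ and $c_1=s(i_1)=p(i_2)$ each have two distinct neighbours in $I$ (using $|C(I)|\ge 4$) and hence lie in $V_F$ by \cref{prop:vnotinPsiF}, while $[c_0,c_1]=\{c_0,i_1,c_1\}$ with $P_{c_0}=P_{c_1}=Y$. Applying \cref{nondeleteddense} with $w=c_0$, $w'=c_1$ yields either some $j\in J$ with $P_{\pi_j(i_1)}=Y$, forcing $i_1=i_Y$ and contradicting that $i_1$ is deleted, or a vertex $y\in P_{i_1}=Y=P_{i_0}$ lying in no $P_{i''}$ with $i''\in I-i_1$, which is contradicted by $i''=i_0$.

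Hence the walk terminates, and $v\in\{c_0,c_1\}$ is a cutpoint lying in $C$, i.e.\ $v\in C\setminus I'$, with $P_v=Y$. Finally I would observe that $[z,v]$ consists of $z$ together with the walk elements $i_0,c_0$ (and possibly $i_1,c_1$) in cyclic order; since consecutive listed elements are predecessor/successor pairs in $C(I)$, nothing lies strictly between them, so $[z,v]$ is finite, every element of $[z,v\mathclose[$ is one of $z,i_0,c_0,i_1$ and thus outside $C$, and each such element has $P$-value $Y=P_v$. The main obstacle is the termination claim: its content is to see that the single cutpoint $c_0=s(i_0)=p(i_1)$ wedged between two deleted indices of $P$-value $Y$ lies in $V_F$, so that \cref{nondeleteddense} applies, and that the second deleted index $i_0$ of the same $P$-value rules out the second alternative of that lemma.
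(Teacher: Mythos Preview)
Your proof is correct and follows essentially the same approach as the paper's: walk forward from $z$ through deleted indices, argue via \cref{nondeleteddense} that at most two deleted indices can occur in a row, and take $v$ to be the first cutpoint reached that survives in $C$. The only differences are expository: you invoke \cref{prop:vnotinPsiF} to place $c_0,c_1$ in $V_F$ where the paper uses the projection argument (between distinct $i,i'\in I$ some $\pi_j$ separates them, forcing a $V_F$-cutpoint in $[i,i']$), and you treat the two alternatives of \cref{nondeleteddense} separately where the paper bundles them into ``$i_1\in I'$''.
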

\begin{proof}
If $z\in I$, then let $i_0=z$, otherwise let $i_0$ be the successor of $z$.
Recursively, if $i_l$ is defined and has a successor in $I$ that is not contained in $I'$, then define $i_{l+1}$ to be that successor.

Assume for a contradiction that $i_2$ is defined.
For any distinct $i,i'\in I$, some projection $\pi_j$ maps $i$ and $i'$ to different images.
So there is $w\in V_F$ such that $w\in [i,i']$.
Thus the common neighbor $w_0$ of $i_0$ and $i_1$, and the common neighbor $w_1$ of $i_1$ and $i_2$, are both contained in $V_F$.
Now the previous lemma implies that $i_1$ is contained in $I'$, a contradiction.

If $i_1$ is defined, then let $v$ be the successor of $i_1$ in $C(I)$.
Otherwise let $v$ be the successor of $i_0$ in $C(I)$.
Thus, if $v$ is the predecessor of some element of $I$, then that is also contained in $I'$.
For $i\in I\setminus I'$, and the predecessor $p$ and successor $s$ of $i$ in $C(I)$, $P_p=P_i=P_s$.
In particular $P_z=P_{i_0}=P_v$, and if $i_1$ is defined then also $P_v=P_{i_1}=P_{w_0}$ for the common neighbor $w_0$ of $i_0$ and $i_1$ in $C(I)$.
\end{proof}

\begin{lem}
$C$ is a cycle completion of $I'$.
\end{lem}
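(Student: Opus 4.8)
\emph{Proof idea.} The plan is to verify directly the three clauses of \cref{def:completionofcyclicorder} for the pair $(I',C)$. The first clause, that $I'\subseteq C$ with the induced cyclic order, is immediate: by construction $I'\subseteq C\subseteq C(I)$, and $C$ carries the cyclic order induced from $C(I)$, whose restriction to $I'$ is the original cyclic order of $I'$. For the second clause, that $I'$ has at least two elements, I would use that each $\Phi_j$ distinguishes at least three elements of $\mathcal P$: by \cref{SofvandwforVF,Vvwcompatible3,emulatedeleted}, an interval separation of some $\Phi_j$ distinguishing two of these profiles equals an interval separation $S(v',w')$ of the limit with $v',w'\in C\setminus I'$ (slide each of its two cutpoints forward to the end of its run of deleted elements; by \cref{emulatedeleted} this does not change the separation, since the intervening sets $P_z$ are all equal). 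A nested family of interval separations realising three profiles cannot be supported unless $[v',w']_C\cap I'$ strictly grows between two of them, which forces at least two elements of $I'$.

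Next I would record the combinatorial anatomy of $C$. Every $i\in I$ has a $C(I)$-predecessor and a $C(I)$-successor, both cutpoints, because the singleton $\{i\}$ is a non-trivial interval of $I$ and hence equals $[p,s]\cap I$ for cutpoints $p,s$; in particular no two cutpoints of $C(I)$ are adjacent (if $v,w$ were, then $[v,w]\cap I$ would be trivial, contradicting \cref{distinctvertices}). Since $C$ is obtained from $C(I)$ by removing $I\setminus I'$ together with the set $D$ of $C(I)$-predecessors of elements of $I\setminus I'$, we have $C\setminus I'=(C(I)\setminus I)\setminus D$, and a cutpoint lies in $D$ precisely when its $C(I)$-successor exists and is a deleted petal. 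Unwinding this, together with \cref{nondeleteddense,emulatedeleted}, one obtains: each maximal run of consecutive elements of $C(I)\setminus C$ is finite, begins with a cutpoint of $D$ and ends with a deleted petal, is preceded in $C(I)$ by an element of $I'$ and is followed in $C(I)$ by a cutpoint of $C\setminus I'$ (which is in turn followed by an element of $I'$). Consequently, sliding any $z\in C(I)$ forward along its run to the first surviving element always lands on a cutpoint of $C\setminus I'$ and traverses only elements outside $I'$, and between any two distinct elements of $C\setminus I'$ there lies an element of $I'$.

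I then treat the third clause. Given distinct $v,w\in C\setminus I'$, one has $[v,w]_C\cap I'=[v,w]_{C(I)}\cap C\cap I'=[v,w]_{C(I)}\cap I'=\bigl([v,w]_{C(I)}\cap I\bigr)\cap I'$, which is an interval of $I'$ since by \cref{distinctvertices} $[v,w]_{C(I)}\cap I$ is a non-trivial interval of $I$; it is non-trivial because, by the anatomy above, each of $[v,w]_{C(I)}$ and $[w,v]_{C(I)}$ meets $I'$. For the converse, given a non-trivial interval $J'$ of $I'$, I would first extend it to a non-trivial interval $K$ of $I$ with $K\cap I'=J'$ by setting $K=\bigcup\{[s,t]_I:s,t\in J',\ [s,t]_{I'}\subseteq J'\}$: cutting $I$ and $I'$ at a common point $b$ of the nonempty set $I'\setminus J'$ makes $J'$ order-convex, $K$ is then its convex hull in $I$, so $K$ is an interval of $I$ containing $J'$, $K\cap I'=J'$, and hence $b\notin K$ and $K\neq I$. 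By the cycle-completion property of $C(I)$, write $K=[v_0,w_0]_{C(I)}\cap I$ for unique cutpoints $v_0,w_0$; sliding $v_0$ and $w_0$ forward to the first surviving cutpoints $v,w\in C\setminus I'$ alters $[v_0,w_0]_{C(I)}$ only by elements outside $I'$, whence $[v,w]_{C(I)}\cap I'=J'$ and so $[v,w]_C\cap I'=J'$. Uniqueness follows since any $v',w'\in C\setminus I'$ with $[v',w']_C\cap I'=J'$ yield a non-trivial interval $[v',w']_{C(I)}\cap I$ of $I$ that contains $K$ (non-triviality of $J'$ rules out $[t,s]_{I'}\subseteq J'$, forcing $[s,t]_I$ into it) and is contained in $K$ together with runs of deleted petals on either side, so the forward-slide of its lower (resp.\ upper) cutpoint is exactly $v$ (resp.\ $w$).

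The main obstacle is the bookkeeping behind the anatomy of $C$: verifying precisely that runs of deleted elements are finite and bounded on both sides by surviving elements of the expected kinds, that the forward-slide to a surviving cutpoint is well defined and independent of how far a valid interval of $I$ may extend past $K$, and that $[v,w]_{C(I)}$ always meets $I'$ for distinct surviving cutpoints $v,w$ (including when one is a limit cutpoint of $C(I)$). All of this is elementary given the non-adjacency of cutpoints in $C(I)$ and the already-proven \cref{nondeleteddense,emulatedeleted}, but it needs care.
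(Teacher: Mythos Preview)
Your approach is correct and essentially the same as the paper's: your extended interval $K$ coincides with the paper's $\hat{I}$, and both proofs proceed by writing $K=[v_0,w_0]_{C(I)}\cap I$ via the cycle-completion property of $C(I)$ and then adjusting the endpoints to land in $C\setminus I'$.

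Two places where the paper is sharper. First, the paper observes directly that $v_0$ is already in $C$: if $v_0$ has a $C(I)$-successor $i'$, then $i'$ is the minimum of $K=\hat{I}$ in the linear order cut at your base point $b$, and the defining union for $K$ forces $i'\in J'\subseteq I'$; hence $v_0$ is not the predecessor of a deleted petal. So only $w_0$ ever needs sliding, and your symmetric ``slide both forward'' step is vacuous on the left. Second, for uniqueness the paper does not attempt your direct comparison with $K$; it simply notes that it suffices to show $[v,w]\cap I'\neq\emptyset$ for all distinct $v,w\in C\setminus I'$ (a fact you also record in your anatomy discussion), and proves this by a short case split: if $[v,w]_{C(I)}$ has at least seven elements it contains two elements of $V_F$ and hence, as in the proof of \cref{emulatedeleted}, an element of $I'$; otherwise $[v,w]_{C(I)}$ is finite and $v$ has a $C(I)$-successor in $I$, which must lie in $I'$ since $v\in C$. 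Your more elaborate uniqueness argument via sandwiching $[v',w']_{C(I)}\cap I$ between $K$ and $K$ plus deleted runs is unnecessary once non-emptiness is in hand, and the ``forward-slide of its lower cutpoint is exactly $v$'' step is awkward to justify directly because $v'$, being already in $C$, does not move under forward-sliding.
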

\begin{proof}
In order to show that the identity on $I'$ can be extended uniquely to a bijective monotone map from $C$ to $C(I')$ it suffices to show that all non-trivial intervals of $I'$ can be uniquely written as $[v,w]\cap I'$ for elements $v$ and $w$ of $C\setminus I'$.
In order to do so, let $I''$ be a non-trivial interval of $I'$, and let $i\in I'\setminus I''$.
Let $\hat{I}$ be the set of all $\hat{\imath}$ for which there are $i_1,i_2\in I''$ such that the interval $[i_1,i_2]$, taken in $I$, contains $\hat{\imath}$ but not $i$.
Then $\hat{I}$ is an interval of $I$ that contains $I''$ as a subset and that does not contain $i$.
So $\hat{I}$ can be written as $[v,w]\cap I$ for unique elements $v$ and $w$ of $C(I)\setminus I$.
Also, $I''=\hat{I}\cap I'=[v,w]\cap I'$.

In order to find $v$ and $w$ that are contained in $C$, first consider $v$.
If $v$ is the predecessor of some $i'\in I$, then $i'$ is contained in $I''$ and thus in $I'$.
So $v$ cannot be the predecessor of an element of $I\setminus I'$, implying that $v$ is contained in $C$.
But $w$ could indeed be contained in $C(I)\setminus C$.
If it is, then apply \cref{emulatedeleted} to $w$ and obtain $w'$.
Otherwise let $w=w'$.
Then $\hat{I}=[v,w]\cap I'=[v,w']\cap I'$.

In order to show that $v$ and $w$ are unique, it suffices to show for any distinct $v,w\in C\setminus I'$ that $[v,w]\cap I'$ is non-empty.
As in the proof of \cref{emulatedeleted}, if $[v,w]$ contains enough elements (at least 7) then it contains two elements of $V_F$ and thus an element of $I'$.
Otherwise, $[v,w]$ is finite and $v$ is the predecessor in $C(I)$ of some $i\in [v,w]\cap I$.
Because $v\in C$, this implies that $i\in I'$.
\end{proof}

Let $\tilde{F}:C(I')\rightarrow C$ be a bijective monotone map whose restriction to $I'$ is the identity.
This map then identifies $C(I')$ with $C$.
Denote the family $(P_z)_{z\in C(I')}$ where each $P_z$ is equal to $P_{\tilde{F}(z)}$, by $\Phi'$.
As $\Phi$ satisfies all properties of a $k$-pseudoflower with the possible exception of (\ref{star}), also $\Phi'$ satisfies all properties of a $k$-pseudoflower with the possible exception of (\ref{star}).

\begin{lem}\label{Vwcompatible4}
For all elements $v$ and $w$ of $C\setminus I'$ we have
\begin{displaymath}
\bigcup_{z\in [v,w]_C}P_z=\bigcup_{z\in [v,w]_{C(I)}}P_z
\end{displaymath}
where the intervals are taken in $C$ and $C(I)$, respectively.
\end{lem}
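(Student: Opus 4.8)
The plan is to verify both inclusions of the displayed identity, using the key fact that $C$ is literally a subset of $C(I)$ carrying the induced cyclic order, so that $[v,w]_C = [v,w]_{C(I)} \cap C$ for all $v,w \in C$.

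First I would dispose of the inclusion ``$\subseteq$'': since $[v,w]_C \subseteq [v,w]_{C(I)}$, the union over $[v,w]_C$ is trivially contained in the union over $[v,w]_{C(I)}$, and no further work is needed.

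For the reverse inclusion I would fix $z \in [v,w]_{C(I)}$ and show $P_z \subseteq \bigcup_{z' \in [v,w]_C} P_{z'}$. If $z \in C$ this is immediate, since then $z \in [v,w]_{C(I)} \cap C = [v,w]_C$. If $z \notin C$, then as $v,w \in C$ we have $z \neq v,w$, so $z \in \mathopen]v,w\mathclose[_{C(I)}$; here I would apply \cref{emulatedeleted} to $z$ to produce $u \in C \setminus I'$ with $[z,u]$ finite, $[z,u\mathclose[ \cap C = \emptyset$, and $P_{z'} = P_u$ for all $z' \in [z,u\mathclose[$ --- in particular, taking $z' = z$, we get $P_z = P_u$. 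The remaining point is to check that $u \in [v,w]_{C(I)}$; granting this, $u \in [v,w]_{C(I)} \cap C = [v,w]_C$, so $P_z = P_u$ sits inside the right-hand union, as desired.

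The one genuinely delicate step is confirming $u \in [v,w]_{C(I)}$, and this is where I expect the main (though still mild) obstacle to lie. I would argue by contradiction: if $u \notin [v,w]_{C(I)}$, then in particular $u \notin \{v,w\}$, and since also $z \notin C$ while $v,w,u \in C$, the four elements $v,z,w,u$ are pairwise distinct and $u \in \mathopen]w,v\mathclose[_{C(I)}$; combined with $z \in \mathopen]v,w\mathclose[_{C(I)}$ this forces the cyclic order of $\{v,z,w,u\}$ to be $(v,z,w,u)$, whence $w \in \mathopen]z,u\mathclose[_{C(I)} \subseteq [z,u\mathclose[$, contradicting $[z,u\mathclose[ \cap C = \emptyset$ together with $w \in C$. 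This last piece is routine cyclic-order bookkeeping of exactly the kind used throughout \cref{sec:cycord}, so no serious difficulty is anticipated.
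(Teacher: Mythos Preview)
Your proposal is correct and follows essentially the same approach as the paper's proof: both handle the trivial inclusion immediately, and for the reverse inclusion both apply \cref{emulatedeleted} to an element outside $C$ and then use the fact that $w\in C$ together with $[z,u\mathclose[\cap C=\emptyset$ to conclude that the resulting element $u$ lies in $[v,w]_{C(I)}$. The only difference is cosmetic: the paper argues directly (``$w\notin[z,u\mathclose[$, so $u\in[v,w]$'') where you spell out the contrapositive cyclic-order reasoning in more detail.
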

\begin{proof}
It is clear that the left-hand side is a subset of the right-hand side.
In order to show the other direction, let $u\in [v,w]_{C(I)}$.
If $u\in C$ then $u\in [v,w]_C$, so assume otherwise.
Apply \cref{emulatedeleted} to $u$ and obtain $z'\in C\setminus I'$.
Then $w\notin [u,z'\mathclose[$, as $w\in C$, so $z'\in [v,w]$.
As $P_u=P_{z'}$, the lemma holds.
\end{proof}

\begin{lem}\label{Phiprimeispseudoflower}
$\Phi'$ is a $k$-pseudoflower such that $\Phi_j\leq \Phi'$ for all $j\in J$.
\end{lem}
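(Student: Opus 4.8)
The statement has two parts: that $\Phi'$ satisfies the last pseudoflower axiom (\ref{star}), the other three having just been observed to hold, and that $\Phi_j\le\Phi'$ for every $j\in J$. The first part is immediate from the construction of $I'$: by design $I'$ contains, for every $Y\in\mathcal{Y}$, exactly one index with petal set $Y\cup X$, namely $i_Y$ (deleting the other indices with petal set $Y\cup X$ leaves $i_Y$, and indices with distinct petal sets are unaffected). So if $i,i'\in I'$ have $|P_i|=(k-|X|)/2$ and $P_i=P_{i'}$, then $P_i=P_{i'}=:Y\in\mathcal{Y}$ and $i=i'=i_Y$.

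For the second part I would fix $j$ and produce a witness $G\colon C(I')\to C(I_j)$ of $\Phi_j\le\Phi'$. The sets $X$ agree for $\Phi_j$ and $\Phi'$: all $\Phi_l$ have the same $X$, it equals that of $\Phi$ by the corollary to \cref{Vvwcompatible3}, and deleting redundant petals changes neither $V$ nor $\bigcup_z P_z$. Let $g_j\colon I'\to I_j$ be the restriction to $I'\subseteq I$ of the projection $\pi_j\colon I\to I_j$; it is monotone since $\pi_j$ is and $I'$ carries the induced cyclic order. Provided $g_j$ is surjective, \cref{cyclichoms1} extends it uniquely to a surjective monotone $G\colon C(I')\to C(I_j)$ with $G(I')=I_j$, and \cref{cyclichoms2,cyclichoms3} then supply an injective monotone $\hat g_j\colon C(I_j)\setminus I_j\to C(I')\setminus I'$ with $G^{-1}([v,w])=[\hat g_j(v),\hat g_j(w)]$ for distinct cutpoints $v,w$ of $C(I_j)$. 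So the task reduces to two things: surjectivity of $g_j$, and the identity $V_{\Phi_j}(v,w)=X\cup\bigcup_{z\in G^{-1}([v,w])}P'_z$ for distinct cutpoints $v,w$ of $C(I_j)$.

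Surjectivity of $g_j$ is the heart of the matter: no fibre $\pi_j^{-1}(i_j)$ may be contained in $I\setminus I'$. I would argue by contradiction. Such a fibre is an interval of $I$, hence a set of consecutive indices, all deleted; by the argument in the proof of \cref{emulatedeleted} (which rules out runs of three consecutive deleted indices via \cref{nondeleteddense}) it has at most two elements. Write it as $\mathopen]w_-,w_+\mathclose[\cap I$ with $w_-=\hat\pi_j(p(i_j))$ and $w_+=\hat\pi_j(s(i_j))$ the bounding cutpoints (from \cref{cyclichoms2}). Then $\mathopen]w_-,w_+\mathclose[$ consists of these one or two deleted indices, the cutpoint between them in the two-index case, and, where applicable, the predecessor of the first index and the successor of the last; each of these cutpoints has two neighbours in $C(I)$ and so lies in $V_F$. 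Each deleted index has petal of size exactly $(k-|X|)/2$, and by \cref{cutpointcontainedinneighbor} each listed cutpoint has its size-$(k-|X|)/2$ petal contained in the petal of an adjacent fibre index; hence all these indices and cutpoints share a single petal set $Y\cup X$ with $Y\in\mathcal{Y}$. Choosing two of these $V_F$-cutpoints with a fibre index $i'$ strictly between them (so that $[w,w']\cap I=\{i'\}$ and $[w,w']\setminus I$ has petal set $Y\cup X$ throughout) and applying \cref{nondeleteddense} to them and $Y$ gives either $P_{\pi_l(i')}=Y$ for some $l\in J$—so $Y$ is a petal of $\Phi_l$ and $i'=i_Y\in I'$—or some $y\in P_{i'}=Y$ lying in no other petal—so $i_Y\in I'$ with $i_Y\ne i'$ and $y\in P_{i_Y}$. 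Either case contradicts $i'$ being deleted, so $g_j$ is surjective and $G$ exists as above.

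Finally, for the interval-set identity: identifying $C(I')$ with $C$ via $\tilde F$ and setting $a=\tilde F(\hat g_j(v))$, $b=\tilde F(\hat g_j(w))\in C\setminus I'$, the right-hand side equals $X\cup\bigcup_{z\in[a,b]_C}P_z=X\cup\bigcup_{z\in[a,b]_{C(I)}}P_z$ by \cref{Vwcompatible4}. The left-hand side equals $\hat V(\hat\pi_j(v),\hat\pi_j(w))=X\cup\bigcup_{z\in[\hat\pi_j(v),\hat\pi_j(w)]_{C(I)}}P_z$, since $\hat\pi_j(v),\hat\pi_j(w)\in V_F$ are identified with $v$ and $w$ and one may pass from $V$ to $V'$ to $\hat V$ using the corollary to \cref{Vvwcompatible3} and \cref{VprimetoV} (with \cref{Phiispseudoflower} guaranteeing $\hat V$ really is a separation). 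To conclude one checks that the two $C(I)$-intervals $[a,b]$ and $[\hat\pi_j(v),\hat\pi_j(w)]$ carry the same union of $P$-sets: they meet $I'$ in the same set $\{i\in I'\colon\pi_j(i)\in[v,w]\}$, and by \cref{emulatedeleted} every deleted index or deleted cutpoint lying in one of these intervals already has its petal set contributed by a surviving point of that interval. I expect this surjectivity step—together with the cycle-completion bookkeeping behind it and behind the last identity—to be the part needing the most care; given \cref{nondeleteddense,cutpointcontainedinneighbor,Vwcompatible4,emulatedeleted}, the remainder is routine.
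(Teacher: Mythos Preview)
Your proof follows the paper's approach closely: both verify (\ref{star}) from the construction of $I'$, argue that $\pi_j|_{I'}$ is surjective via \cref{nondeleteddense}, extend to a monotone surjection $C(I')\to C(I_j)$ by \cref{cyclichoms1}, and establish the interval-set identity through \cref{Vwcompatible4} together with \cref{emulatedeleted}.

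There is one step where your argument is not watertight. You write that the fibre $\pi_j^{-1}(i_j)$ is ``an interval of $I$, hence a set of consecutive indices'' and then invoke the proof of \cref{emulatedeleted} to bound its size by two. An interval of the inverse limit $I$ need not be discretely ordered, so the argument in \cref{emulatedeleted}---which rules out a chain $i_0,i_1,i_2$ of \emph{successive} elements of $I$ with $i_1,i_2\notin I'$---does not by itself cap the size of the fibre. The paper's version avoids this claim: it takes the bounding cutpoints $p,s\in V_F$ (the predecessor and successor of $i_j$ in $C(I_j)$, lifted to $C(I)$) and appeals to \cref{nondeleteddense} directly to produce an element of $[p,s]\cap I'$. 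Admittedly the paper is itself terse about why the hypothesis of \cref{nondeleteddense} (a common $P$-value on the cutpoints of the chosen interval) holds, so whichever route you take, that verification deserves an explicit sentence.

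For the interval-set identity your comparison of the two $C(I)$-intervals $[a,b]$ and $[\hat\pi_j(v),\hat\pi_j(w)]$ is correct in spirit but a little indirect. The paper's route is shorter: when the lifted cutpoint $v\in V_F$ happens not to lie in $C$, replace it by $v'\in C\setminus I'$ via \cref{emulatedeleted} (so $P_v=P_{v'}$ and nothing in $[v,v'\mathclose[$ survives), and then read off $V(v,w)=\hat V(v,w)=\hat V(v',w')=V(\tilde F^{-1}(v'),\tilde F^{-1}(w'))$ in one line. This spares you the separate check that the two intervals meet $I'$ in the same set.
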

\begin{proof}
Recall that by $\tilde{F}$, $C(I')$ is identified with $C$.
For all $v,w\in C(I')\setminus I'$,
\begin{flalign*}
    &&\bigcup_{z\in [v,w]_{C(I')}}P_z&=\bigcup_{z\in [v,w]
    _C}P_z&&\text{by \cref{Vwcompatible4}}\\
    &&&=V'(v,w)\setminus X&&\text{by \cref{VprimetoV}}
\end{flalign*}
so by \cref{Phiispseudoflower} $S(v,w)$ taken in $\Phi'$ is a separation with separator $P_v\cup P_w\cup X$ and also $X=V\setminus \bigcup_{z\in C(I')}P_z$.
As every $P_v$ with $v\in C(I)\setminus I$ has size $(k-|X|)/2$, this is also true for every $v\in C(I')\setminus I'$.
Hence every $S(v,w)$ has order at most $k$.
If some $i\in I'$ has a neighbor $v$ in $C(I')$, then $P_v\subseteq P_i$ by \cref{cutpointcontainedinneighbor}.
And by definition of $I'$, $\Phi'$ satisfies (\ref{star}).
Thus $\Phi'$ is a $k$-pseudoflower.

In order to show that $\Phi_j\leq \Phi'$ for all $j\in J$, we first show that the restriction of $\pi_j$ to $I'$ is surjective.
For that, let $i_j\in I_j$ and let $p$ and $s$ be the predecessor and successor of $i_j$ in $C(I_j)$.
Then $p$ and $s$ are contained in $V_F$, and thus by \cref{nondeleteddense} there is $i\in [v,w]\cap I'$, and $\pi_j(i)=i_j$.
So the restriction of $\pi_j$ to $I'$ with codomain $I_j$ is a surjective monotone map, and there is a unique monotone surjective extension $F_j:C(I')\rightarrow C(I_j)$ with $F_j(I')=(I_j)$.

Let $v$ and $w$ be distinct elements of $C(I_j)\setminus I_j$.
If $v\in C$, then let $v'=v$.
Otherwise there is, by \cref{emulatedeleted}, an element $v'\in C\setminus I'$ such that $[v,v']\cap C = v'$ and $P_v=P_{v'}$.
Define $w'$ similarly.
Then
\begin{displaymath}
V(v,w)=\hat{V}(v,w)=\hat{V}(v',w')=V(\tilde{F}^{-1}(v'), \tilde{F}^{-1}(w'))
\end{displaymath}
so $F_j$ witnesses that $\Phi_j\leq \Phi'$.
\end{proof}

So in this section it was shown that if $(\Phi_j)_{j\in J}$ is a $\leq$-chain of $k$-pseudoflowers which distinguish at least three elements of $\mathcal{P}$ that they locate, then there is a $k$-pseudoflower which is an upper bound of the chain $(\Phi_j)_{j\in J}$.
In particular, if $\Psi$ is a $k$-pseudoflower which distinguishes at least three elements of $\mathcal{P}$ that it locates, then in the set of $k$-pseudoflowers $\Phi'$ with $\Psi\leq \Phi'$ every $\leq$-chain has an upper bound.
Thus the following theorem follows by Zorn's Lemma:
\begin{thm}\label{leqmaxexists}
    Let $\Phi$ be a $k$-pseudoflower which distinguishes at least three elements of $\mathcal{P}$ that it locates.
    Then there is a $\leq$-maximal $k$-pseudoflower $\Psi$ such that $\Phi\leq \Psi$.
\end{thm}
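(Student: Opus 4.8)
The plan is to deduce this from Zorn's Lemma. Let $\mathcal{F}$ denote the collection of all $k$-pseudoflowers $\Phi'$ with $\Phi\leq\Phi'$, preordered by $\leq$. Since property~(\ref{star}) makes $\leq$ antisymmetric up to renaming index sets, I will either quotient $\mathcal{F}$ by the relation ``$\Phi'\leq\Phi''$ and $\Phi''\leq\Phi'$'' to obtain a genuine partial order, or invoke the version of Zorn's Lemma that applies to preorders; in either case it suffices to show that every $\leq$-chain in $\mathcal{F}$ has an upper bound in $\mathcal{F}$, and a maximal element of $\mathcal{F}$ is then automatically a $\leq$-maximal $k$-pseudoflower, since anything above it again lies in $\mathcal{F}$ by transitivity of $\leq$. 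One also restricts, harmlessly, to index sets drawn from a fixed large set, so that $\mathcal{F}$ is a set.

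The existence of an upper bound for a chain is essentially the content of all of Section~5: given a $\leq$-chain $(\Phi_j)_{j\in J}$ of $k$-pseudoflowers each distinguishing at least three elements of $\mathcal{P}$ that it locates, the construction there — compatible witnesses, the inverse-limit index set, the sets $P_v$ for the new cutpoints, and the deletion of redundant petals — produces a $k$-pseudoflower $\Psi$ with $\Phi_j\leq\Psi$ for all $j$ (\cref{Phiprimeispseudoflower}). Together with transitivity of $\leq$ this gives $\Phi\leq\Psi$, so $\Psi\in\mathcal{F}$, and the empty chain is handled by $\Phi$ itself. So the only thing still needed is that every $\Phi'\in\mathcal{F}$ does distinguish at least three elements of $\mathcal{P}$ that it locates; that is, that this property is inherited along $\leq$.

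To prove that inheritance, fix $\Phi\leq\Phi'$ with witness $F\colon C(I')\to C(I)$. First, $\Phi'$ displays every separation displayed by $\Phi$: for distinct cutpoints $v,w$ of $C(I)$, \cref{cyclichoms2} provides unique preimages $v'=\hat{f}(v)$ and $w'=\hat{f}(w)$ in $C(I')\setminus I'$, \cref{cyclichoms3} gives $F^{-1}([v,w])=[v',w']$, and hence $V(v',w')$ computed in $\Phi'$ coincides with $V(v,w)$ computed in $\Phi$ by the defining property of $\leq$, so $S(v',w')=S(v,w)$. Thus $\Phi'$ distinguishes the same three (or more) profiles as $\Phi$. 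Moreover, if such a profile $P$ is located by $\Phi$ via a cutpoint $v$ with, say, $S(v,w)\in P$ for every cutpoint $w\neq v$ of $C(I)$, then $v'=\hat{f}(v)$ should locate $P$ in $\Phi'$: a cutpoint $w'\neq v'$ of $C(I')$ either maps to a cutpoint $w\neq v$ of $C(I)$, where $S(v',w')=S(v,w)\in P$, or maps to some $i\in I$, in which case $S(v',w')$ sits between the interval separations $S(v,p)$ and $S(v,s)$ of $\Phi$ corresponding to the cutpoint neighbours $p,s$ of $i$ in $C(I)$ — both of which lie in $P$ — and one has to deduce from this, the cyclic-order relations among these cutpoints, and the consistency/profile axioms that $S(v',w')\in P$ as well.

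I expect this last point — that the ``located'' part of the hypothesis survives passage to a larger $k$-pseudoflower, and in particular pinning down $S(v',w')$ in the case where $w'$ maps into $I$ — to be the main obstacle. Everything else (the Zorn mechanics, the preorder bookkeeping, and the appeal to \cref{Phiprimeispseudoflower}) is routine, and the genuinely difficult construction, that of the limiting $k$-pseudoflower, has already been carried out in Section~5.
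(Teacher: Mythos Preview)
Your approach is exactly the paper's: apply Zorn's Lemma to $\{\Phi' : \Phi \leq \Phi'\}$, with upper bounds for chains supplied by the Section~5 construction culminating in \cref{Phiprimeispseudoflower}. The paper's own proof is a single sentence to this effect and does not explicitly verify the inheritance along $\leq$ of the ``distinguishes three located elements of $\mathcal{P}$'' hypothesis that you flag; you are simply being more careful than the paper about a point it treats as implicit, and your identification of the locating clause as the only non-routine step is accurate.
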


In this theorem, the condition that $\Phi$ distinguishes at least three elements of $\mathcal{P}$ that it locates can be weakened to $\Phi$ distinguishing any three profiles or tangles that it locates.
As every $k$-pseudoflower locates every $k$-tangle, and every $k$-pseudoflower extending a $k$-flower with four petals locates every $k$-profile, the theorem can be specialized to the following versions:

\begin{thm}\label{leqmaxexistsprofile}
    Let $\Phi$ be a $k$-pseudoflower which distinguishes at least three  $k$-profiles and extends a $k$-flower with four petals.
    Then there is a $\leq$-maximal $k$-pseudoflower $\Psi$ such that $\Phi\leq \Psi$.
\end{thm}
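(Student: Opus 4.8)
The plan is to obtain \cref{leqmaxexistsprofile} as an immediate specialisation of \cref{leqmaxexists}, so that essentially no new argument is needed. First I would recall that \cref{leqmaxexists} already produces a $\leq$-maximal $k$-pseudoflower above any $k$-pseudoflower that distinguishes at least three elements of $\mathcal{P}$ which it \emph{locates}; thus the only discrepancy with the hypothesis of \cref{leqmaxexistsprofile} is that we are merely told that $\Phi$ distinguishes three profiles, not that it locates them.

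To bridge this gap I would apply \cref{profilelocatedinpseudoflower}: since $\Phi$ extends a $k$-flower with four petals, that lemma shows every relevant profile is located by $\Phi$, and in particular so are the (at least three) profiles that $\Phi$ distinguishes. So the hypothesis of \cref{leqmaxexists} is met. For the Zorn's Lemma step in the proof of \cref{leqmaxexists} to go through verbatim I would also record two inheritance facts: extension is transitive, so every $k$-pseudoflower $\Phi'$ with $\Phi\leq\Phi'$ again extends a $k$-flower with four petals; and separations displayed by $\Phi$ remain displayed by any $\Phi'$ with $\Phi\leq\Phi'$, so $\Phi'$ still distinguishes each of the relevant pairs of profiles. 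Hence the property ``distinguishes at least three located profiles'' is inherited along every $\leq$-chain above $\Phi$, which is exactly the input the inverse-limit construction of \cref{sec:limits} requires, and no part of that construction needs to be re-examined.

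The one point that calls for a little care, rather than being a genuine obstacle, is the convention for profiles: \cref{leqmaxexists} is phrased in terms of the fixed set $\mathcal{P}$, whereas it is cleanest here to use the remark made just after \cref{leqmaxexists} that its proof works for an arbitrary family of located profiles. This is harmless because the proof of \cref{leqmaxexists}, and the chain argument it rests on, refer only to the three distinguished profiles and never to any further element of $\mathcal{P}$, so one may simply take those three profiles as the operative family. With this observation, invoking \cref{leqmaxexists} completes the proof.
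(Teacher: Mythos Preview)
Your proposal is correct and matches the paper's own approach: the paper derives \cref{leqmaxexistsprofile} directly from \cref{leqmaxexists} by invoking \cref{profilelocatedinpseudoflower} to ensure the three profiles are located, together with the remark that the proof of \cref{leqmaxexists} works for any three located profiles rather than elements of the fixed set $\mathcal{P}$. Your additional comments on inheritance along $\leq$-chains make explicit what the paper leaves implicit, but the argument is the same.
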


\begin{thm}\label{leqmaxexiststangle}
    Let $\Phi$ be a $k$-pseudoflower which distinguishes at least three $k$-tangles.
    Then there is a $\leq$-maximal $k$-pseudoflower $\Psi$ such that $\Phi\leq \Psi$.
\end{thm}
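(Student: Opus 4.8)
The plan is to obtain \cref{leqmaxexiststangle} as the specialisation of \cref{leqmaxexists} to the case where the ambient set of profiles is taken to be a set of tangles, exploiting that a pseudoflower locates tangles for free. Since \cref{leqmaxexists} holds for an arbitrary fixed set $\mathcal{P}$ of profiles, I would apply it with $\mathcal{P}$ chosen as the set of all tangles of the relevant order, and invoke \cref{placetanglesinflower}, which states that every $k$-pseudoflower locates every such tangle. Then the hypothesis that $\Phi$ distinguishes at least three tangles already yields that $\Phi$ distinguishes at least three elements of $\mathcal{P}$ that it locates, which is exactly what \cref{leqmaxexists} requires; that theorem then provides a $\leq$-maximal $k$-pseudoflower $\Psi$ with $\Phi \leq \Psi$.

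The only step needing a word of justification is that the proof of \cref{leqmaxexists} uses $\mathcal{P}$ only through the profile axioms of its elements and through the standing assumption that the pseudoflowers under consideration distinguish at least three elements of $\mathcal{P}$ that they locate --- an assumption visible in the hypotheses of \cref{findanchoringset,subdividepetalfinite,anchoreq} and throughout the chain-limit construction of \cref{sec:limits} through \cref{sec:redundantpetals}. Both points are unproblematic once $\mathcal{P}$ is a set of tangles: tangles are profiles, and along any $\leq$-chain $(\Phi_j)_{j \in J}$ with $\Phi \leq \Phi_j$ the property ``distinguishes three tangles it locates'' persists, since $\Phi \leq \Phi'$ forces every separation displayed by $\Phi$ to be displayed by $\Phi'$ (so distinguishing is inherited by extensions) while locating is automatic by \cref{placetanglesinflower}. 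Hence every member of the poset $\{\Phi' : \Phi \leq \Phi'\}$ meets the hypotheses needed to bound $\leq$-chains above, and Zorn's Lemma applies exactly as in the proof of \cref{leqmaxexists}.

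I do not expect a genuine obstacle: all the mathematical content is already contained in \cref{leqmaxexists} and \cref{placetanglesinflower}, and what is left is the bookkeeping of identifying ``three distinguished tangles'' with ``three distinguished elements of $\mathcal{P}$ that are located''. The cleanest route, and the one I would adopt in the write-up, is to first isolate the remark made after \cref{leqmaxexists} --- that its proof never refers to any element of $\mathcal{P}$ other than the three flower-located objects it starts from --- so that the three given tangles can be substituted directly, without disturbing whatever set $\mathcal{P}$ has been fixed elsewhere.
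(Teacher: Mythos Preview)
Your proposal is correct and matches the paper's own approach: the paper derives \cref{leqmaxexiststangle} directly from \cref{leqmaxexists} by noting (in the paragraph preceding the theorem) that the hypothesis can be weakened to distinguishing any three located profiles, and then invoking \cref{placetanglesinflower} to observe that tangles are always located. Your two routes---instantiating $\mathcal{P}$ as the set of all tangles, or using only the three given tangles as in the remark after \cref{leqmaxexists}---are both faithful renderings of exactly this specialisation.
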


\subsection{\texorpdfstring{Existence of $\preccurlyeq$-maximal $k$-pseudoflowers}{Existence of maximal pseudoflowers}}\label{sec:preccurlymax}

Call an element $P$ of $\mathcal{P}$ \emph{closed} if whenever $(S_j)_{j\in J}$ is a chain of elements of $P$, then its supremum (which exists because their order is bounded) is contained in $\mathcal{P}$.
If $\mathcal{P}$ only contains closed elements, then $\leq$-maximal $k$-pseudodaisies are also $\preccurlyeq$-maximal, and thus there are $\preccurlyeq$-maximal $k$-pseudodaisies.

\begin{figure}
\centering
\begin{tikzpicture}
\basicflower
\orientation
\newnode{$v$}{90}
\newnode{$t$}{0}
\newnode{$w$}{-90}
\newnode{$y$}{180}
\labelofsep{90}{0}{$S(v,t)$\\$\in P_3$}{1}
\labelofsep{0}{-90}{$S(t,w)$\\$\in P_4$}{1}
\labelofarcbelow[red]{270}{90}{1}{$W$}
\end{tikzpicture}
\caption{Some of the notation used in the proof of \cref{maximalflowerwrttangles}.}
\label{fig:maximalflowerwrttangles}
\end{figure}
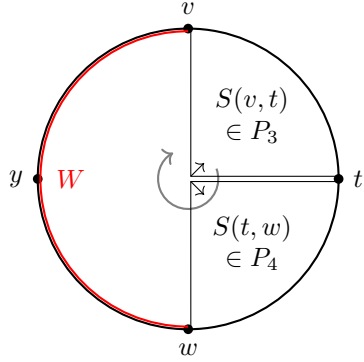

\begin{thm}\label{maximalflowerwrttangles}
Let $\Phi$ be a $k$-pseudoflower that is $\leq$-maximal and distinguishes three elements of $\mathcal{P}$ that it locates.
If all elements of $\mathcal{P}$ are closed, then $\Phi$ is also $\preccurlyeq$-maximal.
\end{thm}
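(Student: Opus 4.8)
The plan is to argue by contraposition: assuming $\Phi$ is not $\preccurlyeq$-maximal, I will produce a $k$-pseudoflower $\Phi'$ with $\Phi \leq \Phi'$ and $\Phi' \not\leq \Phi$, contradicting $\leq$-maximality. So suppose some $k$-pseudoflower $\Psi$ satisfies $\Phi \preccurlyeq \Psi$ but not $\Psi \preccurlyeq \Phi$; then there is a separation $(C,D)$ displayed by $\Psi$ whose equivalence class (with respect to $\mathcal{P}$) is not displayed by $\Phi$. Since $\Phi$ already displays a great deal, the crux is to understand how $(C,D)$ sits relative to the interval separations of $\Phi$. I would first replace $(C,D)$ by an equivalent separation of order at most $k$ that is \emph{nested} with as much of $\Phi$ as possible; if $(C,D)$ is nested with every petal separation of $\Phi$, then a routine argument (using that intervals of $C(I)$ are determined by their boundary cutpoints, together with \cref{verticesinducesintervalsofV(C)} and \cref{intervalcontainingset}) shows $(C,D)$ is already equivalent to an interval separation of $\Phi$, contradicting our choice. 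Hence $(C,D)$ properly crosses some petal separation $S(i)$ of $\Phi$.

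Now I invoke the machinery of \cref{sec:intervalsareseps}-adjacent results: by \cref{findanchoringset}, after replacing $(C,D)$ by a separation with the same join and meet against $S(i)$ — hence equivalent on the relevant profiles — I may assume $(C,D)$ (or its inverse) is anchored at some cutpoint $v \in C(I)\setminus I$. Here is where closedness of the elements of $\mathcal{P}$ enters: a priori \cref{subdividepetalfinite} requires $(C,D)$ to have order exactly $k$, and the anchoring construction produces a separation of order $k$; but to know that after subdividing we remain within a $k$-pseudoflower that still displays the right equivalence class, I need the limit separations arising along the way to stay in $\mathcal{P}$, which is precisely closedness (this is also what was used implicitly to run the chain argument feeding into \cref{leqmaxexists}). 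With $(C,D)$ anchored at $v$ and of order $k$, \cref{subdividepetalfinite} yields an extension $\Phi'$ of $\Phi$, witnessed by a map $F$, in which $(C,D)$ appears as an interval separation $S(m,v)$ of $\Phi'$, obtained by splitting the petal $i$ into $i_1, m, i_2$.

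It remains to check that $\Phi \not\leq \Phi'$, i.e.\ that this extension is proper. If we had $\Phi' \leq \Phi$ as well, then by the remark following the definition of extension, $\Phi$ and $\Phi'$ would coincide up to renaming of index sets, forcing $(C,D)$ — being an interval separation of $\Phi'$ — to be equivalent to an interval separation of $\Phi$; but that is exactly what we assumed fails. Alternatively and more directly: the new cutpoint $m$ has $P_m = (C\cap D)\setminus V(s,p)$, which by property (\ref{star}) and the argument in the proof of \cref{subdividepetalfinite} is genuinely new, so no witness $C(I)\to C(I')$ can exist. Either way we contradict the $\leq$-maximality of $\Phi$, so $\Phi$ must be $\preccurlyeq$-maximal.

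The main obstacle I anticipate is the first reduction step — going from an arbitrary displayed separation $(C,D)$ not equivalent to anything displayed by $\Phi$ to one that is anchored at a cutpoint — because $(C,D)$ could cross \emph{many} petal separations of $\Phi$ simultaneously, and \cref{findanchoringset} is phrased for crossing a single petal separation. I expect one needs to pick $S(i)$ so that $(C,D)$ crosses it properly but the join/meet against $S(i)$ isolates $(C,D)$ inside a single petal's worth of the flower (using submodularity of the order function and that $(C,D)$ together with its corners lies in profiles of $\mathcal{P}$), and then handle the possibility that even after anchoring, $(C,D)$ still crosses other petals — which should not happen once it is pinned between $S(v,p)$ and $S(v,s)$, since those force $V(v,p)\subseteq C\subseteq V(v,s)$ and hence nestedness with every interval separation not meeting the petal $i$.
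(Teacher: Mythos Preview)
Your argument has a genuine gap in the infinite case, and it is precisely the gap that the closedness hypothesis is there to fill --- but not in the place you think.

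Let $P_1,P_2\in\mathcal P$ be the two profiles distinguished by $\Psi$ but not by $\Phi$. Since $\Phi$ locates both, they are located at the \emph{same} cutpoint $v\in C(I)\setminus I$ (say $S(v,w)\in P_1\cap P_2$ for all $w$). Your plan is to find a petal $i$ whose separation $S(i)$ is properly crossed by $(C,D)$ and then invoke \cref{findanchoringset} and \cref{subdividepetalfinite}. This works exactly when $v$ has a neighbour in $C(I)\setminus I$: then the petal $i$ between $v$ and that neighbour does the job, and the paper's proof does precisely this in that case. But in an infinite $k$-pseudoflower $v$ need not have a predecessor in $C(I)\setminus I$; there may be a strictly increasing sequence of cutpoints converging to $v$ with no last one. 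In that situation there is no petal separation adjacent to $v$ for $(C,D)$ to cross, your ``routine argument'' that nestedness with all petal separations forces $(C,D)$ to be (equivalent to) an interval separation fails, and the subdivision lemmas cannot be applied. The obstacle you anticipate at the end --- $(C,D)$ crossing \emph{many} petals --- is a red herring; the real obstacle is $(C,D)$ properly crossing \emph{none}.

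This missing case is exactly where closedness is used in the paper, and it is not used anywhere else: \cref{subdividepetalfinite} is a finite construction and needs no limits. When $v$ has no predecessor, the paper first uses closedness to rule out the $k$-pseudoanemone case (there $S(t,v)$ is the supremum of the chain $(S(t,x))_{x}$ with $x$ ranging over cutpoints approaching $v$, all of which lie in $P_1$, forcing $S(t,v)\in P_1$, a contradiction). In the $k$-pseudodaisy case one forms the chain $S_x=((C,D)\vee S(t,x))\wedge S(t,v)$ over the same $x$, takes its supremum $(A,B)$ (which exists by limit-closedness of the universe), uses closedness of $P_1$ to get $(A,B)\in P_1$, and then derives a contradiction from \cref{anchoreq}: the separator $A\cap B$ would have to lie inside some $V(v,y)$, yet $(A,B)$ still properly crosses $S(v,y)$, which forces $A\cap B$ to contain a vertex of $P_v$ outside $V(v,y)$. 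Your write-up needs this limit argument (or an equivalent one) to close the gap.
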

\begin{proof}
See \cref{fig:maximalflowerwrttangles} for a depiction of some of the notation.
Let $\Phi$ be a $k$-pseudoflower that distinguishes three elements of $\mathcal{P}$ that it locates and which is not $\preccurlyeq$-maximal among all $k$-pseudoflowers, as witnessed by $\Phi'$. Let $P_1$ and $P_2$ be two elements of $\mathcal{P}$ which are distinguished by $\Phi'$ but not by $\Phi$.

By \cref{placetanglesinflower} there is a cutpoint $v\in C(I)\setminus I$ such that either $S(v,w)\in P_1$ for all $w\in C(I)\setminus I-v$ or $S(w,v)\in P_1$ for all $w\in C(I)\setminus I-v$.
We will assume that $S(v,w)\in P_1$ for all $w\in C(I)\setminus I-v$, the other case is symmetric.
Because $\Phi$ distinguishes at least three elements of $\mathcal{P}$ there is an interval separation $S(v,w)$ of $\Phi$ whose inverse is contained in two elements $P_3$ and $P_4$ of $\mathcal{P}$ which are distinguished and located by $\Phi$.
As $\Phi$ distinguishes and locates $P_3$ and $P_4$, and distinguishes them from $P_1$, there is $t\in \mathopen]v,w\mathclose[$ such that $S(v,t)$ distinguishes $P_3$ and $P_4$.
By swapping the names of $P_3$ and $P_4$ if necessary, we may assume that $S(t,v)\in P_3$.
Again by the profile property also $S(t,w)$ distinguishes $P_3$ and $P_4$ with $S(w,t)\in P_4$.

The $k$-pseudoflower $\Phi'$ distinguishes $P_1$, $P_2$, $P_3$ and $P_4$ pairwise, so there is an interval separation $(C,D)$ of $\Phi'$ which is contained in $P_3$ but not $P_4$ and which distinguishes $P_1$ and $P_2$.
By swapping the names of $P_1$ and $P_2$ if necessary we may assume that $(C,D)$ is contained in $P_1$ and $P_3$ and that its inverse is contained in $P_2$ and $P_4$.

If $v$ has a predecessor $v'$ in $C(I)\setminus I$, then $(C,D)$ properly crosses $S(v,v')$ and by \cref{findanchoringset,subdividepetalfinite} there is an extension $\Phi''$ of $\Phi$ which distinguishes $P_1$ and $P_2$.
Assume for a contradiction that $v$ has no predecessor in $C(I)\setminus I$.
Let $W$ be the interval $\mathopen]w,v\mathclose[\setminus I$.
If $\Phi$ is a $k$-pseudoanemone, then $S(t,v)$ is the supremum of the separations $S(t,x)$ with $x\in W$.
Because $P_1$ is closed and contains all $S(t,x)$ with $x\in W$ but not $S(t,v)$, $\Phi$ cannot be a $k$-pseudoanemone and thus has to be a $k$-pseudodaisy.

For all $x\in W$ let $S_x$ be the separation $((C,D)\vee S(t,x))\wedge S(t,v)$.
All three $k$-separations $(C,D)$, $S(t,x)$ and $S(t,v)$ are contained in $P_3$, not contained in $P_4$ and have order at most $k$, so $S_x$ has order at most $k$.
Denote the unique supremum of $(S_x)_{x\in W}$, which exists by limit-closedness, by $(A,B)$.
As the union of all sets $V(v,x)$ with $x\in W$ is the whole ground set $V$, there is some $y\in W$ such that $V(v,y)$ contains $A\cap B$.
But all profiles are closed, so $(A,B)$ is still contained in $P_1$ and thus properly crosses $S(v,y)$.
Hence applying \cref{anchoreq} to the concatenation of $\Phi$ on vertices $v$, $t$ and $y$ shows that $A\cap B$ contains a vertex not in $V(v,y)$, a contradiction to the choice of $y$.
\end{proof}

\section{\texorpdfstring{Example of an infinite $k$-pseudoflower}{Example of an infinite pseudoflower}}\label{sec:example}

Recall that the relevant definitions for graph-like spaces can be found for example in \cite{BCC:graphic_matroids}.

\begin{ex}\label{ex:infinitedaisy}
Let $a$ and $n$ be natural numbers and define $k=a+2n$. Let $G$ be a $k$-connected graph and $(P_i)_{1\leq i\leq n}$ a family of pairwise disjoint arcs. Let $V$ be a vertex set of $G$ 
\begin{figure}
\centering
\includegraphics{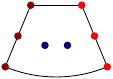}
\includegraphics{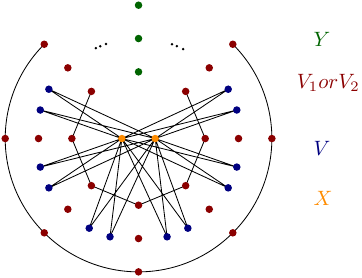} 
\caption{An example of a $k$-pseudoflower with $|X|=2$ and $d=3$.}
\label{examplesofflowers}
\end{figure}
of size $a$ which does not contain any first or last vertices of some $P_i$. Then we can define a $k$-connected graph-like space $G'$ as follows: Let $(G_j)_{j\in \mathbb{Z}}$ be a family of disjoint copies of $G$, $X$ a set of size $a$ disjoint from all $G_i$ and $Y=\{y_1,...,y_n\}$ a set disjoint from all $G_i$ and from $X$. For all suitable $i$ and $j$ identify the last vertex of $P_i$ in $G_j$ with the first vertex of $P_i$ in $G_{j+1}$ and add all edges between the copy of $V$ in $G_j$ and $X$.

Let the topology of $G'-Y$ be the simplex topology of $G'_Y$. Define sets $A(i,j,l,\epsilon)$ for $1\leq i\leq n$, $j\leq l\in \mathbb{Z}$ and $1>\epsilon >0$ as follows: Let $A(i,j,l,\epsilon)\cap V$ consist of the vertices of copies of $P_i$ in graphs $G_m$ where $m<j$ or $m>l$. Add the inner points of edges with both end vertices in $A(i,j,l,\epsilon)$. For edges $e$ of which only the starting vertex is in $A(i,j,l,\epsilon)$ add $\iota_e([0,\epsilon\mathclose[)$ and for edges $e$ of which only the end vertex is contained in $A(i,j,l,\epsilon)$ add $\iota_e(\mathopen]1-\epsilon,1])$. The open sets of $G-Y$ and the sets of the form $A(i,j,l,\epsilon)$ form a base of the topology of~$G'$.

\begin{clm} $G'$ is a graph-like space.
\end{clm}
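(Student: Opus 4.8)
The plan is to verify the definition of graph-like space directly for $G'$, checking each axiom in turn. Recall from \cite{BCC:graphic_matroids} that a graph-like space consists of a topological space together with a distinguished vertex set $V(G')$, an edge index set $E(G')$, and for each edge $e$ a continuous map $\iota_e\colon[0,1]\to G'$ whose restriction to $\mathopen]0,1\mathclose[$ is a homeomorphism onto its (open) image, with the $\iota_e(\mathopen]0,1\mathclose[)$ pairwise disjoint, disjoint from $V(G')$, and with $V(G')$ together with all inner edge points covering $G'$ and each vertex being a closed point. So first I would fix the data: $V(G')$ is the union of the copies of $V(G_j)$ (with the prescribed identifications of arc endpoints) together with $X$ and $Y$, and $E(G')$ is the disjoint union of the edge sets $E(G_j)$ together with the new edges between each copy of $V$ and $X$; the maps $\iota_e$ are inherited from the $G_j$ or, for the new edges, are the obvious parametrisations. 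The topology is the one generated by the stated base.

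The key steps, in order, are: (1) check that the proposed base is actually a base, i.e.\ that finite intersections of basic open sets contain a basic open set around each of their points --- this is where one uses that the sets $A(i,j,l,\epsilon)$ are ``nested'' in $\epsilon$ and in the interval $[j,l]$, and that they interact well with the simplex-topology open sets of $G'-Y$; (2) check that each $\iota_e$ is continuous and a homeomorphism onto its image off the endpoints --- for edges inside some $G_j$ this follows because the subspace topology on $G_j$ (minus the points of $Y$, which only arise as limits) refines to the simplex topology, and for the new $V$--$X$ edges it is immediate since those edges have no inner points appearing in any $A(i,j,l,\epsilon)$; (3) check the disjointness and covering conditions for vertices and inner edge points, which is essentially bookkeeping given how $G'$ was assembled; and (4) check that every vertex is a closed point --- the only subtle vertices are those of $Y$, where one shows that the complement of $y_i$ is open by exhibiting, around every other point, a basic open set avoiding $y_i$ (for points far along a copy of $P_i$ one uses a set $A(i,j,l,\epsilon)$, and for all other points the simplex-topology neighbourhoods already avoid $y_i$).

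The main obstacle I expect is step (1) together with the part of step (4) and step (2) that concerns the points $y_i \in Y$: one must understand precisely which sequences of inner points and vertices converge to $y_i$ in this topology. The intended picture is that $y_i$ is the single ``end'' obtained by letting the copies of $P_i$ run off to $+\infty$ and, simultaneously via a second point or the same point, to $-\infty$ --- so a basic neighbourhood of $y_i$ is of the form $\{y_i\}\cup A(i,j,l,\epsilon)$, and one must verify that these sets are closed under finite intersection (they are, taking the larger interval $[j,l]$ and the smaller $\epsilon$), that $\iota_e$ stays continuous at endpoints that got identified across infinitely many copies (it does, because such a point is an ordinary identified vertex, not $y_i$ itself), and that $G'$ is Hausdorff enough for vertices to be closed. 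Everything else is routine verification of the axioms, so the write-up would state the data, dispatch (2), (3) quickly, and spend the bulk of the argument confirming that the $A(i,j,l,\epsilon)$ behave as a neighbourhood basis at the points of $Y$ and that the resulting space satisfies all the clauses of the definition in \cite{BCC:graphic_matroids}.
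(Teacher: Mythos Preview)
Your proposal is more thorough than the paper's in one respect and has a genuine gap in another.

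The paper's own proof ignores your steps (1)--(3) entirely: it treats the base axioms, the continuity of the $\iota_e$, and the covering/disjointness conditions as evidently inherited from the simplex-topology construction, and it spends the whole argument on a single axiom. That axiom, however, is \emph{not} the one you plan to verify. The clause in \cite{BCC:graphic_matroids} that singles out graph-like spaces among topological spaces with edge data is: for any two distinct vertices $v,w$ there exist disjoint open sets $U,U'$ of $G'$ with $V(G')\subseteq U\cup U'$, $v\in U$, and $w\in U'$. This is strictly stronger than ``every vertex is a closed point'', which is what your step~(4) checks. Showing that the complement of $y_i$ is open does not produce two disjoint open sets whose union covers all vertices; in particular it says nothing about separating two points $y_i,y_{i'}\in Y$ from each other by such a pair.

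So the gap is that your plan omits the key separation axiom. The paper handles it in two cases: if at least one of $v,w$ lies outside $Y$, one uses the simplex topology of $G'-Y$ to isolate that vertex and then enlarges the other side by adjoining suitable sets $A(i,j,l,\epsilon)$ to pick up the points of $Y$; if both $v=y_i$ and $w=y_{i'}$ lie in $Y$, one takes $U$ to be the union of all $A(m,j,l,\tfrac12)$ with $m\neq i$ and builds the complementary open set $U'$ from $A(i,j,l,\tfrac12)$ together with small simplex-neighbourhoods of the remaining vertices. Your careful analysis of how the $A(i,j,l,\epsilon)$ form a neighbourhood basis at $y_i$ is exactly the ingredient needed here, so once you replace step~(4) by this stronger separation property your outline goes through; the rest of your plan is sound, just more detailed than what the paper writes out.
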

\begin{proof}
For its being a graph-like space we have to show that for each two distinct vertices $v,w$ in $V(G')$ there are two open sets of $G'$ partitioning $V(G')$ such that $v$ and $w$ are not contained in the same open set. If one of these vertices, $v$ say, is not contained in $Y$, then there are two disjoint open sets $U,V$ of $G'-Y$ such that $V\cap V(G')=\{v\}$ and $U\cap V(G')=V(G')\backslash Y-v$. Also for each $i$ there is some $A(i,j,l,\epsilon)$ which does not contain $v$, let $W$ be the union of $U$ and such a $A(i,j,l,\epsilon)$ for all $i$.

So consider the case that both $v$ and $w$ are contained in $Y$ and that $v=y_i$ for some $i$. Pick some suitable $j$ and $l$ and define $U$ to be the union of all $A(m,j,l,\frac{1}{2})$ where $m\neq i$. Let $V$ be the union of $A(i,j,l,\frac{1}{2})$ and all $v^{0.5}$ where $v\in V(G'-U)$. Then $U$ and $V$ are disjoint open sets, one of them contains $v$ and the other $w$ and every vertex is contained in one of these sets.
\end{proof}

In order to show that $G'$ is $k$-connected, note that for each $P_i$ the copies of it induce a pseudo-cycle containing $y_i$. Then $G'$ clearly has a $k$-pseudoflower with $(k-|X|)/2=n$ whose petal sets are of the form $V(G_i)\cup X$.
\end{ex}

\bibliographystyle{plain}
\bibliography{flowers}
\newpage

\section{Appendix}
The goal of this appendix is to show that cycle completions exist, are unique and have the properties we outlined in \cref{sec:cycord}. While there is a lot of prior work in this area, some of which is referenced here, none of it matches our requirements exactly, so we include these proofs for completeness.

This section starts with a short collection of basics about cyclic orders and their connection to linear orders.
In this section we often consider distinct linear or cyclic orders on the same ground set.
Because of this, and in contrast to the rest of the paper, in this section cyclic orders and linear orders are not implicit but introduced more formally as relations on the ground set.
So a cyclic order of a set $S$ is a set $Z\subseteq S\times S\times S$ that is cyclic, antisymmetric, linear and transitive.

The notation of $\leq$ is kept for a linear order, but the linear order in question is added as an index where necessary, for example in $s\leq_L t$ for a linear order $L$.
Similarly, for intervals of cyclic orders, the cyclic order in question may be indicated by an index.

\begin{defn}\cite[Definition 1.1]{Novak84}
Given two linear orders $A$ and $B$ on disjoint ground sets, the linear order $A\oplus B$ is the linear order defined on the union of the ground sets of $A$ and $B$ by letting $x\leq y$ if $x\leq_Ay$ or $x\leq_By$ or $x\in A$ and $y\in B$.
\end{defn}

\begin{defn}\cite[Lemma 1.11, Definition 2.1, Theorem 2.3]{Novak84}
	Given a linear order $L$ on a set $S$, the cyclic order $Z$ \emph{induced by $L$} consists of those triples $(s,s',t)$ of elements of $S$ such that in $L$ one of the equations $s<s'<t$, $s'<t<s$ and $t<s<s'$ holds.
	Given a cyclic order $Z$ on a set $S$, a \emph{cut of $Z$} is a linear order $L$ on $S$ such that $Z$ is the cyclic order induced by $L$.
\end{defn}

\begin{lem}\label{cycsettocut}\cite[Theorem 3.1]{Novak82}
	For every cyclic order $Z$ on set $S$ and every $s$ in $S$ there is a cut of $Z$ whose smallest element is $s$.
\end{lem}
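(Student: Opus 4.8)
The plan is to construct the cut explicitly by ``opening the cycle at $s$''. Define a relation $L$ on $S$ by letting $s$ be below everything, and for $x,y\in S\setminus\{s\}$ declaring $x<_L y$ exactly when $(s,x,y)\in Z$. It is immediate from this definition that $s$ is the smallest element of $L$, so the two substantive tasks are: (i) to check that $L$ is a linear order on $S$, and (ii) to check that the cyclic order induced by $L$ is precisely $Z$.

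For (i) it is enough to verify that $<_L$ is a strict linear order on $S\setminus\{s\}$, and each required property reads off one axiom of a cyclic order. Irreflexivity is immediate from \cite[Lemma 1.4]{Novak82}, since $(s,x,x)$ can never lie in $Z$. Totality holds because, for distinct $x,y\in S\setminus\{s\}$, the triple $s,x,y$ is pairwise distinct, so linearity of $Z$ gives $(s,x,y)\in Z$ or $(y,x,s)\in Z$, and $(y,x,s)\in Z$ is a cyclic rotation of $(s,y,x)\in Z$, i.e.\ of $y<_L x$. Antisymmetry is the antisymmetry axiom of $Z$ after a rotation. Transitivity of $<_L$ is the transitivity axiom of $Z$ applied with the four arguments $s,x,y,z$ (and $x\ne z$ follows from the antisymmetry just checked).

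For (ii), by the rotation-invariance of both $Z$ and the cyclic order induced by $L$ it is enough to test triples of the form $(s,q,r)$ and triples $(p,q,r)$ lying entirely in $S\setminus\{s\}$. Triples through $s$ are immediate: since $s$ is $L$-minimal, $(s,q,r)$ lies in the induced cyclic order iff $s<_L q<_L r$ iff $q<_L r$ iff $(s,q,r)\in Z$. For a triple $(p,q,r)$ with $s\notin\{p,q,r\}$ one notes that under the linear order $L$ the three elements fall into one of two ``rotation classes'' of orderings, and the goal is to show that $(p,q,r)\in Z$ is equivalent to the $L$-order lying in the rotation class of $p<_L q<_L r$. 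The forward direction is the one small genuine argument: rotating so that $p$ is the $L$-least of the three, the only case to rule out is $p<_L r<_L q$, where $(s,p,r)\in Z$ and $(s,r,q)\in Z$; combining a rotation of $(p,q,r)\in Z$ with a rotation of $(s,r,q)\in Z$ through the transitivity axiom yields $(s,r,p)\in Z$, i.e.\ $r<_L p$, contradicting $p<_L r$. The converse then follows from this forward direction together with the linearity axiom (if $(p,q,r)\notin Z$ then $(r,q,p)\in Z$, and applying the forward direction to $(r,q,p)$ contradicts the assumed $L$-order).

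I expect the only real obstacle to be organisational: in part (ii) one must keep track of which of the three cyclic rotations of each triple, and which of the four cyclic-order axioms, to invoke at each step. Nothing beyond the four defining axioms of a cyclic order and \cite[Lemma 1.4]{Novak82} is needed.
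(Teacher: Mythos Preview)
Your proof is correct; the construction of opening the cycle at $s$ via $x<_L y \Leftrightarrow (s,x,y)\in Z$ is the standard one, and your verification of the linear-order axioms and of the equality of the induced cyclic order with $Z$ goes through as written. Note, however, that the paper does not give its own proof of this lemma: it is stated as a citation of \cite[Theorem~3.1]{Novak82}, so there is nothing to compare against beyond observing that your argument is presumably close to Nov\'ak's original.
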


\begin{rem}
	By \cref{cycsettocut}, every non-trivial interval of a cyclic order $Z$ is also an interval of a cut $L$ of $Z$.
	Also, such an interval inherits a linear order from every cut of which it is an interval, and that linear order does not depend on the chosen cut.
\end{rem}

\begin{obs}
	Let $Z$ be a cyclic order on a set $S$, $L$ a cut of $Z$ and $s$, $s'$ and $t$ elements of $S$ such that $(s,s',t)\in Z$.
	If $s\leq t$ in $L$, then $s<s'<t$ in $L$.\qed
\end{obs}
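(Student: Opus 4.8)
The plan is to unfold the definition of a cut and then eliminate the impossible cases. Since $L$ is a cut of $Z$, by definition $Z$ is the cyclic order induced by $L$. Hence the hypothesis $(s,s',t)\in Z$ means precisely that in $L$ one of the three chains
\[
s<s'<t,\qquad s'<t<s,\qquad t<s<s'
\]
holds.

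Next I would use the extra hypothesis $s\le t$ in $L$ to discard the last two possibilities. Both the chain $s'<t<s$ and the chain $t<s<s'$ contain the strict inequality $t<s$ in $L$, which is incompatible with $s\le t$ in a linear order. Therefore the first chain must hold, i.e.\ $s<s'<t$ in $L$, which is exactly the assertion of the observation.

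There is essentially no obstacle here: the statement is a direct consequence of the definition of the cyclic order induced by a linear order, and the only point requiring (minimal) care is checking that each of the two discarded chains genuinely forces $t<s$, so that antisymmetry of the linear order $L$ rules them out. One could alternatively first invoke the remark after \cite[Lemma 1.4]{Novak82} that $s$, $s'$ and $t$ are pairwise distinct in order to upgrade $s\le t$ to $s<t$, but this strengthening is not needed for the argument above.
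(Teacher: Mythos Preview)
Your proposal is correct and is exactly the intended argument: the paper treats this observation as immediate from the definition of the cyclic order induced by a linear order (it is stated with a \qedhere\ and no proof), and your case elimination via the three defining chains is precisely that unfolding.
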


\begin{defn}
    Given a cyclic order $Z$ on ground set $S$, the cyclic order
    \begin{displaymath}
        \{(t,s,r)\colon (r,s,t)\in Z\}
    \end{displaymath}
    on $S$ is the \emph{mirror} of $Z$.
\end{defn}

Monotone maps have the property that preimages of intervals are again intervals.
Maps with that property are close to being monotone:

\begin{lem}\label{homintervaltocyclicorder}
	Let $Z$ be a cyclic order on a set $S$ and $Z'$ a cyclic order on a set $S'$.
	Let $f:S\rightarrow S'$ be a map such that for all intervals $I$ of $Z'$ the set $f^{-1}(I)$ is an interval of $Z$.
	Then $f$ is a monotone map or a mirror of a monotone map.
\end{lem}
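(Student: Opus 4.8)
The plan is to reduce the statement to a combinatorial assertion about a ``sign'' attached to pairs of elements of $f(S)$, and then to propagate signs along triangles using transitivity of the cyclic order.

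First I would dispose of the degenerate case: if $f(S)$ has at most two elements then the hypothesis of monotonicity (a triple of pairwise distinct \emph{values}) never occurs, so $f$ is vacuously both monotone and a mirror of a monotone map. So assume $|f(S)|\ge 3$. The key auxiliary fact, which I would prove by choosing a cut $L$ of $Z$ with a suitable minimum (\cref{cycsettocut}) and reading off the description of the relevant arcs in $L$, is the following \emph{arc dichotomy}: if $I$ is a nonempty interval of a cyclic order $Z$ on $S$ and $u,w$ are distinct elements of $S\setminus I$, then $I\subseteq\mathopen]u,w\mathclose[$ or $I\subseteq\mathopen]w,u\mathclose[$. (Take $s\in I$, say $s\in\mathopen]u,w\mathclose[$; if some $t\in I$ lies in $\mathopen]w,u\mathclose[$ then in a cut with minimum $u$ one has $u<s<w<t$, so both $[s,t]$ and $[t,s]$ meet $\{u,w\}$, contradicting that $I$ is an interval avoiding $u$ and $w$.)

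Next, for distinct $p,q\in f(S)$ pick representatives $u\in f^{-1}(p)$, $w\in f^{-1}(q)$ and set $J_{uw}:=f^{-1}(\mathopen]p,q\mathclose[_{Z'})$. By hypothesis $J_{uw}$ is an interval of $Z$, and it avoids $u$ and $w$; by the arc dichotomy it is empty, or contained in $\mathopen]u,w\mathclose[$, or contained in $\mathopen]w,u\mathclose[$, and I define $\sigma(p,q)\in\{0,+,-\}$ accordingly. Using that each fibre $f^{-1}(p)$ is itself an interval of $Z$ (the singleton $\{p\}$ is an interval of $Z'$), together with the arc dichotomy, one checks that $\sigma(p,q)$ is independent of the chosen representatives: moving one representative at a time, the fibre through it is an interval avoiding the other representative and avoiding any point of $J_{uw}$, and the arc dichotomy pins down on which side of the arc it must lie. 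This step is routine but is where ``fibres are intervals'' gets used.

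The heart of the argument is a coherence statement coming from transitivity of $Z'$. If $p,q,r\in f(S)$ are distinct with $(p,q,r)\in Z'$, then $\mathopen]p,r\mathclose[_{Z'}$, $\mathopen]q,p\mathclose[_{Z'}$, $\mathopen]r,q\mathclose[_{Z'}$ contain $q$, $r$, $p$ respectively, so $\sigma(p,r),\sigma(q,p),\sigma(r,q)$ are all nonzero; and for any representatives $u,v,w$ of $p,q,r$, a value $+$ among these three forces $(u,v,w)\in Z$ while a value $-$ forces $(u,v,w)\notin Z$. Since $(u,v,w)$ and $(w,v,u)$ cannot both lie in $Z$, the three signs must agree, to some $\varepsilon(p,q,r)\in\{+,-\}$, and $(u,v,w)\in Z\iff\varepsilon(p,q,r)=+$. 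I would then show $\varepsilon$ is a global constant: the identity $\sigma(p,r)=\sigma(q,p)=\sigma(r,q)$ holds for every cyclically ordered triple of image points, and since $|f(S)|\ge 3$ any two pairs on which $\sigma$ is nonzero can be linked, through a common third element, into such a triangle (first check that for a fixed element $\sigma$ is constant on all pairs emanating from it, then chain). Once $\varepsilon\equiv +$, the equivalence $(f(u),f(v),f(w))\in Z'\Rightarrow(u,v,w)\in Z$ says precisely that $f$ is monotone; once $\varepsilon\equiv -$, it says $(f(u),f(v),f(w))\in Z'\Rightarrow(w,v,u)\in Z$, i.e.\ $f$ is monotone after replacing $Z$ (equivalently $Z'$) by its mirror, which is what it means to be a mirror of a monotone map.

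The main obstacle I anticipate is this last step: propagating constancy of $\varepsilon$ over all of $f(S)$ while carefully tracking which directed pairs are ``defined'' (have $\sigma\ne 0$) — a set which, when $f(S)$ is infinite and without a successor structure, may be all directed pairs, but in general omits the ``cyclically adjacent'' ones. Turning the local triangle identity into a global constant is the fiddly part. A secondary technical point is the cut-based proof of the arc dichotomy, which has to be done carefully from the cyclic-order axioms, or deduced from the linear-order machinery developed in the appendix.
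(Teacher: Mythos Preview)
Your approach is correct and takes a genuinely different route from the paper's. The paper fixes a single reference triple $(r,s,t)$ with $(f(r),f(s),f(t))\in Z'$ and $(r,s,t)\in Z$, and then, for an arbitrary triple $(r',s',t')$ with cyclically ordered images, carries out a direct case analysis on the number $n\in\{0,1,2,3\}$ of values among $f(r'),f(s'),f(t')$ not already in $\{f(r),f(s),f(t)\}$, reducing each case to earlier ones by repeated use of the interval hypothesis. Your argument via the sign function $\sigma$ and its coherence on triangles is more structural: it isolates the reason the result holds (a local orientation that must be globally constant), at the price of the extra bookkeeping for well-definedness of $\sigma$ and for the propagation of $\varepsilon$. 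The paper's proof, by contrast, needs no auxiliary invariant but pays for that with a longer case split.

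One remark on the propagation step, which you rightly flag as the delicate point. The phrasing ``for a fixed element $\sigma$ is constant on all pairs emanating from it'' is not the cleanest route, because your triangle identity only relates the three \emph{specific} directed pairs $(p,r)$, $(q,p)$, $(r,q)$ attached to a cyclic triple $(p,q,r)\in Z'$, not arbitrary pairs through $p$. A direct way to finish is this: for any four distinct points of $f(S)$ in cyclic order $(a,b,c,d)$, each two of the four cyclic sub-triples share one of these directed pairs (e.g.\ $(a,b,c)$ and $(a,b,d)$ share $\sigma(b,a)$; $(a,b,d)$ and $(a,c,d)$ share $\sigma(a,d)$; etc.), so all four have the same $\varepsilon$. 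Then chain from a reference triple to an arbitrary one by replacing one vertex at a time. With this in place your argument is complete.
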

\begin{proof}
	If for all elements $r$, $s$ and $t$ of $S$ the implication
	\begin{displaymath}
	(f(r),f(s),f(t))\in Z'\Rightarrow (t,s,r)\in Z
	\end{displaymath}
	holds, then $f$ is a mirror of a monotone map.
	So assume that there are elements $r$, $s$ and $t$ of $S$ such that $(f(r),f(s),f(t))\in Z'$ and $(r,s,t)\in Z$.
	We will start with an observation that we will refer to later in this proof. For this we consider any $u\in S$ such that $(f(r),f(u),f(t))\in Z'$.
	Then $f^{-1}([f(t),f(r)])$ is an interval of $S$ which contains $t$ and $r$ but not $s$, so $[t,r]$ is a subset of $f^{-1}([f(t),f(r)])$.
	As also $u\notin f^{-1}([f(t),f(r)])$, $u\notin [t,r]$ and thus $(r,u,t)\in Z$.
	
	Now let $r'$, $s'$ and $t'$ be elements of $S$ such that $(f(r'),f(s'),f(t'))\in Z'$.
	In order to show that $(r',s',t')\in Z$, first consider the case that the number $n$ of elements in $\{f(r'),f(s'),f(t')\}$ which are not contained in $\{f(r),f(s),f(t)\}$ is zero.
	Assume, by renaming if necessary, that $f(r')=f(r)$, $f(s')=f(s)$ and $f(t')=f(t)$.
	Then by three applications of our observation, $(r,s',t)\in Z$ and thus $(s',t',r)\in Z$ and hence $(r',s',t')\in Z$.
	
	Next consider the case that $n=1$.
	Assume, again by renaming if necessary, that $(f(r),f(r'),f(s))\in Z'$ (see also the left cyclic order of \cref{fig:homintervaltocyclicorder}).
	By our observation, $(r',s,t)\in Z$ and $(r',t,r)\in Z$ and hence also $(r',s,r)\in Z$.
	Then there are three cases:
	Either $f(s')=f(s)$ and $f(t')=f(t)$ or $f(s')=f(t)$ and $f(t')=f(r)$ or $f(s')=f(s)$ and $f(t')=f(r)$.
	In all three cases, by the case $n=0$ also $(r',s',t')\in Z$.
	
	Next consider the case that $n=2$, and that one of the intervals $\mathopen]f(r),f(s)\mathclose[$, $\mathopen]f(s),f(t)\mathclose[$, and $\mathopen]f(t),f(r)\mathclose[$ contains both elements of $\{f(r'),f(s'),f(t')\}$ which are not contained in $\{f(r),f(s),f(t)\}$.
	Assume, by renaming if necessary, that both $f(r')$ and $f(s')$ are contained in $\mathopen]f(r),f(s)\mathclose[$.
	In that case, the fact that $(f(r'),f(s'),f(t'))\in Z'$ implies that $f(r')\in \mathopen]f(r),f(s')\mathclose[$ (see also the middle cyclic order in \cref{fig:homintervaltocyclicorder}).
	Also, by the case $n=1$, $(r',s,t)$, $(s',s,t)$ and $(t,r,r')$ are all contained in $Z$.
	As $f^{-1}([f(t),f(r')])$ contains $t$ and $r'$ but neither $s$ or $s'$, $(r',s,t)\in Z$ implies $[t,r']\subseteq f^{-1}([f(t),f(r')])$.
	Thus $s'$ is not contained in $[t,r']$ and hence $(r',s',t)\in Z$.
	Because $(s',s,t)$ and $(t,r,r')$ are contained in $Z$, also $(r',s',s)$ and $(r',s',r)$ are contained in $Z$.
	By the case $n=0$, also $(r',s',t')\in Z$.
	
	Next consider the case that $n=2$ and none of the intervals $\mathopen]f(r),f(s)\mathclose[$, $\mathopen]f(s),f(t)\mathclose[$ or $\mathopen]f(t),f(s)\mathclose[$ contains two elements of $\{f(r'),f(s'),f(t')\}$.
	Assume, by renaming if necessary, that $f(t')\in \{f(r),f(s),f(t)\}$ and that there is $u\in \{r,s,t\}$ such that $(f(r'),f(u),f(s'))\in Z'$ (see also the right cyclic order of \cref{fig:homintervaltocyclicorder}).
	In this case $(t',r',u)$ and $(u,s',t')$ are both contained in $Z'$ by the case $n=1$ and thus $(r',s',t')$ is also contained in $Z'$.
	
	The only case left is the case $n=3$.
	Assume, by renaming if necessary, that $(f(r'),f(s),f(s'))\in Z'$.
	Then $(s,s',t')$ and $(s,t',r')$ are contained in $Z$ by the case $n=2$ and thus $(r',s',t')\in Z$.
\end{proof}

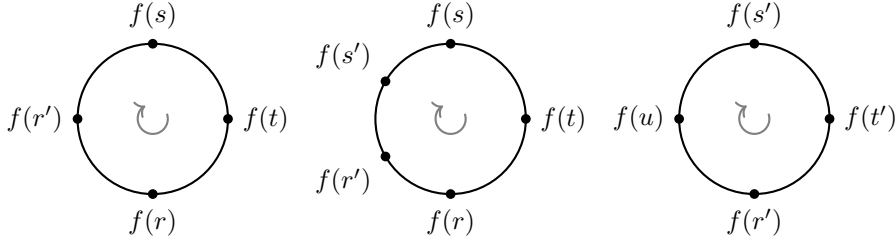
\begin{figure}
	\renewcommand{\flowerradius}{1cm}
	\begin{tikzpicture}
		\basicflower
		\orientation
		\newnode{$f(r)$}{-90}
		\newnode{$f(r')$}{180}
		\newnode{$f(s)$}{90}
		\newnode{$f(t)$}{0}
	\end{tikzpicture}
	\begin{tikzpicture}
		\basicflower
		\orientation
		\newnode{$f(r)$}{-90}
		\newnode{$f(s)$}{90}
		\newnode{$f(t)$}{0}
		\newnode{$f(r')$}{-150}
		\newnode{$f(s')$}{150}
	\end{tikzpicture}
	\begin{tikzpicture}
		\basicflower
		\orientation
		\newnode{$f(r')$}{-90}
		\newnode{$f(s')$}{90}
		\newnode{$f(t')$}{0}
		\newnode{$f(u)$}{180}
	\end{tikzpicture}
	\caption{Three of the cases in the proof of \cref{homintervaltocyclicorder}}\label{fig:homintervaltocyclicorder}
\end{figure}

\begin{defn}\cite[Remark 2.3]{Novak82}
	Given a cyclic order $Z$ of a set $S$ and a subset $S'$ of $S$, the set of triples in $Z$ which only contain elements of $S'$ is a cyclic order on $S'$, the \emph{induced cyclic order} on $S'$.
\end{defn}

\begin{thm}\label{cutsarerelated}\cite[Theorem 3.6]{Novak84}
	Let $Z$ be a cyclic order on set $S$ and let $K$ and $L$ be distinct cuts of $Z$.
	Then there are non-empty disjoint subsets $A$ and $B$ of $S$ such that $A\cup B=S$, $K\restcyc A=L\restcyc A$, $K\restcyc B=L\restcyc B$, $K=K\restcyc A\oplus K\restcyc B$ and $L=K\restcyc B\oplus K\restcyc A$.
\end{thm}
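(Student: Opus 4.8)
The plan is to describe $A$ and $B$ as the two arcs into which the two ``cut points'' of $K$ and $L$ divide the circle, and to pin them down purely in terms of the linear orders by looking at the inversions between $K$ and $L$. Concretely: since $K\neq L$ there are $s,t\in S$ with $s<_K t$ and $t<_L s$. Set
\[
D=\{(x,y)\in S\times S : x<_K y\text{ and }y<_L x\},
\]
let $A=\{x : (x,y)\in D\text{ for some }y\}$ and $B=\{y : (x,y)\in D\text{ for some }x\}$. Then $(s,t)\in D$, so $A$ and $B$ are non-empty.

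I would then establish the following in order. \emph{(i) $A\cap B=\varnothing$.} If $w$ lay in both, choose $y$ with $w<_K y$, $y<_L w$ and $z$ with $z<_K w$, $w<_L z$; then $z<_K w<_K y$ gives $(z,w,y)\in Z$ while $y<_L w<_L z$ gives $(y,w,z)\in Z$, contradicting antisymmetry (all three entries are distinct). \emph{(ii) $A\cup B=S$.} If $w$ were in neither, then $w$ has the same set of $<$-predecessors and the same set of $<$-successors in $K$ as in $L$; using that for $u,v$ both $K$-below $w$ one has $u<_K v\iff(u,v,w)\in Z\iff u<_L v$, and symmetrically for pairs above $w$, one deduces $K=L$, a contradiction. \emph{(iii) $K\restcyc A=L\restcyc A$ and $K\restcyc B=L\restcyc B$.} If $x,x'\in A$ with $x<_K x'$ but $x'<_L x$, then $(x,x')\in D$, forcing $x'\in B$ and contradicting (i); reversing $K$ and $L$ gives the converse, and the argument for $B$ is identical.

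The substantive step is \emph{(iv): $D=A\times B$}, i.e. every $x\in A$, $y\in B$ satisfy $x<_K y$ and $y<_L x$. Given witnesses $p$ with $(x,p)\in D$ and $q$ with $(q,y)\in D$, one automatically has $q\in A$ and $p\in B$. Suppose for contradiction $y<_K x$; then $q<_K y<_K x<_K p$, and (iii) transports $K$-order to $L$-order on $\{q,x\}\subseteq A$ and on $\{y,p\}\subseteq B$, so in $L$ one gets $y<_L q$ and $q<_L x$. Now $q<_K y<_K x$ gives $(q,y,x)\in Z$ while $y<_L q<_L x$ gives $(y,q,x)\in Z$; the second forces $(x,y,q)\in Z$ by the cyclic axiom, contradicting antisymmetry applied to the first. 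A symmetric computation rules out $x<_L y$. This bookkeeping — keeping track of which cyclic triple lands in $Z$ by virtue of which linear order — is the part I expect to be the main obstacle; everything else is essentially immediate from the definitions and the four axioms.

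Finally, (iv) says $A$ lies entirely $K$-below $B$; with (ii) this makes $A$ an initial segment of $K$, so $K=K\restcyc A\oplus K\restcyc B$, and dually $B$ is an initial segment of $L$, so $L=L\restcyc B\oplus L\restcyc A$, which by (iii) equals $K\restcyc B\oplus K\restcyc A$. Together with (iii) this is precisely the conclusion of the theorem, with the displayed $A$ and $B$.
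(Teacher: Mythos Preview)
The paper does not give a proof of this theorem: it is simply quoted from \cite[Theorem~3.6]{Novak84}. So there is no ``paper's own proof'' to compare against. That said, your argument is correct and self-contained.

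A couple of minor remarks. In step~(ii), you might make explicit that the two inclusions you obtain (``$K$-successors of $w$ are $L$-successors'' and ``$K$-predecessors of $w$ are $L$-predecessors'') are automatically equalities because each pair of sets partitions $S\setminus\{w\}$; this is what lets you conclude that $w$ sits in the same position in both orders. In step~(iv), your ``symmetric computation'' for ruling out $x<_L y$ does go through: from $p<_L x<_L y<_L q$ one gets via~(iii) on $B$ that $p<_K y$, whence $x<_K p<_K y$ gives $(x,p,y)\in Z$, while $p<_L x<_L y$ gives $(p,x,y)\in Z$; these two triples contradict antisymmetry just as in the first half. It may be worth spelling this out rather than invoking symmetry, since the roles of $A$ and $B$ are not literally interchangeable in the definition of $D$.
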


One example of a construction similar to the cycle completion is the following: The cycle completion of a cyclically ordered set $I$ can be obtained from the Dedekind completion of one of its cuts $L$ by adding as many elements to the ground set as necessary such that every element of the original ground set has a predecessor and a successor not in the ground set and then identifying the new smallest and biggest elements.
Also constructing the pseudo-line as in \cite{BCC:graphic_matroids} from $L$, contracting all inner points of an edge to one point and then again identifying the new smallest and biggest elements yields the cycle completion.
Third, the restriction of the cycle completion to the set of cuts is already described in \cite{Novak84}.

We will now give a precise construction which gives a linear order $D(L)$ starting from a linear order $L$ and is very similar to both the Dedekind completion of $L$ and the pseudo-line $L(L)$ as in \cite[Definition 4.1]{BCC:graphic_matroids}.
Similarly to the Dedekind completion, $D(L)$ consists of initial segments of $L$ and of the elements of $L$ itself.
But here an element $l$ of $L$ is not identified with an initial segment of $L$.
The construction of $D(L)$ can be obtained from the pseudo-line $L(L)$ by replacing all the intervals $(0,1)\times \{l\}$ by just $l$.
The topology of the pseudo-line is not needed in the context of this paper.

\begin{ex}\label{ex:cycle completion}
	(See also \cite{BCC:graphic_matroids})
	Let $L$ be a linear order on a set $S$ and let $V(L)$ be the set of \emph{initial segments of $L$}, i.e.\ subsets $S'$ of $S$ which satisfy that if $s$ is an element of $S'$ and $t$ is an element of $S$ with $t<s$ then also $t\in S'$.
	The subset relation is a natural linear order on $V(L)$.
	Define a linear order on the disjoint union of $S$ and $V(L)$ by letting $x\leq y$ if either both $x$ and $y$ are contained in $S$ and $x\leq y$ in $L$ or both are contained in $V(L)$ and $x\leq y$ in $V(L)$ or $x\in y$ or $y\in S\setminus x$.
	Denote the resulting linear order on $S\cup V(L)$ by $D(L)$.
	The smallest element of $D(L)$ is the empty set and the biggest element of $D(L)$ is $S$.
	Denote $S\cup (V(L)\setminus \{S\})$ by $V'(L)$, the restriction of $D(L)$ to $V'(L)$ by $D'(L)$ and the cyclic order induced by $D'(L)$ by $Z(L)$.
	For every element $s$ of $S$, the set $\{t\in S\colon t< s\}$ is the predecessor and the set $\{t\in S\colon t\leq s\}$ is the successor of $s$ in $D(L)$.
	
	Every subset of $D(L)$ has a supremum and an infimum in $D(L)$, which can be seen as follows:
	Given a subset $V'$ of $V(L)$, the set $\bigcup V'$ is an initial segment of $S$ and is the supremum of $V'$ both in $D(L)\restcyc V(L)$ and in $D(L)$.
	Similarly the set $\bigcap V'$ is the infimum of $V'$ in $D(L)\restcyc V(L)$ and $D(L)$.
	So in order to show that every subset of $S\cup V(L)$ has a supremum and an infimum in $D(L)$, it suffices to consider subsets $S'$ of $S$, and by symmetry it suffices to show that $S'$ has a supremum in $D(L)$.
	The set $\{s\in S|\exists \,t\in S':s\leq t\}$, denoted by $S''$, is an initial segment of $S$ which is an upper bound of $S'$.
	Also, no proper subset of $S''$ is an upper bound of $S'$.
	So if $S'$ has an upper bound in $D(L)$ which is less than $S''$, then that upper bound is contained in $S$.
	In particular, as $D(L)$ contains between any two elements of $S$ at least one element of $V(L)$, there is at most one upper bound of $S'$ which is less than $S''$.
	Thus $S'$ has a supremum in $D(L)$.
\end{ex}

\begin{lem}\label{uniquecyccompiso}
	Let $L$ and $K$ be cuts of a cyclic order $Z$ on a set $S$ with at least two elements such that $K=(L\restcyc (S\setminus S'))\oplus (L\restcyc S')$ for an initial segment $S'$ of $L$.
	Then the map
	\begin{displaymath}
	V'(L)\rightarrow V'(K), \quad x \mapsto
	\begin{cases}
	x & x\in S\\
	x\cup (S\setminus S') & x\in V'(L)\setminus S,\ x\subsetneq S'\\
	x \setminus S' & x \in V'(L)\setminus S,\ S'\subseteq x
	\end{cases}
	\end{displaymath}
	is the unique isomorphism of $Z(L)$ and $Z(K)$ which preserves~$S$.
\end{lem}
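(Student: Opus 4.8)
The plan is to verify, roughly in this order, that the displayed map (call it $\phi$) is well defined, bijective, the identity on $S$, an isomorphism of the cyclic orders $Z(L)$ and $Z(K)$, and the only such map. First I would dispose of the degenerate cases $S'=\emptyset$ and $S'=S$, where $K=L$ and $\phi$ is literally the identity; so from then on $\emptyset\neq S'\neq S$, whence $S'\in V'(L)$ and $S\setminus S'\in V'(K)$. Well-definedness is a one-line case check: $\phi(x)=x\in S$ for $x\in S$; for an initial segment $x\subsetneq S'$ of $L$ the set $x\cup(S\setminus S')$ is an initial segment of $K$ that is not $S$ because $x\neq S'$; and for $S'\subseteq x\subsetneq S$ the set $x\setminus S'$ is an initial segment of $K$ that is not $S$ because $x\neq S$. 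For bijectivity I would exhibit the inverse as the map $\psi\colon V'(K)\to V'(L)$ given by the \emph{same} recipe applied to the initial segment $S\setminus S'$ of $K$ (legitimate since $L=(K\restcyc S')\oplus(K\restcyc(S\setminus S'))$, i.e.\ $L$ is the rotation of $K$ at $S\setminus S'$); then $\psi\circ\phi=\mathrm{id}$ and $\phi\circ\psi=\mathrm{id}$ follow by a routine case check (the only point to watch is the exclusion $x\neq S$, which prevents $x\setminus S'$ from equalling $S\setminus S'$). That $\phi$ restricts to the identity on $S$ is immediate.

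To show $\phi$ is an isomorphism of cyclic orders I would use that a linear order and any of its rotations induce the same cyclic order (a direct check on the definition of the induced cyclic order in \cref{ex:cycle completion}, cf.\ also \cref{cutsarerelated}). Set $U:=\{x\in V'(K)\colon S\setminus S'\le_{D'(K)}x\}$, a final segment of $D'(K)$, and let $K^{\ast}:=(D'(K)\restcyc U)\oplus(D'(K)\restcyc(V'(K)\setminus U))$; then $K^{\ast}$ induces $Z(K)$, i.e.\ $K^{\ast}$ is a cut of $Z(K)$ with least element $S\setminus S'$. The key claim is that $\phi\colon D'(L)\to K^{\ast}$ is an isomorphism of \emph{linear} orders; granting it, $\phi$ carries the cyclic order induced by $D'(L)$, which is $Z(L)$, to the one induced by $K^{\ast}$, which is $Z(K)$, as desired. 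Since $\phi$ is already known to be bijective, this reduces to order-preservation, and the organizing observation is that, writing $B=\{x\in V'(L)\colon x\le_{D'(L)}S'\}$ and $A=V'(L)\setminus B$, the map $\phi$ sends $B$ bijectively onto $U\cup\{\emptyset\}$ (the part of $D'(L)$ at or below $S'$, wrapped past the least element $\phi(S')=\emptyset$ of $D'(K)$) and $A$ bijectively onto $(V'(K)\setminus U)\setminus\{\emptyset\}$, with every element of $\phi(B)$ preceding every element of $\phi(A)$ in $K^{\ast}$. The main obstacle is the order-preservation check itself: it is elementary but splits into many cases according to whether each of $x<_{D'(L)}y$ is a point of $S$ (and then whether it lies in $S'$ or in $S\setminus S'$) or an initial segment of $L$ (and then whether it is $\subsetneq S'$ or $\supseteq S'$); in each case one rewrites $<_{D'(L)}$ and $<_{K^{\ast}}$ via membership of a point in an initial segment, inclusion of initial segments, and the relations $K\restcyc S'=L\restcyc S'$ and $K\restcyc(S\setminus S')=L\restcyc(S\setminus S')$, and reads off the inequality. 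I expect only bookkeeping, no conceptual difficulty.

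For uniqueness, suppose $\phi'$ is another isomorphism $Z(L)\to Z(K)$ that is the identity on $S$; then $\alpha:=\phi^{-1}\circ\phi'$ is an automorphism of $Z(L)$ fixing $S$ pointwise, and it suffices to prove $\alpha=\mathrm{id}$. For an initial segment $x$ of $L$ with $\emptyset\neq x\neq S$, viewed as a subset of $S$ with non-empty complement $S\setminus x$, one has $s<_{D'(L)}x<_{D'(L)}t$, hence $(s,x,t)\in Z(L)$, for all $s\in x$ and $t\in S\setminus x$; moreover $x$ is the \emph{only} element of $V'(L)$ with this property, since a point of $S$ lies in $x$ or in $S\setminus x$ and so cannot be strictly between them, while a different initial segment $z$ of $L$ is comparable with $x$ under inclusion and then a point lying in exactly one of $x$ and $z$ supplies $s\in x$, $t\in S\setminus x$ with $(s,z,t)\notin Z(L)$. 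As $\alpha$ fixes $S$ pointwise and preserves the ternary relation, $\alpha(x)$ inherits the property, so $\alpha(x)=x$. Hence $\alpha$ fixes all of $V'(L)\setminus\{\emptyset\}$ (the points of $S$ together with the non-empty proper initial segments of $L$), and being a bijection it fixes $\emptyset$ too; thus $\alpha=\mathrm{id}$ and $\phi'=\phi$.
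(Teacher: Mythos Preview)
Your proof is correct, but both halves are organised differently from the paper's.

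For the isomorphism part, the paper takes a slightly slicker route: it introduces two auxiliary linear isomorphisms
\[
F_1\colon D'(L)\;\xrightarrow{\ \cong\ }\; D'(L\restcyc S')\oplus D'(L\restcyc(S\setminus S')),\qquad
F_2\colon D'(K)\;\xrightarrow{\ \cong\ }\; D'(L\restcyc(S\setminus S'))\oplus D'(L\restcyc S'),
\]
observes that the two target linear orders are rotations of one another and hence induce the same cyclic order, and then identifies the displayed map with $F_2^{-1}\circ F_1$. This factoring through the ``split'' object $D'(L\restcyc S')\cup D'(L\restcyc(S\setminus S'))$ avoids most of the case analysis you anticipate: each of $F_1$, $F_2$ is visibly a linear isomorphism, and the rotation fact does the rest. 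Your approach---rotating $D'(K)$ to a cut $K^{\ast}$ with least element $S\setminus S'$ and checking that $\phi\colon D'(L)\to K^{\ast}$ is a linear isomorphism---uses the same underlying idea (rotation preserves the induced cyclic order) but pays for it in bookkeeping, since the order check on $\phi$ genuinely splits into the many cases you list. Both arguments are elementary; the paper's buys brevity, yours buys directness.

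For uniqueness the two arguments are genuinely different. The paper argues by contradiction in $D'(K)$: if two $S$-preserving isomorphisms $F$ and $G$ differ at some $v\in V'(L)\setminus S$ with, say, $F(v)<_{D'(K)}G(v)$, then picking $s\in G(v)\setminus F(v)$ and $t\in S\setminus G(v)$ yields $(t,F(v),s)\in Z(K)$ and $(s,G(v),t)\in Z(K)$, which push back to the contradictory pair $(t,v,s),(s,v,t)\in Z(L)$. Your argument instead reduces to automorphisms of $Z(L)$ fixing $S$ and shows that each non-empty proper initial segment $x$ is pinned down by the relations $(s,x,t)\in Z(L)$ for $s\in x$, $t\in S\setminus x$; this is a clean intrinsic characterisation and arguably more conceptual, while the paper's version is shorter. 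Your separate handling of the degenerate cases $S'\in\{\emptyset,S\}$ and of the leftover point $\emptyset\in V'(L)$ is correct and a nice touch.
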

\begin{proof}
	The map $F_1:V'(L)\rightarrow V'(L\restcyc S')\cup V'(L\restcyc (S\setminus S'))$ which maps elements of $S$ to themselves, initial segments which are properly contained in $S'$ to themselves and initial segments $I$ containing $S'$ to $I\setminus S'$ is an isomorphism of the linear orders $D'(L)$ and $D'(L\restcyc S')\oplus D'(L\restcyc (S\setminus S'))$.
	Similarly the map $F_2:V'(K)\rightarrow V'(L\restcyc S')\cup V'(L\restcyc (S\setminus S'))$ which maps every elements of $S$ to themselves, initial segments properly contained in $S\setminus S'$ to themselves and initial segments $I$ containing $S\setminus S'$ to $I\cap S'$ is an isomorphism of the linear orders $D'(K)$ and $D'(L\restcyc (S\setminus S'))\oplus (L\restcyc S')$.
	Thus the map given in the lemma, which equals $F_2^{-1}\circ F_1$, is an isomorphism of $Z(L)$ and $Z(K)$.
	
	Let $F$ and $G$ be two isomorphisms of $Z(L)$ and $Z(K)$ which preserve $S$.
	Assume for a contradiction that there is $v\in V'(L)$ such that $F(v)$ is less than $G(v)$ in $D'(K)$.
	As $F$ and $G$ both are bijective and preserve $S$, $F(v)$ and $G(v)$ are both contained in $V'(K)\setminus S$.
	Thus there are elements $s\in G(v)\setminus F(v)$ and $t\in S\setminus G(v)$.
	Then $(F(t),F(v),F(s))$ equals $(t,F(v),s)$ and is thus contained in $Z(K)$.
	Because $F$ is monotone, this implies that $(t,v,s)\in Z(L)$.
	But similarly $(s,G(v),t)\in Z(K)$ and thus $(s,v,t)\in Z(L)$, a contradiction.
\end{proof}
\begin{cor}
	Let $Z$ be a cyclic order on set $S$ and let $L$ and $K$ be cuts of $Z$.
	Then there is a unique isomorphism of $Z(L)$ and $Z(K)$ which preserves $S$.
\end{cor}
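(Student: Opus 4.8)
The plan is to reduce the claim to \cref{uniquecyccompiso} by using \cref{cutsarerelated} to produce the decomposition that lemma requires. If $S$ has at most one element, then $L=K$ (there being only one linear order on $S$) and the statement is immediate; so assume $|S|\geq 2$. In the case $L=K$ one applies \cref{uniquecyccompiso} with $S'=\emptyset$: this is an initial segment of $L$ and $L\restcyc(S\setminus S')\oplus L\restcyc S'=L\restcyc S=L=K$, so the lemma gives that there is a unique isomorphism of $Z(L)$ with itself preserving $S$ (the explicit formula in the lemma specialises to the identity in this case).

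Now assume $L\neq K$. I would invoke \cref{cutsarerelated} to obtain non-empty disjoint $A,B$ with $A\cup B=S$, $K\restcyc A=L\restcyc A$, $K\restcyc B=L\restcyc B$, $K=K\restcyc A\oplus K\restcyc B$, and $L=K\restcyc B\oplus K\restcyc A$. Put $S'=B$. Since $L=K\restcyc B\oplus K\restcyc A=L\restcyc B\oplus L\restcyc A$, every element of $B$ precedes every element of $A$ in $L$, so $S'$ is an initial segment of $L$ and $S\setminus S'=A$. Then
\[
L\restcyc(S\setminus S')\oplus L\restcyc S'=L\restcyc A\oplus L\restcyc B=K\restcyc A\oplus K\restcyc B=K,
\]
so the hypotheses of \cref{uniquecyccompiso} are met, and the lemma yields a unique isomorphism of $Z(L)$ and $Z(K)$ preserving $S$.

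Essentially all the work is contained in the two cited results, so there is no serious obstacle; the one point that needs a line of argument is the bookkeeping step just described, namely that the decomposition coming out of \cref{cutsarerelated} can be rewritten in the exact shape $K=(L\restcyc(S\setminus S'))\oplus(L\restcyc S')$ with $S'$ an initial segment of $L$. Once $B$ is recognised as such an initial segment this is immediate. (One could even skip the separate treatment of $L=K$, since \cref{uniquecyccompiso} with $S'=\emptyset$ already covers it whenever $|S|\geq 2$.)
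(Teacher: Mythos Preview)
Your proof is correct and follows essentially the same approach as the paper: invoke \cref{cutsarerelated} to obtain an initial segment $S'$ of $L$ with $K=(L\restcyc(S\setminus S'))\oplus(L\restcyc S')$, then apply \cref{uniquecyccompiso}. You simply spell out the bookkeeping (and the edge cases $|S|\leq 1$ and $L=K$) that the paper leaves implicit.
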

\begin{proof}
	Let $S'\subseteq S$ such that $K=(L\restcyc (S\setminus S'))\oplus (L\restcyc S')$ and such that $S'$ is an initial segment of $L$.
	Such a set exists by \cref{cutsarerelated}.
	Then the statement follows from \cref{uniquecyccompiso}.
\end{proof}

\begin{lem}\label{uniquecyccomp}
	Let $Z$ be a cyclic order on a non-empty set $S$ and let $\mathcal{V}$ be the set of cuts of $Z$.
	For every cut $L\in \mathcal{V}$ denote the map $V'(L)\rightarrow S\cup \mathcal{V}$ which maps every element of $S$ to itself and every initial segment $S'$ to $(L\restcyc (S\setminus S'))\oplus (L\restcyc S')$ by $\eta_L$.
	Also denote $\{(\eta_L(a),\eta_L(b),\eta_L(c))\colon (a,b,c)\in Z(L)\}$ by $T_L$.
	Then $T_L$ is a cyclic order on $S\cup \mathcal{V}$ which does not depend on the choice of $L$.
\end{lem}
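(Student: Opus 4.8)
The plan is to derive both assertions from the single fact that $\eta_L$ is a bijection from $V'(L)$ onto $S\cup\mathcal V$, together with the explicit isomorphism supplied by \cref{uniquecyccompiso}. First I would check that $\eta_L$ is well defined, i.e.\ that for a proper initial segment $S'$ of $L$ the linear order $(L\restcyc(S\setminus S'))\oplus(L\restcyc S')$ is again a cut of $Z$: this is the observation that moving an initial segment of a cut to the end does not change the induced cyclic order, which one verifies by a short case analysis according to how many coordinates of a triple lie in $S'$ (for $0$ and $3$ coordinates the two linear orders agree on that triple; for $1$ and $2$ one checks directly that the two resulting cyclic triples coincide). Since $\eta_L$ is the identity on $S$ and $S$ is disjoint from $\mathcal V$, injectivity reduces to injectivity on proper initial segments; if $S'_1\subsetneq S'_2$ one exhibits a pair $a\in S'_1$, $b\in S'_2\setminus S'_1$ (using $L$-minima in the degenerate case $S'_1=\emptyset$) on which the two rotated orders disagree. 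Surjectivity is exactly \cref{cutsarerelated}: an arbitrary cut $K$ equals either $L=\eta_L(\emptyset)$ or $(L\restcyc(S\setminus S_0))\oplus(L\restcyc S_0)$ for a necessarily proper initial segment $S_0$ of $L$, hence $K=\eta_L(S_0)$. As $Z(L)$ is a cyclic order on $V'(L)$ (it is induced by the linear order $D'(L)$), its coordinatewise image $T_L$ under the bijection $\eta_L$ is a cyclic order on $S\cup\mathcal V$.

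For the independence, let $K$ and $L$ be cuts of $Z$; if $K=L$ there is nothing to prove, so by \cref{cutsarerelated} fix a nonempty proper initial segment $S_0$ of $L$ with $K=(L\restcyc(S\setminus S_0))\oplus(L\restcyc S_0)$. In particular $|S|\ge 2$, so \cref{uniquecyccompiso} applies and gives the isomorphism $G\colon V'(L)\to V'(K)$ of $Z(L)$ and $Z(K)$ which fixes $S$ pointwise. The key claim is that $\eta_K\circ G=\eta_L$. Granting this, and using that $G$ carries $Z(L)$ bijectively onto $Z(K)$, we obtain
\[
T_L=\{(\eta_L a,\eta_L b,\eta_L c):(a,b,c)\in Z(L)\}
   =\{(\eta_K a',\eta_K b',\eta_K c'):(a',b',c')\in Z(K)\}=T_K .
\]

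To prove the claim I would split along the definition of $G$. On $S$ all three maps are the identity, so there is nothing to do. Let $x$ be a proper initial segment of $L$; being an initial segment, $x$ is $\subseteq$-comparable with $S_0$. If $x\subsetneq S_0$, then $G(x)=x\cup(S\setminus S_0)$, so $\eta_K(G(x))=\big(K\restcyc(S_0\setminus x)\big)\oplus\big(K\restcyc(x\cup(S\setminus S_0))\big)$; using $K\restcyc S_0=L\restcyc S_0$ and $K\restcyc\big(x\cup(S\setminus S_0)\big)=(L\restcyc(S\setminus S_0))\oplus(L\restcyc x)$ this rewrites as $(L\restcyc(S_0\setminus x))\oplus(L\restcyc(S\setminus S_0))\oplus(L\restcyc x)$, which equals $(L\restcyc(S\setminus x))\oplus(L\restcyc x)=\eta_L(x)$ since $S_0$ is an initial segment of $L$. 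If $S_0\subseteq x$, then $G(x)=x\setminus S_0$ and, since $x\setminus S_0\subseteq S\setminus S_0$, an entirely analogous rewriting gives $\eta_K(G(x))=(L\restcyc(S\setminus x))\oplus(L\restcyc S_0)\oplus(L\restcyc(x\setminus S_0))=(L\restcyc(S\setminus x))\oplus(L\restcyc x)=\eta_L(x)$. This establishes the claim and hence the lemma.

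The main obstacle is the verification $\eta_K\circ G=\eta_L$: although each individual step is formal, it requires careful bookkeeping of which subsets of $S$ sit in which block of the nested $\oplus$-decompositions, together with repeated use of the fact that $S_0$ and $x$ are initial segments of $L$. The only other not entirely formal ingredient, the statement that a rotation of a cut is a cut (used to see that $\eta_L$ actually maps into $\mathcal V$), is a short application of the cyclic-order axioms.
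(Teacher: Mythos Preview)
Your proposal is correct and follows essentially the same route as the paper: both establish that $\eta_L$ is a bijection (the paper only cites \cref{cutsarerelated} for surjectivity and leaves injectivity and well-definedness implicit, whereas you spell these out), and both prove independence by invoking the explicit isomorphism $G$ from \cref{uniquecyccompiso} and verifying $\eta_K\circ G=\eta_L$ through the same two-case computation on initial segments $x\subsetneq S_0$ and $x\supseteq S_0$.
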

\begin{proof}
	By \cref{cutsarerelated} the maps $\eta_L$ are surjective, so they are bijections between $V'(L)$ and $S\cup \mathcal{V}$.
	Thus every $T_L$ arises from $Z(L)$ by renaming the elements of $V'(L)$ and thus is a cyclic order on $S\cup \mathcal{V}$, and $\eta_L$ is an isomorphism of $Z(L)$ and $T_L$.
	Let $L$ and $K$ be elements of $\mathcal{V}$ and let $S'$ be an initial segment of $L$ such that $K=(L\restcyc (S\setminus S'))\oplus (L\restcyc S')$ (such a segment exists by \cref{cutsarerelated}).
	Denote the unique isomorphism of $Z(L)$ and $Z(K)$ preserving $S$, which exists by \cref{uniquecyccompiso}, by $F$.
	Then $\eta_K\circ F(s)=\eta_L(s)$ for all $s\in S$.
	Also, for all initial segments $I$ of $L$ which are properly contained in $S'$,
	\begin{align*}
	\eta_K\circ F(I)&=\eta_K(I\cup (S\setminus S'))=(K\restcyc (S'\setminus I))\oplus (K\restcyc (I\cup (S\setminus S')))\\
	&=(L\restcyc (S'\setminus I))\oplus (L\restcyc (S\setminus S'))\oplus (L\restcyc I)\\
	&=(L\restcyc (S\setminus I))\oplus (L\restcyc I)=\eta_L(I),
	\end{align*}
	and similarly for all initial segments $I$ of $L$ which contain $S'$
	\begin{align*}
	\eta_K\circ F(I)&=\eta_K(I\setminus S')=(K\restcyc (S\setminus (I\setminus S')))\oplus (K\restcyc (I\setminus S'))\\
	&=(L\restcyc (S\setminus I)))\oplus (L\restcyc S')\oplus (L\restcyc (I\setminus S'))\\
	&=(L\restcyc (S\setminus I)) \oplus (L\restcyc I)=\eta_L(I).
	\end{align*}
	So $\eta_K\circ F=\eta_L$ and thus $\eta_K\circ F\circ \eta_L^{-1}$ is the identity.
	But $\eta_K\circ F\circ \eta_L^{-1}$ is also a composition of isomorphisms of cyclic orders and thus the identity is an isomorphism of $T_L$ and $T_K$, so $T_L=T_K$.
\end{proof}

Given a cyclic order $Z$ on set $S$ and a cut $L$ of $Z$, the previous lemma shows that $T_L$ only depends on $Z$ and not on $L$.
$T_L$ will be a cycle completion of $Z$.
From now on, denote $T_L$ by $\cyccomp(Z)$ and its ground set by $\cycset(Z)$.

The next lemma shows that $\cyccomp(Z)$ is really a cycle completion of a cyclic order $Z$.
As a result, cycle completions of cyclic orders exists.

\begin{lem}\label{boundsofintervalincompletion}
	Let $Z$ be a cyclic order on set $S$.
	Then for every non-trivial interval $I$ of $S$ there are unique elements $v$ and $w$ of $\cycset(Z)\setminus S$ such that $I=[v,w]\cap S$.
\end{lem}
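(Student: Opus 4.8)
The plan is to work inside a fixed cut $L$ of $Z$ via the explicit construction of $\cyccomp(Z)$ as $T_L = \{(\eta_L(a),\eta_L(b),\eta_L(c)) : (a,b,c) \in Z(L)\}$, where $\eta_L: V'(L) \to \cycset(Z)$ is the bijection from \cref{uniquecyccomp}. Since $\eta_L$ restricts to the identity on $S$ and $Z$ is the cyclic order induced by $Z(L)$ on $S$, it suffices to prove the statement for $Z(L)$ in place of $\cyccomp(Z)$: that is, for every non-trivial interval $I$ of $S$ there are unique $v, w \in V'(L) \setminus S$ with $I = [v,w]_{Z(L)} \cap S$. Transporting along $\eta_L$ then gives the claim for $\cyccomp(Z)$, and the fact that the statement does not depend on the choice of $L$ is already guaranteed by \cref{uniquecyccomp}.

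First I would reduce to linear orders. By \cref{cycsettocut} (the remark following it), every non-trivial interval $I$ of $Z$ is an interval of some cut, and by choosing that cut to be $L$ itself — or rather, using that $I$ inherits from $L$ a linear order independent of the cut — I may assume $I$ is an interval of the linear order $L$ on $S$. Recall $D'(L)$ is the linear order on $V'(L) = S \cup (V(L) \setminus \{S\})$ from \cref{ex:cycle completion}, in which between any two elements of $S$ there lies at least one initial segment, every $s \in S$ has both a predecessor and a successor in $V(L) \setminus S$, and every subset has a supremum and infimum in $D(L)$. Set $v = \sup\{s' \in V(L) : s' \subsetneq s \text{ for some } s \in I\}$, i.e. the infimum in $D(L)$ of the down-set generated by $I$ minus $I$ itself — concretely, $v$ is the largest initial segment contained in $\bigcap_{s \in I}\{t : t < s\}$ — and dually set $w = \sup\{t : t \le s \text{ for some } s \in I\}$, the smallest initial segment containing $I$. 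Because $I$ is neither empty nor all of $S$, both $v$ and $w$ are well-defined elements of $V(L) \setminus \{S\}$ (for $w \ne S$ one uses that $I \ne S$; for $v$ one uses $I \ne \emptyset$), hence lie in $V'(L) \setminus S$. One then checks directly from the definition of $D'(L)$ that $s \in I$ iff $v < s < w$ in $D'(L)$, using that no element of $V(L)$ lies strictly between two consecutive-in-$I$ elements other than possibly a single one, and that $I$ being an interval of $L$ forces $I = \{s \in S : v <_{D'(L)} s <_{D'(L)} w\}$; translating the linear betweenness of $D'(L)$ into the cyclic order $Z(L)$ gives $I = [v,w]_{Z(L)} \cap S = \mathopen]v,w\mathclose[_{Z(L)} \cap S$.

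For uniqueness, suppose $v', w' \in V'(L) \setminus S$ also satisfy $I = [v',w']_{Z(L)} \cap S$. Passing to the cut $L'$ of $Z(L)$ with smallest element $v'$ (via \cref{cycsettocut} applied to $Z(L)$), both $[v,w]$ and $[v',w']$ become intervals of a common linear order, and since they have the same trace on $S$ while $S$ is "dense" between cutpoints in the precise sense that every element of $V'(L) \setminus S$ has a neighbour in $S$ on each side except at the extreme ends — here using that $I$ is non-trivial so neither $v$ nor $w$ is the top or bottom of $D'(L)$ — one concludes $v = v'$ and $w = w'$. I expect the main obstacle to be the bookkeeping in this last step: carefully ruling out that two distinct initial segments $v \ne v'$ could both serve as the "lower cutpoint" of $I$, which comes down to the observation that between any two distinct elements of $V'(L)$ that are not separated by an element of $S$ there is in fact nothing (no element of $S$), so the condition "$I = \mathopen]v,w\mathclose[ \cap S$ with $v,w \notin S$" pins down $v$ and $w$ as the unique initial segments immediately below and immediately above $I$. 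The rest is routine manipulation of initial segments and the induced cyclic order.
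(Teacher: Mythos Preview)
Your approach is the same as the paper's: pick a cut $L$, locate $v$ and $w$ as the initial segments immediately below and above $I$ in $D'(L)$, and transport via $\eta_L$. The paper's proof is three sentences long and does exactly this.

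There is one real gap in your sketch. You assert that $w\ne S$ follows from $I\ne S$, but this is false as stated: if the interval $I$ happens to be cofinal in $L$ (for instance contains the maximum of $L$, or is unbounded above), then the downward closure of $I$ is all of $S$, so your $w$ equals $S\notin V'(L)$. The paper handles this by choosing the cut $L$ more carefully at the outset: not merely so that $I$ is an interval of $L$, but so that \emph{some element of $S$ lies strictly above all of $I$ in $L$}. This is possible precisely because $I$ is non-trivial, and once $L$ is chosen this way $w\ne S$ is immediate. Your phrase ``by choosing that cut to be $L$ itself'' suggests you saw that $L$ should depend on $I$, but you did not record the extra condition that makes the argument go through.

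Your uniqueness argument is also more elaborate than needed. Passing to a second cut $L'$ of $Z(L)$ is unnecessary: the key fact, available directly from the construction of $D(L)$, is that between any two distinct initial segments there lies an element of $S$. Hence two distinct candidates for $v$ (or for $w$) would trace different subsets of $S$, and uniqueness in $D'(L)$ follows at once; uniqueness in $Z(L)$ then uses the extra element of $S$ above $I$ to rule out wrap-around.
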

\begin{proof}
	Let $L$ be a cut of $Z$ such that $I$ is an interval of $L$ and such that some element of $S$ is bigger than all elements of $I$ in $L$.
	By construction of $D(L)$ there are unique elements $v$ and $w$ of $V'(L)\setminus S$ such that $I=[v,w]\cap S$ in $D(L)$.
	Then $v$ and $w$ are also the unique elements of $V'(L)\setminus S$ such that $I=[v,w]\cap S$ in $Z(L)$, and thus $\eta_L(v)$ and $\eta_L(w)$ are the unique elements of $\cycset(Z)$ such that $I=[\eta_L(v),\eta_L(w)]\cap S$ in $\cyccomp(Z)$.
\end{proof}

Given a cyclic order $Z$ on set $S$, $\cyccomp(Z)$ clearly has the property that for distinct elements $v$ and $w$ of $\cycset(Z)\setminus S$ the interval $[v,w]\cap S$ is non-trivial.
This property holds for cycle completions in general, as the following rephrasing of \cref{distinctvertices} shows:

\begin{lem}\label{distinctcutsboundinterval}
    Let $Z$ be a cyclic order on a set $S$ with at least two elements and let $T$ on the set $R$ be a cycle completion of $Z$.
    Then for distinct $v$ and $w$ in $R\setminus S$ the interval $[v,w]\cap S$ of $S$ is non-trivial.
\end{lem}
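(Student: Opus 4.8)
The plan is to decompose the claim into three assertions: that $[v,w]_T\cap S$ is an interval of $Z$; that it is non-empty; and that it is not all of $S$. The first is soft: since $Z$ is the cyclic order induced on $S$, one has $\mathopen]s,t\mathclose[_Z=\mathopen]s,t\mathclose[_T\cap S$ for $s,t\in S$, hence $[s,t]_Z=[s,t]_T\cap S$, and restricting the $T$-interval $[v,w]_T$ to $S$ then gives a $Z$-interval (if $s,t\in[v,w]_T\cap S$ and, say, $[s,t]_T\subseteq[v,w]_T$, then $[s,t]_Z=[s,t]_T\cap S\subseteq[v,w]_T\cap S$). For the remaining two assertions, observe that since $v,w\notin S$ we have $[v,w]_T\cap S=\mathopen]v,w\mathclose[_T\cap S$ and $S$ is the disjoint union of $\mathopen]v,w\mathclose[_T\cap S$ and $\mathopen]w,v\mathclose[_T\cap S$; so $[v,w]_T\cap S=S$ is equivalent to $\mathopen]w,v\mathclose[_T\cap S=\emptyset$. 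Thus it suffices to prove: \emph{for all distinct $v,w\in R\setminus S$ the set $\mathopen]v,w\mathclose[_T\cap S$ is non-empty} (applied to $(v,w)$ this gives non-emptiness of $[v,w]_T\cap S$, and applied to $(w,v)$ it gives $[v,w]_T\cap S\ne S$).

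To prove that, I would suppose for contradiction that $\mathopen]v,w\mathclose[_T\cap S=\emptyset$ and pick, via \cref{cycsettocut}, a cut $L$ of $T$ with least element $v$. Because $v=\min_L$, the induced cyclic order gives $\mathopen]v,w\mathclose[_T=\{c:v<_Lc<_Lw\}$, so the hypothesis says no element of $S$ is $<_Lw$; together with $v\notin S$ this yields $v<_Lw<_Ls_0$, where $s_0:=\min_L(S)$. Since $|S|\ge 2$, the set $S\setminus\{s_0\}=S\cap\{x:x>_Ls_0\}$ is a non-trivial interval of $Z$ (it is the trace on $S$ of a final segment of $L$, and final segments of $L$ are intervals of $T$), so by the definition of cycle completion it equals $[a,b]_T\cap S$ for $a,b\in R\setminus S$. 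Distinguishing the two possible positions of the arc $[a,b]_T$ in the cut $L$ (an $L$-interval, or the complement of one), the constraints $S\setminus\{s_0\}\subseteq[a,b]_T$ and $s_0\notin[a,b]_T$ pin down $s_0<_La$ and $a\le_L\min_L(S\setminus\{s_0\})$; in particular $a,v,w$ are pairwise distinct.

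The contradiction then comes from two short computations of traces on $S$. Since $v=\min_L<_La$, the arc $[a,v]_T$ consists of $v$ together with all elements $>_La$, so $[a,v]_T\cap S=\{s\in S:s\ge_La\}=S\setminus\{s_0\}$. Since $w<_Ls_0<_La$, the arc $[a,w]_T$ consists of $a$, $w$, all elements $<_Lw$, and all elements $>_La$; but no element of $S$ is $<_Lw$, so again $[a,w]_T\cap S=S\setminus\{s_0\}$. Now the non-trivial interval $S\setminus\{s_0\}$ has been written both as $[a,v]_T\cap S$ and as $[a,w]_T\cap S$ with $v\ne w$, contradicting the uniqueness clause in the definition of a cycle completion. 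This finishes the proof.

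The part I expect to need the most care is the middle step: correctly translating ``$[a,b]_T$ is an arc of $T$'' into a statement about the cut $L$ and checking in each of the two cases for the position of $b$ relative to $a$ that $S\setminus\{s_0\}\subseteq[a,b]_T$ and $s_0\notin[a,b]_T$ force $s_0<_La\le_L\min_L(S\setminus\{s_0\})$. The genuinely useful idea is the choice of cut anchored at $v$: it places both cutpoints $v$ and $w$ below all of $S$ in $L$, which is exactly what lets the two competing boundary choices $v$ and $w$ produce the same interval $S\setminus\{s_0\}$ and thereby violate uniqueness.
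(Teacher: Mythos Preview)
Your reduction to ``$\mathopen]v,w\mathclose[_T\cap S\ne\emptyset$ for all distinct $v,w\in R\setminus S$'' is clean and correct, and the final contradiction via uniqueness is exactly the right lever. There is, however, a genuine gap at the point where you set $s_0:=\min_L(S)$. Nothing in the abstract definition of a cycle completion guarantees that $S$ has a least element in a cut $L$ of $T$: the completeness property you may have in mind (\emph{every subset has an infimum}) is proved in the paper only for the concrete model $D(L)$, and the present lemma is needed \emph{before} one knows that an arbitrary cycle completion is isomorphic to that model. Since the equality $S\setminus\{s_0\}=S\cap\{x:x>_L s_0\}$ and the subsequent bound ``$a\le_L\min_L(S\setminus\{s_0\})$'' both rely on $s_0$ being the minimum, the argument as written does not go through.

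The fix is small. Pick any $s_0\in S$ that is not the $L$-maximum of $S$ (possible as $|S|\ge2$) and work with the non-trivial interval $I:=\{s\in S: s>_L s_0\}$ instead of $S\setminus\{s_0\}$. Writing $I=[a,b]_T\cap S$ and running your two-case analysis on the position of $[a,b]_T$ in $L$ yields $s_0<_L a$ and $s\ge_L a$ for every $s\in I$; in particular $a\notin\{v,w\}$ since $a>_L s_0>_L w>_L v$. Then $[a,v]_T\cap S=\{s\in S:s\ge_L a\}=I$ and, because no $s\in S$ satisfies $s\le_L w$, also $[a,w]_T\cap S=I$, giving the desired violation of uniqueness. (Alternatively, one can obtain such an $a$ directly as the $T$-successor of any $s_1\in S$ with $s_1<_L s_2$ for some $s_2\in S$, using that $\{s_1\}=[c,d]_T\cap S$ forces $d$ to be that successor.)

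For comparison, the paper avoids cuts altogether: it first produces \emph{some} $x,y\in R\setminus S$ with $[x,y]\cap S$ non-trivial (immediate from $|S|\ge2$), then shows by a one-line uniqueness argument that $[v,y]\cap S$ is non-trivial whenever $v\ne y$, and finally bootstraps the same step with $(v,y)$ in the role of $(x,y)$ to conclude for $(v,w)$. Your linearisation makes the geometry explicit and is perfectly viable once the minimum issue is repaired, but the paper's two-step bootstrap is shorter and sidesteps any order-completeness questions.
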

\begin{proof}
    As $S$ has at least two elements, it has a non-trivial interval and thus there are $x$ and $y$ in $R\setminus S$ such that $[x,y]\cap S$ is a non-trivial interval of $S$.
    
    If $[v,y]\cap S$ is a trivial interval, then one of $[v,y]\cap S$ and $[y,v]\cap S$ is empty. Suppose $v \not= w$.
    If $[v,y]\cap S$ is empty, then $x\notin [v,y]$, implying that $v\in [x,y]$ and $[x,v]\cap S=[x,y]\cap S$, a contradiction.
    Similarly, if $[y,v]\cap S$ is empty, then also $[x,v]\cap S=[x,y]\cap S$, a contradiction to $T$ being a cycle completion.
    Hence if $[v,y]\cap S$ is a trivial interval of $S$ then $v=y$.

    Thus either $[v,y]\cap S$ is a non-trivial interval of $S$, or $v=y$ which in particular implies that $[v,x]\cap S$ is a non-trivial interval of $S$.
    As $v\neq w$, similarly $[v,w]\cap S$ is a non-trivial interval of $S$.
\end{proof}

In this paper, cycle completions are used as index sets for $k$-pseudoflowers, and they are related to each other via surjective monotone maps that map only cuts to cuts.
The following lemmas establish a few basic facts about such monotone maps.
In particular the following lemma is a rephrasing of \cref{cyclichoms2}.

\begin{lem}\label{fromcyccomphomtoalternatives}
	Let $Z$ and $Z'$ be cyclic orders on sets $S$ and $S'$ with at least two elements.
	Let $T$ and $T'$ be cycle completions of $Z$ and $Z'$ on sets $R$ and $R'$.
	Let $F:R \rightarrow R'$ be surjective and monotone such that $F(S)\subseteq S'$.
	Then there is for every $v'\in R'\setminus S'$ exactly one $v\in R$ with $F(v)=v'$, and there is for every $s'\in S'$ some $s\in S$ with $F(s)=s'$.
\end{lem}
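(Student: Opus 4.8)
The plan is to reduce the statement to two facts and prove each using the two main tools already available: the observation preceding \cref{existencecyccomp} that a surjective monotone map whose image has at least three elements pulls intervals back to intervals, and \cref{distinctcutsboundinterval}, which forbids two \emph{distinct} cutpoints of a cycle completion with no ground-set point strictly between them. First I would note that $\im F = R'$ has at least three (in fact four) elements: since $|S'|\ge 2$, for distinct $s_1',s_2'\in S'$ the singleton $\{s_1'\}$ is a non-trivial interval of $S'$, so by \cref{def:completionofcyclicorder} it equals $[a',b']\cap S'$ for distinct $a',b'\in R'\setminus S'$. Hence the $F$-preimage of any singleton (being the preimage of an interval) is an interval of $R$.

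For the first claim, fix $v'\in R'\setminus S'$. Then $F^{-1}(v')$ is non-empty by surjectivity and disjoint from $S$, because $F(S)\subseteq S'$ while $v'\notin S'$. It is also an interval of $R$ by the paragraph above. But an interval $I$ of $R$ with $I\cap S=\emptyset$ has at most one element: two distinct $v_1,v_2\in I$ would force $[v_1,v_2]\subseteq I$ or $[v_2,v_1]\subseteq I$, hence $[v_1,v_2]\cap S\subseteq I\cap S=\emptyset$, contradicting \cref{distinctcutsboundinterval} (applied with $|S|\ge 2$), which says $[v_1,v_2]\cap S$ is non-trivial, in particular non-empty. So $|F^{-1}(v')|=1$.

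For the second claim, suppose for contradiction that some $s'\in S'$ lies in $S'\setminus F(S)$. Then $F^{-1}(s')$ is non-empty (surjectivity), disjoint from $S$ (since $s'\notin F(S)$), and an interval of $R$, so exactly as above it is a single cutpoint $\{v\}$ with $v\in R\setminus S$. The key step is to find a cutpoint just "below" $s'$: since $|S'|\ge 2$, $\{s'\}$ is a non-trivial interval of $S'$, so $\{s'\}=[a',b']\cap S'$ for distinct $a',b'\in R'\setminus S'$; I claim $p':=a'$ is the predecessor of $s'$ in $R'$. Indeed $s'\in\mathopen]a',b'\mathclose[$; if some $c'\in\mathopen]a',s'\mathclose[$ lay in $S'$, then $c'\notin\{s'\}=[a',b']\cap S'$ would put $c'\in\mathopen]b',a'\mathclose[$, contradicting, via transitivity and antisymmetry of $T'$, that $c'\in\mathopen]a',s'\mathclose[\subseteq\mathopen]a',b'\mathclose[$; and if such a $c'$ lay in $R'\setminus S'$, then \cref{distinctcutsboundinterval} would give a point of $S'$ in $\mathopen]a',c'\mathclose[\subseteq\mathopen]a',s'\mathclose[$, reducing to the previous case. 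Hence $\mathopen]a',s'\mathclose[=\emptyset$ and $p'\in R'\setminus S'$. By the first claim $F^{-1}(p')=\{p\}$ for a single $p\in R\setminus S$, and $p\ne v$ since $p'\ne s'$. Now \cref{distinctcutsboundinterval} yields some $t\in\mathopen]p,v\mathclose[\cap S$; using monotonicity of $F$ contrapositively on $t\in\mathopen]p,v\mathclose[$ (equivalently $t\notin\mathopen]v,p\mathclose[$) gives $F(t)\notin\mathopen]F(v),F(p)\mathclose[=\mathopen]s',p'\mathclose[$, so $F(t)\in[p',s']$; since $F(t)\in S'$, $p'\notin S'$, and $\mathopen]p',s'\mathclose[=\emptyset$, this forces $F(t)=s'$ with $t\in S$ — contradicting $s'\notin F(S)$. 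Therefore $F(S)=S'$.

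The routine parts are the small cyclic-order manipulations (transitivity/antisymmetry bookkeeping) needed to verify that $p'$ is the predecessor of $s'$ and to peel apart $\mathopen]s',p'\mathclose[$ from $[p',s']$. The genuinely load-bearing step, and the one I expect to be the main obstacle to pin down precisely, is the existence of the predecessor cutpoint $p'$: it is exactly what lets monotonicity "pinch" the value $F(t)$ onto $s'$, and it is here that the defining property of a cycle completion — every non-trivial interval of the ground set is bounded by cutpoints — is doing the real work.
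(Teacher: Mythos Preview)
Your proof is correct and follows essentially the same strategy as the paper: both arguments use that preimages of singletons are intervals, invoke \cref{distinctcutsboundinterval} to force any $S$-avoiding interval of $R$ to be a singleton, and then exploit the cutpoint neighbours of $s'$ in $R'$ to pin down an $S$-preimage. The only tactical difference is that for the second claim the paper uses \emph{both} the predecessor and successor of $s'$ directly (taking their unique preimages $t,q$ and picking $s\in[t,q]\cap S$, so that monotonicity forces $F(s)\in\{t',s',q'\}\cap S'=\{s'\}$), whereas you argue by contradiction with only the predecessor, first using the hypothesis $s'\notin F(S)$ to collapse $F^{-1}(s')$ to a single cutpoint $v$ and then pinching between $p$ and $v$; this is a slight detour but entirely sound.
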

\begin{proof}
	Every interval of $T$ with at least two elements contains at least one element of $S$.
	As $F^{-1}(v')$ does not contain elements of $S$, it has at most one element.
	Because $F$ is surjective, $F^{-1}(v')$ contains exactly one element $v$.
	
	Let $t'$ and $q'$ be the predecessor and successor of $s'$ in $T'$.
	Then $[t',q']$ taken in $T'$ consists of $t'$, $q'$ and $s'$.
	Let $t$ and $q$ be the unique elements of $R$ such that $F(t)=t'$ and $F(q)=q'$, and let $s$ be an element of $[t,q]$ taken in $T$ which is contained in $S$.
	As $F$ is monotone and $F(s)$, $q'$ and $t'$ are pairwise disjoint, $F(s)\in [t',q']$ in $T'$.
	Thus $F(s)=s'$.
\end{proof}

So $F$ naturally induces two other monotone maps: The restriction of $F$ to $S$ is surjective and monotone, and $g:R'\setminus S'\rightarrow R\setminus S$ which maps every element to its unique preimage under $F$ is injective and monotone.
In the other direction, all surjective monotone $f$ from $Z$ to $Z'$ are derived from a surjective monotone map $T\rightarrow T'$ with $F(S)\subseteq S'$.

\begin{lem}\label{preimageofintervalbounds}
	Let $Z$ and $Z'$ be cyclic orders on set $S$ and $S'$ respectively, each with at least two elements, and let $T$ and $T'$ be cycle completions of $Z$ and $Z'$ on sets $R$ and $R'$ respectively.
	Let $F:R\rightarrow R'$ be surjective and monotone with $F(S)\subseteq S'$.
	Then for all elements $v$ and $w$ of $R$ such that $F(v)$ and $F(w)$ are distinct elements of $R'\setminus S'$ the equations $F^{-1}(\mathopen]F(v),F(w)\mathclose[)=\mathopen]v,w\mathclose[$ and $F^{-1}([F(v),F(w)])=[v,w]$ hold.
\end{lem}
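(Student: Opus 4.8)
The plan is to reduce the whole statement to a short argument combining monotonicity of $F$ with two elementary facts: that $F$ has a unique preimage above each point outside $S'$, and that a closed interval differs from the corresponding open interval only by its two endpoints. First I would invoke \cref{fromcyccomphomtoalternatives}: since $F(v)$ and $F(w)$ lie in $R'\setminus S'$, each of them has exactly one $F$-preimage in $R$. As $v$ is a preimage of $F(v)$ and $w$ a preimage of $F(w)$, this gives $F^{-1}(F(v))=\{v\}$ and $F^{-1}(F(w))=\{w\}$; in particular $v\neq w$ because $F(v)\neq F(w)$.

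Next I would establish $F^{-1}(\mathopen]F(v),F(w)\mathclose[)=\mathopen]v,w\mathclose[$ by two inclusions. For $\subseteq$: if $u$ is a preimage of some element of $\mathopen]F(v),F(w)\mathclose[$, then $(F(v),F(u),F(w))\in Z'$, and monotonicity of $F$ yields $(v,u,w)\in Z$, i.e.\ $u\in\mathopen]v,w\mathclose[$. The reverse inclusion is the crux. Take $u\in\mathopen]v,w\mathclose[$; then $u\notin\{v,w\}$ since the three entries of a triple of a cyclic order are pairwise distinct, and hence $F(u)\notin\{F(v),F(w)\}$ because $F^{-1}(F(v))=\{v\}$ and $F^{-1}(F(w))=\{w\}$. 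So $F(v),F(u),F(w)$ are pairwise distinct. If $F(u)$ did not lie in $\mathopen]F(v),F(w)\mathclose[$, then linearity of $Z'$ would force $(F(w),F(u),F(v))\in Z'$, and monotonicity of $F$ would then give $(w,u,v)\in Z$, that is $u\in\mathopen]w,v\mathclose[$; but antisymmetry of $Z$ makes $\mathopen]v,w\mathclose[$ and $\mathopen]w,v\mathclose[$ disjoint, a contradiction. Hence $F(u)\in\mathopen]F(v),F(w)\mathclose[$, proving $\supseteq$.

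Finally I would deduce the closed-interval equality by bookkeeping: $[F(v),F(w)]=\mathopen]F(v),F(w)\mathclose[\cup\{F(v),F(w)\}$, taking preimages commutes with unions, and $F^{-1}(\{F(v),F(w)\})=\{v,w\}$ by the first step (using $F(v)\neq F(w)$), so $F^{-1}([F(v),F(w)])=\mathopen]v,w\mathclose[\cup\{v,w\}=[v,w]$. I do not anticipate a genuine obstacle beyond the $\supseteq$ direction above; the one point requiring care is that the hypothesis $F(v),F(w)\in R'\setminus S'$ is used essentially twice — once to get uniqueness of preimages from \cref{fromcyccomphomtoalternatives}, and once to conclude that $u\in\mathopen]v,w\mathclose[$ forces $F(u)\notin\{F(v),F(w)\}$ — so the statement genuinely fails for arbitrary $v,w\in R$ with $F(v)\neq F(w)$.
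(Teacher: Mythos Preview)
Your argument is correct and is essentially the same as the paper's: both use \cref{fromcyccomphomtoalternatives} to get uniqueness of the preimages of $F(v)$ and $F(w)$, then combine monotonicity with linearity/antisymmetry to obtain the biconditional $(v,u,w)\in T \Leftrightarrow (F(v),F(u),F(w))\in T'$ for $u\notin\{v,w\}$, from which both interval equalities follow. One small notational slip: throughout you write $Z$ and $Z'$ for the relevant cyclic orders, but $v,w,F(v),F(w)$ lie in $R\setminus S$ and $R'\setminus S'$, so the triples live in the cycle completions $T$ and $T'$, not in $Z$ and $Z'$.
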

\begin{proof}
	By \cref{fromcyccomphomtoalternatives}, $v$ is the only element of $R$ which is mapped to $F(v)$ by $F$ and similarly for $w$.
	So for all $x\in R-v-w$, $(v,x,w)\in T$ if and only if $(F(v),F(x),F(w))\in T'$ and thus the two equations hold.
\end{proof}

Together with \cref{fromcyccomphomtoalternatives}, the previous lemma shows \cref{cyclichoms3}.
With its help, it is now possible to show the following phrasing of \cref{cyclichoms1}.

\begin{lem}\label{fromThomtoTcychom}
	Let $Z$ and $Z'$ be cyclic orders on sets $S$ and $S'$ and let $T$ and $T'$ be cycle completions of $Z$ and $Z'$ respectively on sets $R$ and $R'$.
	Let $f:S\rightarrow S'$ be surjective and monotone.
	If $S'$ has at least two elements, then there is a unique surjective monotone map $F$ from $T$ to $T'$ such that the restriction of $F$ to $S$ equals $f$.
\end{lem}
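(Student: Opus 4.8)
The plan is to prove uniqueness and existence separately, in both cases transferring the problem to linear orders by means of cuts.

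\emph{Uniqueness.} Suppose $F_1,F_2\colon R\to R'$ are surjective monotone maps with $F_1|_S=F_2|_S=f$. Since $F_i(S)=f(S)=S'$, \cref{fromcyccomphomtoalternatives} shows that each $F_i$ restricts to a bijection of $F_i^{-1}(R'\setminus S')\subseteq R\setminus S$ onto $R'\setminus S'$. For distinct cutpoints $v',w'$ of $T'$, \cref{preimageofintervalbounds} gives $F_i^{-1}([v',w'])=[F_i^{-1}(v'),F_i^{-1}(w')]$, so intersecting with $S$ yields $[F_i^{-1}(v'),F_i^{-1}(w')]\cap S=f^{-1}([v',w']\cap S')$, independently of $i$. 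As $[v',w']\cap S'$ is a non-trivial interval of $S'$ (by \cref{distinctcutsboundinterval}) and, when $|S'|\geq 3$, preimages of intervals under $f$ are intervals, this common set is a non-trivial interval of $S$, so by the uniqueness clause in the definition of cycle completion the bounding cutpoints agree: $F_1^{-1}(v')=F_2^{-1}(v')$ for every cutpoint $v'$ of $T'$. Consequently $F_1$ and $F_2$ agree at every point mapped by either of them into $R'\setminus S'$; and if $F_1(v)$ and $F_2(v)$ were distinct elements of $S'$, choosing the unique cutpoints $v',w'$ of $T'$ with $[v',w']\cap S'=\{F_1(v)\}$ and using that $F_i^{-1}([v',w'])$ is the common interval $[F_1^{-1}(v'),F_1^{-1}(w')]$ would force $F_2(v)\in\{F_1(v)\}$, a contradiction. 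Hence $F_1=F_2$.

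\emph{Existence.} I first reduce to linear orders; assume $|S'|\geq 3$ (the case $|S'|=2$ either is trivial or needs the additional hypothesis that the point-fibres of $f$ are intervals, which is available in the applications). Fix $s_0'\in S'$. Since $f$ has image with at least three elements, $f^{-1}(s_0')$ is an interval of $Z$, and it is non-trivial; hence there is a cut $L$ of $Z$ having $f^{-1}(s_0')$ as an initial segment (\cref{cycsettocut}, the remark after it, and \cref{cutsarerelated}). Let $L'$ be a cut of $Z'$ with smallest element $s_0'$ (\cref{cycsettocut}). A short case analysis on a pair $v<_L w$ — splitting according to whether $f(v),f(w)$ equal $s_0'$, and, when neither does, applying monotonicity of $f$ to a triple one of whose entries lies in $f^{-1}(s_0')$ — shows that $f\colon(S,\leq_L)\to(S',\leq_{L'})$ is order-preserving.

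It remains to extend $f$ to an order-preserving surjection $\bar f\colon D'(L)\to D'(L')$. Once this is done, $\bar f$ is automatically monotone for the induced cyclic orders $Z(L)$ and $Z(L')$ and is surjective, and through the isomorphisms $Z(L)\cong\cyccomp(Z)\cong T$ and $Z(L')\cong\cyccomp(Z')\cong T'$ of cyclic orders, all of which fix $S$ (by \cref{uniquecyccomp} and \cref{existencecyccomp}), it yields the required $F\colon T\to T'$ with $F|_S=f$. Put $\bar f|_S=f$. For a proper initial segment $S_0$ of $L$ define $\bar f(S_0)$ as follows: if some fibre $f^{-1}(t')$ meets both $S_0$ and $S\setminus S_0$, then such a $t'$ is unique (a second one would contradict order-preservation of $f$) and we set $\bar f(S_0)=t'\in S'$; otherwise $f(S_0)$ and $f(S\setminus S_0)$ are disjoint, so $f(S_0)$ is a proper initial segment of $L'$ (here one uses surjectivity of $f$ and $S\setminus S_0\neq\emptyset$), and we set $\bar f(S_0)=f(S_0)$. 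Verifying that $\bar f$ is order-preserving is a routine four-way case check, and surjectivity onto proper initial segments of $L'$ follows because, for a proper initial segment $S_0'$ of $L'$, the set $S_0=f^{-1}(S_0')$ is a proper initial segment of $L$ with no straddling fibre, so $\bar f(S_0)=f(S_0)=S_0'$. The step I expect to be the main obstacle is exactly this extension of $f$ across the cutpoints: pinning down the two-case definition of $\bar f$ on initial segments and checking order-preservation cleanly through the degenerate configurations (for instance when $L'$ has, or lacks, a maximum), together with the auxiliary $|S'|=2$ case where the reduction to linear orders can break down.
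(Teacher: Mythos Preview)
Your proof is correct (with the same caveat on $|S'|=2$ that the paper's proof also shares), but takes a genuinely different route from the paper.

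The paper works entirely in the cyclic setting: for each cutpoint $v\in R\setminus S$ it defines $F(v)$ directly by a two-case rule --- $F(v)=s'$ if $v$ lies in an $S$-interval whose $f$-image is a single point $s'$, and otherwise $F(v)$ is the unique cutpoint of $T'$ separating $f([s,v]\cap S)$ from $f([v,t]\cap S)$ --- and then checks surjectivity and monotonicity by hand. Your approach instead linearises: you pick compatible cuts $L$ and $L'$ so that $f$ becomes an order-preserving surjection of linear orders, and then extend $f$ across the Dedekind-style completion $D'(L)\to D'(L')$ by the ``straddling fibre'' rule, which is essentially the linear analogue of the paper's two-case definition. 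What your route buys is that the verification of order-preservation and surjectivity becomes a collection of elementary linear-order checks rather than cyclic ones; what the paper's route buys is avoiding the auxiliary choice of cuts and the need to argue that this choice does not matter.

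Two further points are worth noting. First, you prove uniqueness explicitly via \cref{fromcyccomphomtoalternatives} and \cref{preimageofintervalbounds}, whereas the paper's proof only constructs $F$ and verifies it is surjective and monotone, never returning to uniqueness --- so your argument actually fills a gap. Second, your observation that the case $|S'|=2$ requires the fibres of $f$ to be intervals is well-taken: the paper's proof also implicitly uses that $f^{-1}(r)$ is an interval (in the clause ``which implies that $F^{-1}(r)$ is an interval of $R$''), and for $|S'|=2$ this need not hold for an arbitrary monotone $f$, so the lemma as stated is slightly too strong. Since every application in the paper either has $|S'|\ge3$ or has interval fibres for structural reasons, this does not affect the paper's results, and your flagging it is appropriate.
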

\begin{proof}
    For $v\in R\setminus S$, define $F(v)$ as follows:
    If there are $s$ and $t$ in $S$ such that $v\in [s,t]$ and $[s,t]\cap S \subseteq f^{-1}(s')$ for some $s'\in S'$, then let $F(v)=s'$.
    This is well defined as there is at most one such $s'$.
    Otherwise let $s$ and $t$ be elements of $S$ with $f(s)\neq f(t)$ and $v\in [s,t]$.
    Then $S_1:=f([s,v]\cap S)$ and $S_2:=f([v,t]\cap S)$ are intervals of $S'$, and they are non-trivial because they are disjoint.
    So they can be written uniquely as $S_1=[w_1,w_1']\cap S'$ and $S_2=[w_2,w_2']\cap S'$ for elements $w_1,w_1',w_2,w_2'$ of $R'\setminus S'$.
    Then $w_1'=w_2$, and this element of $R'$ does not depend on the choice of $s$ and $t$.
    Let $F(v)=w_1'$.
    In particular $F(v)\in R'\setminus S'$.

    In order to show that $F$ is surjective, consider $v'\in R'\setminus S'$.
    Let $s'$ and $t'$ be distinct elements of $S'$ with $v'\in [s',t']$ and let $s$ and $t$ be elements of $S$ with $f(s)=s'$ and $f(t)=t'$.
    Then $f^{-1}([s',v']\cap S')$ is a non-trivial interval of $S$ and thus can be written uniquely as $[v,w]\cap S$ for elements $v$ and $w$ of $R\setminus S$.
    As $[s,v]\cap S \subseteq f^{-1}([s',v']\cap S')$ and $[v,t]\cap S\subseteq f^{-1}([v',t']\cap S')$, this implies that $F(v)=v'$.
    
    In order to show that $F$ is monotone, consider $s,t,v\in R$ such that the triple $(F(s),F(v),F(t))$ is contained in~$ T'$.
    For an element $r$ of $S'$, $F^{-1}(r)\setminus S$ only contains elements of $R\setminus S$ whose image under $F$ is defined via the first case, which implies that $F^{-1}(r)$ is an interval of $R$.
    So in the case where all three elements $F(s)$, $F(t)$ and $F(v)$ are contained in $S'$, $(s,v,t)\in T$ follows from the fact that $f$ is monotone.
    As between any two elements of $R'\setminus S'$ there is some element of $S'$, in order to show $(s,v,t)\in T$ it suffices to consider the case that $F(s)$ and $F(t)$ are contained in $S'$ but $F(v)$ is not.
    Again, because $F^{-1}(r)$ is an interval of $R$ for every $r\in S'$, it suffices to consider the case that both $s$ and $t$ are contained in $S$.
    The fact that $F(v)$ is not contained in $S'$ implies that $v$ is not contained in $S$ and that $F(v)$ is defined via the second case.
    In particular if $v\in [t,s]$ then $F(v)\in [f(t),f(s)]=[F(t),F(s)]$, which is impossible.
    Hence $(s,v,t)\in T$.
\end{proof}

By the previous lemma, given two cycle completions of a cyclic order $Z$ on set $S$, the identity on $S$ can be extended uniquely to a surjective monotone map between the cycle completions, showing that cycle completions are essentially unique.
Together with the existence of cycle completions proved above this proves \cref{existencecyccomp}.

\end{document}